\renewcommand{\AA}{\mathbb{A}}
\newcommand{\BB}{\mathbb{B}}
\newcommand{\CC}{\mathbb{C}}
\newcommand{\DD}{\mathbb{D}}
\newcommand{\GG}{\mathbb{G}}
\newcommand{\II}{\mathbb{I}}
\newcommand{\NN}{\mathbb{N}}
\newcommand{\QQ}{\mathbb{Q}}
\newcommand{\ZZ}{\mathbb{Z}}
\newcommand{\cF}{\mathcal{F}}
\newcommand{\cG}{\mathcal{G}}
\newcommand{\cH}{\mathcal{H}}
\newcommand{\cL}{\mathcal{L}}
\newcommand{\cO}{\mathcal{O}}
\newcommand{\fc}{\mathfrak{c}}
\newcommand{\ff}{\mathfrak{f}}
\newcommand{\fp}{\mathfrak{p}}
\newcommand{\fpb}{{\bar{\fp}}}
\newcommand{\fP}{\mathfrak{P}}
\newcommand{\Qp}{\QQ_p}
\newcommand{\Zp}{\ZZ_p}
\newcommand{\Cp}{\CC_p}
\newcommand{\Qb}{\overline{\QQ}}
\newcommand{\Qpb}{\Qb_p}
\newcommand{\Qpnr}{\QQ_{p}^{\mathrm{nr}}}
\newcommand{\OFinf}{\widehat{\cO}_{F_\infty}}
\newcommand{\Finf}{\widehat{F}_\infty}
\newcommand{\htimes}{\mathbin{\hat\otimes}}
\newcommand{\vp}{\varphi}
\newcommand{\ve}{\varepsilon}
\newcommand{\Dcris}{\mathbb{D}_{\mathrm{cris}}}
\newcommand{\DdR}{\mathbb{D}_{\dR}}
\newcommand{\Brig}{\BB^+_{\rig,\Qp}}
\newcommand{\Bcris}{\BB_{\cris}}
\newcommand{\fq}{\mathfrak{q}}
\DeclareMathOperator{\Gal}{Gal}
\DeclareMathOperator{\Iw}{Iw}
\DeclareMathOperator{\Tr}{Tr}
\DeclareMathOperator{\Tw}{Tw}
\DeclareMathOperator{\pr}{pr}
\DeclareMathOperator{\End}{End}
\DeclareMathOperator{\GL}{GL}
\DeclareMathOperator{\rig}{rig}
\DeclareMathOperator{\cris}{cris}
\DeclareMathOperator{\dR}{dR}
\DeclareMathOperator{\Fil}{Fil}
\DeclareMathOperator{\arith}{arith}
\DeclareMathOperator{\rank}{rank}
\DeclareMathOperator{\Hom}{Hom}
\DeclareMathOperator{\loc}{loc}
\DeclareMathOperator{\Ind}{Ind}
\DeclareMathOperator{\Leop}{Leop}
\newcommand{\onto}{\twoheadrightarrow}
\newcommand{\into}{\hookrightarrow}
\newtheorem{definition}{Definition}[section]
\newtheorem{proposition}[definition]{Proposition}
\newtheorem{theorem}[definition]{Theorem}
\newtheorem{lemma}[definition]{Lemma}
\newtheorem{corollary}[definition]{Corollary}
\newtheorem{conjecture}[definition]{Conjecture}
\newtheorem{assumption}[definition]{Assumption}
\theoremstyle{remark}
\newtheorem{remark}[definition]{Remark}
\newtheorem{example}[definition]{Example}
\begin{document}

\markboth{David Loeffler and Sarah Livia Zerbes}
{Iwasawa theory and p-adic L-functions over $\Zp^2$-extensions}

\title{IWASAWA THEORY AND P-ADIC L-FUNCTIONS OVER $\Zp^2$-EXTENSIONS}

\author{DAVID LOEFFLER}
\thanks{Supported by a Royal Society University Research Fellowship.}
\address{Mathematics Institute\\
Zeeman Building\\
University of Warwick\\
Coventry CV4 7AL, United Kingdom}
\email{d.a.loeffler@warwick.ac.uk}

\author{SARAH LIVIA ZERBES}
\thanks{Supported by EPSRC First Grant EP/J018716/1.}
\address{Mathematics Department\\
  University College London\\
  Gower Street\\
  London WC1E 6BT, United Kingdom}
\email{s.zerbes@ucl.ac.uk}

\begin{abstract}
  We construct a two-variable analogue of Perrin-Riou's $p$-adic regulator map for the Iwasawa cohomology of a crystalline representation of the absolute Galois group of $\Qp$, over a Galois extension whose Galois group is an abelian $p$-adic Lie group of dimension 2.

  We use this regulator map to study $p$-adic representations of global Galois groups over certain abelian extensions of number fields whose localisation at the primes above $p$ is an extension of the above type. In the example of the restriction to an imaginary quadratic field of the representation attached to a modular form, we formulate a conjecture on the existence of a ``zeta element'', whose image under the regulator map is a $p$-adic $L$-function. We show that this conjecture implies the known properties of the 2-variable $p$-adic $L$-functions constructed by Perrin-Riou and Kim.
\end{abstract}

\keywords{Iwasawa theory, $p$-adic regulator, $p$-adic $L$-function}

\subjclass[2010]{Mathematics Subject Classification 2010:
Primary 11R23, 
11G40. 
Secondary: 11S40, 
11F80 
}

\maketitle

 \section{Introduction}
  \label{sect:intro}

  In the first part of this paper (Sections \ref{sect:yager} and \ref{sect:regulator}), we develop a ``two-variable'' analogue of Perrin-Riou's theory of $p$-adic regulator maps for crystalline representations of $p$-adic Galois groups.

  Let us briefly recall Perrin-Riou's cyclotomic theory as developed in \cite{perrinriou95}. Let $p$ be an odd prime, $F$ a finite unramified extension of $\Qp$, and $V$ a continuous $p$-adic representation of the absolute Galois group $\cG_F$ of $F$, which is crystalline with Hodge--Tate weights $\ge 0$ and with no quotient isomorphic to the trivial representation. Then there is a ``regulator'' or ``big logarithm'' map
  \[ \mathcal{L}^\Gamma_{F, V} : H^1_{\Iw}(F(\mu_{p^\infty}), V) \rTo \cH_{\Qp}(\Gamma) \otimes_{\Qp} \Dcris(V)\]
  which interpolates the values of the Bloch--Kato dual exponential and logarithm maps for the twists $V(j)$, $j \in \ZZ$, over each finite subextension $F(\mu_{p^n})$. Here $\cH_{\Qp}(\Gamma)$ is the algebra of $\Qp$-valued distributions on the group $\Gamma = \Gal(F(\mu_{p^\infty}) / F) \cong \Zp^\times$, and the Iwasawa cohomology $H^1_{\Iw}(F(\mu_{p^\infty}), V)$ is defined as $\Qp \otimes_{\Zp} \varprojlim_n H^1(F(\mu_{p^n}), T)$ where $T$ is any $\cG_F$-stable $\Zp$-lattice in $V$. This map plays a crucial role in cyclotomic Iwasawa theory for $p$-adic representations of the Galois groups of number fields, as a bridge between cohomological objects and $p$-adic $L$-functions.

  It is natural to ask whether or not the construction of the maps $\mathcal{L}^\Gamma_{F, V}$ may be extended to consider twists of $V$ by more general characters of $\cG_{F}$. In this paper, we give a complete answer to this question for characters factoring through an extension $K_\infty / F$ which is abelian over $\Qp$ (thus for all characters if $F = \Qp$). Any such character factors through the Galois group $G$ of an extension of the form $K_\infty = F_\infty(\mu_{p^\infty})$, where $F_\infty$ is an unramified extension of $F$ which is a finite extension of the unique unramified $\Zp$-extension of $F$. Denote by $\Finf$ the $p$-adic completion of $F_\infty$, and $\cH_{\Finf}(G)$ the algebra of $\Finf$-valued distributions on $G$.

  \begin{theorem}
   For any crystalline representation $V$ of $\cG_{F}$ with non-negative Hodge--Tate weights, there exists a regulator map
   \[ \mathcal{L}^{G}_{V} : H^1_{\Iw}(K_\infty, V) \rTo \cH_{\Finf}(G) \otimes_{\Qp} \Dcris(V)\]
   interpolating the maps $\mathcal{L}^\Gamma_{K, V}$ for all unramified extensions $K/F$ contained in $F_\infty$.
  \end{theorem}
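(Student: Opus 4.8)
The plan is to build $\mathcal{L}^{G}_{V}$ as a projective limit of Perrin--Riou's maps $\mathcal{L}^\Gamma_{K,V}$ over the finite unramified subextensions $K/F$ of $F_\infty$, using the Yager-module theory of Section~\ref{sect:yager} to identify the target. Since $F_\infty/F$ is unramified and $F(\mu_{p^\infty})/F$ totally ramified, the two are linearly disjoint, and writing $\Gamma=\Gal(F(\mu_{p^\infty})/F)$ and $H=\Gal(F_\infty/F)$ we get a canonical decomposition $G\cong\Gamma\times H$; restriction identifies $\Gal(K(\mu_{p^\infty})/K)$ with $\Gamma$ for every such $K$. As $K/\Qp$ is unramified, $\Dcris$ of $V$ over $K$ is $K\otimes_F\Dcris(V)$, so Perrin--Riou's map takes the form
\[ \mathcal{L}^\Gamma_{K,V}\colon H^1_{\Iw}\big(K(\mu_{p^\infty}),V\big)\longrightarrow \cH_{\Qp}(\Gamma)\otimes_{\Qp}K\otimes_F\Dcris(V). \]
From the definition of Iwasawa cohomology one has $H^1_{\Iw}(K_\infty,V)\cong\varprojlim_K H^1_{\Iw}(K(\mu_{p^\infty}),V)$, the transition maps being corestriction (equivalently, a Shapiro isomorphism for $\varprojlim_K\Ind_{\cG_K}^{\cG_F}$).

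The central point is then a compatibility assertion: for $K\subseteq K'$ one has $\mathcal{L}^\Gamma_{K,V}\circ\mathrm{cor}_{K'/K}=(\Tr_{K'/K}\otimes\id)\circ\mathcal{L}^\Gamma_{K',V}$, so that the $\mathcal{L}^\Gamma_{K,V}$ form a morphism of projective systems once the target of the $K'$-level map is sent to that of the $K$-level map by the trace $K'\to K$. I would prove this by reducing to the interpolation property characterising Perrin--Riou's map: evaluation at a locally constant character of $\Gamma$ twisted by an integral power of the cyclotomic character recovers, up to a factor depending only on $V$ and the character and not on $K$, the Bloch--Kato (dual-)exponential map on $H^1$ of the relevant twist over the finite layer $K(\mu_{p^n})$; these maps commute with corestriction via the trace on $\Dcris$, and the chosen characters separate points of $\cH_{\Qp}(\Gamma)$, so the two composites coincide. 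A cleaner route is to invoke the construction of $\mathcal{L}^\Gamma_{K,V}$ from the Wach module $N_K(V)=\cO_K\otimes_{\cO_F}N_F(V)$: corestriction corresponds to $\Tr_{\cO_{K'}/\cO_K}\otimes\id$ on Wach modules, and Perrin--Riou's construction (applying $1-\vp$ and the Mellin transform) is visibly functorial in the coefficient ring $\cO_K$.

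Granting the compatibility, set $\mathcal{L}^{G}_{V}:=\varprojlim_K\mathcal{L}^\Gamma_{K,V}$, a map from $H^1_{\Iw}(K_\infty,V)$ to $\varprojlim_K\big(\cH_{\Qp}(\Gamma)\otimes_{\Qp}K\otimes_F\Dcris(V)\big)$, the transitions acting as $\Tr_{K'/K}$ on the middle factor. It remains to locate this limit inside $\cH_{\Finf}(G)\otimes_{\Qp}\Dcris(V)$, and this is exactly the role of Section~\ref{sect:yager}: $\varprojlim_K\cO_K$ along the trace maps is the Yager module of $F_\infty/F$, free of rank one over $\Lambda_{\cO_F}(H)$, whence $\varprojlim_K\big(\cH_{\Qp}(\Gamma)\otimes_{\Qp}K\big)$ is canonically a $\Lambda(H)$-submodule of $\cH_{\Finf}(G)$. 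Composing with the inclusion gives $\mathcal{L}^{G}_{V}$ with the stated target, and the interpolation property is automatic: projection onto the $K$-component of the inverse system is precisely the operation recovering $\mathcal{L}^\Gamma_{K,V}$.

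The main obstacle is the compatibility step of the second paragraph: one must check that Perrin--Riou's normalisations — her explicit $\Gamma$-Euler factor, the choice of compatible $p$-power periods, and the crystalline Frobenius on $\Dcris_K(V)=K\otimes_F\Dcris(V)$ — are genuinely independent of $K$, so that the maps assemble into an honest projective system rather than one differing layer by layer by $K$-dependent units. The Wach-module description is the most robust way to see this, since there naturality in the coefficients is manifest; once it is in place, the remaining steps are formal given the results of Section~\ref{sect:yager}.
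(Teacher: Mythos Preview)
Your approach is essentially the paper's: the ``cleaner route'' you sketch via Wach modules---identify $H^1_{\Iw}(K(\mu_{p^\infty}),T)$ with $\NN_K(T)^{\psi=1}$, apply $1-\vp$, and use that $\NN_K(T)=\cO_K\otimes_{\cO_F}\NN_F(T)$ so the construction is manifestly compatible with traces---is exactly Definition~\ref{def:2variableregulator} and the proof of Theorem~\ref{thm:localregulator}(1). The Yager module identification of the target is also the same.

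There is one genuine gap. When $V$ admits the trivial representation as a quotient, Perrin--Riou's map $\cL^\Gamma_{K,V}$ is \emph{not} defined on all of $H^1_{\Iw}(K(\mu_{p^\infty}),T)$: by \cite[Theorem~A.3]{berger03} one only has $H^1_{\Iw}(K(\mu_{p^\infty}),T)\cong(\pi^{-1}\NN_K(T))^{\psi=1}$, and the regulator is defined on the (possibly proper) submodule $\NN_K(T)^{\psi=1}$. So your inverse system of maps is not a priori defined on the inverse system of full Iwasawa cohomology groups. The paper resolves this with a universal-norms argument (Theorem~\ref{thm:unramunivnorms} and Proposition~\ref{prop:yagercohomology2}): if $F_\infty/F$ is infinite, any class in the image of corestriction from $H^1_{\Iw}(K_\infty,T)$ automatically lands in $\NN_K(T)^{\psi=1}$, because the ``residue'' obstruction is killed by arbitrarily high powers of $p$. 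You need this step for the theorem as stated to hold without a no-trivial-quotient hypothesis.

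A smaller point: your passage from $\varprojlim_K\cO_K\cong S_{F_\infty/F}$ to an embedding of $\varprojlim_K(\cH_{\Qp}(\Gamma)\otimes_{\Qp}K)$ into $\cH_{\Finf}(G)$ is not automatic, since inverse limits need not commute with inverting $p$ or with tensoring by the Fr\'echet space $\cH_{\Qp}(\Gamma)$. The paper sidesteps this by working integrally throughout: the image of $1-\vp$ lies in $(\vp^*\NN_F(T))^{\psi=0}\htimes_{\cO_F}S_{F_\infty/F}$ (Proposition~\ref{prop:kernelofpsi}), and it is this specific module that embeds in $\cH_{\Finf}(G)\otimes\Dcris(V)$. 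Your Wach-module route naturally gives this if you stay integral until the last step.
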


  See Theorem \ref{thm:localregulator} for a precise statement of the result. Unlike the cyclotomic case, this result holds whether or not $V$ has trivial quotients.

  In Sections \ref{sect:semilocalregulator} and \ref{sect:defmoduleofLfunctions}, we use the $2$-variable $p$-adic regulator to study global Galois representations. Let $K$ be a finite extension of $\QQ$, $\fp$ a prime of $K$ above $p$ which is unramified, and $K_\infty$ be a $p$-adic Lie extension of $K$ such that for any prime $\fP$ of $K_\infty$ above $\fp$, the local extension $K_{\infty, \fP} / K_{\fp}$ is of the type considered above. Let $G=\Gal(K_\infty\slash K)$. In Section \ref{sect:semilocalregulator}, we extend the regulator map to a map
  \[ \cL^G_{\fp, V} : Z^1_{\Iw, \fp}(K_\infty, V) \rTo \cH_{\Finf}(G) \otimes_{\Qp} \Dcris(K_{\fp}, V) \]
  where $Z^1_{\Iw, \fp}(K_\infty, V)$ is the direct sum of the Iwasawa cohomology groups at each of the primes $\fq \mid \fp$, and $\Dcris(K_{\fp}, V)$ is the Fontaine $\Dcris$ functor for $V$ regarded as a representation of a decomposition group at $\fp$. There is a natural localisation map
  \[ H^1_{\Iw, S}(K_\infty, V) \to  \bigoplus_{\fp \mid p} Z^1_{\Iw, \fp}(K_\infty, V)\]
  where $H^1_{\Iw, S}(K_\infty,V)$ denotes the inverse limit of global cohomology groups unramified outside a fixed set of primes $S$. As in the case of Perrin-Riou's cyclotomic regulator map, our map $\cL^G_V$ allows elements of Iwasawa cohomology (or, more generally, of its exterior powers) to be interpreted as $\Dcris$-valued distributions on $G$ (after extending scalars). Assuming a plausible conjecture analogous to Leopoldt's conjecture, we use the map $\cL^G_V$ to define a certain submodule $\II_{\arith}(V)$ of the distributions on $G$ with values in an exterior power of $\Dcris$. Following Perrin-Riou \cite{perrinriou95}, we call $\II_{\arith}(V)$ the \emph{module of $2$-variable $L$-functions}. We conjecture that there exist special elements of the top exterior power of $H^1_{\Iw,S}(K_\infty, V)$ (``zeta elements'') whose images under the regulator map are $p$-adic $L$-functions, and that these should generate $\II_{\arith}(V)$ as a module over the Iwasawa algebra $\Lambda_{\Qp}(G)$.

  In Section \ref{sect:imquad}, we investigate in detail two instances of this conjecture that occur when the field $K$ is imaginary quadratic. We first show that for the representation $\Zp(1)$, our regulator map coincides with the map constructed in \cite{yager82}. In this paper, Yager shows that his map sends the Euler system of elliptic units to Katz's $p$-adic $L$-function. As the second example, we study the representation attached to a weight 2 cusp form for $\GL_2 / K$: here we predict the existence of multiple distributions, depending on a choice of Frobenius eigenvalue at each prime above $p$ (Conjecture \ref{conj:twovarmf}), and we show that our conjectures imply the known properties of the 2-variable $p$-adic $L$-functions constructed by Perrin-Riou \cite{perrinriou88} (for $f$ ordinary) and by B.D.~Kim \cite{kim-preprint} (for $f$ non-ordinary). However, our conjectures also predict the existence of some new $p$-adic $L$-functions. (The existence of these $p$-adic $L$-functions is verified in a forthcoming paper \cite{loeffler13} of the first author.)

  In our paper \cite{leiloefflerzerbes12} (joint with Antonio Lei), we use the $2$-variable $p$-adic regulator to study the critical slope $p$-adic $L$-functions of an ordinary CM modular form. In this case, there are two candidates for the $p$-adic $L$-function, one arising from Kato's Euler system and a second from $p$-adic modular symbols. The latter has been studied by Bella\"\i che \cite{bellaiche11}, who has proved a formula (Theorem 2 of \emph{op.cit.}) relating it to the Katz $L$-function for the CM field. We use the methods of the present paper to prove a corresponding formula for the $L$-function arising from Kato's construction, implying that the two $p$-adic $L$-functions in fact coincide.


 \section{Setup and notation}


  \subsection{Fields and their extensions}
   \label{sect:fieldsandtheirextensions}

   Let $p$ be an odd prime, and denote by $\mu_{p^\infty}$ the set of $p$-power roots of unity. Let $K$ be a finite extension of either $\QQ$ or $\Qp$. Define the Galois groups $\cG_K=\Gal(\overline{K}\slash K)$ and $H_K=\Gal(\overline{K}\slash K(\mu_{p^\infty}))$. A \emph{$p$-adic Lie extension} of $K$ is a Galois extension $K_\infty/K$ such that $\Gal(K_\infty / K)$ is a compact $p$-adic Lie group of finite dimension.

   We write $\Gamma$ for the Galois group $\Gal(\QQ(\mu_{p^\infty}) / \QQ)\cong \Gal(\Qp(\mu_{p^\infty})\slash \Qp)$, which we identify with $\Zp^\times$ via the cyclotomic character $\chi$. Then $\Gamma \cong \Delta \times \Gamma_1$, where $\Delta$ is cyclic of order $p-1$ and $\Gamma_1 = \Gal(\Qp(\mu_{p^\infty}) / \Qp(\mu_p)) \cong \Zp$, so in particular $\QQ_\infty$ (resp. $\QQ_{p,\infty}$) is a $p$-adic Lie extension of $\QQ$ (resp. $\Qp$) of dimension $1$.


  \subsection{Iwasawa algebras and power series}
   \label{sect:iwasawaalgs}

   Let $G$ be a compact $p$-adic Lie group, and $L$ a complete discretely valued extension of $\Qp$ with ring of integers $\cO_L$. We let $\Lambda_{\cO_L}(G)$ be the Iwasawa algebra $\varprojlim_U \cO_L[G /U]$, where the limit is taken over open subgroups $U \subseteq G$. We shall always equip this with the inverse limit topology (sometimes called the ``weak topology'') for which it is a Noetherian topological $\cO$-algebra (cf.~\cite[Theorem 6.2.8]{emerton04}). If $L / \Qp$ is a finite extension then $\Lambda_{\cO_L}(G)$ is compact (but not otherwise).

   We let $\Lambda_L(G) = L \otimes_{\cO_L} \Lambda_{\cO}(G)$, which is also Noetherian; it is isomorphic to the continuous dual of the space $C(G, L)$ of continuous $L$-valued functions on $G$. (See \cite[Corollary 2.2]{schneiderteitelbaum02} for a proof of the last statement when $L / \Qp$ is a finite extension; this extends immediately to general discretely-valued $L$, since $\Lambda_{L}(G) = L \htimes_{\Qp} \Lambda_{\Qp}(G)$ and similarly for $C(G, L)$.)


   Let $\cH_L(G)$ be the space of $L$-valued locally analytic distributions on $G$ (the continuous dual of the space $C^{\mathrm{la}}(G, L)$ of $L$-valued locally analytic functions on $G$). There is an injective algebra homomorphism $\Lambda_L(G) \into \cH_L(G)$ (see \cite[Proposition 2.2.7]{emerton04}), dual to the inclusion of $C^{\mathrm{la}}(G, L)$ as a dense subspace of $C(G, L)$. We endow $\cH_L(G)$ with its natural topology as an inverse limit of Banach spaces, with respect to which the map $\Lambda_L(G) \into \cH_L(G)$ is continuous.

   We shall mostly be concerned with the case when $G$ is abelian, in which case $G$ has the form $H \times \Zp^d$ for $H$ a finite abelian group. In this case $\Lambda_{\cO_L}(G)$ is isomorphic to the power series ring $\cO_L[H][[X_1, \dots, X_d]]$, where $X_i = \gamma_i - 1$ for generators $\gamma_1, \dots, \gamma_d$ of the $\Zp^d$ factor (see \cite[\S 8.4.1]{nekovar06}). The weak topology on $\Lambda_{\cO_L}(G)$ is the $I$-adic topology, where $I$ is the ideal $(p, X_1, \dots, X_d)$. Meanwhile, $\cH_L(G)$ identifies with the algebra of $L[H]$-valued power series in $X_1, \dots, X_d$ converging on the rigid-analytic unit ball $|X_i| < 1$, with the topology given by uniform convergence on the closed balls $|X_i| \le r$ for all $r < 1$.

   In particular, for the group $\Gamma \cong \Zp^*$ as in Section \ref{sect:fieldsandtheirextensions}, we may identify $\cH_L(\Gamma)$ with the space of formal power series
   \[ \{f \in L[\Delta][[X]]:\text{$f$ converges everywhere on the open unit $p$-adic disc}\},\]
   where $X$ corresponds to $\gamma - 1$ for $\gamma$ a topological generator of $\Gamma_1$; and $\Lambda_L(\Gamma)$ corresponds to the subring of $\cH_L(\Gamma)$ consisting of power series with bounded coefficients. Similarly, we define $\cH_L(\Gamma_1)$ as the subring of $\cH_L(\Gamma)$ defined by power series over $\Qp$, rather than $\Qp[\Delta]$.

   For each $i \in \ZZ$, we define an element $\ell_i \in \cH_{\Qp}(\Gamma_1)$ by
   \[ \ell_i = \frac{\log \gamma}{\log \chi(\gamma)} - i\]
   for any non-identity element $\gamma \in \Gamma_1$ (cf.~\cite[\S II.1]{berger03}); note that this differs by a sign from the element denoted by the same symbol in \cite{perrinriou94}.


  \subsection{Fontaine rings}
   \label{sect:Fontainerings}

   We review the definitions of some of Fontaine's rings that we use in this paper. Details can be found in \cite{berger04} or \cite{leiloefflerzerbes11}. Let $K$ be a finite extension of $\QQ_p$; the rings we shall require are those denoted by $\AA_{K}$, $\AA^+_K$, $\BB_K$, $\BB^+_K$, and $\BB^+_{\rig, K}$.

   These rings have intrinsic definitions independent of any choices and valid for any $K$; but we shall be interested in the case when $K$ is unramified over $\QQ_p$. In this case, they have concrete (but slightly noncanonical) descriptions as follows. A choice of compatible system $(\zeta_n)_{n \ge 0}$ of $p$-power roots of unity defines an element $\pi \in \AA_K^+$, and allows us to identify $\AA_K^+$ with the formal power series ring $\cO_K[[\pi]]$. The ring $\AA_K$ is simply $\widehat{\AA^+_K[1/\pi]}$. The ring $\BB^+_K$ is defined as $\AA^+_K[1/p]$, and similarly $\BB_K = \AA_K[1/p]$. Finally, we let $\BB^+_{\rig, K}$ be the ring of power series $f \in K[[\pi]]$ which converge on the open unit disc $|\pi| < 1$.

   All these rings are endowed with an $\cO_K$-linear action of $\Gamma$ by $\gamma(\pi)=(\pi+1)^{\chi(\gamma)}-1$, and with a Frobenius $\vp$ which acts as the usual arithmetic Frobenius on $\cO_K$ and on $\pi$ by $\vp(\pi)=(\pi+1)^p-1$. There is also a left inverse $\psi$ of $\vp$ on all of the above rings, satisfying
   \[
    \vp\circ\psi(f(\pi))=\frac{1}{p}\sum_{\zeta^p=1}f(\zeta(1+\pi)-1).
   \]

   Write $t=\log(1+\pi) \in \Brig$, and $q=\vp(\pi)/\pi\in\AA_{\Qp}^+$. A formal power series calculation shows that $g(t) = \chi(g) t$ for $g \in \Gamma$, and $\vp(t) = pt$.

   The action of $\Gamma$ on $\AA^+_{K}$ gives an isomorphism of $\Lambda_{\cO_K}(\Gamma)$ with the submodule $(\AA^+_{K})^{\psi=0}$, the so-called ``Mellin transform''
   \begin{align*}
    \mathfrak{M}: \Lambda_{\cO_K}(\Gamma) & \rightarrow (\AA^+_{K})^{\psi=0} \\
    f(\gamma-1) & \mapsto f(\gamma-1) \cdot (\pi+1).
   \end{align*}
   This extends to bijections $\Lambda_K(\Gamma) \cong (\BB^+_{K})^{\psi=0}$ and $\cH_K(\Gamma) \cong (\BB^+_{\rig, K})^{\psi=0}$. (See \cite[\S 1.3]{perrinriou90}, \cite[Proposition 1.2.7]{perrinriou94}, or \cite[\S 1.C.2]{leiloefflerzerbes11} for more details.)


  \subsection{Crystalline and de Rham representations}
   \label{sect:crystallinereps}

   Let $K$ be a finite extension of $\Qp$, and $V$ a continuous representation of $\cG_K$ on a $\Qp$-vector space of dimension $d$. Recall that $\DdR(V)$ denotes the space $(V \otimes_{\Qp} \BB_{\dR})^{\cG_K}$, where $\BB_{\dR}$ is Fontaine's ring of periods. This space $\DdR(V)$ is a filtered $K$-vector space of dimension $\le d$, and we say $V$ is \emph{de Rham} if equality holds. If $j \in \ZZ$, $\Fil^j \DdR(V)$ denotes the $j$-th step in the Hodge filtration of $\DdR(V)$.

   If $L$ is a finite extension of $K$, we shall sometimes write $\DdR(L, V)$ for $\DdR(V|_{\cG_L})$, which can be canonically identified with $L \otimes_K \DdR(V)$.

   We also consider the crystalline period ring $\Bcris \subset \BB_{\dR}$, and define similarly $\Dcris(V) = (V \otimes_{\Qp} \Bcris)^{\cG_K}$. This is a $K_0$-vector space of dimension $\le d$, where $K_0$ is the maximal unramified subspace of $K$, endowed with a semilinear Frobenius (acting as the usual arithmetic Frobenius on $K_0$). We say $V$ is \emph{crystalline} if $\dim_{K_0} \Dcris(V) = d$, in which case $V$ is automatically de Rham, and there is a canonical isomorphism of $K$-vector spaces $\DdR(V) \cong K \otimes_{K_0} \Dcris(V)$. As above, we will write $\Dcris(L, V)$ for $\Dcris(V|_{\cG_L})$, where $L$ is a finite extension of $K$; if $V$ is crystalline over $K$ this is isomorphic to $L_0 \otimes_{K_0} \Dcris(V)$.

   For an integer $j$, $V(j)$ denotes the $j$-th Tate twist of $V$, i.e. $V(j)=V\otimes_{\Zp} (\varprojlim_n \mu_{p^n})^{\otimes j}$. If $\zeta = (\zeta_n)_{n \ge 0}$ is a choice of a compatible system of $p$-power roots of unity, this defines a basis vector $e_j$ of $\Qp(j)$ and an element $t^{-j} \in \BB_{\dR}$; these each depend on $\zeta$, but the element $t^{-j} e_j \in \DdR(\Qp(j))$ does not, and tensoring with $t^{-j}e_j$ thus gives a canonical isomorphism $\DdR(\Qp(j)) \cong \Qp$ for each $j$.

   We write
   \[ \exp_{K,V} : \frac{\DdR(V)}{\Fil^0 \DdR(V) + \Dcris(V)^{\vp=1}} \rInto H^1(K,V)\]
   for the \emph{Bloch-Kato exponential} of $V$ over $K$ (c.f. \cite{blochkato90}), which is the boundary map in the cohomology of the ``fundamental exact sequence''
   \[ 0 \rTo V \rTo V \otimes_{\Qp} \Bcris^{\vp = 1} \rTo V \otimes_{\Qp} \left( \frac{\BB_{\dR}}{\BB_{\dR}^+}\right) \rTo 0. \]
   The image of this map is denoted $H^1_e(K,V)$, and we denote its inverse by
   \[ \log_{K, V} : H^1_e(K, V) \rTo^\cong \frac{\DdR(V)}{\Fil^0 \DdR(V) + \Dcris(V)^{\vp=1}}.\]
   We also denote by
   \[ \exp_{K,V}^*: H^1(K,V^*(1)) \rightarrow \Fil^0\DdR(V^*(1))\]
  the \emph{dual exponential} map, which is the dual of $\exp_{K, V}$ with respect to the Tate duality pairing (c.f.~\cite[\S II.1.4]{kato93}); it satisfies the identity
  \[ \langle \exp_{K, V}(a), b \rangle_{K} = \langle a, \exp^*_{K, V}(b)\rangle_{\dR}\]
  for all $a \in \DdR(V)$ and $b \in H^1(K, V)$, where $\langle -, - \rangle_{\mathrm{Tate}}$ is the Tate pairing and $\langle-, -\rangle_{\dR, K}$ is the pairing
  \[ \DdR(V) \otimes \DdR(V^*(1)) \rTo \DdR(\Qp(1)) \cong K \rTo^{\operatorname{trace}} \Qp.\]

  Finally, if $L$ is a number field, $V$ is a $p$-adic representation of $\cG_L$ and $\fp$ is a prime of $L$ above $p$, we write $\DdR(L_\fp, V)$ and $\Dcris(L_\fp, V)$ for the Fontaine spaces attached to $V$ regarded as a representation of $\Gal(\overline{L}_{\fP} / L_\fp)$ for any choice of prime $\fP \mid \fp$ of $\overline{L}$; up to a canonical isomorphism these spaces are independent of the choice of $\fP$.


  \subsection{$(\vp,\Gamma)$-modules and Wach modules}
   \label{sect:phigammawach}

   Let $K$ be a finite extension of $\Qp$, and let $T$ be a $\Zp$-representation of $\cG_K$ (that is, a finite-rank free module over $\Zp$ with a continuous action of $\cG_K$). Denote the $(\vp,\Gamma)$-module of $T$ by $\DD_K(T)$. This is a module over Fontaine's ring $\AA_K$.

   If $K$ is unramified over $\Qp$ and $T$ is a $\Zp$-representation of $\cG_K$ which is crystalline (i.e.~such that $V = T[1/p]$ is crystalline), Wach and Berger have shown that there exists a canonical $\AA^+_K$-submodule $\NN_K(T) \subset \DD_K(T)$, the \emph{Wach module} (see \cite{wach96}, \cite{berger04}); this is the unique submodule such that
   \begin{itemize}
    \item $\NN_K(T)$ is free of rank $d$ over $\AA^+_K$,
    \item the action of $\Gamma$ preserves $\NN_K(T)$ and is trivial on $\NN_K(T) / \pi \NN_K(T)$,
    \item there exists $b \in \ZZ$ such that $\vp(\pi^b \NN_K(T)) \subseteq \pi^b \NN_K(T)$ and the quotient $\pi^b \NN_K(T) / \vp^*(\pi^b \NN_K(T))$ is killed by a power of $q = \vp(\pi)/\pi$.
   \end{itemize}
   Here $\vp^*(\pi^b \NN_K(T))$ denotes the $\AA^+_K$-submodule of $\DD_K(T)$ generated by $\vp(\pi^b \NN_K(T))$.

   The following lemma is immediate from the definition of the functors $\DD_K(-)$ and $\NN_K(-)$:

   \begin{lemma}
    \label{lemma:unramifiedphiGammabasechange}
    Assume that $T$ is a $\Zp$-representation of $\cG_{K}$, and $L$ a finite extension of $K$, with $L$ and $K$ both unramified over $\Qp$. There is a canonical isomorphism of $(\varphi, \Gamma)$-modules
    \[ \DD_L(T) \cong \DD_K(T)\otimes_{\cO_K}\cO_L,\]
    where $\vp$ acts on $\cO_L$ via the arithmetic Frobenius $\sigma_p \in \Gal(L/\Qp)$. If $V = T[1/p]$ is crystalline, then this isomorphism restricts to an isomorphism
    \[ \NN_L(T)\cong \NN_K(T)\otimes_{\cO_K}\cO_L.\]
   \end{lemma}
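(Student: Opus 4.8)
The plan is to exploit the concrete description of the functors $\DD_K(-)$ and $\NN_K(-)$ over unramified bases given in Section~\ref{sect:phigammawach}, together with the fact that the rings $\AA_L$, $\AA_L^+$ attached to an unramified extension $L/K$ are obtained from $\AA_K$, $\AA_K^+$ simply by extending the coefficient ring from $\cO_K$ to $\cO_L$ (since $\pi$ is defined over $\Qp$ and only the coefficient ring grows). First I would recall that, by construction, $\DD_L(T) = (\widetilde{\AA} \otimes_{\Zp} T)^{H_L}$ where $H_L = \Gal(\overline{\Qp}/L(\mu_{p^\infty}))$; since $L/K$ is unramified, $L(\mu_{p^\infty})/K(\mu_{p^\infty})$ is an unramified extension, so $H_L \subseteq H_K$ with $H_K/H_L \cong \Gal(L/K)$ cyclic, generated by (a lift of) the arithmetic Frobenius $\sigma_p$. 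Taking $H_L$-invariants of $\widetilde{\AA}\otimes T$ in two stages, first over $H_K$ and then over the residual $\Gal(L/K)$-action, gives $\DD_K(T) = \DD_L(T)^{\Gal(L/K)}$; and because $\AA_L = \AA_K \otimes_{\cO_K} \cO_L$ with $\Gal(L/K)$ acting only on the $\cO_L$-factor via $\sigma_p$, Galois descent (Hilbert 90 for the finite unramified extension, or simply the normal-basis theorem) yields the canonical isomorphism $\DD_L(T) \cong \DD_K(T) \otimes_{\cO_K} \cO_L$, compatibly with $\vp$ and $\Gamma$, where $\vp$ acts on $\cO_L$ as $\sigma_p$. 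This is really just functoriality of the $(\vp,\Gamma)$-module construction under unramified base change, so it should be genuinely immediate.

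For the statement about Wach modules, the natural approach is to transport the defining properties. I would set $\NN' := \NN_K(T) \otimes_{\cO_K} \cO_L \subseteq \DD_K(T) \otimes_{\cO_K} \cO_L = \DD_L(T)$ and check that $\NN'$ satisfies the three bullet-point characterising properties of $\NN_L(T)$ listed in the excerpt; since the Wach module is characterised uniquely by those properties, this forces $\NN' = \NN_L(T)$. Freeness of rank $d$ over $\AA_L^+ = \AA_K^+ \otimes_{\cO_K} \cO_L$ is clear from freeness of $\NN_K(T)$ over $\AA_K^+$ and flatness of $\cO_L/\cO_K$. The $\Gamma$-stability and triviality of the $\Gamma$-action on $\NN'/\pi\NN' \cong (\NN_K(T)/\pi\NN_K(T)) \otimes_{\cO_K}\cO_L$ are inherited directly, since $\Gamma$ acts $\cO_L$-linearly. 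For the third property, the key point is that the same integer $b$ works: $\vp(\pi^b \NN') = \vp(\pi^b \NN_K(T)) \otimes \sigma_p(\cO_L) = \vp(\pi^b\NN_K(T))\otimes\cO_L \subseteq \pi^b\NN_K(T)\otimes\cO_L = \pi^b\NN'$, using $\sigma_p(\cO_L) = \cO_L$; and $\pi^b\NN'/\vp^*(\pi^b\NN') \cong (\pi^b\NN_K(T)/\vp^*(\pi^b\NN_K(T)))\otimes_{\cO_K}\cO_L$ is killed by the same power of $q$ (note $q \in \AA_{\Qp}^+$ is fixed by everything). Invoking the uniqueness clause in the definition of the Wach module then gives $\NN_L(T) = \NN_K(T)\otimes_{\cO_K}\cO_L$, and this isomorphism is the restriction of the one on $(\vp,\Gamma)$-modules by construction.

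The only point requiring any genuine care — and hence the main (mild) obstacle — is the bookkeeping of how $\vp$ interacts with the new coefficients: one must be careful that in the isomorphism $\DD_L(T) \cong \DD_K(T)\otimes_{\cO_K}\cO_L$ the Frobenius is \emph{not} $\vp_{\DD_K(T)} \otimes \id$ but rather $\vp_{\DD_K(T)}\otimes\sigma_p$, since $\vp$ on $\AA_L$ restricts to the arithmetic Frobenius on the unramified part $\cO_L$. Once this twist is correctly inserted, everything else is formal, which is exactly why the excerpt asserts the lemma is ``immediate from the definition of the functors''. Concretely I would make this precise by choosing a $\vp$-compatible embedding: the ring $\AA_L$ has residue field $\cO_L/p$, Frobenius-semilinear over $\cO_L$, and the inclusion $\AA_K \into \AA_L$ is $\vp$-equivariant; tensoring a basis of $\NN_K(T)$ over $\AA_K^+$ up to $\AA_L^+$ and writing the matrices of $\vp$ and of $\gamma \in \Gamma$ in this basis, one sees the matrices are unchanged except that the base ring of entries has grown and $\vp$ now acts on those entries via $\sigma_p$ — which is precisely the content of the lemma. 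I would therefore present the proof as: (i) base-change description of the ambient rings, (ii) the resulting identification of $(\vp,\Gamma)$-modules with the $\sigma_p$-twist on coefficients, (iii) verification that the tensored-up Wach module satisfies the characterising axioms, hence equals $\NN_L(T)$ by uniqueness.
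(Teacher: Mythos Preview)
Your argument is correct and is exactly what the paper has in mind: the paper gives no proof beyond the sentence ``immediate from the definition of the functors $\DD_K(-)$ and $\NN_K(-)$'', and your write-up simply unpacks that claim by using the explicit description $\AA_L^+=\AA_K^+\otimes_{\cO_K}\cO_L$ for unramified $L/K$, Galois descent for the $(\vp,\Gamma)$-module, and the uniqueness characterisation of the Wach module. There is nothing to add.
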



  \subsection{Iwasawa cohomology and the Perrin-Riou pairing}
   \label{sect:iwasawacoho}

   Let $K$ be a finite extension of $\QQ_\ell$ for some prime $\ell$ (which may or may not equal $p$) and let $T$ be a $\Zp$-representation of $\cG_K$. Let $K_\infty$ be a $p$-adic Lie extension of $K$.

   \begin{definition}
    \label{def:Iwasawacohomology}
    We define
    \[ H^i_{\Iw}(K_\infty, T) := \varprojlim H^i(L, T),\]
    where $L$ varies over the finite extensions of $K$ contained in $K_\infty$, and the inverse limit is taken with respect to the corestriction maps.

    If $V = \Qp \otimes_{\Zp} T$, we write
    \[ H^i_{\Iw}(K_\infty, V) := \Qp \otimes_{\Zp} H^i_{\Iw}(K_\infty,T)\]
    (which is independent of the choice of $\Zp$-lattice $T \subset V$).
   \end{definition}

   It is clear that the groups $H^i_{\Iw}(K_\infty, T)$ are $\Lambda_{\Zp}(G)$-modules; we show in \S \ref{appendix:iwacoho} below that they are finitely generated.

   There is a natural extension of the Tate pairing to this setting. We may clearly choose an increasing sequence $\{K_n\}$ of finite extensions of $K$ with $\bigcup_n K_n = K_\infty$ and each $K_n$ Galois over $K$. If $\langle -, - \rangle_{K_n}$ denotes the Tate pairing $H^1(K_n, T) \times H^1(K_n, T^*(1)) \to \Zp$, and $x = (x_n)$ and $y = (y_n)$ are sequences in $H^1_{\Iw}(K_\infty, T)$ and $H^1_{\Iw}(K_\infty, T^*(1))$, then the sequence whose $n$-th term is
   \begin{equation}
    \label{eq:PRpairing}
    \sum_{\sigma \in \Gal(K_n / K)} \langle x_n, \sigma(y_n) \rangle_{K_n} [\sigma] \in \Zp[\Gal(K_n / K)]
   \end{equation}
   is compatible under the natural projection maps, and hence defines an element of $\Lambda_{\Zp}(G)$.

   \begin{definition}
    We define the \emph{Perrin-Riou pairing} to be the pairing
    \[ \langle - , - \rangle_{K_\infty, T} : H^1_{\Iw}(K_\infty, T) \times H^1_{\Iw}(K_\infty, T^*(1)) \to \Lambda_{\Zp}(G)\]
    defined by the inverse limit of the pairings \eqref{eq:PRpairing}.
   \end{definition}

   It is easy to see that for $\alpha, \beta \in G$ we have
   \[ \langle \alpha x , \beta y \rangle_{K_\infty, T} = \alpha \cdot  \langle x , y \rangle_{K_\infty, T} \cdot \beta^{-1}.\]
   (The above construction is valid for any $p$-adic Lie extension $K_\infty / K$, but in this paper we shall only use the above construction when $G$ is abelian, in which case the distinction between left and right multiplication is not significant.)

   \begin{lemma}
    \label{lemma:PRpairingtwist}
    If $\eta$ is any continuous $\Zp$-valued character of $G$, and we identify $H^1_{\Iw}(K_\infty, T(\eta))$ with $H^1_{\Iw}(K_\infty, T)(\eta)$, then we have
    \[ \langle x, y \rangle_{K_\infty, T(\eta)} = \Tw_{\eta^{-1}} \langle x, y \rangle_{K_\infty, T},\]
    where $\Tw_{\eta}$ is the map $\Lambda_{\Zp}(G) \to \Lambda_{\Zp}(G)$ mapping $g \in G$ to $\eta(g) g$.
   \end{lemma}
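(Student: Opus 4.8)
The plan is to pass to the $\Lambda_{\Zp}(G)$-adic reformulation of everything in sight, where both the Perrin--Riou pairing and the twisting isomorphism become manifestations of cup products, and then to chase a diagram. Write $R = \Lambda_{\Zp}(G)$, and for a $\Zp$-representation $M$ of $\cG_K$ let $\widetilde M := M \htimes_{\Zp} R$, with $\cG_K$ acting diagonally (via its action on $M$ and via $\cG_K \onto G \hookrightarrow R^\times$, $\iota$-twisted as usual) so that Shapiro's lemma gives a canonical isomorphism $H^i_{\Iw}(K_\infty, M) \cong H^i(K, \widetilde M)$. Under this isomorphism the Perrin--Riou pairing is the $R$-sesquilinear cup-product pairing arising from Tate local duality $T \otimes_{\Zp} T^*(1) \to \Zp(1)$ over the coefficient ring $R$; one checks directly that it coincides with the pairing defined by \eqref{eq:PRpairing}. (All of this is standard; see \cite{perrinriou95, nekovar06}.) Finally, the identification of $H^1_{\Iw}(K_\infty, T(\eta))$ with $H^1_{\Iw}(K_\infty, T)(\eta)$ in the statement is the \emph{twisting isomorphism}, induced by the $\cG_K$-equivariant isomorphism $\widetilde T \cong \widetilde{T(\eta)}$ sending $t \otimes r$ to $(t \otimes e_\eta) \otimes \Tw_{\eta^{-1}}(r)$, where $e_\eta$ is a generator of $\Zp(\eta)$; it is well defined because twisting the Galois action on $T$ by the character $\eta$ of $G$ is precisely compensated by twisting the universal $R$-valued action by $\eta$.

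The identity is then a diagram chase. Use the canonical identification $(T(\eta))^*(1) = T^*(1)(\eta^{-1})$, under which the dual generator $e_{\eta^{-1}}$ of $\Zp(\eta^{-1}) = \Zp(\eta)^*$ satisfies $e_\eta \otimes e_{\eta^{-1}} = 1$ in $\Zp(\eta) \otimes \Zp(\eta^{-1}) = \Zp$. Transporting the duality pairing for $T(\eta)$ through the twisting isomorphisms $\widetilde T \cong \widetilde{T(\eta)}$ (by $\eta$) and $\widetilde{T^*(1)} \cong \widetilde{T^*(1)(\eta^{-1})}$ (by $\eta^{-1}$), one finds that it becomes the composite of the duality pairing for $T$ with $\Tw_{\eta^{-1}} \colon R \to R$ on the target: the two $\Tw_{\eta^{-1}}$-twists contributed by the two arguments combine --- thanks to the sesquilinearity of the pairing, i.e.\ its conjugate-linearity with respect to $g \mapsto g^{-1}$ --- into a single $\Tw_{\eta^{-1}}$ acting on the value. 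Passing to cohomology, where cup products are functorial, gives $\langle x, y \rangle_{K_\infty, T(\eta)} = \Tw_{\eta^{-1}} \langle x, y \rangle_{K_\infty, T}$.

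As a sanity check --- and a completely elementary proof in the case $\eta$ has finite order --- note that if $\eta$ factors through $\Gal(K_m/K)$ then for $n \ge m$ the restriction of $\eta$ to $\cG_{K_n}$ is trivial, so $H^1(K_n, T(\eta)) = H^1(K_n, T) \otimes e_\eta$ as abelian groups; since $\sigma \in \Gal(K_n/K)$ scales $e_\eta$ by $\eta(\sigma)$ while $e_\eta \otimes e_{\eta^{-1}} = 1$, the $n$-th term of \eqref{eq:PRpairing} for $T(\eta)$ equals $\sum_{\sigma} \eta(\sigma)^{-1} \langle x_n, \sigma(y_n) \rangle_{K_n} [\sigma] = \Tw_{\eta^{-1}}\!\bigl( \sum_{\sigma} \langle x_n, \sigma(y_n) \rangle_{K_n} [\sigma] \bigr)$, and one passes to the inverse limit. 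The reason this shortcut cannot be pushed to general $\eta$ --- and the only genuine subtlety in the argument above --- is that when $\eta$ has infinite order $\Tw_\eta$ no longer commutes with the projections $R \to \Zp[\Gal(K_n/K)]$, so the twisting isomorphism mixes up the finite layers and there is no level-by-level formula; one must argue $R$-adically. Apart from that, the only point needing care is bookkeeping the direction of the twist --- why $\Tw_{\eta^{-1}}$ and not $\Tw_{\eta}$ --- which is dictated by the sesquilinearity convention in the Perrin--Riou pairing together with the direction fixed in the twisting isomorphism; alternatively one could simply cite the twist-compatibility of Iwasawa-theoretic duality pairings from \cite{nekovar06}.
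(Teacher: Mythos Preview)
Your $\Lambda$-adic diagram chase is correct, but it is considerably more elaborate than what the paper does, and your claim that ``one must argue $R$-adically'' is mistaken. The paper's proof is two lines: the finite-order case is exactly your sanity-check computation, and the general case follows by reducing both sides modulo $p^m$. The point is that although $\Tw_\eta$ does not commute with the projections $\Lambda_{\Zp}(G) \to \Zp[\Gal(K_n/K)]$, it \emph{does} commute with reduction of coefficients modulo $p^m$; and modulo $p^m$ the character $\eta$ becomes a character $\eta_m : G \to (\ZZ/p^m\ZZ)^\times$, which has finite image and hence finite order. Thus the identity in $\Lambda_{\Zp/p^m}(G)$ is an instance of the finite-order case (applied with $T/p^m T$ in place of $T$), and passing to the inverse limit over $m$ gives the result.

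So your argument and the paper's are genuinely different. The paper's approach is shorter and entirely elementary, needing only the level-by-level definition \eqref{eq:PRpairing} of the pairing. Your approach, by contrast, recasts the pairing as an $R$-sesquilinear cup product on $H^*(K, \widetilde T)$ and tracks the twisting isomorphism through it; this is heavier but conceptually cleaner, and it is the formulation that generalises most naturally (for instance to non-abelian $G$, or to coefficients in extensions of $\Zp$ where the mod-$p^m$ trick would need more care). Your computation that $\iota \circ \Tw_\eta = \Tw_{\eta^{-1}} \circ \iota$, together with the multiplicativity of $\Tw_{\eta^{-1}}$, is indeed exactly what makes the two twists combine into a single $\Tw_{\eta^{-1}}$ on the target. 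The only inaccuracy in your write-up is the assertion that the elementary route is blocked for infinite-order $\eta$: it is not, one just has to reduce modulo $p^m$ rather than project to finite group-ring quotients.
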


   \begin{proof}
    This is immediate if $\eta$ has finite order, and follows for all $\eta$ by reduction modulo powers of $p$; cf.~\cite[\S 3.6.1]{perrinriou94}.
   \end{proof}

   If $V = T[1/p]$, we obtain by extending scalars a pairing
   \[ H^1_{\Iw}(K_\infty, V) \times H^1_{\Iw}(K_\infty, V^*(1)) \to \Lambda_{\Qp}(G)\]
   which we denote by $\langle -, - \rangle_{K_\infty, V}$. This pairing is independent of the choice of lattice $T \subseteq V$.

   It is clear that if $T$ is an $\cO_E$-module for some finite extension $E / \Qp$, then we may similarly define an $\cO_E$-linear analogue of the Perrin-Riou pairing, and in this case Lemma \ref{lemma:PRpairingtwist} applies to any $\cO_E$-valued character $\eta$.

  \subsection{The Fontaine isomorphism}

   In the case when $K_\infty=K(\mu_{p^\infty})$, we can describe $H^1_{\Iw}(K_\infty,T)$ in terms of the $(\vp, \Gamma)$-module $\DD_K(T)$. Let $\Gamma_K = \Gal(K(\mu_{p^\infty}) / K)$, which we identify with a subgroup of $\Gamma$. The following result is originally due to Fontaine (unpublished); for a reference see \cite[Section II]{cherbonniercolmez99}.

   \begin{theorem}
    We have a canonical isomorphism of $\Lambda_{\Zp}(\Gamma_K)$-modules
    \begin{equation}
     \label{eq:fontaineisom1}
     h^1_{\Iw,T}: \DD_K(T)^{\psi=1} \rTo^\cong H^1_{\Iw}(K(\mu_{p^\infty}),T).
    \end{equation}
   \end{theorem}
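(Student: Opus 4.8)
The plan is to follow the method of Cherbonnier--Colmez, deducing the statement from the description of local Galois cohomology via $(\vp,\Gamma)$-modules; the map $h^1_{\Iw,T}$ will emerge as the effect on $H^1$ of a quasi-isomorphism of complexes. The first step is to re-express the inverse limit defining $H^1_{\Iw}$ as the cohomology of a single ``big'' Galois module. Writing $K_n = K(\mu_{p^n})$ and $\Gamma_{K_n} = \Gal(K(\mu_{p^\infty})/K_n)$, Shapiro's lemma identifies $H^i(K_n,T)$ with $H^i(K,\Zp[\Gamma_K/\Gamma_{K_n}]\otimes_{\Zp}T)$, with $\cG_K$ acting diagonally through its image in $\Gamma_K$, in such a way that the corestriction maps correspond to the canonical projections of group rings. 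Passing to the inverse limit then gives a canonical isomorphism $H^i_{\Iw}(K(\mu_{p^\infty}),T)\cong H^i(K,\widetilde T)$, where $\widetilde T := \Lambda_{\Zp}(\Gamma_K)\htimes_{\Zp}T$ carries the diagonal $\cG_K$-action and its tautological $\Lambda_{\Zp}(\Gamma_K)$-module structure, which is the one referred to in the statement. For this step one needs $\varprojlim^1$ of the system $\{H^i(K_n,T)\}$ to vanish, which follows from the Mittag--Leffler condition since these are finitely generated $\Zp$-modules (the finiteness being established in \S\ref{appendix:iwacoho}).

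The second, and hardest, step is to compute $H^i(K,\widetilde T)$ through $(\vp,\Gamma)$-modules. One first checks $\DD_K(\widetilde T)\cong\Lambda_{\Zp}(\Gamma_K)\htimes_{\Zp}\DD_K(T)$ (with a correspondingly twisted $\vp$ and $\Gamma$-action), so that by Herr's theorem $H^i(K,\widetilde T)$ is computed by Herr's complex of $\DD_K(\widetilde T)$, built from $\vp-1$ and $\Gamma_K$. The key point is to identify this complex, up to quasi-isomorphism, with the two-term complex
\[ \DD_K(T)\ \xrightarrow{\ \psi - 1\ }\ \DD_K(T) \]
placed in degrees $1$ and $2$. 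The mechanism is the Mellin-transform isomorphism $\Lambda_{\cO_K}(\Gamma)\cong(\AA^+_K)^{\psi=0}$ of \S\ref{sect:Fontainerings} together with its $\psi$-theoretic variants: the extra variables packaged into $\Lambda_{\Zp}(\Gamma_K)$ can be absorbed into a modification of the Frobenius structure on $\DD_K(T)$, under which the operator $\vp-1$ on the big module $\DD_K(\widetilde T)$ becomes $\psi-1$ on $\DD_K(T)$; the ultimate reason $\psi$, and not $\vp$, appears is the trace formula $\vp\circ\psi(f(\pi)) = \tfrac{1}{p}\sum_{\zeta^p=1} f(\zeta(1+\pi)-1)$, which matches $\psi$ with the corestriction/norm maps of the cyclotomic tower, so that a norm-compatible system of classes in the $H^1(K_n,T)$ is precisely the datum of a single element of $\DD_K(T)$ with $\psi x = x$. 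Making this watertight is the real work -- it requires careful bookkeeping with the cocycle description of corestriction and of the $\Gamma_K$-action -- and it rests on the two standard pillars of the $(\vp,\Gamma)$-module formalism: Fontaine's ``Hilbert 90'' vanishing $H^i(H_K,\AA\otimes_{\Zp}T) = 0$ for $i\ge 1$, and the exactness of the fundamental sequence $0\to T\to\AA\otimes_{\Zp}T\xrightarrow{\vp-1}\AA\otimes_{\Zp}T\to 0$.

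Granting the quasi-isomorphism, the theorem is obtained by reading off $H^1$: one gets $h^1_{\Iw,T}\colon\DD_K(T)^{\psi=1}\xrightarrow{\ \sim\ }H^1_{\Iw}(K(\mu_{p^\infty}),T)$ (and, as a byproduct, $H^2_{\Iw}\cong\DD_K(T)/(\psi-1)\DD_K(T)$ while $H^0_{\Iw}=0$). The isomorphism is $\Lambda_{\Zp}(\Gamma_K)$-linear because the $\Gamma_K$-action on $\DD_K(T)^{\psi=1}$ commutes with $\psi$ and is matched, under the construction, with the Iwasawa-module action on $H^1_{\Iw}$; and it is canonical -- independent of the compatible system $(\zeta_n)$ used to present $\AA^+_K = \cO_K[[\pi]]$, and of the chosen lattice $T\subset V$ -- thanks to the intrinsic characterisation of the functor $\DD_K(-)$ and the naturality of each map in the chain. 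Two loose ends remain to be tied off: since $p$ is odd, $\Gamma_K\cong\Zp\times\Delta_K$ with $\Delta_K$ finite of order prime to $p$, so the $\Delta_K$-direction of Herr's complex contributes only $\Delta_K$-invariants and introduces no complications; and the statement for $V = T[1/p]$, and hence for $p$-adic representations, follows at once by inverting $p$.
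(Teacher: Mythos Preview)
The paper does not prove this theorem at all: it simply records it as a known result of Fontaine and refers the reader to \cite[Section II]{cherbonniercolmez99} for a proof. Your proposal is a faithful outline of precisely that Cherbonnier--Colmez argument (Shapiro's lemma to pass to the big module $\widetilde T$, Herr's complex, and the quasi-isomorphism collapsing it to the two-term $\psi-1$ complex), so there is nothing to compare --- you have supplied what the paper deliberately outsources. One small caveat: Herr's theorem is originally stated for representations on finite $\Zp$-modules, so when you invoke it for $\widetilde T = \Lambda_{\Zp}(\Gamma_K)\htimes_{\Zp}T$ you should either pass to the limit over the finite quotients $\Zp[\Gamma_K/\Gamma_{K_n}]\otimes T$ (which is how Cherbonnier--Colmez actually proceed) or cite an extension of Herr's result to this setting; as written, that step is asserted rather than justified.
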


   If $T$ is a representation of $\cG_{\Qp}$, then the action of $\Gamma$ extends to an action of $\Gal(K(\mu_{p^\infty}) / \Qp)$ on both sides of equation \eqref{eq:fontaineisom1}, and the map $h^1_{\Iw, T}$ commutes with the action of this larger group. We shall apply this below in the case when $K$ is an unramified extension of $\Qp$, so $\Gamma_K = \Gamma$ and $\Gal(K(\mu_{p^\infty}) / \Qp) = \Gamma \times U_F$, where $U_F = \Gal(F / \Qp)$.

   Now let $K$ be a finite unramified extension of $\Qp$, and assume that $V$ is a crystalline representation of $\cG_K$ whose Hodge-Tate weights\footnote{In this paper we adopt the convention that the Hodge--Tate weight of the cyclotomic character is $+1$.} lie in the interval $[a,b]$. The following result is due to Berger \cite[Theorem A.2]{berger03}.

   \begin{theorem}
    \label{thm:crystallinepsiinvariants}
    We have $\DD_K(T)^{\psi=1}\subset \pi^{a-1}\NN_K(T)$. Moreover, if $V$ has no quotient isomorphic to $\Qp(a)$, then $\DD_K(T)^{\psi=1}\subset \pi^a\NN_K(T)$.
   \end{theorem}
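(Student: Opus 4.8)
The plan is to reduce to the case $a=0$ by a Tate twist and then to deduce the result from the interaction of $\psi$ with the $\pi$-adic filtration $\{\pi^j\NN_K(T)\}_{j\in\ZZ}$ of $\DD_K(T)$, using only local Wach-module input. For the reduction: the Wach module transforms under Tate twists by $\NN_K(T(r))=\pi^{-r}\NN_K(T)\otimes e_r$ inside $\DD_K(T(r))=\DD_K(T)\otimes e_r$, where $e_r$ is the canonical basis of $\Qp(r)$, on which the Frobenius $\vp$ -- hence also $\psi$ -- acts trivially; consequently $\DD_K(T(r))^{\psi=1}=\DD_K(T)^{\psi=1}\otimes e_r$, so after twisting by $\Qp(-a)$ the two assertions for $T$ become literally the same assertions for $T':=T(-a)$, whose Hodge--Tate weights lie in $[0,b-a]$ and for which ``no quotient isomorphic to $\Qp(a)$'' becomes ``no trivial quotient''. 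So assume $a=0$; write $\NN=\NN_K(T)$, a free $\AA^+_K$-module on which $\Gamma$ is trivial modulo $\pi$, with $\NN/\pi\NN\cong\Dcris(V)$ as Frobenius modules and $\psi(\NN)\subseteq\NN$ (a standard fact for non-negative weights, and the point where the $q$-torsion condition on $\NN/\vp^*\NN$ is used), and recall that $\DD_K(T)$ is the $(p,\pi)$-adic completion of $\bigcup_{N\ge 0}\pi^{-N}\NN$.

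The key local statement is that
\[ \psi(\pi^{-1}\NN)\subseteq\pi^{-1}\NN \qquad\text{and}\qquad \psi(\pi^{-N}\NN)\subseteq\pi^{-N+1}\NN+p\,\pi^{-N}\NN \quad(N\ge 2). \]
Both are deduced from the corresponding rank-one identities -- $\psi(\pi^{-1})=\pi^{-1}$, and $\psi(\pi^{-M})\in p\,\pi^{-M}\AA^+_K+\pi^{-M+1}\AA^+_K$ for $M\ge 2$, which one reads off from the explicit formula for $\vp\circ\psi$ (a partial-fractions computation over the $p$-th roots of unity) -- by means of the Fourier decomposition $n=\sum_{i=0}^{p-1}(1+\pi)^i\vp(m_i)$ of an element $n\in\NN$: here $m_i=\psi((1+\pi)^{-i}n)\in\NN$ because $\psi(\NN)\subseteq\NN$, whence $\psi(\pi^{-N}n)=\sum_i\psi\bigl(\pi^{-N}(1+\pi)^i\bigr)m_i$ and the coefficients $\psi(\pi^{-N}(1+\pi)^i)$ are controlled by the rank-one identities. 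Reduced modulo $p$, the second inclusion says that the operator $\bar\psi$ induced on $\DD_K(T/pT)$ strictly raises the $\bar\pi$-adic valuation of every element of valuation $\le-2$.

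Granting this, the first inclusion follows by descent: for $0\ne x\in\DD_K(T)^{\psi=1}$, divide out the largest power of $p$ so that $x\notin p\DD_K(T)$; then $\bar x\in\DD_K(T/pT)$ has a well-defined $\bar\pi$-valuation (that module being free over the field $\AA_K/p$), which by $\bar\psi(\bar x)=\bar x$ must be $\ge-1$, i.e.\ $x\in\pi^{-1}\NN+p\DD_K(T)$; writing $x=u+pv$ with $u\in\pi^{-1}\NN$, deducing $(1-\psi)v\in\pi^{-1}\NN$ from the first inclusion and re-applying the mod-$p$ statement to $v$, one improves this to $x\in\pi^{-1}\NN+p^n\DD_K(T)$ for all $n$, hence $x\in\pi^{-1}\NN$. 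For the refinement: the first inclusion makes $\pi^{-1}\NN/\NN$ a $\psi$-stable free $\cO_K$-module of rank $d$, and the $\psi$-action on it is governed, via the comparison $\NN/\pi\NN\cong\Dcris(V)$ and the Hodge filtration on $\NN$, by the crystalline Frobenius; so the image of a $\psi$-fixed $x$ lies in a $\vp$-eigenspace of $\Dcris(V)$ in weight $0$, and a nonzero such class produces a $\cG_K$-equivariant surjection $V\onto\Qp$. Under the hypothesis that $V$ has no such quotient, this image vanishes and $x\in\NN$.

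The step I expect to be the real obstacle is the last implication of the refinement -- that a nonzero $\psi$-fixed class in $\pi^{-1}\NN/\NN$ genuinely forces a quotient of $V$ isomorphic to $\Qp$ (i.e.\ to $\Qp(a)$ before the twist). This requires the precise dictionary between $\NN$ and $\Dcris(V)$ as filtered $\vp$-modules -- in particular how the Hodge filtration is read off from $\NN$ -- in order to exclude the class from landing harmlessly in a ``wrong-weight'' piece. By contrast, once $\psi(\NN)\subseteq\NN$ is granted, the first inclusion is comparatively formal via the Fourier-decomposition trick above; and $\psi(\NN)\subseteq\NN$ itself, while genuinely using the Wach structure, is a known lemma.
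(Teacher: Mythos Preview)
The paper does not prove this theorem at all: it is stated with attribution to Berger and a citation to \cite[Theorem A.2]{berger03}, and no argument is given in the present paper. So there is no ``paper's own proof'' against which to compare your attempt.

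That said, your sketch is broadly the argument Berger gives. The Tate-twist reduction to $a=0$, the fact that $\psi(\NN)\subseteq\NN$ when the weights are non-negative, the control of $\psi$ on $\pi^{-N}\NN/\pi^{-N+1}\NN$ via the rank-one identities for $\psi(\pi^{-N})$, and the $p$-adic descent to land in $\pi^{-1}\NN$ are all the right ingredients. One small sharpening: on the quotient $\pi^{-1}\NN/\NN\cong\Dcris(T)$ your computation shows that $\psi$ acts as $\vp^{-1}$, so the image of a $\psi$-fixed class lies precisely in $\Dcris(V)^{\vp=1}$ (not merely ``a $\vp$-eigenspace in weight $0$''); this is exactly how the present paper uses the result later (see the residue map $r_{F,V}$ in \S\ref{sect:regulator}). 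You are right that the genuinely delicate step is the final implication, that a nonzero class in $\Dcris(V)^{\vp=1}$ forces a trivial quotient of $V$: this is not a formal consequence of $\vp=1$ alone but uses weak admissibility together with the hypothesis on the Hodge--Tate weights to pin down the induced filtration on the line, and one must be careful about sub versus quotient. Berger handles this via the identification with Iwasawa cohomology; your outline is consistent with that, but you would need to make the admissibility argument explicit.
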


   In particular, if $V$ has non-negative Hodge--Tate weights and no quotient isomorphic to $\Qp$, we have $\NN_K(T)^{\psi=1} = \DD_K(T)^{\psi=1}$. Then \eqref{eq:fontaineisom1} becomes
    \begin{equation}
     \label{eq:fontaineisom2}
     h^1_{\Iw,T}: \NN_K(T)^{\psi=1} \rTo^\cong H^1_{\Iw}(K(\mu_{p^\infty}),T).
    \end{equation}


  \subsection{Gauss sums, L- and epsilon-factors}
   \label{sect:epsfactors}

   In many of our formulae, epsilon-factors attached to characters of the Galois group (or rather the Weil group) of $\Qp$ will make an appearance, so we shall fix normalizations for these. We follow the conventions of \cite{deligne73}.

   Let $E$ be an algebraically closed field of characteristic 0, and let $\zeta = (\zeta_n)_{n \ge 0}$ be a choice of a compatible system of $p$-power roots of unity in $E$. The data of such a choice is equivalent to the data of an additive character $\lambda: \Qp \to E^\times$ with kernel $\Zp$, defined by $\lambda(1/p^n) = \zeta_n$.

   We first define the Gauss sum of a finitely ramified character $\omega$ of the Weil group $W_{\mathbb{Q}_p}$, which will in fact depend only on the restriction of $\omega$ to the inertia subgroup $\Gal(\Qpb / \Qpnr)$. If $\omega$ has conductor $n$, then we define
   \[ \tau(\omega, \zeta) = \sum_{\sigma \in \Gal(\Qpnr(\mu_{p^n}) / \Qpnr)} \omega(\sigma)^{-1} \zeta_{p^n}^\sigma.\]

   Now let us recall the definition of epsilon-factors given in \cite{deligne73} for locally constant characters of $\Qp^\times$. These depend on the character $\omega$, the auxilliary additive character $\lambda$, and a choice of Haar measure $\mathrm{d}x$; we choose $\mathrm{d}x$ so that $\Zp$ has volume 1. The definition is given as
   \[ \ve(\omega, \lambda, \mathrm{d}x) =
    \begin{cases}
     1 & \text{if $\omega$ is unramified,} \\
     \int_{\Qp^\times} \omega(x^{-1}) \lambda(x)\, \mathrm{d}x & \text{if $\omega$ is ramified.}
    \end{cases}
   \]
   As shown in \textit{op.cit.}, if the conductor of $\omega$ is $n$, then it suffices to take the integral over $p^{-n} \Zp^\times$. For consistency with \cite{cfksv}, we will rather work with the additive character $\lambda(-x)$ rather than $\lambda(x)$; then we find that
   \[ \ve(\omega, \lambda(-x), \mathrm{d}x) = \omega(p)^n \sum_{x \in (\ZZ / p^n \ZZ)^\times} \omega(x)^{-1} \zeta_n^{-x}.\]

   We now recall that local reciprocity map $\operatorname{rec}_{\Qp}$ of class field theory identifies $W_{\Qp}^{\mathrm{ab}}$ with $\QQ_p^\times$. Following \cite{deligne73}, we normalize $\operatorname{rec}_{\Qp}$ such that \emph{geometric} Frobenius elements of $W_{\Qp}^\mathrm{ab}$ are sent to uniformizers. Then the restriction of $\operatorname{rec}_{\Qp}$ to $\Gal(\QQ_p^\mathrm{ab} / \Qpnr)$ gives an isomorphism
   \[ \Gal(\QQ_p^\mathrm{ab} / \Qpnr) \rTo \Zp^\times.\]
   Our choice of normalization for the local reciprocity map implies that this coincides with the cyclotomic character. On the other hand, $p \in \QQ_p^\times$ corresponds to $\tilde\sigma_p^{-1}$, where $\tilde\sigma_p$ is the unique element of $\Gal(\Qp^\mathrm{ab} / \Qp)$ which acts as the arithmetic Frobenius $\sigma_p$ on $\Qpnr$ and acts trivially on all $p$-power roots of unity. Hence
   \[ \ve(\omega^{-1}, \lambda(-x), \mathrm{d}x) = \omega(\tilde\sigma_p)^{n} \sum_{\sigma \in \Gamma / \Gamma_n} \omega(\sigma) \zeta_n^{-\sigma} = \frac{p^n \omega(\tilde\sigma_p)^n}{\tau(\omega, \zeta)}.\]
   This quantity $\ve(\omega^{-1}, \lambda(-x), \mathrm{d}x)$, which we shall abbreviate to $\ve(\omega^{-1})$, will appear in our formulae for the two-variable regulator.

   We shall also need to consider the case when $E$ is a $p$-adic field and $\omega$ is a continuous character of $\cG_{\Qp}^{\mathrm{ab}}$ which is Hodge--Tate, but not necessarily finitely ramified. Any such character is potentially crystalline, and a well-known construction of Fontaine \cite{fontaine94c} allows us to regard $\DD_{\mathrm{pst}}(\omega)$ as a one-dimensional representation of the Weil group; concretely, if $\omega = \chi^j \omega'$ where $\omega'$ is finitely ramified, then $\sigma \in W_{\Qp}$ acts on $\DD_{\mathrm{pst}}(\omega)$ as $p^{j n(\sigma)} \omega'(\sigma)$, where $n(\sigma)$ is the power of the arithmetic Frobenius by which $\sigma$ acts on $\Qpnr$. We define $\ve(\omega) = \ve(\omega, \lambda(-x), \mathrm{d}x)$ to be the epsilon-factor attached to $\DD_{\mathrm{pst}}(\omega)$, so
   \[ \ve(\omega^{-1}) = \frac{p^{n(1+j)} \omega(\tilde\sigma_p)^n}{\tau(\omega, \zeta)}.\]

   We write $P(\omega, X)$ for the $L$-factor of the Weil--Deligne representation $\DD_{\mathrm{pst}}(\omega)$. This is a polynomial $P(\omega, X)$ in $X$, which is identically 1 if $\omega$ is not crystalline; otherwise, it is given by $P(\omega, X) = 1 - u X$, where $u$ is the scalar by which crystalline Frobenius acts on $\Dcris(\omega)$, so $u = p^{-j} \omega'(\sigma_p)^{-1}$ if $\omega = \chi^j \omega'$ with $\omega'$ unramified.


 \section{Local theory: Yager modules and Wach modules}
  \label{sect:yager}

  \subsection{Some cohomological preliminaries}

   Let $F$ be a finite unramified extension of $\Qp$, and let $F_\infty / F$ be an unramified $p$-adic Lie extension with Galois group $U$. (Thus $U$ is either a finite cyclic group, or the product of such a group with $\Zp$.) Let $\OFinf$ be the completion of the ring of integers of $F_\infty$.

   \begin{lemma}
    \label{lemma:ranktwistedmodule}
    Let $M$ be a free $\Zp$-module of rank $d < \infty$, with a continuous action of $U$. Then the module
    \[ H^0(U, \OFinf \otimes_{\Zp} M)\]
    is free of rank $d$ over $\cO_F$, and
    \[ H^1(U, \OFinf \otimes M) = 0.\]
   \end{lemma}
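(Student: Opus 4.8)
The plan is to reduce to the case where $U$ is procyclic and then compute directly using the structure of $\OFinf$ as a module over the group algebra. First I would handle the case $U$ finite cyclic. Here $F_\infty / F$ is a finite unramified extension, so $\OFinf = \cO_{F_\infty}$ is a free $\cO_F[U]$-module of rank $1$ by the normal basis theorem for unramified local extensions (equivalently, $\cO_{F_\infty}$ is induced from $\cO_F$ as a $U$-module, since the cohomology of the residue field extension is that of a cyclic extension of finite fields, whose $H^1$ vanishes and whose $H^0$ is rank one). Consequently $\OFinf \otimes_{\Zp} M \cong \cO_F[U] \otimes_{\cO_F} (\cO_F \otimes_{\Zp} M)$ as $U$-modules, which is (co)induced, so by Shapiro's lemma $H^i(U, \OFinf \otimes_{\Zp} M) = H^i(1, \cO_F \otimes_{\Zp} M)$; this is $\cO_F \otimes_{\Zp} M$, free of rank $d$ over $\cO_F$, in degree $0$ and vanishes in degree $1$.

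Next I would treat the general case $U \cong U_0 \times \Zp$ with $U_0$ finite cyclic. Write $F_\infty'$ for the fixed field of the $\Zp$-factor, a finite unramified extension of $F$, and let $\gamma$ be a topological generator of the $\Zp$-part of $U$. By inflation-restriction (or the Hochschild--Serre spectral sequence) applied to $1 \to \Zp \to U \to U_0 \to 1$, together with the finite-cyclic case already established for $U_0$ acting on the module $N := H^*(\Zp, \OFinf \otimes_{\Zp} M)$, it suffices to compute the cohomology of $\Zp$ acting on $\OFinf \otimes_{\Zp} M$. Since $\Zp$ has cohomological dimension $1$ with cohomology computed by the complex $[\, \cdot \xrightarrow{\gamma - 1} \cdot\,]$, I need to show that $\gamma - 1$ is \emph{bijective} on $\OFinf \otimes_{\Zp} M$. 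Surjectivity will give $H^1 = 0$ and injectivity will give $H^0 = 0$ at this stage; the rank-$d$ statement for the full $H^0(U, -)$ then comes from the surviving $U_0$-piece via Shapiro as before, once one checks $(\gamma-1)$ acts invertibly so that only the $F_\infty'$-level contributes — more precisely, $H^0(U, \OFinf \otimes M) = H^0(U_0, (\OFinf \otimes M)^{\Zp})$ and a short argument identifies $(\OFinf \otimes M)^{\Zp}$ with $\cO_{F_\infty'} \otimes_{\Zp} M$, reducing to the finite-cyclic case.

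The main obstacle is precisely the bijectivity of $\gamma - 1$ on $\OFinf \otimes_{\Zp} M$. The key point is that the $\Zp$-extension here is the \emph{unramified} one, so $\gamma$ acts on $\OFinf$ as (a power of) the arithmetic Frobenius, and on the residue field $\overline{\FF}_p$-side this is the Frobenius $x \mapsto x^{q}$ on $\varinjlim \FF_{q^{p^n}}$, for which $x \mapsto x^q - x$ is visibly surjective (Artin--Schreier) with kernel $\FF_q$ — but that kernel disappears once we tensor with the (free, hence Frobenius-module-flat) module $M$ and pass to the completed direct limit, because the relevant fixed part is absorbed at finite level. I would make this rigorous by working at finite layers $\cO_{F_n} \otimes M$, showing $\gamma^{p^n} - 1$ is topologically nilpotent modulo $p$ while $\gamma - 1$ itself becomes invertible after inverting $p$ is \emph{not} what we want — rather, one shows directly on $\OFinf$ that $1 - \sigma$ (arithmetic Frobenius minus identity) is bijective because $\OFinf = \widehat{\cO}_{F_\infty}$ is the $p$-adic completion of $\cO_{\Qpnr}$-type rings on which $1-\sigma$ is known to be surjective with no fixed points in the completion away from the base (this is the standard fact underlying the vanishing of Galois cohomology of $\widehat{\cO}_{\Qpnr}$). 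Tensoring the bijective operator $1-\sigma$ with the identity on the free $\Zp$-module $M$ preserves bijectivity, and the $U$-action twists this by a finite-order character which does not affect the conclusion. Assembling these pieces yields both displayed claims.
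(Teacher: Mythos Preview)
Your proposal contains a genuine internal contradiction that would have to be resolved before the argument can work. You first claim that $\gamma-1$ is \emph{bijective} on $\OFinf\otimes_{\Zp}M$ (so that $H^0(\Zp,\OFinf\otimes M)=0$), and then a few lines later you assert that $(\OFinf\otimes M)^{\Zp}\cong\cO_{F_\infty'}\otimes_{\Zp}M$, which is visibly nonzero. These cannot both hold. In fact $\gamma-1$ is never injective here: already for $M=\Zp$ with trivial action, $(\OFinf)^{\gamma=1}=\cO_{F_\infty'}$. What one actually needs is that $\gamma-1$ is \emph{surjective} on $\OFinf\otimes M$ (giving $H^1=0$) with kernel free of rank $d$ over $\cO_{F_\infty'}$.

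Two further points: your proposed identification $(\OFinf\otimes M)^{\Zp}\cong\cO_{F_\infty'}\otimes_{\Zp}M$ as $U_0$-modules is not correct in general, since $\gamma$ acts diagonally and may act nontrivially on $M$; and your closing remark that the $U$-action on $M$ ``twists by a finite-order character'' is unjustified, as $M$ can be any continuous $\Zp$-representation of $U$. The honest argument proceeds by d\'evissage: modulo $p$ one is looking at $\overline{k}\otimes_{\FF_p}(M/pM)$ with Frobenius twisted by a representation over $\FF_p$, where the surjectivity of $1-\sigma$ and the rank of its kernel come from Lang's theorem (or the additive Hilbert~90 for $\overline{k}/k$), and one then lifts through the $p$-adic filtration. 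This is exactly what the paper is invoking when it says ``a form of Hilbert's Theorem~90'' and cites \cite[Proposition 1.2.4]{fontaine90}; your overall strategy (reduce to the procyclic case, then analyse $\gamma-1$) is the right shape, but the execution of the key step needs to be corrected as above.
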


   \begin{proof}
    This is a form of Hilbert's Theorem 90; for the form of the statement given here see e.g.~\cite[Proposition 1.2.4]{fontaine90}.
   \end{proof}

   We will need the following result on trace maps for unramified extensions.

   \begin{proposition}
    \label{prop:iwasawastructure}
    The module
    \[ \varprojlim_{K} \cO_{K}, \]
    where $K$ varies over finite extensions of $F$ contained in $F_\infty$ and the inverse limit is with respect to the trace maps, is free of rank 1 over $\Lambda_{\cO_F}(U)$.
   \end{proposition}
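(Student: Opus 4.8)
The plan is to reduce the statement to the classical normal integral basis theorem, the only real work being to choose the normal bases compatibly along the tower. First I would fix a cofinal chain $F = K_0 \subseteq K_1 \subseteq K_2 \subseteq \cdots$ of finite subextensions of $F_\infty$, with $K_n$ the fixed field of an open subgroup $U_n \leq U$, chosen so that each quotient $U_n/U_{n+1}$ is a $p$-group; this is possible because $U$, being a quotient of $\widehat{\ZZ} = \Gal(\overline{\FF}_q/\FF_q)$, is either finite or of the form $\Zp \times C$ with $C$ finite cyclic of order prime to $p$. Since $K_n/F$ is unramified, hence tame, the normal integral basis theorem gives that $\cO_{K_n}$ is free of rank $1$ over $\cO_F[\Gal(K_n/F)] = \cO_F[U/U_n]$ (for instance, any lift to $\cO_{K_n}$ of a normal basis generator of the residue extension is such a generator). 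If $U$ is finite this already finishes the proof, since then $\varprojlim_K \cO_K = \cO_{F_\infty}$ and $\Lambda_{\cO_F}(U) = \cO_F[U]$; so from now on assume $U$ is infinite.

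Next I would establish two facts about the layerwise trace $\Tr_{K_{n+1}/K_n}: \cO_{K_{n+1}} \to \cO_{K_n}$. (i) It carries normal integral basis generators to normal integral basis generators: if $\{g(\theta) : g \in U/U_{n+1}\}$ is an $\cO_F$-basis of $\cO_{K_{n+1}}$, then the $|U/U_n|$ elements $g \cdot \Tr_{K_{n+1}/K_n}(\theta)$ are the sums of this basis over the cosets of $U_n/U_{n+1}$, hence are $\cO_F$-linearly independent, and their $\cO_F$-span is precisely the subspace on which $U_n/U_{n+1}$ acts trivially, namely $\cO_{K_n}$. (ii) The resulting map on sets of normal basis generators is surjective: given a generator $\theta_n$ of $\cO_{K_n}$, choose any generator $\theta'$ of $\cO_{K_{n+1}}$, write $\theta_n = u \cdot \Tr_{K_{n+1}/K_n}(\theta')$ with $u \in \cO_F[U/U_n]^\times$, lift $u$ to $\tilde u \in \cO_F[U/U_{n+1}]^\times$ (possible because the kernel of $\cO_F[U/U_{n+1}] \onto \cO_F[U/U_n]$ is generated by the elements $\sigma - 1$ with $\sigma$ in the $p$-group $U_n/U_{n+1}$, hence lies in the Jacobson radical), and use $\Tr_{K_{n+1}/K_n}(\tilde u \theta') = u \cdot \Tr_{K_{n+1}/K_n}(\theta') = \theta_n$, where the first equality holds because $\Tr_{K_{n+1}/K_n}$ is linear over $\cO_F[U/U_n]$, $\cO_F[U/U_{n+1}]$ acting through the projection.

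Finally, since each set of normal integral basis generators is nonempty and the transition maps between them are surjective, the inverse limit over the countable chain is nonempty; I would fix an element $(\theta_n)_n$, which then lies in $\varprojlim_K \cO_K$, and the $\Lambda_{\cO_F}(U)$-linear map $\Lambda_{\cO_F}(U) = \varprojlim_n \cO_F[U/U_n] \to \varprojlim_K \cO_K$, $(x_n)_n \mapsto (x_n \theta_n)_n$, is an isomorphism because it is one at every finite level. I expect the main obstacle to be precisely step (ii) — producing the compatible tower of normal bases — while (i) and the passage to the limit are routine bookkeeping. As a sanity check, and an alternative route to the same identification, one can note that the trace pairings $\cO_K \times \cO_K \to \cO_F$, $(x,y)\mapsto \Tr_{K/F}(xy)$, are perfect since $K/F$ is unramified, and that the layerwise trace is dual to the layerwise inclusion, so that $\varprojlim_K \cO_K$ is canonically the $\cO_F$-linear dual of $\cO_{F_\infty} = \varinjlim_K \cO_K$.
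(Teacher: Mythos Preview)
Your proof is correct and follows essentially the same strategy as the paper: reduce to producing a trace-compatible family of integral normal basis generators, then use a Nakayama/Jacobson-radical argument to propagate a generator up the $p$-power part of the tower (the paper does this in one shot, showing that \emph{any} trace-lift to $\cO_K$ of a fixed generator $x_0$ of $\cO_{F_0}$ is already a generator, where $F_0$ is the maximal subfield with $[F_0:F]$ prime to $p$). The only slip is that when $C \neq 1$ you cannot arrange that \emph{every} successive quotient $U_n/U_{n+1}$ is a $p$-group while keeping $K_0 = F$ and the chain cofinal; the fix---which is exactly the paper's choice of $F_0$---is to take $K_1$ with $\Gal(K_1/F) = C$, pick any normal basis generator $\theta_1$ of $\cO_{K_1}$, and run your lifting step (ii) only for $n \ge 1$.
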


   \begin{proof}
    We first note that if $L / K$ is any finite unramified extension of local fields, then the trace map $\cO_{L} \to \cO_K$ is surjective, since the residue extension $k_{L} / k_K$ is separable and hence its trace map is surjective. Moreover, $\cO_L$ is free of rank 1 over $\cO_K[\Gal(L/K)]$; elements of $\cO_{L}$ that generate it as a $\cO_F[\Gal(L/K)]$-module are called \emph{integral normal basis generators} of $L/K$. We must show that there exists a trace-compatible sequence $x = (x_K) \in \varprojlim_{K} \cO_{K}$ such that $x_K$ is an integral normal basis generator of $K/F$ for all $K$.

    Let $F_0$ be the largest subfield of $F_\infty$ such that $[F_0 : F]$ is prime to $p$; this is a finite extension of $F$, by our hypotheses on $F_\infty$. Choose a normal basis generator $x_0$ of $F_0 / F$.

    We claim that if $K$ is any finite extension of $F_0$ contained in $F_\infty$, and $x$ is any element of $\cO_K$ with $\operatorname{Tr}_{K/F_0}(x) = x_0$, then $x$ is an integral normal basis generator of $K/F$.

    To prove this, consider the group ring $R = \cO_F[\Gal(K/F)]$. As noted above, $\cO_K$ is a free $R$-module of rank 1. Let $I$ be the ideal of $R$ given by the kernel of the natural map $\cO_F[\Gal(K/F)] \to \cO_F[\Gal(F_0/F)]$. Then $I$ is contained in the Jacobson radical $J$ of $R$ (indeed $J$ is generated by $I$ and $p$). So, by Nakayama's lemma, an element $x \in \cO_K$ generates $\cO_K$ as an $R$-module if and only if its image in $\cO_K / I \cO_K$ generates this quotient; but the trace map $\Tr_{K/F_0}: \cO_K \to \cO_{F_0}$ is surjective and factors through $\cO_K / I \cO_K$, and $\cO_{F_0}$ and $\cO_K / I \cO_K$ are free $\Zp$-modules of the same rank, so $\Tr_{K/F_0}$ must give an isomorphism $\cO_K / I \cO_K \to \cO_{F_0}$. This proves the claim.

    So it suffices to take any element of $\varprojlim_{K} \cO_{K}$ lifting $x_0$.
   \end{proof}

   \begin{remark}
    As noted in \cite{pickett10}, one can also deduce the above claim from the work of Semaev \cite[Lemma 4.1]{semaev88} on normal bases of extensions of \emph{finite} fields, which does not explicitly use Nakayama's lemma.
   \end{remark}


  \subsection{The Yager module}

   In this section we develop a variant of the construction in~\cite[\S 2]{yager82} in order to construct a certain module which, in a sense we shall make precise below, encodes the periods for the unramified characters of $\cG_{F}$.

   \begin{definition}
    Let $K / F$ be a finite unramified extension. For $x \in \cO_K$, we define
    \[ y_{K/F}(x) = \sum_{\sigma \in \Gal(K / F)} x^\sigma\, [\sigma^{-1}] \in \cO_{K}[\Gal(K / F)].\]
   \end{definition}

   It is clear that $y_{K/F}$ is $\cO_F$-linear and injective, and we have $y_{K/F}(x^g) = [g] y_{K/F}(x)$ for all $g \in \Gal(K / F)$, where $[u]$ is the image of $u$ in the group ring. Moreover, the image of $y_{K/F}$ is precisely the submodule $S_{K/F}$ of $\cO_{K}[\Gal(K / F)]$ consisting of elements satisfying $y^g = [g] y$ for all $g \in \Gal(K / F)$, where $y^g$ denotes the action of $\Gal(K / F)$ on the coefficients $\cO_K$.

   \begin{proposition}
    \label{prop:reduction}
    If $L \supset K \supset F$ are finite unramified extensions and $x \in \cO_{L}$, the image of $y_{L/F}(x)$ under the reduction map
    \[ \cO_{L}[\Gal(L / F)] \to \cO_{L}[\Gal(K / F)] \]
    induced by the surjection $\Gal(L / F) \to \Gal(K / F)$ is equal to $y_{K/F}(\operatorname{Tr}_{L/K} x)$. In particular, the reduction has coefficients in $\cO_{K}$.
   \end{proposition}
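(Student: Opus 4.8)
The plan is to compute directly, tracking how the coefficients transform under the reduction map. Write $\Gamma_L = \Gal(L/F)$, $\Gamma_K = \Gal(K/F)$, and let $\pi \colon \Gamma_L \onto \Gamma_K$ be the natural surjection, with kernel $H = \Gal(L/K)$. By definition,
\[
 y_{L/F}(x) = \sum_{\sigma \in \Gamma_L} x^\sigma\, [\sigma^{-1}],
\]
and applying the reduction map $\cO_L[\Gamma_L] \to \cO_L[\Gamma_K]$ (which is the identity on coefficients and sends $[\sigma^{-1}]$ to $[\pi(\sigma)^{-1}] = [\pi(\sigma^{-1})]$) yields
\[
 \sum_{\sigma \in \Gamma_L} x^\sigma\, [\pi(\sigma^{-1})] = \sum_{\bar\tau \in \Gamma_K} \Bigl( \sum_{\substack{\sigma \in \Gamma_L \\ \pi(\sigma) = \bar\tau}} x^\sigma \Bigr) [\bar\tau^{-1}].
\]
So the whole statement comes down to identifying the inner sum, indexed by a coset of $H$ in $\Gamma_L$, with $\bigl(\Tr_{L/K} x\bigr)^{\bar\tau}$ for an appropriate lift of $\bar\tau$.

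The key step is the following: fix $\bar\tau \in \Gamma_K$ and pick any lift $\tilde\tau \in \Gamma_L$. The fibre $\pi^{-1}(\bar\tau)$ is the coset $\tilde\tau H$, so the inner sum equals $\sum_{h \in H} x^{\tilde\tau h}$. Now I would use that $\Gal(L/F)$ is abelian (all the fields here are unramified over $\Qp$, hence contained in $\Qpnr$, so the Galois groups are procyclic and in particular abelian), which lets me rewrite $x^{\tilde\tau h} = (x^h)^{\tilde\tau}$; since the action of $\tilde\tau$ on $\cO_L$ is a ring automorphism and $\Tr_{L/K} x = \sum_{h \in H} x^h$, I get $\sum_{h \in H} x^{\tilde\tau h} = \bigl(\Tr_{L/K} x\bigr)^{\tilde\tau}$. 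Because $\Tr_{L/K} x \in \cO_K$ and $\tilde\tau$ acts on $\cO_K$ through its image $\bar\tau \in \Gamma_K$, this expression depends only on $\bar\tau$, namely it is $\bigl(\Tr_{L/K} x\bigr)^{\bar\tau}$ --- so the independence of the choice of lift is automatic, and in particular the inner sum lies in $\cO_K$, giving the final sentence of the statement. Substituting back, the reduction of $y_{L/F}(x)$ is $\sum_{\bar\tau \in \Gamma_K} \bigl(\Tr_{L/K} x\bigr)^{\bar\tau}\, [\bar\tau^{-1}] = y_{K/F}(\Tr_{L/K} x)$, as claimed.

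The only genuine subtlety --- the ``main obstacle'', though it is mild --- is the commutation step $x^{\tilde\tau h} = (x^h)^{\tilde\tau}$, which uses commutativity of the Galois group; without it the sum over the coset $\tilde\tau H$ would not obviously collapse to a trace from $L$ to $K$. One could phrase this more carefully by noting that $H \trianglelefteq \Gamma_L$ always, so conjugation by $\tilde\tau$ permutes $H$ and one gets $\sum_{h\in H} x^{\tilde\tau h} = \sum_{h' \in H} x^{h'\tilde\tau} = \bigl(\sum_{h'} x^{h'}\bigr)^{\tilde\tau}$ regardless of commutativity; in the abelian setting at hand this is transparent. Everything else is bookkeeping with the group-ring reduction map and the definition of $y_{\bullet/F}$, so I would keep the write-up to essentially the display computation above together with the one-line identification of the coset sum with $\Tr_{L/K}$.
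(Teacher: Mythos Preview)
Your proof is correct and is exactly the computation the paper has in mind: the paper's own proof is simply ``Clear from the formula defining the maps $y_{K/F}$ and $y_{L/F}$'', and you have written out precisely those details. Your remark that the coset sum collapses to $(\Tr_{L/K} x)^{\tilde\tau}$ either via abelianness or via normality of $H$ is the only nontrivial point, and it is handled correctly.
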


   \begin{proof}
    Clear from the formula defining the maps $y_{K/F}$ and $y_{L/F}$.
   \end{proof}

   Now let $F_\infty / F$ be any unramified $p$-adic Lie extension with Galois group $U$, as in the previous section. Passing to inverse limits with respect to the trace maps, we deduce that there is an isomorphism of $\Lambda_{\cO_F}(U)$-modules
   \begin{equation}
    \label{def:y}
    y_{F_\infty / F} : \varprojlim_{F \subseteq K \subseteq F_\infty} \cO_{K} \rTo^\cong S_{F_\infty / F} := \varprojlim_{F \subseteq K \subseteq F_\infty} S_{K/F}.
   \end{equation}

   \begin{proposition}
    We have
    \[ S_{F_\infty / F} = \{ f \in \Lambda_{\OFinf}(U) : f^u = [u] f\}\]
    for any topological generator $u$ of $U$.
   \end{proposition}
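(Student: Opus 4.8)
The plan is to verify the identity one finite level at a time. Recall from \eqref{def:y} that $S_{F_\infty/F} = \varprojlim_K S_{K/F}$, the limit running over the finite unramified extensions $K$ of $F$ inside $F_\infty$, and the transition maps being the restrictions to the $S_{L/F}$ (legitimate by Proposition~\ref{prop:reduction}) of the reduction maps $\OFinf[\Gal(L/F)] \to \OFinf[\Gal(K/F)]$ induced by $\Gal(L/F) \onto \Gal(K/F)$. These reduction maps are exactly the transition maps defining $\Lambda_{\OFinf}(U) = \varprojlim_K \OFinf[\Gal(K/F)]$; they are $\OFinf$-algebra homomorphisms and commute with the coefficientwise action of $u \in U$ on $\OFinf$. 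Hence the two operators $f \mapsto f^u$ and $f \mapsto [u] f$ on $\Lambda_{\OFinf}(U)$ are computed componentwise, and it suffices to prove, for each such $K$, the level-$K$ identity
\[ \{ y \in \OFinf[\Gal(K/F)] : y^u = [u] y \} = S_{K/F}. \]

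For the finite-level computation, write $n_K = [K:F]$ and note that $\Gal(K/F)$ is cyclic, generated by the image $\bar u$ of $u$. Expanding a general element as $y = \sum_{\sigma \in \Gal(K/F)} a_\sigma [\sigma^{-1}]$ with $a_\sigma \in \OFinf$, the equation $y^u = [u] y$ unwinds to $u(a_\sigma) = a_{\sigma \bar u}$ for all $\sigma$; iterating gives $a_{\bar u^j} = u^j(a_1)$, which is self-consistent precisely when $a_1$ is fixed by $u^{n_K}$, and then $y$ is determined by $a_1$. So the left-hand side above is carried bijectively, by $y \mapsto a_1$, onto $(\OFinf)^{u^{n_K}}$. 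On the other hand $S_{K/F} = \im(y_{K/F})$ is the set of elements $\sum_\sigma \sigma(a) [\sigma^{-1}]$ with $a \in \cO_K$, and for $a \in \cO_K$ the action of $u^j$ coincides with that of $\bar u^j$; thus $S_{K/F}$ corresponds under $y \mapsto a_1$ to the submodule $\cO_K \subseteq \OFinf$. The level-$K$ identity is therefore equivalent to $(\OFinf)^{u^{n_K}} = \cO_K$. To finish, observe that since $u$ topologically generates $U$ and $\Gal(K/F) = U/\Gal(F_\infty/K)$ is cyclic of order $n_K$, the element $u^{n_K}$ topologically generates $\Gal(F_\infty/K)$; as $U$ acts continuously on $\OFinf$, an element fixed by $u^{n_K}$ is fixed by all of $\Gal(F_\infty/K)$; and Lemma~\ref{lemma:ranktwistedmodule}, applied to the unramified extension $F_\infty/K$ and the trivial $\Zp$-representation, identifies $(\OFinf)^{\Gal(F_\infty/K)}$ with a free $\cO_K$-module of rank one, which, containing $\cO_K$ and contained in $\OFinf$, must be $\cO_K$.

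The one genuinely substantive point is this last step. Purely formal manipulation of the defining relation only constrains the coefficients $a_\sigma$ to lie in $\OFinf$, which is much larger than the ring $\cO_K$ occurring in $S_{K/F}$; closing this gap is precisely where the Galois-descent content of Lemma~\ref{lemma:ranktwistedmodule} (an avatar of Hilbert's Theorem 90 for unramified extensions) is needed, together with the elementary observation that $\langle u^{n_K} \rangle$ is dense in $\Gal(F_\infty/K)$. The remainder is routine bookkeeping with inverse limits and group-ring identities.
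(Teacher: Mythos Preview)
Your proof is correct and follows essentially the same strategy as the paper's: both reduce to the finite level, and both use Lemma~\ref{lemma:ranktwistedmodule} to identify $(\OFinf)^{\Gal(F_\infty/K)}$ with $\cO_K$ as the key step. The paper's argument is slightly more streamlined: rather than expanding in coordinates and tracking the constant term $a_1$, it observes directly that the relation $f^u=[u]f$ extends by continuity from the generator to all of $U$, so in particular any $v\in U_n=\Gal(F_\infty/F_n)$ gives $x_n^v=[v]x_n=x_n$ (since $v$ maps to the identity in $U/U_n$), forcing the coefficients of $x_n$ into $(\OFinf)^{U_n}=\cO_{F_n}$; it then invokes the defining characterization of $S_{F_n/F}$. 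Your explicit computation with $a_{\bar u^j}=u^j(a_1)$ and the density of $\langle u^{n_K}\rangle$ in $\Gal(F_\infty/K)$ amounts to the same thing unpacked by hand.
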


   \begin{proof}
    Let us set $X = \{ f \in \Lambda_{\OFinf}(U) : f^u = [u] f\}$. Let $F_n$ be a family of finite extensions of $F$ whose union is $F_\infty$, and let $U_n = \Gal(F_\infty / F_n)$.

    Firstly, since $S_{F_n / F} \subseteq \cO_{F_n}[U / U_n] \subseteq \OFinf[U / U_n]$, we clearly have an embedding $S_{F_\infty / F} \into \Lambda_{\OFinf}(U)$, which must land in $X$, because of the Galois-equivariance property of the elements of $S_{F_n / F}$. However, it is clear that for any $x \in X$, the image $x_n$ of $x$ in $\OFinf[U / U_n]$ has coefficients in $\cO_{F_n}$ (since $(\OFinf)^{U_n} = \cO_n$ by Lemma \ref{lemma:ranktwistedmodule}) and satisfies $(x_n)^u = [u] x_n$, thus lies in $S_{F_n}$. So the map $S_{F_\infty / F} \into X$ is a bijection.
   \end{proof}

   We shall always equip $S_{F_\infty / F}$ with the inverse limit topology (arising from the $p$-adic topology of the finitely generated $\Zp$-modules $S_{F_n / F}$). This topology is compact and Hausdorff, and coincides with the subspace topology from $\Lambda_{\OFinf}(U)$.

   \begin{definition}
    We refer to $S_{F_\infty / F}$ as the \emph{Yager module}, since it is closely related to the objects appearing in \cite[\S 2]{yager82}.
   \end{definition}

   We now explain the relation between $S_{F_\infty / F}$ and the periods for characters of $U$. Let $M$ be a finite-rank free $\Zp$-module with an action of $U$, given by a continuous map $\rho: U \to \operatorname{Aut}_{\Zp}(M)$. Then $\rho$ induces a ring homomorphism $\Lambda_{\OFinf}(U) \to \OFinf \otimes_{\Zp} \End_{\Zp} M$, which we also denote by $\rho$.

   \begin{proposition}
    \label{prop:twist}
    Let $\omega \in S_{F_\infty / F}$. Then $\rho(\omega) \in \OFinf \otimes_{\Zp} \End_{\Zp}(M)$ is a period for $\rho$, in the sense that
    \[
      \rho(\omega)^u = \rho(u) \cdot \rho(\omega) .
    \]
    for all $u \in U$.
   \end{proposition}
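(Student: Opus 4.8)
The plan is to deduce the twisting identity formally from the explicit description
\[ S_{F_\infty / F} = \{ f \in \Lambda_{\OFinf}(U) : f^u = [u] f \text{ for all } u \in U\} \]
obtained in the previous proposition, using only that $\rho : \Lambda_{\OFinf}(U) \to \OFinf \otimes_{\Zp} \End_{\Zp}(M)$ is a continuous, $\OFinf$-linear ring homomorphism. The one point that requires care is keeping track of the two different actions of $U$ in play: on the source, the ``coefficientwise'' action $f \mapsto f^u$ (the Galois action on the coefficient ring $\OFinf$, trivial on the group elements $[g]$), and on the target, the action $x \otimes \phi \mapsto x^u \otimes \phi$ coming from the first tensor factor, together with left multiplication by $\rho(u)$.

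First I would record the precise construction of $\rho$: it is obtained by base change from the group-ring homomorphism $\rho_0 : \Lambda_{\Zp}(U) \to \End_{\Zp}(M)$ attached to the \emph{left} $U$-module $M$, i.e.\ $\rho = \id_{\OFinf} \htimes \rho_0$ after $p$-adic completion. In particular $\rho$ is $\OFinf$-linear, continuous and multiplicative, and it is the unique such map sending $[g] \mapsto \rho(g)$ for $g \in U$ (the target being a ring because $\End_{\Zp}(M)$ is). This is a routine unwinding of definitions, but it is worth stating because it is exactly what makes the remaining argument purely formal: no finite-level analysis of the modules $S_{F_n/F}$, and in particular no appeal to Hilbert~90, is needed.

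Next I would check that $\rho$ intertwines the coefficientwise actions, i.e.\ that $\rho(f^u) = \rho(f)^u$ for every $f \in \Lambda_{\OFinf}(U)$ and $u \in U$. It suffices to verify this on elements of the form $\lambda [g]$ with $\lambda \in \OFinf$ and $g \in U$, since these span a dense $\OFinf$-submodule (the group algebra $\OFinf[U]$) and both sides are additive in $f$ and continuous; for such an element both sides equal $\lambda^u \, \rho(g)$, using $\OFinf$-linearity of $\rho$ and the fact that $\rho$ fixes the group elements.

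Finally, given $\omega \in S_{F_\infty/F}$, I apply the ring homomorphism $\rho$ to the identity $\omega^u = [u]\,\omega$ and use the previous step to get
\[ \rho(\omega)^u = \rho([u]\,\omega) = \rho([u])\,\rho(\omega) = \rho(u)\,\rho(\omega), \]
which is exactly the asserted relation, valid for all $u \in U$ (it would in fact be enough to check it for a topological generator, since $\rho(u)$ has trivial coefficient part and is therefore Galois-fixed). The only real obstacle is the bookkeeping in the first two steps, namely distinguishing $\OFinf$-linearity from $\OFinf$-semilinearity and making the density/continuity reduction airtight; once that is in place there is nothing further to prove.
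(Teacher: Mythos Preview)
Your proof is correct and follows essentially the same approach as the paper's own argument: use the defining relation $\omega^u = [u]\,\omega$ for elements of $S_{F_\infty/F}$, observe that the map $\rho : \Lambda_{\OFinf}(U) \to \OFinf \otimes_{\Zp} \End_{\Zp}(M)$ commutes with the $U$-action on the coefficient ring $\OFinf$, and then apply $\rho$ to both sides. The paper states the compatibility of $\rho$ with the coefficientwise $U$-action as a one-line observation, whereas you spell it out via the density-and-continuity argument on elements $\lambda[g]$; this added care is harmless and makes the same point.
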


   \begin{proof}
    Since $\omega \in S_{F_\infty / F}$, we have $\omega^u = [u] \omega$ for any $u \in U$. However, the map $\Lambda_{\OFinf}(U) \to \OFinf \otimes_{\Zp} \End_{\Zp} M$ commutes with the action of $U$ on the coefficient ring $\OFinf$; so we have
    \[ \rho(\omega)^u = \rho(\omega^u) = \rho([u] \cdot \omega) = \rho(u) \rho(\omega).\]
   \end{proof}

   \begin{remark}
    After the results in this section had been proven, we discovered that similar results had been obtained by Pasol in his unpublished PhD thesis \cite[\S 2.5]{pasol05}. Our module $S_{F_\infty / F}$ is the same as his module $\DD_0$. He uses the module $\DD_0$ to relate Katz's $2$-variable $p$-adic $L$-functions attached to a CM elliptic curve to the modular symbols construction by Greenberg and Stevens  \cite{greenbergstevens93}.
   \end{remark}


  \subsection{P-adic representations}

   Let $T$ be a crystalline $\Zp$-representation of $\cG_{F}$. If $K / F$ is any unramified extension, we have isomorphisms $\NN_K(T) \cong \NN_F(T) \otimes_{\cO_F} \cO_K$, so we have trace maps $\NN_L(T) \to \NN_K(T)$ for $L / K$ any two finite unramified extensions of $F$.

   \begin{definition}
    \label{def:Dinfty}
    Let $\NN_{F_\infty}(T) = \varprojlim_{F \subseteq K \subset F_\infty} \NN_K(T)$, where the inverse limit is taken with respect to the trace maps.
   \end{definition}

   By construction, $\NN_{F_\infty}(T)$ has actions of $\Gamma$ and $U$, since these act on the modules $\NN_K(T)$ compatibly with the trace maps.

   \begin{proposition}
    We have an isomorphism of topological modules
    \[ \NN_{F_\infty}(T) \cong \NN_F(T) \htimes_{\cO_F} S_{F_\infty / F}.\]
   \end{proposition}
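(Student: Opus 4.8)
The plan is to deduce the isomorphism by passing to the inverse limit in the base-change isomorphism of Lemma~\ref{lemma:unramifiedphiGammabasechange}. First I would note that the trace maps $\NN_L(T)\to\NN_K(T)$ used to define $\NN_{F_\infty}(T)$ become, under the identifications $\NN_K(T)\cong\NN_F(T)\otimes_{\cO_F}\cO_K$ of Lemma~\ref{lemma:unramifiedphiGammabasechange}, simply $\id_{\NN_F(T)}\otimes\Tr_{\cO_L/\cO_K}$ (which is how they are defined), the compatibility of the identifications for varying $K$ being an instance of the canonicity, hence transitivity, in Lemma~\ref{lemma:unramifiedphiGammabasechange} via $\bigl(\NN_F(T)\otimes_{\cO_F}\cO_K\bigr)\otimes_{\cO_K}\cO_L=\NN_F(T)\otimes_{\cO_F}\cO_L$. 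Consequently $\NN_{F_\infty}(T)=\varprojlim_K\bigl(\NN_F(T)\otimes_{\cO_F}\cO_K\bigr)$, the limit taken over these trace maps in the $\cO_K$-factor.

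Next I would commute the limit past $\NN_F(T)$. Since $\NN_F(T)$ is free of finite rank over $\AA^+_F=\cO_F[[\pi]]$, it is a $p$-adically complete profinite $\cO_F$-module, and for such a module the completed tensor product $\NN_F(T)\htimes_{\cO_F}(-)$ agrees with $\NN_F(T)\otimes_{\cO_F}(-)$ on finite free $\cO_F$-modules and, crucially, commutes with inverse limits. Applying this to the inverse system $(\cO_K)_K$ with its trace maps gives $\NN_{F_\infty}(T)\cong\NN_F(T)\htimes_{\cO_F}\bigl(\varprojlim_K\cO_K\bigr)$, and the Yager isomorphism~\eqref{def:y} identifies $\varprojlim_K\cO_K$ with $S_{F_\infty/F}$ as topological $\Lambda_{\cO_F}(U)$-modules; combining these yields the claimed isomorphism $\NN_{F_\infty}(T)\cong\NN_F(T)\htimes_{\cO_F}S_{F_\infty/F}$. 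As it is assembled from isomorphisms of finite discrete modules at each finite level it is automatically a homeomorphism, and one checks at once that it intertwines the $\AA^+_F$-action and the commuting actions of $\Gamma$ and $U$.

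The step needing care is the interchange of $\varprojlim_K$ with $\NN_F(T)\htimes_{\cO_F}(-)$. Writing $\NN_F(T)=\varprojlim_j\NN_F(T)/A_j$ over a cofinal family of open submodules, each quotient finite, and $\cO_K=\varprojlim_n\cO_K/p^n$, one has $\NN_F(T)\htimes_{\cO_F}\cO_K=\varprojlim_{j,n}\bigl(\NN_F(T)/A_j\otimes_{\cO_F}\cO_K/p^n\bigr)$, so the whole object is a triple inverse limit over $(j,n,K)$ and one wants to perform the limit over $K$ innermost. This is legitimate because tensoring a finite $\cO_F$-module through an inverse limit of $\cO_F$-modules is harmless and because $\varprojlim_K(\cO_K/p^n)=\bigl(\varprojlim_K\cO_K\bigr)/p^n$ with vanishing $\varprojlim^1$; both facts rest on the surjectivity of the trace maps $\cO_L\to\cO_K$ established in the proof of Proposition~\ref{prop:iwasawastructure}, which makes the relevant inverse systems of compact modules surjective. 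Everything else is formal.
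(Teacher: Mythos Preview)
Your argument is correct and is precisely the unpacking of what the paper means by ``clear by construction'': the paper's own proof is the single sentence ``Clear by construction,'' relying on the identification $\NN_K(T)\cong\NN_F(T)\otimes_{\cO_F}\cO_K$ already recorded just before Definition~\ref{def:Dinfty} and on the definition $S_{F_\infty/F}\cong\varprojlim_K\cO_K$. Your careful justification of the interchange of $\varprojlim_K$ with $\NN_F(T)\htimes_{\cO_F}(-)$, using that $\NN_F(T)$ is free of finite rank over $\AA^+_F$ (hence profinite over $\cO_F$) together with surjectivity of the trace maps, is exactly the content the paper leaves to the reader.
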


   \begin{proof}
     Clear by construction.
   \end{proof}

   By construction, $\NN_{F_\infty}(T)$ has $\cO_F$-linear actions of $\Gamma$ and of $U$, which extend to a continuous action of $\Lambda_{\cO_F}(\Gamma \times U)$.
%

   Define $\varphi^* \NN_{F_\infty}(T)$ as the $\AA^+_{F}$-submodule of $\NN_{F_\infty}(T)[q^{-1}]$ generated by $\vp(\NN_{F_\infty}(T))$; this is in fact an $\AA^+_{F} \htimes_{\cO_F} \Lambda_{\cO_F}(U)$-submodule, since $\vp$ acts bijectively on $\Lambda_{\cO_F}(U)$. If $T$ has non-negative Hodge--Tate weights, then we have an inclusion
   \[ \NN_{F_\infty}(T) \into \vp^* \NN_{F_\infty}(T),\]
   with quotient annihilated by $q^h$, for any $h$ such that the Hodge-Tate weights of $T$ lie in $[0, h]$. Note that the map $\vp : \NN_{F_\infty}(T) \to \vp^* \NN_{F_\infty}(T)$ commutes with the action of $G = U \times \Gamma$. Similarly, the maps $\psi$ on $\NN_K(T)[q^{-1}]$ for each $K$ assemble to a map
   \[ \psi : \vp^* \NN_{F_\infty}(T) \to \NN_{F_\infty}(T),\]
   which is a left inverse of $\vp$.

   The following proposition will be important for constructing the regulator map:

   \begin{proposition}
    \label{prop:kernelofpsi}
    We have
    \[ \left( \vp^* \NN_{F_\infty}(T) \right)^{\psi = 0} = \left(\vp^* \NN_F(T)\right)^{\psi = 0} \htimes_{\cO_F} S_{F_\infty / F}.\]
   \end{proposition}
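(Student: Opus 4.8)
The plan is to deduce this from the already-established topological tensor-product description of $\NN_{F_\infty}(T)$ together with the $U$-equivariance of the Frobenius structure. First I would recall from the preceding proposition that $\NN_{F_\infty}(T) \cong \NN_F(T) \htimes_{\cO_F} S_{F_\infty/F}$, and that this isomorphism intertwines the $\vp$ on the left with $\vp \htimes \vp$ on the right (where $\vp$ acts on $S_{F_\infty/F} \subseteq \Lambda_{\OFinf}(U)$ via the arithmetic Frobenius on the coefficients $\OFinf$ and the identity on $U$). Since $\vp$ acts bijectively on $S_{F_\infty/F}$ — it is induced by a ring automorphism of $\Lambda_{\OFinf}(U)$ preserving the Yager condition $f^u = [u]f$, because Frobenius on $\OFinf$ commutes with the action of $U$ — we get $\vp^* \NN_{F_\infty}(T) \cong \bigl(\vp^* \NN_F(T)\bigr) \htimes_{\cO_F} S_{F_\infty/F}$ as $\AA^+_F \htimes_{\cO_F} \Lambda_{\cO_F}(U)$-modules, with $\psi$ on the left corresponding to $\psi \htimes \vp^{-1}$ on the right. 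In particular $\psi$ acts on the second tensor factor invertibly, so taking $\psi = 0$ only affects the first factor.

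The core point is therefore the commutation of $\psi = 0$ with the completed tensor product: I would claim
\[ \left( \bigl(\vp^* \NN_F(T)\bigr) \htimes_{\cO_F} S_{F_\infty/F} \right)^{\psi \htimes \vp^{-1} = 0} = \bigl(\vp^* \NN_F(T)\bigr)^{\psi = 0} \htimes_{\cO_F} S_{F_\infty/F}. \]
Concretely, choosing a topological $\cO_F$-basis and unwinding, an element of the completed tensor product is a convergent sum $\sum_i n_i \htimes s_i$; since $\vp^{-1}$ is a bijection on $S_{F_\infty/F}$ one may re-express any element with the $s_i$ running over a fixed basis, and then $(\psi \htimes \vp^{-1})$ vanishes iff each $\psi(n_i) = 0$. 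To make this rigorous at the level of completed tensor products, I would argue at finite level: $\vp^* \NN_{F_n}(T)$ is free of finite rank over $\AA^+_{F_n}$, hence $\psi$ is a continuous $\cO_F$-linear endomorphism of a module that is (topologically) $\AA^+_F \otimes_{\cO_F} (\text{free }\cO_F\text{-module})$, and $\ker(\psi)$ on $\vp^* \NN_{F_n}(T)$ is $\bigl(\vp^* \NN_F(T)\bigr)^{\psi=0} \otimes_{\cO_F} \cO_{F_n}$ by base change (using $\NN_{F_n}(T) \cong \NN_F(T) \otimes_{\cO_F} \cO_{F_n}$ and the $\Gal(F_n/F)$-twisted $\vp$). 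One then passes to the inverse limit over $n$; since all the transition maps are surjective (trace maps are surjective on unramified extensions, as noted in the proof of Proposition~\ref{prop:iwasawastructure}) and the modules are compact, $\varprojlim$ is exact and commutes with the kernel, giving the claimed identity after reassembling via $S_{F_\infty/F} = \varprojlim_n S_{F_n/F}$.

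The main obstacle I anticipate is the bookkeeping around the completed tensor product and the semilinearity of $\vp$ in the coefficients: one must be careful that ``$\psi = 0$'' on $\vp^* \NN_{F_\infty}(T)$ really does correspond to $\psi \htimes \vp^{-1} = 0$ and not merely $\psi \htimes \id = 0$, and that the decomposition respects the $\Lambda_{\cO_F}(U)$-module structure rather than just the $\cO_F$-module structure. Handling this cleanly is most easily done at finite level where everything is a genuine (uncompleted) base change $-\otimes_{\cO_F}\cO_{F_n}$ of finite free modules, so that exactness of $\psi$-kernels under flat base change is immediate; the passage to the limit is then formal given compactness and surjectivity of transition maps. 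Once the finite-level statement and the exactness of the inverse limit are in hand, the proposition follows.
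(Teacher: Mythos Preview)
Your proof is correct, but the route is different from the paper's. The paper proceeds by a direct explicit computation: it fixes an $\AA^+_F$-basis $n_1,\dots,n_d$ of $\NN_F(T)$ and a $\Lambda_{\cO_F}(U)$-basis $\Omega$ of $S_{F_\infty/F}$, writes an arbitrary $v \in \vp^*\NN_{F_\infty}(T)$ as $\sum_{i,j} (1+\pi)^i \vp(x_{ij}) \cdot (\vp(n_j)\otimes\Omega)$ using that $\{(1+\pi)^i : 0 \le i \le p-1\}$ is a basis of $\AA^+_F$ over $\vp(\AA^+_F)$, and reads off $\psi(v)$ using $\psi((1+\pi)^i) = \delta_{i0}$. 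This gives the result in one stroke, with the semilinearity in the $S_{F_\infty/F}$ factor appearing only as the harmless observation that $\sigma_p^{-1}\Omega$ is again a $\Lambda_{\cO_F}(U)$-generator.

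Your argument instead reduces to the finite unramified layers via flat base change (where $\ker(\psi\otimes\sigma_p^{-1}) = \ker(\psi)\otimes_{\cO_F}\cO_{F_n}$ is immediate because $\sigma_p^{-1}$ is an automorphism of $\cO_{F_n}$) and then takes an inverse limit. This is perfectly valid; what it buys you is that you never touch the internal $(1+\pi)^i$ structure of the Wach module, so the same argument would go through verbatim for any tower where the second tensor factor carries an invertible semilinear Frobenius. The paper's approach, by contrast, is entirely self-contained and avoids any appeal to limit exactness or to the freeness of $(\vp^*\NN_F(T))^{\psi=0}$. One small remark: you do not actually need surjectivity of transition maps or Mittag--Leffler-type exactness here, since the identification $\ker(\psi_\infty) = \varprojlim_n \ker(\psi_n)$ is automatic from $\psi_\infty$ being defined levelwise.
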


    \begin{proof}
     Choose a basis $n_1, \dots, n_d$ of $\NN_F(T)$ as an $\AA^+_F$-module, and a basis $\Omega$ of $S_{F_\infty / F}$ as a $\Lambda_{\cO_F}(U)$-module. Then any vector $v \in \vp^* \NN_{F_\infty}(T)$ can be uniquely written as
     \[ v = \sum_{i = 0}^{p-1} \sum_{j = 0}^d (1 + \pi)^i \vp(x_{ij}) \cdot (\vp(n_j) \otimes \Omega),\]
     for some $x_{ij} \in \AA^+_{F} \htimes_{\cO_F} \Lambda_{\cO_F}(U)$, since $\{ (1 + \pi)^{i}: 0 \le i \le p-1\}$ is a basis of $\AA^+_{F}$ over $\vp(\AA^+_{F})$.

     Applying $\psi$, we have
     \[ \psi(v) = \sum_{i = 0}^{p-1} \sum_{j = 0}^d \psi\left((1 + \pi)^i\right) x_{ij} \cdot (n_j \otimes \sigma_p^{-1}\Omega),\]
     where $\sigma_p$ is the arithmetic Frobenius element of $\Gal(F_\infty / \Qp)$. The element $\sigma_p^{-1} \Omega$ is also a $\Lambda_{\cO_F}(U)$-generator of $S_{F_\infty / F}$. Moreover, it is well known that $\psi\left((1 + \pi)^i\right)$ is $1$ if $i = 0$ and $0$ if $1 \le i \le p-1$. So we have $\psi(v) = 0$ if and only if $v$ is in the submodule
     \[ \bigoplus_{i = 1}^{p-1} (1 + \pi)^i\vp(\NN_F(T)) \htimes_{\cO_F} S_{F_\infty / F} = \vp^*\NN_F(T)^{\psi = 0} \htimes_{\cO_F} S_{F_\infty/F}.\]
   \end{proof}
%
%
%

  \subsection{Recovering unramified twists}

   Let us pick a finite-rank free $\Zp$-module $M$ equipped with a continuous action of $U$, via a homomorphism $\rho: \Lambda_{\Zp}(U) \to \End_{\Zp}(M)$ as above.

   There is a ``twisting'' map from $M \otimes_{\Zp} \Lambda_{\Zp}(U)$ to itself, defined by $m \otimes [u] \mapsto \rho(u)^{-1} m \otimes [u]$ for $u \in U$. This map intertwines two different actions of $U$: on the left-hand side the action given by
   \[ u \cdot (m \otimes [v]) = m \otimes [u^{-1}v]\]
   and on the right the action given by
   \[ u \cdot (m \otimes [v]) = \rho(u) m \otimes [u^{-1} v].\]
   Taking the completed tensor product with $\OFinf$ (endowed with its natural $U$-action) and passing to $U$-invariants, we obtain a bijection
   \[ i_M: M \otimes_{\Zp} S_{F_\infty/F} \rTo^\cong S_{F_\infty/F} \cdot \left(M \otimes_{\Zp} \OFinf \right)^{U}.\]

   \begin{proposition}
    There is a canonical isomorphism
    \begin{equation}
     \label{eq:unramifiedisom}
     \NN_F(T) \otimes_{\cO_F} \left(\OFinf \otimes_{\Zp} M \right)^{U} \rTo \NN_F(T \otimes_{\Zp} M),
    \end{equation}
    commuting with the actions of $\AA^+_{F}$, $\Gamma$, $\vp$ and $\psi$ (where the latter two elements act on $\OFinf$ as the arithmetic Frobenius and its inverse).
   \end{proposition}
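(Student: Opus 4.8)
The plan is to construct the map \eqref{eq:unramifiedisom} directly and then verify it is an isomorphism by reducing to a statement about Wach modules over unramified extensions, where Lemma \ref{lemma:unramifiedphiGammabasechange} applies. First I would observe that $T \otimes_{\Zp} M$ is again a crystalline $\Zp$-representation of $\cG_F$ (since $M$ is unramified, it does not affect the Hodge--Tate weights, and crystallinity is preserved by twisting by an unramified representation). For any finite unramified $K/F$ large enough that $U$ acts on $M$ through $\Gal(K/F)$, the representation $T \otimes_{\Zp} M$ becomes isomorphic, after restriction to $\cG_K$, to a direct sum of copies of $T$; more precisely, $M|_{\cG_K}$ is trivial, so $\NN_K(T \otimes_{\Zp} M) \cong \NN_K(T) \otimes_{\cO_F} M$ as $\AA^+_K$-modules with semilinear $\Gamma$, $\vp$, $\psi$ actions, but \emph{not} compatibly with the residual $\Gal(K/F)$-action, which is twisted by $\rho$. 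Combining this with Lemma \ref{lemma:unramifiedphiGammabasechange}, which gives $\NN_K(T) \cong \NN_F(T) \otimes_{\cO_F} \cO_K$ and similarly for $T \otimes_{\Zp} M$, we get an identification $\NN_F(T \otimes M) \otimes_{\cO_F} \cO_K \cong \NN_F(T) \otimes_{\cO_F} \cO_K \otimes_{\Zp} M$.

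Next I would take $\Gal(K/F)$-invariants of both sides of this identification, being careful about the twist. On the right-hand side, the $\Gal(K/F)$-action is the diagonal one on $\cO_K$ and on $M$ (via $\rho$) and trivial on $\NN_F(T)$; its invariants are exactly $\NN_F(T) \otimes_{\cO_F} (\cO_K \otimes_{\Zp} M)^{\Gal(K/F)}$, and by Lemma \ref{lemma:ranktwistedmodule} (applied at finite level, or by passing to the limit) $(\cO_K \otimes_{\Zp} M)^{\Gal(K/F)}$ is free of rank $d = \rank M$ over $\cO_F$, and stabilises to $(\OFinf \otimes_{\Zp} M)^U$. On the left-hand side, $\NN_F(T \otimes M)$ carries the trivial $\Gal(K/F)$-action on the $\cO_F$-structure coming from Lemma \ref{lemma:unramifiedphiGammabasechange}, so $(\NN_F(T\otimes M) \otimes_{\cO_F} \cO_K)^{\Gal(K/F)} = \NN_F(T \otimes M)$ by Hilbert 90 / faithfully flat descent. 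This yields the isomorphism \eqref{eq:unramifiedisom}. Finally, since all the identifications used (Lemma \ref{lemma:unramifiedphiGammabasechange} and the trivialisation of $M$ over $K$) are compatible with $\AA^+_F$, $\Gamma$, $\vp$, $\psi$ — with $\vp, \psi$ acting on $\cO_K$, hence on $(\OFinf \otimes M)^U$, via the arithmetic Frobenius and its inverse — the resulting map respects all these structures.

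The main obstacle, I expect, is bookkeeping the two competing $\Gal(K/F)$-actions cleanly: the "geometric" action coming from $\cO_K$ as a subring of the $(\vp,\Gamma)$-module formalism, versus the action twisted by $\rho$ that is needed to make $M$ trivial over $K$. One must check that the isomorphism $\NN_K(T \otimes M) \cong \NN_K(T) \otimes_{\cO_F} M$ is canonical and $\Gal(K/F)$-semilinear for the twisted action — this follows from the uniqueness characterisation of Wach modules (the three bulleted conditions in Section \ref{sect:phigammawach}), since $\NN_K(T) \otimes_{\cO_F} M$ manifestly satisfies all three conditions for $T \otimes M$ over $K$ — and then that taking invariants commutes with $\otimes_{\cO_F} \NN_F(T)$, which is clear because $\NN_F(T)$ is a finite free $\AA^+_F$-module and the $\Gal(K/F)$-action on it (in the relevant structure) is trivial. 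Everything else is formal; the canonicity claim is exactly what one gets by unwinding the functoriality of $\NN_K(-)$ together with Lemma \ref{lemma:unramifiedphiGammabasechange}.
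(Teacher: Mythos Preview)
Your argument has a genuine gap: you assume the existence of a finite unramified extension $K/F$ such that the $U$-action on $M$ factors through $\Gal(K/F)$. This need not exist. The group $U$ can contain a copy of $\Zp$, and a continuous $U$-action on a free $\Zp$-module of finite rank can have infinite image (e.g.\ $M=\Zp$ with $U$ acting through an unramified character $\varpi:U\to\Zp^\times$ of infinite order, which is exactly the case of interest in Proposition~\ref{prop:twisting}). For such $M$ there is no finite $K$ over which $M$ trivialises, so the step ``$\NN_K(T\otimes M)\cong\NN_K(T)\otimes_{\cO_F}M$'' never gets off the ground. The descent via Lemma~\ref{lemma:unramifiedphiGammabasechange} that follows is therefore vacuous in the cases that matter.

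The paper bypasses this by a different, more direct route: it invokes the compatibility of Wach modules with tensor products (Berger), reducing to the identification $\NN_F(M)=\AA^+_F\otimes_{\cO_F}(\OFinf\otimes_{\Zp}M)^U$, and then establishes the latter by using the canonical embedding $\OFinf\hookrightarrow\AA$ to place $(\OFinf\otimes_{\Zp}M)^U$ inside $\DD_F(M)$ and checking the three axioms for the Wach module. This argument works uniformly, whether or not $M$ factors through a finite quotient of $U$. Your approach can be repaired by working modulo $p^n$: since $M/p^nM$ is finite, the $U$-action on it \emph{does} factor through some finite $\Gal(K_n/F)$, and one can run your descent argument at each finite level and pass to the inverse limit. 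But carrying this out cleanly (checking compatibility of the isomorphisms as $n$ varies, and identifying the limit of $(\cO_{K_n}\otimes M/p^n)^{\Gal(K_n/F)}$ with $(\OFinf\otimes M)^U$) is more work than the paper's two-line argument.
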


   \begin{proof}
    Wach modules are known to commute with tensor products \cite{berger04}, so it suffices to check that \[ \NN_F(M) = \AA^+_F \otimes_{\cO_F} \left(\OFinf \otimes_{\Zp} M \right)^{U}.\]
    This follows from the fact that there is a canonical embedding of $\OFinf$ into Fontaine's ring $\AA$, hence there is a canonical inclusion
    \[ \left(\OFinf \otimes_{\Zp} M \right)^{U} \subseteq \left(M \otimes_{\Zp} \AA\right)^{H_F} = \DD_F(M).\]
    Since the left-hand side is free of rank $d$ over $\cO_F$, extending scalars to $\AA^+_F$ gives a submodule of $\DD_F(M)$ which is free of rank $d$ over $\AA^+_F$ and clearly satisfies the conditions defining the Wach module $\NN_F(M) \subset \DD_F(M)$.
  \end{proof}

   \begin{remark}
    Suppose (for simplicity) that $F = \Qp$ and $M \cong \Zp$ with $U$ acting via a character $\tau: U \to \Zp^\times$. Since $\left(M \otimes \OFinf \right)^{U}$ is free of rank 1 over $\Zp$, any choice of basis of this space gives a non-canonical isomorphism between $\NN_{\Qp}(T(\tau))$ and $\NN_{\Qp}(T)$ with its $\vp$-action twisted by $\tau(\sigma_p)^{-1}$. However, the isomorphism \eqref{eq:unramifiedisom} \emph{is} canonical and does not depend on any such choice.
   \end{remark}

   \begin{theorem}
    \label{thm:unramifiedtwist}
    There is a canonical isomorphism
    \[ i_{M}: M \otimes_{\Zp} \NN_{F_\infty}(T) \rTo^\cong \NN_{\infty}(M \otimes_{\Zp} T) \]
    which commutes with the actions of $\vp$, $\Gamma$, $\AA^+_{F}$ and $\End_{\cG_F}(M)$, and satisfies
    \[ i_{M}(u \cdot x) = \rho(u)^{-1} u \cdot i_M(x)\]
    for $u \in U$ and $x \in \NN_{F_\infty}(T)$.
   \end{theorem}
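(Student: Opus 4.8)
The plan is to build the map $i_M$ of the theorem by composing three isomorphisms that are already available: the base-change description $\NN_{F_\infty}(T)\cong\NN_F(T)\htimes_{\cO_F}S_{F_\infty/F}$ of the preceding proposition, the bijection $i_M\colon M\otimes_{\Zp}S_{F_\infty/F}\to S_{F_\infty/F}\cdot(M\otimes_{\Zp}\OFinf)^U$ constructed just above, and the isomorphism \eqref{eq:unramifiedisom}. The first thing I would note is that $M\otimes_{\Zp}T$ is again a crystalline $\Zp$-representation of $\cG_F$: the representation $M$ is unramified, since it factors through $U=\Gal(F_\infty/F)$ and $F_\infty/F$ is unramified, hence $M$ is crystalline, and a tensor product of crystalline representations is crystalline. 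Consequently $\NN_F(M\otimes_{\Zp}T)$ is defined, and Definition \ref{def:Dinfty} together with Lemma \ref{lemma:unramifiedphiGammabasechange} yields $\NN_{F_\infty}(M\otimes_{\Zp}T)\cong\NN_F(M\otimes_{\Zp}T)\htimes_{\cO_F}S_{F_\infty/F}$, exactly as for $T$ itself.

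With that in hand I would chain the isomorphisms
\begin{align*}
 M\otimes_{\Zp}\NN_{F_\infty}(T)
 &\cong M\otimes_{\Zp}\bigl(\NN_F(T)\htimes_{\cO_F}S_{F_\infty/F}\bigr)\\
 &\cong \NN_F(T)\htimes_{\cO_F}\bigl(M\otimes_{\Zp}S_{F_\infty/F}\bigr)\\
 &\cong \NN_F(T)\htimes_{\cO_F}\bigl(S_{F_\infty/F}\cdot(M\otimes_{\Zp}\OFinf)^U\bigr)\\
 &\cong \bigl(\NN_F(T)\otimes_{\cO_F}(\OFinf\otimes_{\Zp}M)^U\bigr)\htimes_{\cO_F}S_{F_\infty/F}\\
 &\cong \NN_F(M\otimes_{\Zp}T)\htimes_{\cO_F}S_{F_\infty/F}\\
 &\cong \NN_{F_\infty}(M\otimes_{\Zp}T),
\end{align*}
where the second and fourth isomorphisms are harmless rearrangements of the completed tensor factors (using that $S_{F_\infty/F}$ is free of rank $1$ over $\Lambda_{\cO_F}(U)$ and $(\OFinf\otimes_{\Zp}M)^U$ is free over $\cO_F$), the third is the bijection $i_M$ of the previous subsection, and the fifth is \eqref{eq:unramifiedisom}. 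Taking the composite as the definition of $i_M$, it is canonical because every constituent is.

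For the equivariance, I would argue that \eqref{eq:unramifiedisom} is compatible with $\AA^+_F$, $\Gamma$ and $\vp$ by its construction, while the remaining isomorphisms in the chain act only on the $M$- and $S_{F_\infty/F}$-factors, on which $\AA^+_F$ acts through $\cO_F$, $\Gamma$ acts trivially, and $\vp$ acts through the arithmetic Frobenius of $\OFinf$; this Frobenius preserves $S_{F_\infty/F}$ because $\Gal(F_\infty/\Qp)$ is abelian, so $\sigma_p$ commutes with the $U$-action on $\OFinf$. Compatibility with $\End_{\cG_F}(M)$ is then automatic, since every step is functorial in $M$. The twisting relation $i_M(u\cdot x)=\rho(u)^{-1}\,u\cdot i_M(x)$ should amount to the statement that the bijection $i_M$ of the previous subsection carries the translation action of $U$ on $M\otimes_{\Zp}S_{F_\infty/F}$ to the twisted action on the target, transported through the identifications above; this was built into the twisting map $m\otimes[u]\mapsto\rho(u)^{-1}m\otimes[u]$ used to define that $i_M$, and only needs to be read off.

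The one genuinely delicate point, which I expect to be the main obstacle, is the bookkeeping of the several $U$-actions in play: left translation on $\Lambda_{\OFinf}(U)$, the Galois action on the coefficient ring $\OFinf$, the action $\rho$ on $M$, and the induced action on $\NN_{F_\infty}(M\otimes_{\Zp}T)$. One has to check carefully that the displayed chain really does carry the translation-only action $x\mapsto u\cdot x$ on $M\otimes_{\Zp}\NN_{F_\infty}(T)$ precisely to $x\mapsto\rho(u)^{-1}\,u\cdot i_M(x)$, and not to some other combination of these, and that this is compatible with the asserted $\vp$-equivariance. I expect this to unwind, with no new input, to the intertwining property of the twisting map recorded in the construction of $i_M$ in the previous subsection.
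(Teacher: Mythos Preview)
Your proposal is correct and follows essentially the same approach as the paper: tensor the bijection $i_M\colon M\otimes_{\Zp}S_{F_\infty/F}\to S_{F_\infty/F}\cdot(M\otimes_{\Zp}\OFinf)^U$ with $\NN_F(T)$ and then invoke the isomorphism \eqref{eq:unramifiedisom}. The paper's proof is terser, but your chain of isomorphisms and your discussion of the equivariance and the $U$-action bookkeeping simply spell out what the paper leaves implicit.
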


   \begin{proof}
    This follows immediately by tensoring the map
    \[ i_M : M \otimes_{\Zp} S_{F_\infty / F} \rTo^\cong S_{F_\infty/F} \cdot \left(M \otimes_{\Zp} \OFinf\right)^{U}\]
    with $\NN_F(T)$, and using the isomorphism \eqref{eq:unramifiedisom}.
   \end{proof}


 \section{The 2-variable p-adic regulator}
  \label{sect:regulator}


  \subsection{A lemma on universal norms}

   Let $F$ be a finite unramified extension of $\Qp$, and let $T$ be a $\Zp$-rep\-re\-sen\-ta\-tion of $\cG_{F}$.

   \begin{definition}
    \label{def:goodcrystalline}
    The representation $T$ is \emph{good crystalline} if $V = T[1/p]$ is crystalline and has non-negative Hodge-Tate weights.
   \end{definition}

   By \cite[Theorem A.3]{berger03}, for any good crystalline $T$ there is a canonical isomorphism
   \[ H^1_{\Iw}(F(\mu_{p^\infty}), T) \rTo^\cong \left( \pi^{-1} \NN_F(T) \right)^{\psi = 1}.\]
   We define a ``residue'' map
   \[ r_{F, V} : H^1_{\Iw}(F(\mu_{p^\infty}), T) \to \Dcris(F, V)\]
   by composing the above isomorphism with the natural map
   \[ \pi^{-1} \NN_F(T) \to \frac{\pi^{-1} \NN_F(V)}{\NN_F(V)} \cong \frac{\NN_F(V)}{\pi\NN_F(V)} \cong \Dcris(F, V).\]
   As is shown in the proof of \cite[Theorem A.3]{berger03}, the image of the map $r_{F, V}$ is contained in $\Dcris(F, V)^{\vp = 1}$; in particular, if the latter space is zero, then $H^1_{\Iw}(F(\mu_{p^\infty}), T) \cong \NN_F(T)^{\psi = 1}$.

   We now consider the behaviour of these maps in unramified towers. Let $F_\infty$ be an infinite unramified $p$-adic Lie extension of $F$, so we may write $F_\infty = \bigcup_n F_n$ where $F_0 / F$ is a finite extension and $F_n$ is the unramified extension of $F_0$ of degree $p^n$. As we have seen above, $\Dcris(F_n, V) \cong \Dcris(V) \otimes_{F} F_n$. Let us formally write $\Dcris(F_\infty, V) = F_\infty \otimes_{F} \Dcris(F, V)$.

   \begin{proposition}
    There is an $n_0$ (depending on $V$) such that
    \[ \Dcris(F_\infty, V)^{\vp = 1} = \Dcris(F_n, V)^{\vp = 1}.\]
   \end{proposition}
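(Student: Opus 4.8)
The plan is to analyze the $\varphi$-action on $\Dcris(F_\infty, V) = F_\infty \otimes_F \Dcris(F, V)$ and show that the $\varphi$-fixed points stabilize at a finite level. First I would unwind the structure: writing $d_0 = \Dcris(F, V)$, a $\varphi$-semilinear operator on $d_0$ over $F$ (semilinear over the arithmetic Frobenius $\sigma_p$ acting on $F_0 = F$, since $F$ is unramified), the space $\Dcris(F_n, V) = F_n \otimes_F d_0$ carries the diagonal $\varphi$, acting as $\sigma_p$ on the $F_n$-factor and as the original $\varphi$ on $d_0$. The key point is that $\varphi$ on $F_n$ has order $p^n$ (it generates $\Gal(F_n/F)$ up to the order of $\Gal(F/\Qp)$), so $\varphi$ has a genuinely growing order on the coefficient field, while its action on the finite-dimensional $d_0$ is fixed once and for all.

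The second step is to make the stabilization precise. A $\varphi$-fixed vector $x \in F_n \otimes_F d_0$ satisfies $\varphi(x) = x$; applying $\varphi$ repeatedly $[F_n : F]$ times returns us to a $\Frob^{[F_n:F]} = \id$ situation on $F_n$, forcing $x$ to lie in $(F_n \otimes_F d_0)^{\varphi^{[F_n:F]}}$-type constraints, but the cleanest route is linear algebra over $F$. Fixing an $F$-basis of $d_0$ in which $\varphi$ is given by a matrix $A \in \GL_d(F)$ composed with $\sigma_p$-semilinearity, solving $\varphi(x) = x$ amounts to a semilinear system whose solution space over $F_n$ is controlled by the eigenvalues of the linearization $A \cdot A^{\sigma_p} \cdots A^{\sigma_p^{f-1}}$ (an honest $F$-linear operator, where $f = [F:\Qp]$) together with $p^n$-th roots of unity type conditions coming from $F_n$. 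Since $\varphi$ has slope-zero behaviour only on the unramified part, and $d_0$ is a fixed finite-dimensional space, the list of eigenvalues $\lambda$ that can be hit is finite and fixed; the extra solutions that appear when passing from $F_n$ to $F_{n+1}$ would require new roots of unity of $p$-power order to satisfy the constraint $\lambda = \sigma_p^{p^n}$-compatible, which can happen only for finitely many $n$. Concretely, $\Dcris(F_\infty, V)^{\varphi=1}$ is an increasing union of the finite-dimensional $F$-vector spaces $\Dcris(F_n, V)^{\varphi=1}$, and each sits inside the fixed $\Qp$-vector space $\Dcris(\overline{F}, V)^{\varphi = 1}$, which is finite-dimensional (of dimension at most $d$); an increasing union of subspaces of a finite-dimensional space must stabilize, giving the required $n_0$.

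Actually the quickest argument, which I would present as the main line, avoids eigenvalue bookkeeping entirely: each $\Dcris(F_n, V)^{\varphi=1}$ is a $\Qp$-subspace of $\Dcris(F_\infty, V)^{\varphi=1}$, the inclusions are compatible, and $\Dcris(F_\infty, V)^{\varphi=1} = \bigcup_n \Dcris(F_n, V)^{\varphi=1}$ because any element of the union lies in some $F_n \otimes_F d_0$ by definition of $F_\infty = \bigcup_n F_n$. It then suffices to bound $\dim_{\Qp} \Dcris(F_\infty, V)^{\varphi=1}$ uniformly. For this, note $\Dcris(F_\infty, V)^{\varphi=1} \subseteq (V \otimes_{\Qp} \Bcris)^{\varphi=1}$ interpreted appropriately — more precisely, it injects into $\Dcris(\widehat{F}_\infty^{\mathrm{nr}}, V)^{\varphi=1}$ or one can argue that $\varphi=1$ vectors over the unramified closure still form a space of dimension $\le d$ because $\varphi$ acting on $\Dcris(\Qp^{\mathrm{nr}} \otimes_{\Qp_0} \Dcris(V))$ has fixed points cut out by a fixed finite set of Frobenius eigenvalues being $1$ after adjusting by the $\sigma_p$-action on scalars, and for each such eigenvalue the solution space is a line. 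Hence the union is finite-dimensional and must stabilize.

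The main obstacle will be handling the semilinearity cleanly: the operator $\varphi$ on $\Dcris(F_n, V)$ is $\sigma_p$-semilinear, not $F_n$-linear, so "$\varphi=1$" is not the kernel of an $F_n$-linear map and standard dimension arguments do not apply directly over $F_n$. The resolution is to always work over $\Qp$ (or over $F^{\sigma_p} = \Qp_p$'s appropriate subfield), where $\varphi$ \emph{is} linear, and use that $(F_n \otimes_F d_0)^{\varphi=1}$ is a $\Qp$-vector space; then the classical fact that $\Dcris(V)^{\varphi=1}$ has $\Qp$-dimension $\le d$ for the base field, combined with the observation that passing to larger unramified $F_n$ enlarges $\varphi=1$ vectors by at most the finitely many ``missing'' Frobenius eigenvalues becoming trivial, gives the bound. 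Once the union is known to be finite-dimensional the stabilization $n_0$ is immediate, and the asserted equality $\Dcris(F_\infty, V)^{\varphi=1} = \Dcris(F_n, V)^{\varphi=1}$ holds for all $n \ge n_0$.
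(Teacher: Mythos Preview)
Your overall strategy coincides with the paper's: observe that the $\Dcris(F_n,V)^{\vp=1}$ form an increasing chain of $\Qp$-subspaces whose union is $\Dcris(F_\infty,V)^{\vp=1}$, and then bound the $\Qp$-dimension of the latter, forcing stabilization. The paper's execution of the dimension bound is considerably crisper than yours, though. It simply notes that $F_\infty$ is a field on which $\vp$ acts as the arithmetic Frobenius $\sigma_p$, so $(F_\infty)^{\vp=1}=\Qp$; then one applies the standard semilinear descent lemma (Fontaine, \emph{Repr\'esentations $p$-adiques semi-stables}, Propositions 1.4.2(i) and 1.6.1): for a finite-dimensional vector space $D$ over a field $L$ equipped with a bijective semilinear endomorphism $\vp$ whose fixed subfield is $\Qp$, one has $\dim_{\Qp} D^{\vp=1}\le \dim_L D$. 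Applied to $D=F_\infty\otimes_F\Dcris(F,V)$, this gives $\dim_{\Qp}\Dcris(F_\infty,V)^{\vp=1}\le \dim_{\Qp} V$ in one line.

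Your attempts at the bound are less solid. The proposed embedding $\Dcris(F_\infty,V)^{\vp=1}\subseteq (V\otimes_{\Qp}\Bcris)^{\vp=1}$ proves nothing, since the right-hand side is infinite-dimensional over $\Qp$. The eigenvalue approach---tracking which Frobenius eigenvalues ``become trivial'' as $n$ grows---is morally sound but would require care when $\vp$ is not diagonalizable, and your claim that ``for each such eigenvalue the solution space is a line'' is not generally true for semilinear operators. None of this is fatal, but you should replace these remarks with the one-line descent argument above; it avoids all eigenvalue bookkeeping and handles the semilinearity you flag as the main obstacle automatically.
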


   \begin{proof}
    Since the spaces $\Dcris(F_n, V)^{\vp = 1}$ are an increasing sequence of finite-dimensional $\Qp$-vector spaces, it suffices to show that their union $\Dcris(F_\infty, V)^{\vp = 1}$ is finite-dimensional over $\Qp$. This follows from the fact that $F_\infty$ is a field, and $\vp$ acts on $F_\infty$ as the arithmetic Frobenius $\sigma_p$, so $(F_\infty)^{\vp = 1} = \Qp$. Thus
    \[\dim_{\Qp} \left(F_\infty \otimes_F \Dcris(V)\right)^{\sigma_p \otimes \vp = 1} \le \dim_{\Qp} V,\]
    by Propositions 1.4.2(i) and 1.6.1 of \cite{fontaine94b}.
   \end{proof}

   \begin{proposition}
    Let $\Dcris(T)$ be the $\Zp$-lattice in $\Dcris(V)$ which is the image of $\NN_F(T)$. If $m \ge n \ge n_0$, $x \in H^1_{\Iw}(F_{m}(\mu_{p^\infty}), T)$, and $y = \operatorname{cores}_{F_m/F_{n}}(x) \in H^1_{\Iw}(F_{n}(\mu_{p^\infty}), T)$, then we have
    \[ r_{F_{n}, V}(y) \in p^{m-n} \cO_{F_n} \otimes_{\cO_F} \Dcris(T).\]
   \end{proposition}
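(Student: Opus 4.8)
The plan is to transport the problem to Wach modules, where corestriction in the unramified tower $F_m/F_n$ becomes a trace map on the coefficient ring, and then to use that the image of $r_{F_m,V}$ lies in $\Dcris(F_m,V)^{\vp=1}$, which by the preceding proposition is already defined at level $n_0$; hence the trace down to level $n\ge n_0$ acts on $r_{F_m,V}(x)$ simply as multiplication by the degree $[F_m:F_n]=p^{m-n}$. Throughout, $\Tr_{F_m/F_n}$ denotes the trace map $\cO_{F_m}\to\cO_{F_n}$.

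First I would record the behaviour of the residue maps under corestriction. Under Berger's isomorphism $H^1_{\Iw}(F_k(\mu_{p^\infty}),T)\cong(\pi^{-1}\NN_{F_k}(T))^{\psi=1}$ together with the base-change isomorphism $\NN_{F_k}(T)\cong\NN_F(T)\otimes_{\cO_F}\cO_{F_k}$ of Lemma \ref{lemma:unramifiedphiGammabasechange}, the corestriction map $\operatorname{cores}_{F_m/F_n}$ corresponds to $\id\otimes\Tr_{F_m/F_n}$ (acting on the coefficient ring); this is the functoriality of the Fontaine isomorphism in the base field for unramified extensions, and $\Tr_{F_m/F_n}$ commutes with $\vp$, $\psi$ and the $\Gamma$-action, so this map does preserve the relevant $\psi=1$ submodules. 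Since the recipe defining $r_{F_k,V}$, the composite $\pi^{-1}\NN_{F_k}(T)\to\pi^{-1}\NN_{F_k}(V)/\NN_{F_k}(V)\cong\Dcris(F_k,V)$, is manifestly compatible with these identifications, one obtains
\[ r_{F_n,V}\circ\operatorname{cores}_{F_m/F_n}=\bigl(\Tr_{F_m/F_n}\otimes\id_{\Dcris(T)}\bigr)\circ r_{F_m,V}. \]
Furthermore, the image of $r_{F_k,V}$ on cohomology with $T$-coefficients is contained in the $\cO_{F_k}$-lattice $\cO_{F_k}\otimes_{\cO_F}\Dcris(T)$ of $\Dcris(F_k,V)$, since the image of $\pi^{-1}\NN_{F_k}(T)$ in $\pi^{-1}\NN_{F_k}(V)/\NN_{F_k}(V)$ is $\NN_{F_k}(T)/\pi\NN_{F_k}(T)$.

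Next I would locate $z:=r_{F_m,V}(x)$. By the above, $z\in\cO_{F_m}\otimes_{\cO_F}\Dcris(T)$, and, as recalled in the construction of the residue map, the image of $r_{F_m,V}$ lies in $\Dcris(F_m,V)^{\vp=1}$. Since $m\ge n_0$, the preceding proposition gives $\Dcris(F_m,V)^{\vp=1}=\Dcris(F_\infty,V)^{\vp=1}=\Dcris(F_{n_0},V)^{\vp=1}$, so $z$ lies in the image of $\Dcris(F_{n_0},V)=F_{n_0}\otimes_F\Dcris(V)$ inside $\Dcris(F_m,V)=F_m\otimes_F\Dcris(V)$. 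As $n_0\le n$, the group $\Gal(F_m/F_n)$ acts through the first tensor factor and fixes $F_{n_0}$ pointwise, hence fixes $z$; combined with $z\in\cO_{F_m}\otimes_{\cO_F}\Dcris(T)$, the identity $\cO_{F_m}^{\Gal(F_m/F_n)}=\cO_{F_n}$, and the $\cO_F$-freeness of $\Dcris(T)$, this forces $z\in\cO_{F_n}\otimes_{\cO_F}\Dcris(T)$. Finally, $z$ being $\Gal(F_m/F_n)$-invariant, $\Tr_{F_m/F_n}(z)=[F_m:F_n]\,z=p^{m-n}z$, whence, by the displayed identity, $r_{F_n,V}(y)=p^{m-n}z\in p^{m-n}\cO_{F_n}\otimes_{\cO_F}\Dcris(T)$, as claimed.

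The only point requiring genuine care is the first step: that $\operatorname{cores}_{F_m/F_n}$ really does correspond to the untwisted trace map $\id\otimes\Tr_{F_m/F_n}$ on Wach modules, compatibly with $\psi$; this forces one to pin down the normalisations of the Fontaine and Berger isomorphisms in unramified towers. Everything else is routine bookkeeping with lattices and Galois invariants.
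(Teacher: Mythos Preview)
Your proof is correct and follows essentially the same idea as the paper's: both rest on the commutative square relating corestriction with the trace on $\Dcris$, together with the fact that $r_{F_m,V}(x)$ already lies in the stabilised subspace $\Dcris(F_{n_0},V)^{\vp=1}$, so that the trace from $F_m$ to $F_n$ acts as multiplication by $[F_m:F_n]=p^{m-n}$. The only difference is presentational: the paper phrases this as an induction, applying the square for $F_{k+1}/F_k$ at each $k$ with $n\le k<m$, whereas you carry out the trace in a single step and explicitly verify the integrality $z\in\cO_{F_n}\otimes_{\cO_F}\Dcris(T)$ via Galois invariants, which the paper leaves implicit.
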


   \begin{proof}
    This follows from the fact that for any $n \ge 0$, we have a commutative diagram
    \[
     \begin{diagram}
      H^1_{\Iw}(F_{n+1}(\mu_{p^\infty}), T) & \rTo^{r_{F_{n+1}, V}} & \Dcris(F_{n+1}, V)^{\vp = 1}\\
      \dTo^{\operatorname{cores}_{F_{n+1}/F_n}} & & \dTo^{\operatorname{Tr}_{F_{n+1}/F_n}} \\
      H^1_{\Iw}(F_n(\mu_{p^\infty}), T) & \rTo^{r_{F_{n}, V}} & \Dcris(F_n, V)^{\vp = 1}.
     \end{diagram}
    \]
    If $n \ge n_0$, then the trace map on the right-hand side is simply multiplication by $[F_{n+1} : F_n] = p$.
   \end{proof}

   \begin{theorem}
    \label{thm:unramunivnorms}
    Let $F_\infty$ be an infinite unramified $p$-adic Lie extension of $F$, and let $x \in H^1_{\Iw}(F_\infty(\mu_{p^\infty}), T)$. Then for any $n \ge 0$, the image $y$ of $x$ in $H^1_{\Iw}(F(\mu_{p^\infty}), T) \cong \left(\pi^{-1}\NN_F(T)\right)^{\psi = 1}$ is contained in $\NN_F(T)^{\psi = 1}$.
   \end{theorem}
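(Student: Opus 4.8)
The strategy is to exploit the "residue" map $r_{F,V}$ together with the norm‐compatibility estimate of the preceding proposition. Write $F_\infty = \bigcup_n F_n$ as in the setup, and let $y_m = \operatorname{cores}_{F_m/F}(x) \in H^1_{\Iw}(F(\mu_{p^\infty}), T) \cong (\pi^{-1}\NN_F(T))^{\psi=1}$, so that $y = y_0$ (or rather $y$ is the image of $x$, which is the common value obtained by corestricting from any finite level, independently of $m$, since corestriction maps are compatible in the tower defining Iwasawa cohomology). By the previous proposition, for any $m \ge n_0$ we have $r_{F,V}(y) \in p^{m} \cO_F \otimes_{\cO_F} \Dcris(T) = p^m \Dcris(T)$, after taking $n = 0$ in the statement (and replacing $F$ by $F_0$ and absorbing $[F_0:F]$, which is prime to $p$, into the argument; alternatively one runs the argument with $F$ replaced by $F_{n_0}$ throughout and then corestricts the extra finite prime-to-$p$ step down to $F$, which does not change $p$-divisibility). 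Letting $m \to \infty$, we conclude $r_{F,V}(y) \in \bigcap_m p^m \Dcris(T) = 0$.

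Now recall the definition of $r_{F,V}$: it is the composite of the Fontaine–Berger isomorphism $H^1_{\Iw}(F(\mu_{p^\infty}),T) \cong (\pi^{-1}\NN_F(T))^{\psi=1}$ with the natural map $\pi^{-1}\NN_F(T) \to \pi^{-1}\NN_F(V)/\NN_F(V) \cong \Dcris(F,V)$. Thus $r_{F,V}(y) = 0$ means precisely that the image of $y$ in $\pi^{-1}\NN_F(V)/\NN_F(V)$ vanishes, i.e. $y \in \NN_F(V) \cap (\pi^{-1}\NN_F(T))^{\psi=1}$. Since $y$ already lies in $(\pi^{-1}\NN_F(T))^{\psi=1} \subseteq \pi^{-1}\NN_F(T)$, and also in $\NN_F(V)$, and since $\NN_F(V) \cap \pi^{-1}\NN_F(T) = \NN_F(T)$ (this is an elementary statement about the lattice $\NN_F(T) \subset \NN_F(V)$: a vector with coefficients in $\frac1\pi \cO_F[[\pi]] \cap F[[\pi]]\cdot(\text{integrality from } T) = \cO_F[[\pi]]$), we get $y \in \NN_F(T)^{\psi=1}$, as required.

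The main obstacle is the bookkeeping around the finite prime-to-$p$ bottom layer $F_0/F$: the divisibility proposition is stated for the unramified $\ZZ_p$-tower $\{F_n\}$ above $F_0$, whereas the theorem is phrased over the base $F$ itself. I would handle this by first proving $r_{F_0, V}(\operatorname{cores}_{F_\infty/F_0}(x)) = 0$ by the $p$-adic limiting argument above, then applying the corestriction compatibility of the $r$-maps (via the commutative square relating $r_{F_1,V}$ and $r_{F_0,V}$, iterated, or rather its analogue for the single step $F_0/F$, where the vertical map on the right is the trace $\operatorname{Tr}_{F_0/F}$); since $r_{F,V}(y)$ is the trace of something that is already zero, it too is zero. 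The only other point requiring a line of care is the identity $\NN_F(V) \cap \pi^{-1}\NN_F(T) = \NN_F(T)$ inside $\NN_F(V)[\pi^{-1}]$, which follows because $\NN_F(T)$ is free over $\AA_F^+ = \cO_F[[\pi]]$ and $\NN_F(V) = \NN_F(T)[1/p]$, so an element of $\NN_F(V)$ lying in $\pi^{-1}\NN_F(T)$ has coordinates in $F[[\pi]] \cap \pi^{-1}\cO_F[[\pi]] = \cO_F[[\pi]]$ once we also know its image in $\Dcris(F,V)$ is integral — but in fact vanishing of $r_{F,V}(y)$ gives exactly the stronger statement that the image in $\pi^{-1}\NN_F(V)/\NN_F(V)$ is zero, so no integrality subtlety remains.
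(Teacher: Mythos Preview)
Your proposal is correct and follows essentially the same argument as the paper: use the divisibility estimate of the preceding proposition to force $r_{F,V}(y)$ to be infinitely $p$-divisible, hence zero, and then read off that $y \in \NN_F(T)^{\psi=1}$. The paper's own proof is a single sentence pointing at exactly this mechanism; your additional bookkeeping around the finite prime-to-$p$ layer $F_0/F$ and the lattice identity $\NN_F(V) \cap \pi^{-1}\NN_F(T) = \NN_F(T)$ just makes explicit what the paper leaves to the reader.
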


   \begin{proof}
    This follows immediately from the preceding proposition, since $r_{F_{n}, V}(y)$ must be divisible by arbitrarily large powers of $p$ and hence is zero.
   \end{proof}


  \subsection{The regulator map}
   \label{sect:2variableregulator}

   For the rest of Section \ref{sect:regulator}, we assume that $T$ is a good crystalline representation of $\cG_{F}$, for $F$ a finite unramified extension of $\Qp$, and we let $F_\infty$ be any unramified $p$-adic Lie extension of $F$ with Galois group $U$ as before. We define $K_\infty = F_\infty(\mu_{p^\infty})$, and $G = \Gal(K_\infty / F) \cong U \times \Gamma$.

   \begin{proposition}
    \label{prop:yagercohomology2}
    We have a canonical isomorphism
    \[ H^1_{\Iw}(K_\infty ,T) \cong \left(\pi^{-1}\NN_{F_\infty}(T)\right)^{\psi = 1}.\]
    If either $F_\infty / F$ is infinite, or $T$ has no quotient isomorphic to the trivial representation, then we have
    \[  H^1_{\Iw}(K_\infty, T) \cong \NN_{F_\infty}(T)^{\psi = 1}.\]
   \end{proposition}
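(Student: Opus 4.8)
The plan is to obtain the first isomorphism by passing to the limit in the corresponding isomorphism for each finite layer, using the compatibility of the Fontaine–Berger description of Iwasawa cohomology with the trace maps, and then to deduce the refined statement (with $\NN$ in place of $\pi^{-1}\NN$) by combining the residue map with the universal-norm argument already established in Theorem \ref{thm:unramunivnorms}.

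First I would recall that for each finite unramified $K$ with $F \subseteq K \subseteq F_\infty$, Berger's theorem \cite[Theorem A.3]{berger03} gives a canonical isomorphism $H^1_{\Iw}(K(\mu_{p^\infty}), T) \cong (\pi^{-1}\NN_K(T))^{\psi=1}$, and that the isomorphisms $\NN_K(T) \cong \NN_F(T) \otimes_{\cO_F} \cO_K$ of Lemma \ref{lemma:unramifiedphiGammabasechange} are compatible with the trace maps on both sides (trace/corestriction on Iwasawa cohomology, trace on the Wach-module side). Taking the inverse limit over $K$ and using Definition \ref{def:Dinfty} together with the fact that the functor $(\pi^{-1}\NN_{(-)}(T))^{\psi=1}$ is left-exact (so commutes with the inverse limit), I get
\[ H^1_{\Iw}(K_\infty, T) = \varprojlim_K H^1_{\Iw}(K(\mu_{p^\infty}), T) \cong \varprojlim_K \left(\pi^{-1}\NN_K(T)\right)^{\psi=1} \cong \left(\pi^{-1}\NN_{F_\infty}(T)\right)^{\psi=1}, \]
the last step because taking $\psi=1$ invariants commutes with the completed tensor product defining $\NN_{F_\infty}(T) = \NN_F(T) \htimes_{\cO_F} S_{F_\infty/F}$. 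This gives the first assertion.

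For the refined statement, when $T$ has no trivial quotient the residue map $r_{F,V}$ lands in $\Dcris(F,V)^{\vp=1}=0$, so already at the bottom layer $\left(\pi^{-1}\NN_F(T)\right)^{\psi=1} = \NN_F(T)^{\psi=1}$, and the same holds at every layer $K$ (since $V$ has no trivial $\cG_K$-quotient either, as $K/\Qp$ is unramified so $\Qp \subseteq \Dcris(K,V)^{\vp=1}$ would force a trivial $\cG_F$-quotient); passing to the limit gives $H^1_{\Iw}(K_\infty,T)\cong\NN_{F_\infty}(T)^{\psi=1}$. When $F_\infty/F$ is infinite, I would instead invoke Theorem \ref{thm:unramunivnorms} directly: writing the first isomorphism compatibly with the projection $H^1_{\Iw}(K_\infty,T)\to H^1_{\Iw}(F(\mu_{p^\infty}),T)$, any class $x$ in $H^1_{\Iw}(K_\infty,T)$ lies in $H^1_{\Iw}(F_\infty(\mu_{p^\infty}),T)$, so by that theorem its image in $\left(\pi^{-1}\NN_F(T)\right)^{\psi=1}$ actually lies in $\NN_F(T)^{\psi=1}$; since this holds after projecting to every finite layer $K$ in place of $F$, the element of $\left(\pi^{-1}\NN_{F_\infty}(T)\right)^{\psi=1}$ corresponding to $x$ lies in $\varprojlim_K \NN_K(T)^{\psi=1} = \NN_{F_\infty}(T)^{\psi=1}$.

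The main obstacle I anticipate is the bookkeeping in the first step: one must check that Berger's isomorphism is genuinely functorial in $K$ with respect to the trace maps, with the normalizations of $\vp$ on $\cO_K$ (arithmetic Frobenius) matching on both sides, and that forming $\psi=1$-invariants really does commute with the relevant inverse limits and completed tensor products — this is where the compactness/Hausdorffness of the Yager module (as recorded after its definition) is needed so that no $\varprojlim^1$ term obstructs exactness. Once that is in place the rest is a formal consequence of the results already proved, in particular Theorem \ref{thm:unramunivnorms}.
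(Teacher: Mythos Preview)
Your proof takes essentially the same route as the paper's: take the inverse limit of Berger's finite-layer isomorphisms $H^1_{\Iw}(K(\mu_{p^\infty}),T)\cong(\pi^{-1}\NN_K(T))^{\psi=1}$, using the trace-compatibility you identify, to get the first isomorphism; and for the refined statement in the infinite case, apply Theorem \ref{thm:unramunivnorms} layer by layer. The paper's proof is terser but identical in substance.

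One small gap: the parenthetical claim that ``$V$ has no trivial $\cG_K$-quotient either'' does not follow from $V$ having no trivial $\cG_F$-quotient. A nontrivial unramified character of $\cG_F$ whose order divides $[K:F]$ gives a counterexample, and the argument you sketch via $\Dcris(K,V)^{\vp=1}$ does not repair this (having a $\vp$-fixed line in $\Dcris(K,V)$ need not descend to one in $\Dcris(F,V)$). The paper handles the finite-$F_\infty$ case simply by reducing to $F_\infty=F$ and citing \cite[Theorem A.1]{berger03}, i.e.\ it effectively reads the ``no trivial quotient'' hypothesis over the top field $F_\infty$; you should do the same rather than try to propagate the hypothesis from $F$ up to $K$.
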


   \begin{proof}
    If $F_\infty / F$ is a finite extension, we may assume $F_\infty = F$, and this is \cite[Theorem A.1]{berger03}.

    If $F_\infty / F$ is an infinite extension, then we note that for each finite subextension $K / F$ contained in $F_\infty$ we have an isomorphism
    \[ H^1_{\Iw}(K(\mu_{p^\infty}),T) \cong \left(\pi^{-1}\NN_{K}(T)\right)^{\psi=1},\]
    and if $L / K$ are two such fields, then the corestriction map
    \[ H^1_{\Iw}(L(\mu_{p^\infty}),T)\rTo H^1_{\Iw}(K(\mu_{p^\infty}),T)\]
    corresponds to the maps
    \[ \pi^{-1}\NN_{L}(T)\rTo \pi^{-1}\NN_{K}(T)\]
    induced from the trace map $\cO_L \to \cO_K$. By Theorem \ref{thm:unramunivnorms}, we have an isomorphism
    \[
     H^1_{\Iw}(K_\infty, T) = \varprojlim_K \left( \pi^{-1} \NN_{K}(T)\right)^{\psi=1} \cong \varprojlim_K \NN_{K}(T)^{\psi=1}=
     \NN_{F_\infty}(T)^{\psi=1},
    \]
    which finishes the proof.
   \end{proof}

   As shown in \cite[Proposition 2.11]{leiloefflerzerbes11}, we have a $\Lambda_{\cO_F}(\Gamma)$-equivariant embedding
   \[ \big(\vp^*\NN_F(T)\big)^{\psi=0}\subset \cH_{F}(\Gamma) \otimes_{F} \Dcris(V),\]
   which is continous with respect to the weak topology on $\vp^* \NN_F(T)^{\psi=0}$ and the usual Fr\'echet topology on $\cH_{F}(\Gamma)$. Moreover, we have a continuous injection
   \[ S_{F_\infty / F} \into \Lambda_{\OFinf}(U) \into \cH_{\Finf}(U).\]
   Tensoring these together we obtain a continuous, $\Lambda_{\cO_F}(G)$-linear map
   \begin{multline*}
    \big(\vp^*\NN_F(T)\big)^{\psi=0} \htimes_{\cO_F} S_{F_\infty / F} \into \cH_{\Finf}(U) \htimes_{\cO_F} \cH_F(\Gamma) \otimes_{F} \Dcris(V) \\ = \cH_{\Finf}(G) \otimes_{F} \Dcris(V).
   \end{multline*}

   \begin{definition}
    \label{def:2variableregulator}
    We define the $p$-adic regulator
    \[ \cL^{G}_V : H^1_{\Iw}(K_\infty, T) \rTo \cH_{\Finf}(G) \otimes_{F} \Dcris(V)\]
    to be the composite map
    \begin{align*}
     H^1_{\Iw}(K_\infty, T) & \rTo^\cong \NN_{F_\infty}(T)^{\psi=1} \cong \left(\NN_F(T) \htimes_{\cO_F} S_\infty\right)^{\psi = 1}\\
     & \rTo^{1- \vp} \big(\vp^*\NN_F(T)\big)^{\psi=0} \htimes_{\cO_F} S_\infty \\
     & \rTo \cH_{\Finf}(G) \otimes_{F} \Dcris(V).
    \end{align*}
    Here, we use that $\vp^* \NN_{F_\infty}(T)^{\psi = 0} \cong \vp^*\NN_F(T)^{\psi=0} \htimes_{\Zp} S_\infty$ by Proposition \ref{prop:kernelofpsi}.
   \end{definition}

   By construction, $\cL^{G}_V$ is a morphism of $\Lambda_{\cO_F}(G)$-modules. As suggested by the notation, we will usually invert $p$ and regard $\cL^{G}_V$ as a map on $H^1_{\Iw}(K_\infty, V)$, associating to each compatible system of cohomology classes in $H^1_{\Iw}(K_\infty, V)$ a distribution on $G$ with values in $\Finf \otimes_{F} \Dcris(V)$.

    We can summarise the properties of the map we have constructed by the following theorem:

   \begin{theorem}
    \label{thm:localregulator}
    Let $F$ be a finite unramified extension of $\Qp$, and $K_\infty$ a $p$-adic Lie extension of $F$ with Galois group $G$ such that
    \[ F(\mu_{p^\infty}) \subseteq K_\infty \subset F \cdot \Qp^{\mathrm{ab}}.\]

    Let $T$ be a crystalline representation of $\cG_{F}$ with non-negative Hodge--Tate weights, and assume that either $K_\infty / F(\mu_{p^\infty})$ is infinite, or $T$ has no quotient isomorphic to the trivial representation.

    Then there exists a morphism of $\Lambda_{\cO_F}(G)$-modules
    \[ \mathcal{L}_V^G : H^1_{\Iw}(K_\infty, T) \rTo \cH_{\Finf}(G) \otimes_{F} \Dcris(V),\]
    where $F_\infty$ is the maximal unramified subfield of $K_\infty$, such that:
    \begin{enumerate}
     \item for any finite unramified extension $K / F$ contained in $K_\infty$, we have a commutative diagram
     \[
      \begin{diagram}
       H^1_{\Iw}(K_\infty / \Qp, V) &&\rTo^{\mathcal{L}_V^G} &&\cH_{\Finf}(G) \otimes_{F} \Dcris(V)\\
       \dTo & & & &\dOnto \\
       H^1_{\Iw}(K(\mu_{p^\infty}), V )& \rTo^{\mathcal{L}_V^{G'}} & \cH_{K}(G') \otimes_{F} \Dcris(V) & \rInto & \cH_{\Finf}(G') \otimes_{F} \Dcris(V).
      \end{diagram}
     \]
     Here $G' = \Gal(K(\mu_{p^\infty}) / \Qp)$, the right-hand vertical arrow is the map on distributions corresponding to the projection $G \onto G'$, and the map $\mathcal{L}_V^{G'}$ is defined by
     \[ \mathcal{L}_V^{G'} = \sum_{\sigma \in \Gal(K / F)} [\sigma] \cdot \mathcal{L}^{\Gamma}_{K, V}(\sigma^{-1} \circ x),\]
     where $\cL^\Gamma_{K, V}$ is the Perrin-Riou regulator map for $K(\mu_{p^\infty}) / K$.
     \item For any $x \in H^1_{\Iw}(F_\infty(\mu_{p^\infty}) / \Qp, V)$ and any character $\eta$ of $\Gamma$, the distribution $\pr^{\eta}(\cL_{G, V}(x))$ on $U$, which is defined by twisting by $\eta$ and pushing forward along the projection to $U$, is bounded.
    \end{enumerate}
    Moreover, the conditions (1) and (2) above uniquely determine the morphism $\mathcal{L}_V^G$.
   \end{theorem}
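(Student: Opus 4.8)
The plan is to split the proof into three parts: existence of $\mathcal{L}_V^G$, verification of properties (1) and (2), and uniqueness. For existence I would simply take the map $\mathcal{L}_V^G$ constructed in Definition~\ref{def:2variableregulator}; the hypotheses of the theorem (namely $F(\mu_{p^\infty}) \subseteq K_\infty \subset F\cdot\Qp^{\mathrm{ab}}$, together with the assumption that either $K_\infty/F(\mu_{p^\infty})$ is infinite or $T$ has no trivial quotient) guarantee that $K_\infty = F_\infty(\mu_{p^\infty})$ for $F_\infty$ the maximal unramified subextension, that $G \cong U \times \Gamma$, and that Proposition~\ref{prop:yagercohomology2} gives the identification $H^1_{\Iw}(K_\infty, T) \cong \NN_{F_\infty}(T)^{\psi=1}$ needed for the construction to make sense. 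That the resulting map is $\Lambda_{\cO_F}(G)$-linear is immediate from the construction, since each of the three arrows in Definition~\ref{def:2variableregulator} is $\Lambda_{\cO_F}(G)$-equivariant.

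For property~(1), the key point is that the construction of $\mathcal{L}_V^G$ is compatible with passage to finite unramified subextensions $K/F$ of $F_\infty$. Concretely, the projection $G \onto G' = \Gal(K(\mu_{p^\infty})/\Qp)$ corresponds on the level of Yager modules to the trace map $S_{F_\infty/F} \to S_{K/F}$ of Proposition~\ref{prop:reduction}, and $S_{K/F} = \bigoplus_{\sigma \in \Gal(K/F)}[\sigma]\,\Lambda_{\cO_F}(\Gamma)\cdot(\text{period})$, so tracing through the identifications $\NN_{F_\infty}(T)^{\psi=1} \cong (\NN_F(T) \htimes S_{F_\infty/F})^{\psi=1}$ and $H^1_{\Iw}(K(\mu_{p^\infty}),T) \cong (\pi^{-1}\NN_K(T))^{\psi=1}$ — the latter being $\NN_K(T)^{\psi=1} \cong \NN_F(T)^{\psi=1}\otimes_{\cO_F}\cO_K$ by Theorem~\ref{thm:unramunivnorms} — reduces the claimed commutativity to the corresponding statement for the one-variable Perrin-Riou map, i.e.\ to the fact that $\mathcal{L}^{\Gamma}_{K,V}$ applied to the $(\vp,\Gamma)$-module side recovers the Perrin-Riou regulator. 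One then checks that the twisting by $[\sigma]$ and summation over $\Gal(K/F)$ appearing in the formula for $\mathcal{L}_V^{G'}$ is exactly the image of the Yager-module decomposition under $y_{K/F}$. Property~(2) is a boundedness statement: specialising $\mathcal{L}_V^G$ at a character $\eta$ of $\Gamma$ and pushing to $U$, the $\Dcris$-valued distribution one obtains is the image of an element of $\big(\vp^*\NN_F(T)\big)^{\psi=0}\htimes_{\cO_F} S_{F_\infty/F}$ evaluated at $\eta$; since $S_{F_\infty/F} \subset \Lambda_{\OFinf}(U)$ consists of bounded measures and the $(\vp,\Gamma)$-module factor contributes a fixed element of $\Dcris(V)\otimes_F\cH_F(\Gamma)$ whose specialisation at a fixed $\eta$ lies in $\Finf\otimes_F\Dcris(V)$, boundedness follows from the fact that the Mellin transform identifies $\Lambda_{\OFinf}(U)$ with $(\AA^+_{\Finf})^{\psi=0}$, i.e.\ with power series having bounded (integral) coefficients.

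For uniqueness, I would argue that (1) already pins down $\mathcal{L}_V^G$ \emph{after} projecting to $\cH_{\Finf}(G')$ for every finite unramified $K/F$ in $F_\infty$. If $F_\infty/F$ is finite, there is nothing more to do. If $F_\infty/F$ is infinite, then two candidate maps agreeing after every such projection differ by a map landing in $\bigcap_K \ker\big(\cH_{\Finf}(G) \to \cH_{\Finf}(G')\big)$; since $\cH_{\Finf}(G) = \cH_{\Finf}(U)\htimes_{\Finf}\cH_{\Finf}(\Gamma)$ and $\bigcap_n \ker\big(\cH_{\Finf}(U)\to\Finf[U/U_n]\big) = 0$ (as an analytic function vanishing to all orders at the origin on the rigid ball is zero — here one uses that $U$ has a $\Zp$-factor and that $\cH_{\Finf}(U)$ is a ring of convergent power series), this intersection is $0$, so the difference vanishes. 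I would note that property~(2) is not needed for uniqueness when $F_\infty/F$ is infinite, but it is what rigidifies the map in the finite case combined with the freedom in choosing the Yager period; alternatively one observes that the map on $H^1_{\Iw}(K_\infty,V)$ is determined by its values on the dense $\Lambda$-submodule coming from $\NN_{F_\infty}(T)^{\psi=1}$ together with continuity. The main obstacle I anticipate is bookkeeping: making the identification in property~(1) between the Yager-module decomposition $S_{K/F} \cong \bigoplus_\sigma [\sigma]\,\Lambda_{\cO_F}(\Gamma)$ and the explicit formula $\mathcal{L}_V^{G'} = \sum_\sigma [\sigma]\cdot\mathcal{L}^\Gamma_{K,V}(\sigma^{-1}\circ x)$ precise, keeping careful track of how $\vp$ acts on the coefficient ring $\cO_K$ (via $\sigma_p$) and how this interacts with the $1-\vp$ step and with the normalisation of the Perrin-Riou map; the intervention of the arithmetic Frobenius on both the Yager side and inside $\mathcal{L}^\Gamma_{K,V}$ is where sign and twisting errors would most easily creep in.
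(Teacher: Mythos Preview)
Your treatment of existence and of properties (1) and (2) follows the paper's approach closely and is essentially correct. The gap is in your uniqueness argument.

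You claim that property (1) alone determines $\mathcal{L}_V^G$ when $F_\infty/F$ is infinite, because
\[
\bigcap_n \ker\bigl(\cH_{\Finf}(U)\to\Finf[U/U_n]\bigr)=0,
\]
and you justify this by saying ``an analytic function vanishing to all orders at the origin is zero''. But the kernel of $\cH_{\Finf}(U)\to\Finf[U/U_n]$ is not the ideal of functions vanishing to high order at the origin; in the power-series model with $X=\gamma-1$ it is the ideal generated by $(1+X)^{p^n}-1$, i.e.\ the functions vanishing at $\zeta-1$ for every $p^n$-th root of unity $\zeta$. These zeros do not accumulate inside the open disc, and there are nonzero elements of $\cH_{\Finf}(U)$ lying in every such kernel: for example $\log(1+X)$, which is precisely the element $\ell_0$ up to normalisation. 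So the intersection is \emph{not} zero, and your statement that ``property (2) is not needed for uniqueness when $F_\infty/F$ is infinite'' is false.

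The paper's argument uses (1) and (2) together. Property (1) pins down the value of $\mathcal{L}_V^G(x)$ at every character $\eta\varpi$ with $\varpi$ of finite order; property (2) says that for each fixed $\eta$ the function $\varpi\mapsto\mathcal{L}_V^G(x)(\eta\varpi)$ is a \emph{bounded} rigid-analytic function on the character space of $U$. The point is that a bounded function on the open unit disc vanishing at all $\zeta-1$ for $p$-power roots of unity $\zeta$ must be zero (by Weierstrass preparation in $\Lambda_{\OFinf}(U)$, a nonzero element has only finitely many zeros), whereas an unbounded one need not be. Boundedness is exactly what rules out the $\log(1+X)$ obstruction. You have also reversed the roles of the finite and infinite cases: when $F_\infty/F$ is finite, property (1) with $K=F_\infty$ already gives everything; it is in the infinite case that (2) does the work.
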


   \begin{proof}
    Let us show first that the map $\mathcal{L}_{V}^G$ defined above satisfies (1) and (2). Let $T$ be a choice of lattice in $V$.

    Let $K$ be any finite unramified extension of $F$ contained in $F_\infty$. Then the diagram
    \[
     \begin{diagram}
      \NN_{F_\infty}(T)^{\psi = 1} & \rTo^{1-\vp} & \vp^*\NN_{F_\infty}(T)^{\psi = 0}\\
      \dTo & & \dTo\\
      \NN_K(T)^{\psi = 1} & \rTo^{1-\vp} & \vp^*\NN_{K}(T)^{\psi = 0}
     \end{diagram}
    \]
    evidently commutes; and we also have a commutative diagram
    \[
     \begin{diagram}
      \vp^* \NN_F(T)^{\psi = 0} \htimes_{\cO_F} S_{F_\infty/F} & \rTo^{i_{F_\infty / F}} & \cH_{\Finf}(G) \otimes_{F} \Dcris(V)  \\
      \dTo & & \dTo\\
      \vp^* \NN_K(T)^{\psi = 0} \otimes_{\Zp} S_{K/F} & \rTo^{i_{K/F}} & \cH_{\Finf}(G') \otimes_{F} \Dcris(V),
     \end{diagram}
    \]
   where the arrows $i_{F_\infty / F}$ and $i_{K/F}$ are induced by the inclusions $S_{F_\infty/F} \into \cH_{\Finf}(U)$ and $S_{K/F} \into \cO_K[U'] \into \Finf[U']$, where $U' = \Gal(K/F)$, and the right vertical arrow is the one arising from the projection $G \to G'$. If we combine the two diagrams using the identification $\NN_K(T) \cong \NN_F(T) \otimes_F S_{K/F}$ and similarly for $F_\infty$, the composite of the maps on the top row is the definition of $\mathcal{L}_{V}^G$, and the composite of the arrows on the bottom row is the map $\mathcal{L}_V^{G'}$. The commutativity of these diagrams therefore proves (1).

     Property (2) is clear, since the image of $\Lambda_{\Finf}(U)$ in $\cH_{\Finf}(U)$ is exactly the bounded distributions.

    We now show that these properties characterise $\mathcal{L}^G_{V}$ uniquely. It suffices to show that (1) and (2) determine the value of $\mathcal{L}^G_{V}(x)$ at any character of $G$. Such a character has the form $\eta \varpi$ where $\eta$ is a character of $\Gamma$ and $\varpi$ is a character of $U$. Property (1) uniquely determines the value at $\eta \times \varpi$ if $\varpi$ has finite order, and property (2) implies that for each fixed $\eta$, the function $\varpi \mapsto \mathcal{L}^G_{V}(x)(\eta \times \varpi)$ is a bounded analytic function on the rigid space parametrising characters of $U$, and hence is determined uniquely by its values on finite-order $\varpi$'s.
   \end{proof}

   We now record some properties of the map $\cL^G_{V}$.

   \begin{proposition}
    \label{prop:orderofdistributions}
    Let $W \subseteq \Dcris(V)$ be a $\vp$-invariant $F$-subspace such that all eigenvalues of $\vp$ on the quotient $Q = \Dcris(V) / W$ have $p$-adic valuation $\ge -h$ (where we normalise the $p$-adic valuation on $\Qpb$ such that $v_p(p) = 1$).

    Then for any $x \in H^1_{\Iw}(K_\infty, V)$, the image of $x$ under
    \[ H^1_{\Iw}(K_\infty, V) \rTo^{\cL^G_V} \cH_{\Finf}(G) \otimes_{F} \Dcris(V) \to \cH_{\Finf}(G) \otimes_{F} Q\]
    lies in $D^{(0,h)}(G, \Finf) \otimes Q$, where $D^{(0,h)}(G, \Finf)$ is the space of $\Finf$-valued distributions of order $(0, h)$ with respect to the subgroups $(U, \Gamma)$.
   \end{proposition}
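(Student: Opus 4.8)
The plan is to unwind the definition of $\cL^G_V$---which realizes it, essentially, as a completed tensor product over $\cO_F$ of the one-variable object $\big(\vp^*\NN_F(T)\big)^{\psi=0}\hookrightarrow\cH_F(\Gamma)\otimes_F\Dcris(V)$ with the Yager module $S_{F_\infty/F}$---and then to estimate the order of the resulting distribution in the two directions separately. The $U$-direction will contribute order $0$, since the Yager module consists of bounded distributions; the $\Gamma$-direction will contribute order $\le h$, by a one-variable order estimate for Perrin--Riou's regulator applied to the $\vp$-stable quotient $Q$.

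In more detail: by Definition~\ref{def:2variableregulator} together with Proposition~\ref{prop:kernelofpsi}, $\cL^G_V(x)$ is the image in $\cH_{\Finf}(G)\otimes_F\Dcris(V)$ of an element of $\big(\vp^*\NN_F(T)\big)^{\psi=0}\htimes_{\cO_F}S_{F_\infty/F}$, the embedding being the tensor product of the map of \cite[Proposition~2.11]{leiloefflerzerbes11} with the inclusion $S_{F_\infty/F}\into\Lambda_{\OFinf}(U)\into\cH_{\Finf}(U)$ (compare property~(2) of Theorem~\ref{thm:localregulator}). Since the image of an Iwasawa algebra inside its distribution algebra is exactly the space of bounded distributions, and since $D^{(0,h)}(G,\Finf)$ contains $\Lambda_{\OFinf}(U)\htimes_{\cO_F}\cH_{F,h}(\Gamma)$---where $\cH_{F,h}(\Gamma)$ denotes the $F$-valued distributions on $\Gamma$ of order $\le h$---the statement reduces to showing that the composite
\[
\big(\vp^*\NN_F(T)\big)^{\psi=0}\;\into\;\cH_F(\Gamma)\otimes_F\Dcris(V)\;\onto\;\cH_F(\Gamma)\otimes_F Q
\]
has image in $\cH_{F,h}(\Gamma)\otimes_F Q$. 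One should also check, using the power-series description of $\cH_{\Finf}(G)$ from Section~\ref{sect:iwasawaalgs}, that membership in $D^{(0,h)}(G,\Finf)\otimes_F Q$ may be verified one direction at a time, i.e.\ that the order filtration respects the decomposition $G\cong U\times\Gamma$.

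For the remaining one-variable statement the plan is to use Berger's comparison isomorphism $\BrigF[1/t]\otimes_{\AA^+_F}\NN_F(V)\cong\BrigF[1/t]\otimes_F\Dcris(V)$ together with the inclusion $\NN_F(V)\subseteq\vp^*\NN_F(V)$, whose cokernel is killed by a power of $q=\vp(\pi)/\pi$. The key refinement is that after composing with the projection $\Dcris(V)\onto Q$, the image of $\vp^*\NN_F(V)$ acquires $t$-denominators (equivalently, denominators in $q$) of order bounded by $h$ rather than by the full Hodge--Tate range of $V$, because on a $\vp$-stable quotient these denominators are governed by the $\vp$-slopes there. Granting this, and passing through the Mellin isomorphism $\cH_F(\Gamma)\cong(\BrigF)^{\psi=0}$---under which a $t$-denominator of order $r$ raises the order of the associated distribution by at most $r$---one concludes that the displayed composite lands in $\cH_{F,h}(\Gamma)\otimes_F Q$, and the proposition follows by combining this with the $U$-direction bound.

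The hard part is exactly this refined one-variable estimate. A Wach-module basis adapted to the Hodge filtration, as in \cite{leiloefflerzerbes11}, yields only the crude bound by the top Hodge--Tate weight of $V$; improving it to a bound in terms of the $\vp$-slopes on $Q$ forces one to work with a $\vp$-stable subspace $W$ which in general does not come from a subrepresentation of $V$, so there is no honest sub-Wach-module to pass to the quotient by. One must instead control directly the interaction between the $\pi$-adic (equivalently $t$-adic) filtration on $\vp^*\NN_F(V)$ and the Newton filtration on $\Dcris(V)$, invoking weak admissibility to move between the Hodge and Newton data---this is where the real work sits. Once the one-variable estimate is in hand, no further analysis in the $U$-direction is required: it is entirely subsumed in the inclusion $S_{F_\infty/F}\subseteq\Lambda_{\OFinf}(U)$.
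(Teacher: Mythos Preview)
Your reduction to the one-variable statement is correct and is exactly what the paper does: its entire proof reads ``This is immediate from the definition of the 2-variable regulator map and the corresponding statement for the 1-variable regulator, which is well known.'' The tensor-product description you give, together with $S_{F_\infty/F}\subset\Lambda_{\OFinf}(U)$, is precisely the ``definition'' part, and the remaining one-variable input is treated as a black box.

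Where you diverge is in treating that one-variable bound as ``the hard part'' and sketching a Wach-module argument for it. Two remarks. First, the paper regards this as standard and does not prove it; it is the familiar fact, going back to Perrin-Riou and ubiquitous in the $p$-adic $L$-function literature for modular forms, that projection of $\cL^\Gamma_V$ to a $\vp$-eigenline of eigenvalue $\alpha$ yields a distribution of order $v_p(\alpha^{-1})$. Second, your sketch does not work as stated: the $q$- (equivalently $t$-) divisibility in the inclusion $\NN_F(V)\subset\vp^*\NN_F(V)$ is governed by the \emph{Hodge} filtration, not the Newton slopes, so the assertion that ``on a $\vp$-stable quotient these denominators are governed by the $\vp$-slopes there'' is not justified --- and weak admissibility (Newton on or above Hodge) gives an inequality in the wrong direction for this purpose. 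The standard route to the one-variable bound is instead via the interpolation property (cf.\ the formulae in Appendix~\ref{appendix:cyclo} or their antecedents in Perrin-Riou and Berger): the factor $\vp^n$ appearing in $\cL^\Gamma_V(x)(\chi^j\omega)$ for $\omega$ of conductor $n$, once projected to $Q$, has operator norm $O(p^{hn})$ when all $\vp$-eigenvalues on $Q$ have valuation $\ge -h$, and this is exactly the growth condition defining order $h$. In short, your reduction is right; you should simply cite the one-variable result rather than attempt to re-derive it by the route you describe.
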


   \begin{proof}
    This is immediate from the definition of the 2-variable regulator map and the corresponding statement for the 1-variable regulator, which is well known.
    \end{proof}
%

   \begin{proposition}
    \label{prop:galoisequivariance}
    If $u \in U$ and $\tilde u$ is the unique lifting of $u$ to $G$ acting trivially on $F(\mu_{p^\infty})$, then for any $x \in H^1_{\Iw}(K_\infty, V)$ we have
    \[\cL^G_{V}(x)^u = [\tilde u] \cdot \cL^G_{V}(x).\]
   \end{proposition}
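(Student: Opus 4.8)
The plan is to trace the Galois-equivariance through the construction of $\cL^G_V$ step by step, using the properties of the constituent maps established earlier. Recall that $\cL^G_V$ is the composite of three maps: the Fontaine-type isomorphism $H^1_{\Iw}(K_\infty, V) \cong \NN_{F_\infty}(V)^{\psi=1}$, the operator $1 - \vp$ landing in $(\vp^*\NN_F(V))^{\psi=0} \htimes_{\cO_F} S_{F_\infty/F}$, and finally the embedding into $\cH_{\Finf}(G) \otimes_F \Dcris(V)$. Here the relevant $U$-action on the source of $\cL^G_V$ is the action coming from the identification of $U$ with a quotient of $G$ (equivalently, the action via $\tilde u$), whereas on the target $\cH_{\Finf}(G)$ there is also the "coefficient" action of $U = \Gal(F_\infty/F)$ on $\Finf$, which is what the superscript $(-)^u$ denotes.

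First I would reduce the statement to the behaviour of the Yager module. Both $h^1_{\Iw}$ and $1-\vp$ are $\Lambda_{\cO_F}(G)$-equivariant (the first by the last sentence of the Fontaine isomorphism subsection applied with the larger group $\Gal(K_\infty/\Qp)$, the second since $\vp$ commutes with the $G$-action as noted before Proposition \ref{prop:kernelofpsi}), so they intertwine the $U$-action on source and target \emph{without} any coefficient twist. Thus the only place a twist can enter is the final embedding $(\vp^*\NN_F(V))^{\psi=0} \htimes_{\cO_F} S_{F_\infty/F} \into \cH_{\Finf}(G) \otimes_F \Dcris(V)$, which is built from the fixed map $(\vp^*\NN_F(V))^{\psi=0} \into \cH_F(\Gamma) \otimes_F \Dcris(V)$ tensored with the inclusion $S_{F_\infty/F} \into \Lambda_{\OFinf}(U) \into \cH_{\Finf}(U)$. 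So the claim comes down to the following: for $\omega \in S_{F_\infty/F}$, viewed inside $\cH_{\Finf}(U)$, one has $\omega^u = [\tilde u]\cdot \omega$ where $\omega^u$ means applying the coefficient action of $u$ on $\Finf$ and $[\tilde u]$ is the group element. But this is precisely the defining property of the Yager module, $S_{F_\infty/F} = \{f \in \Lambda_{\OFinf}(U) : f^u = [u] f\}$, established in the Proposition identifying $S_{F_\infty/F}$; note $[\tilde u]$ projects to $[u] \in \cH_{\Finf}(U)$ since $\tilde u$ maps to $u$ under $G \to U$, and acts trivially on the $\cH_F(\Gamma)$-factor.

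Putting this together: if $x \in H^1_{\Iw}(K_\infty, V)$ corresponds to $\sum_i n_i \otimes \omega_i \in \NN_F(V)^{\psi=1} \htimes_{\cO_F} S_{F_\infty/F}$ (after applying $h^{-1}$ and $1-\vp$, say with $n_i \in (\vp^*\NN_F(V))^{\psi=0}$), then $\tilde u \cdot x$ corresponds to $\sum_i n_i \otimes [\tilde u]\omega_i = \sum_i n_i \otimes \omega_i^u$, and applying the embedding and using that $n_i$ maps into the $U$-coefficient-invariant part $\cH_F(\Gamma)\otimes_F\Dcris(V)$, we get $\cL^G_V(\tilde u \cdot x) = \big(\cL^G_V(x)\big)^u$. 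Since $\tilde u \cdot x = x$ as the element of $H^1_{\Iw}(K_\infty,V)$ is the \emph{same} inverse-limit sequence—wait, rather: by $\Lambda_{\cO_F}(G)$-linearity $\cL^G_V(\tilde u \cdot x) = [\tilde u]\cdot \cL^G_V(x)$, and combining with the previous identity yields $\big(\cL^G_V(x)\big)^u = [\tilde u]\cdot\cL^G_V(x)$, which is the assertion.

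The main obstacle, such as it is, is purely bookkeeping: one must be careful to distinguish the two $U$-actions on $\cH_{\Finf}(G)$—the "geometric" action via $G$ and the "arithmetic" coefficient action on $\Finf$—and to check that the reduction map $(\vp^*\NN_F(T))^{\psi=0} \into \cH_F(\Gamma)\otimes_F\Dcris(V)$ genuinely has image in the part where the coefficient-$U$ acts trivially (which holds because this map is defined over $F$, not $\Finf$). Once the notational dust settles, the proof is a one-line consequence of the defining relation $\omega^u = [u]\omega$ for the Yager module, exactly as in \cite[\S 2]{yager82}. I would present it as: "This is immediate from the definition of $\cL^G_V$ and the defining property of $S_{F_\infty/F}$, noting that the map $(\vp^*\NN_F(T))^{\psi=0}\into\cH_F(\Gamma)\otimes_F\Dcris(V)$ is $F$-linear and hence fixed by the coefficient action of $U$."
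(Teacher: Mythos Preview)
Your argument is correct and matches the paper's (implicit) approach: the paper states this proposition without proof, presumably regarding it as immediate from the construction, and your unpacking is exactly the right one --- the only nontrivial input is the defining relation $\omega^u = [u]\omega$ of the Yager module, combined with the fact that the embedding $(\vp^*\NN_F(T))^{\psi=0} \into \cH_F(\Gamma)\otimes_F\Dcris(V)$ is defined over $F$. Your final one-line summary is precisely how the authors would have phrased it had they included a proof; the meandering middle paragraph (with the ``wait, rather'') could simply be deleted, since the clean argument is: the image of $\cL^G_V$ lies in $\cH_F(\Gamma)\otimes_F\Dcris(V)$ tensored with $S_{F_\infty/F}$, the coefficient action of $u$ is trivial on the first factor and equals multiplication by $[u]$ on the second by the defining property of $S_{F_\infty/F}$, and $[u]$ on the $U$-component corresponds to $[\tilde u]$ on $G$ since $\tilde u$ is trivial on $\Gamma$.
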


   \begin{proposition}
    If $m_1, \dots, m_d$ are a $\Lambda_{\cO_F}(\Gamma)$-basis of $\vp^* \NN_F(T)^{\psi = 0}$, and $\omega$ a $\Lambda_{\cO_F}(U)$-basis of $S_\infty$, then the image of the $p$-adic regulator is contained in the $\Lambda_{\cO_F}(G)$-span of the vectors
    \[ \left(i_\infty(m_j \htimes \omega)\right)_{j = 1, \dots, d}.\]
   \end{proposition}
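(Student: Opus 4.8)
The plan is to trace through the definition of $\cL^G_V$ and check that each arrow in the composite lands inside the $\Lambda_{\cO_F}(G)$-span of the claimed vectors. First I would recall that by Proposition~\ref{prop:yagercohomology2} the source $H^1_{\Iw}(K_\infty, T)$ is identified with $\NN_{F_\infty}(T)^{\psi=1}$, and the regulator is the composite of $(1-\vp)$ followed by the embedding $i_\infty$ of $\big(\vp^*\NN_F(T)\big)^{\psi=0} \htimes_{\cO_F} S_\infty$ into $\cH_{\Finf}(G) \otimes_F \Dcris(V)$. So it suffices to show that the image of $(1-\vp)$, namely $\big(\vp^*\NN_{F_\infty}(T)\big)^{\psi=0}$, maps under $i_\infty$ into the stated span. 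By Proposition~\ref{prop:kernelofpsi} this image is exactly $\big(\vp^*\NN_F(T)\big)^{\psi=0} \htimes_{\cO_F} S_{F_\infty/F}$.

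Next I would use the hypotheses: $m_1,\dots,m_d$ form a $\Lambda_{\cO_F}(\Gamma)$-basis of $\big(\vp^*\NN_F(T)\big)^{\psi=0}$, and $\omega$ is a $\Lambda_{\cO_F}(U)$-basis of $S_\infty$. Hence every element of $\big(\vp^*\NN_F(T)\big)^{\psi=0} \htimes_{\cO_F} S_{F_\infty/F}$ can be written as $\sum_{j=1}^d \lambda_j \cdot (m_j \htimes \omega)$ for suitable $\lambda_j$ in the completed tensor product $\Lambda_{\cO_F}(\Gamma) \htimes_{\cO_F} \Lambda_{\cO_F}(U) = \Lambda_{\cO_F}(G)$, using that $G \cong U \times \Gamma$. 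Since $i_\infty$ is $\Lambda_{\cO_F}(G)$-linear (this is built into the construction in Definition~\ref{def:2variableregulator}), we get $i_\infty\!\left(\sum_j \lambda_j (m_j \htimes \omega)\right) = \sum_j \lambda_j \cdot i_\infty(m_j \htimes \omega)$, which visibly lies in the $\Lambda_{\cO_F}(G)$-span of the $i_\infty(m_j \htimes \omega)$. Composing with the isomorphism $H^1_{\Iw}(K_\infty, T) \cong \NN_{F_\infty}(T)^{\psi=1}$ and the $(1-\vp)$ map (both $\Lambda_{\cO_F}(G)$-equivariant) completes the argument.

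\textbf{Main obstacle.} The only genuine point to be careful about is the identification of completed tensor products with Iwasawa algebras: one must check that the coefficients $\lambda_j$ arising from expressing an element of the completed tensor product in terms of the basis $(m_j \htimes \omega)$ really lie in $\Lambda_{\cO_F}(G) = \Lambda_{\cO_F}(\Gamma) \htimes_{\cO_F} \Lambda_{\cO_F}(U)$, rather than in some larger ring. This follows because $\big(\vp^*\NN_F(T)\big)^{\psi=0}$ is free over $\Lambda_{\cO_F}(\Gamma)$ with basis $(m_j)$ and $S_\infty$ is free over $\Lambda_{\cO_F}(U)$ with basis $\omega$, so the completed tensor product is free over $\Lambda_{\cO_F}(G)$ with basis $(m_j \htimes \omega)$; the uniqueness of coefficients is then automatic. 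Everything else is a formal unwinding of definitions already established above, so I would keep the write-up short.
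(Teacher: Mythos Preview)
Your proposal is correct and is exactly the argument the paper has in mind: the proposition is stated without proof there because it is immediate from the definition of $\cL^G_V$ in Definition~\ref{def:2variableregulator} together with Proposition~\ref{prop:kernelofpsi}, which is precisely the unwinding you carry out. The point you flag as the ``main obstacle'' (that $\Lambda_{\cO_F}(\Gamma)\htimes_{\cO_F}\Lambda_{\cO_F}(U)=\Lambda_{\cO_F}(G)$ so that the $(m_j\htimes\omega)$ form a $\Lambda_{\cO_F}(G)$-basis of the completed tensor product) is indeed the only thing to check, and you handle it correctly.
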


   \begin{proposition}
    \label{prop:injectivereg}
    If $F_\infty / F$ is infinite, the regulator map $\cL^{G}_V$ is injective.
   \end{proposition}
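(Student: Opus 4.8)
The plan is to unwind Definition~\ref{def:2variableregulator}, identify the kernel of $\cL^G_V$ with the $\vp$-invariants of $\NN_{F_\infty}(T)$, and show that these vanish when $F_\infty/F$ is infinite. Recall that (since $F_\infty/F$ is infinite) $\cL^G_V$ is the composite
\[
 H^1_{\Iw}(K_\infty, T) \cong \NN_{F_\infty}(T)^{\psi=1} \xrightarrow{1-\vp} \big(\vp^*\NN_{F_\infty}(T)\big)^{\psi=0} \hookrightarrow \cH_{\Finf}(G)\otimes_F\Dcris(V),
\]
where the last arrow is the injection exhibited in the construction of $\cL^G_V$ (and one inverts $p$ at the end). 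Since $\psi$ is a left inverse of $\vp$, any $x$ with $(1-\vp)x=0$ automatically satisfies $\psi x=\psi(\vp x)=x$; hence $\ker\cL^G_V = \NN_{F_\infty}(T)^{\vp=1}$, which indeed lies inside $\NN_{F_\infty}(T)^{\psi=1}$. So, using $\ZZ_p$-flatness, it suffices to prove $\NN_{F_\infty}(T)^{\vp=1}=0$.

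To compute $\NN_{F_\infty}(T)^{\vp=1}$, write $\NN_{F_\infty}(T)=\varprojlim_{F\subseteq K\subseteq F_\infty}\NN_K(T)$, the limit along the trace maps. Under the identification $\NN_K(T)\cong\cO_K\otimes_{\cO_F}\NN_F(T)$ of Lemma~\ref{lemma:unramifiedphiGammabasechange}, the transition maps are $\Tr_{\cO_{K'}/\cO_K}\otimes\id$, and $\vp$ acts diagonally through the arithmetic Frobenius $\sigma_p$ on $\cO_K$ and through the Frobenius of the Wach module $\NN_F(T)$. Since $\Gal(K'/\Qp)$ is abelian, $\sigma_p$ commutes with every $\Tr_{\cO_{K'}/\cO_K}$, so $\vp$ commutes with all transition maps; hence $\NN_{F_\infty}(T)^{\vp=1}=\varprojlim_K\NN_K(T)^{\vp=1}$.

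Write $F_\infty=\bigcup_n F_n$ with $[F_n:F_0]=p^n$ as in~\S\ref{sect:2variableregulator}. The crux is the claim that this tower stabilizes: there is an $n_0$ such that $\NN_{F_n}(T)^{\vp=1}=\NN_{F_{n_0}}(T)^{\vp=1}$ as submodules of $\NN_{F_n}(T)=\cO_{F_n}\otimes_{\cO_{F_{n_0}}}\NN_{F_{n_0}}(T)$ for all $n\ge n_0$. Granting this, for $m\ge n\ge n_0$ the transition map $\NN_{F_m}(T)^{\vp=1}\to\NN_{F_n}(T)^{\vp=1}$ is the restriction of $\Tr_{F_m/F_n}$ to the submodule $\NN_{F_{n_0}}(T)^{\vp=1}$, which $\Gal(F_m/F_n)$ fixes pointwise, so it is multiplication by $[F_m:F_n]=p^{m-n}$. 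A compatible system $(x_n)_{n\ge n_0}$ then satisfies $x_n\in p^{m-n}\NN_{F_{n_0}}(T)^{\vp=1}$ for all $m$, hence $x_n\in\bigcap_k p^k\NN_{F_{n_0}}(T)=0$ since $\NN_{F_{n_0}}(T)$ is $p$-adically separated; thus the inverse limit vanishes --- this is the same divisibility mechanism as in the proof of Theorem~\ref{thm:unramunivnorms}. To establish the stabilization claim I would use Berger's comparison isomorphism $\BB^+_{\rig,K}\otimes_{\AA^+_K}\NN_K(V)\cong\BB^+_{\rig,K}\otimes_K\Dcris(K,V)$, which is $\vp$-equivariant (with diagonal $\vp$ on the right) and functorial in unramified $K$: since $V$ has non-negative Hodge--Tate weights the $\vp$-slopes on $\Dcris(K,V)$ are $\ge 0$, while $\vp$ multiplies $t$ by $p$, so a Newton-polygon argument gives $\big(\BB^+_{\rig,K}\otimes_K\Dcris(K,V)\big)^{\vp=1}=\Dcris(K,V)^{\vp=1}$; consequently $\NN_K(V)^{\vp=1}$ embeds, $\vp$-equivariantly and compatibly in $K$, into the finite-dimensional space $\Dcris(K,V)^{\vp=1}$, which stabilizes for $K$ large by the Proposition of~\S\ref{sect:2variableregulator} on $\Dcris(F_\infty,V)^{\vp=1}$. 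It then remains to promote this to the integral statement by checking, via the base change of Lemma~\ref{lemma:unramifiedphiGammabasechange}, that the stabilized subspace of $\NN_{F_n}(V)$ already lies in $\NN_{F_{n_0}}(V)$, after which one intersects with $\NN_{F_n}(T)$ (using $\cO_{F_n}\cap F_{n_0}=\cO_{F_{n_0}}$). I expect precisely this last point --- upgrading stabilization ``inside an abstract finite-dimensional space'' to the integral equality $\NN_{F_n}(T)^{\vp=1}=\NN_{F_{n_0}}(T)^{\vp=1}$ --- to be the main obstacle; the remaining steps are routine.
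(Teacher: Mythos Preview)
Your overall strategy coincides with the paper's: identify $\ker\cL^G_V$ with $\NN_{F_\infty}(T)^{\vp=1}$, rewrite this as $\varprojlim_n \NN_{F_n}(T)^{\vp=1}$, show that the tower stabilizes, and then use that the transition maps become multiplication by $p$ to force the limit to vanish. The divisibility endgame is exactly as in the paper.

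Where you diverge is in the stabilization step, and here the paper is much cleaner. Instead of going through Berger's comparison $\BB^+_{\rig,K}\otimes_{\AA^+_K}\NN_K(V)\cong\BB^+_{\rig,K}\otimes_K\Dcris(K,V)$ and a Newton-polygon argument, the paper simply observes
\[
 \NN_{F_n}(T)^{\vp=1}\subset \DD_{F_n}(T)^{\vp=1}=(T\otimes_{\Zp}\AA)^{H_{F_n},\,\vp=1}=T^{H_{F_n}},
\]
using only that $\AA^{\vp=1}=\Zp$. Since $T$ is a finitely generated $\Zp$-module, the increasing chain $T^{H_{F_n}}$ stabilizes, and everything is already integral --- so the ``main obstacle'' you flag (promoting rational stabilization in $\Dcris$ to an integral statement in $\NN_{F_n}(T)$) simply evaporates. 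This also avoids the slope bookkeeping, which in your sketch is in fact reversed: with the paper's convention that the cyclotomic character has Hodge--Tate weight $+1$, non-negative Hodge--Tate weights correspond to \emph{non-positive} $\vp$-slopes on $\Dcris(V)$ (e.g.\ $\vp$ acts by $p^{-1}$ on $\Dcris(\Qp(1))$), so your Newton-polygon justification as written does not go through, even though the embedding $\NN_K(V)^{\vp=1}\hookrightarrow\Dcris(K,V)^{\vp=1}$ you want can be obtained by other means.

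In short: your proof is on the right track and can be completed, but the route through $\Dcris$ and Berger's comparison is unnecessarily heavy; the one-line inclusion into $T^{H_{F_n}}$ gives stabilization immediately and at the integral level.
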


   \begin{proof}
    As before, let us identify $F_\infty$ with the unramified $\Zp$-extension of a finite extension $F_0 / F$. Let $\pr_n:\NN_{F_\infty}(T) \rightarrow \NN_{F_n}(T)$ be the projection map; for $x\in\NN_{F_\infty}(T)$, we have $\vp(x) = x$ if and only if $\pr_n(x) \in \NN_{F_n}(T)^{\vp = 1}$ for all $n$. However,
    \[ \NN_{F_n}(T)^{\vp = 1} \subset \DD_{F_n}(T)^{\vp=1} = T^{H_{F_n}}.\]
    As $T$ is a finitely generated $\Zp$-module, there must be some $m$ such that $T^{H_{F_n}} = T^{H_{F_m}}$ for all $n \ge m$. However, for $n \ge m$ the projection map $T^{H_{F_{n+1}}} \to T^{H_{F_{n}}}$ is multiplication by $p$; so $\pr_m(x)$ is divisible by arbitrarily high powers of $p$ and is thus zero. Hence $x = 0$.
   \end{proof}

   The next statement requires some extra notation. Let $\varpi$ be a continuous character $U \to \cO_E^\times$, where $E$ is some finite extension of $\Qp$. Then there is an obvious isomorphism
   \begin{equation}
    \label{eq:iwacohotwist}
    H^1_{\Iw}(K_\infty, T(\varpi)) \cong H^1_{\Iw}(K_\infty, T)(\varpi).
   \end{equation}
   Moreover, via the isomorphism $V \otimes_{\Qp} \Bcris \cong \Dcris(V) \otimes_{F} \Bcris$, we can regard the space
   \[ \Dcris(V(\varpi)) = \left( E \otimes_{\Qp} V \otimes_{\Qp} \Bcris\right)^{\cG_{\Qp} = \varpi^{-1}}\]
   as a subspace of $E \otimes_{\Qp} \Dcris(V) \otimes_{F} \Bcris$.
   Since the natural inclusion $\Finf \into \Bcris$ induces an injection
   \[ (E \otimes_{\Qp} K_\infty)^{U = \varpi^{-1}} \into (E \otimes_{\Qp} \Bcris)^{\cG_{\Qp} = \varpi^{-1}}\]
   which must be an isomorphism (as the right-hand side must have $E$-dimension $\le 1$), we have a canonical isomorphism
   \[ \Dcris(V(\varpi))  = \Dcris(V) \otimes_{F} \left(E \otimes_{\Qp} K_\infty\right)^{U = \varpi^{-1}}.\]
   In particular, there is a canonical isomorphism
   \[ \Finf \otimes_{F} \Dcris(V(\varpi)) \cong E \otimes_{\Qp} \Finf \otimes_{F} \Dcris(V).\]

   We also have a canonical map
   \[ \Tw_{\varpi^{-1}} : E \otimes_{\Qp} \cH_{\Finf}(G) \to E \otimes_{\Qp} \cH_{\Finf}(G)\]
   which on group elements corresponds to the map $g \mapsto \varpi(g)^{-1} g$. Tensoring with the canonical isomorphism above, we obtain a map (which we also denote by $\Tw_{\varpi^{-1}}$)
   \[ E \otimes_{\Qp} \cH_{\Finf}(G) \otimes_{F} \Dcris(V) \rTo \cH_{\Finf}(G) \otimes_{F} \Dcris(V(\varpi)).\]

   \begin{proposition}
    \label{prop:twisting}
    With the identifications described above, the regulator $\cL^G_V$ is invariant under unramified twists: there is a commutative diagram
    \begin{diagram}
     \cO_E \otimes H^1_{\Iw}(K_\infty, T) & \rTo^{\cL_V^G} & E \otimes_{\Qp} \cH_{\Finf}(G) \otimes_{F} \Dcris(V) \\
     \dTo^\cong & & \dTo^{\Tw_{\varpi^{-1}}}\\
     H^1_{\Iw}(K_\infty, T(\varpi)) & \rTo^{\cL_{V(\varpi)}^{G}} & \cH_{\Finf}(G) \otimes_{F} \Dcris(V(\varpi))
    \end{diagram}
   \end{proposition}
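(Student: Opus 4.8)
The plan is to factor the claim through the three maps composing $\cL^G_V$ in Definition~\ref{def:2variableregulator} and to check compatibility with unramified twisting one arrow at a time; the essential input has already been done in Theorem~\ref{thm:unramifiedtwist}. Put $M = \cO_E$ with $\cG_F$ acting through $\varpi$ inflated along $\cG_F \onto U$, so that $T(\varpi) = M \otimes_{\Zp} T$ and $V(\varpi) = M \otimes_{\Qp} V$; Theorem~\ref{thm:unramifiedtwist} then provides
\[ i_M \colon M \otimes_{\Zp} \NN_{F_\infty}(T) \xrightarrow{\ \cong\ } \NN_{F_\infty}(T(\varpi)) \]
commuting with $\vp$, with $\psi$ and the $\AA^+_F$-action (via~\eqref{eq:unramifiedisom}), with $\Gamma$, and satisfying $i_M(\tilde u \cdot x) = \varpi(u)^{-1}\,\tilde u \cdot i_M(x)$, where $\tilde u \in G$ lifts $u \in U$ and acts trivially on $F(\mu_{p^\infty})$. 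I would then establish three commuting squares, one per arrow of Definition~\ref{def:2variableregulator}, whose horizontal composites are $\cL^G_{V(\varpi)}$ and $\cL^G_V$ and whose vertical maps assemble to $\Tw_{\varpi^{-1}}$ and the twist isomorphisms of the statement.

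The first square is that the identification $H^1_{\Iw}(K_\infty, -) \cong \NN_{F_\infty}(-)^{\psi=1}$ of Proposition~\ref{prop:yagercohomology2} carries the tautological twist isomorphism~\eqref{eq:iwacohotwist} to $i_M$ restricted to $\psi=1$-parts (note $\vp$ and $\psi$ act on $M \otimes_{\Zp} \NN_{F_\infty}(T)$ through the second factor, so $(M \otimes_{\Zp} \NN_{F_\infty}(T))^{\psi=1} = \cO_E \otimes_{\Zp} \NN_{F_\infty}(T)^{\psi=1}$). Since the isomorphism of Proposition~\ref{prop:yagercohomology2} is assembled over the finite layers $F \subseteq K \subseteq F_\infty$ from Fontaine's isomorphism~\eqref{eq:fontaineisom1} for $K(\mu_{p^\infty})/K$, this comes down to the compatibility of Fontaine's isomorphism with twisting by an unramified character, which may be extracted from \cite[\S II]{cherbonniercolmez99} or proved as in Lemma~\ref{lemma:PRpairingtwist} (finite-order twists by reindexing, the general case by reduction mod $p^n$); alternatively it follows from the canonicity of all the maps involved together with the period description underlying Theorem~\ref{thm:unramifiedtwist}. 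The second square, for the arrow $1 - \vp$, is then immediate: $i_M$ commutes with $\vp$ and with $\AA^+_F$ (hence with $q$), so it carries $M \otimes_{\Zp} \vp^*\NN_{F_\infty}(T)$ onto $\vp^*\NN_{F_\infty}(T(\varpi))$, and combining this with $\psi$-equivariance and Proposition~\ref{prop:kernelofpsi} it restricts to an isomorphism $M \otimes_{\Zp} \big[\vp^*\NN_F(T)^{\psi=0} \htimes_{\cO_F} S_\infty\big] \cong \vp^*\NN_F(T(\varpi))^{\psi=0} \htimes_{\cO_F} S_\infty$ intertwining $1 - \vp$.

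The third square — the embedding into $\cH_{\Finf}(G) \otimes \Dcris$ — is where I expect the remaining work to lie. One must check that $i_M$ carries the embedding $\vp^*\NN_F(T)^{\psi=0} \htimes_{\cO_F} S_\infty \into \cH_{\Finf}(G) \otimes_F \Dcris(V)$ (obtained by tensoring the one-variable embedding of \cite[Proposition~2.11]{leiloefflerzerbes11} with $S_\infty \into \cH_{\Finf}(U)$) to the analogous embedding for $T(\varpi)$, up to $\Tw_{\varpi^{-1}}$ and the canonical identification $\Finf \otimes_F \Dcris(V(\varpi)) \cong E \otimes_{\Qp} \Finf \otimes_F \Dcris(V)$. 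Two contributions must be matched. First, reducing~\eqref{eq:unramifiedisom} modulo $\pi$ identifies the $\Dcris$-lattice of $\NN_F(T(\varpi))$ with $\Dcris(T) \otimes_{\cO_F} (\OFinf \otimes_{\Zp} M)^U$; inverting $p$ (the completion being harmless for this rank-one eigenspace), this is precisely the isomorphism $\Dcris(V(\varpi)) = \Dcris(V) \otimes_F (E \otimes_{\Qp} K_\infty)^{U=\varpi^{-1}}$ of the statement, using that $U$ acts trivially on $\mu_{p^\infty}$ so that $(E \otimes_{\Qp} K_\infty)^{U=\varpi^{-1}} = (E \otimes_{\Qp} F_\infty)^{U=\varpi^{-1}}$. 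Second, the semilinearity $i_M(\tilde u \cdot x) = \varpi(u)^{-1}\,\tilde u \cdot i_M(x)$, transported through $S_\infty \into \cH_{\Finf}(U)$, is exactly $\Tw_{\varpi^{-1}}$ on the $U$-factor of $\cH_{\Finf}(G) = \cH_{\Finf}(U) \htimes \cH_F(\Gamma)$; in the $\Gamma$-direction nothing twists, because $\varpi$ is trivial on $\Gamma$ and $i_M$ is $\Gamma$- and $\AA^+_F$-equivariant and thus intertwines the one-variable embeddings directly. The main obstacle is to see that the embedding of \cite[Proposition~2.11]{leiloefflerzerbes11} is natural enough for $i_M$ — which on the $\NN_F$-factor is~\eqref{eq:unramifiedisom} — to intertwine it with its twisted analogue; I would handle this by recalling its explicit construction (through the Mellin transform $\mathfrak{M}$ and the $\vp^{-n}$-renormalisation) and checking it commutes with~\eqref{eq:unramifiedisom}, or else, one finite layer $K$ at a time and using the first square already in hand, by reducing to the known invariance of the one-variable Perrin--Riou regulator under unramified twists. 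Stacking the three squares then yields the commutative diagram of the proposition.
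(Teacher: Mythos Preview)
Your approach is essentially the same as the paper's: factor the regulator through the three maps of Definition~\ref{def:2variableregulator}, use the isomorphism $i_M$ of Theorem~\ref{thm:unramifiedtwist} to supply the vertical arrows, and check compatibility one square at a time. The paper is terser about your first square (it simply invokes~\eqref{eq:iwacohotwist} and Proposition~\ref{prop:yagercohomology2} without further comment) and slightly more direct on your third, observing that the $\NN_F$ and $\Dcris$ identifications for $T(\varpi)$ are compatible because both arise from multiplication along the single chain of inclusions $\OFinf \into \AA^+ \into \tilde{\AA}^+ \into \AA_{\cris} \into \Bcris$, which bypasses any appeal to the explicit Mellin-transform description or to finite layers.
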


   \begin{proof}
    By  \eqref{eq:iwacohotwist}, we have canonical isomorphisms $H^1_{\Iw}(K_\infty, T) \otimes_{\Zp} \cO_E \cong \NN_{F_\infty}(T)^{\psi = 1} \otimes_{\Zp} \cO_E$, and $H^1_{\Iw}(K_\infty, T(\varpi)) \cong \NN_{F_\infty}(T(\varpi))^{\psi = 1}$. We can therefore rewrite the above diagram to obtain the following:
    \begin{diagram}
     \NN_{F_\infty}(T)^{\psi = 1} \otimes_{\Zp} \cO_E & \rTo^{1-\vp} & \NN_{F_\infty}(T)^{\psi = 0} \otimes_{\Zp} \cO_E & \rTo & \cH_{\Finf}(G) \otimes_{\Qp} \Dcris(V) \otimes_{\Qp} E \\
     \dTo & & \dTo & & \dTo^{\Tw_{\varpi^{-1}}}\\
     \NN_{F_\infty}(T(\varpi))^{\psi = 1} & \rTo^{1-\vp} & \NN_{F_\infty}(T(\varpi))^{\psi = 0} & \rTo & \cH_{\Finf}(G) \otimes_{\Qp} \Dcris(V(\varpi)).
    \end{diagram}

    Here the left and middle vertical maps are obtained by restriction from that of Theorem \ref{thm:unramifiedtwist}, taking $\tau = \varpi^{-1}$; as noted above, this isomorphism commutes with $\vp$ and $\psi$.

    The commutativity of the left square is clear. Moreover, the isomorphisms
    \[ \NN_F(T(\varpi)) \cong \NN_F(T) \otimes_{\Zp} \left( \cO_E \otimes_{\Zp} \OFinf\right)^{U = \varpi^{-1}}\]
    and
    \[ \Dcris(V(\varpi)) \cong \Dcris(V) \otimes_{\Zp} \left( \cO_E \otimes_{\Zp} \OFinf \right)^{U = \varpi^{-1}}\]
    are compatible (since the first is given by multiplication in $\AA$, the second in $\Bcris$, and the inclusion of $\OFinf$ in $\Bcris$ factors through the natural maps $\AA^+ \into \tilde{\AA}^+ \into \AA_{\cris}$). Hence the commutativity of the right square follows, as the twisting maps $\Lambda_{\OFinf}(U) \to \Lambda_{\OFinf}(U)$ and $\cH_{\Finf}(U) \to \cH_{\Finf}(U)$ are evidently compatible.
%
%

   \end{proof}


  \subsection{An explicit formula for the values of the regulator}
   \label{sect:explicitformula}

   In this section, we use the results from the previous section to give a direct interpretation of the value of the regulator map $\cL^G_V$ at any de Rham character of $G$, relating these to the values of the Bloch-Kato exponential maps for $V$ and its twists. In this section we assume (for simplicity) that $F = \Qp$.

   As above, let $\varpi$ be a continuous character of $U$ with values in $\cO_E$, for some finite extension $E / \Qp$. Combining Proposition \ref{prop:twisting} with the defining property of $\cL^G_{V(\varpi)}$ in Theorem \ref{thm:localregulator}, we have:

   \begin{theorem}
    The following diagram commutes:
    \[
     \begin{diagram}
      H^1_{\Iw}(K_\infty, V) & \rTo^{\cL_{V}^G} & \cH_{\Finf}(G)^{\circ} \otimes_{\Qp} \Dcris(V) \\
      \dTo_{\pr_{\Iw}^\varpi} & & \dTo_{\pr^\varpi_{\cris}}\\
      H^1_{\Iw}(\Qp(\mu_{p^\infty}), V(\varpi)) & \rTo^{\cL^\Gamma_{V(\varpi)}} & \cH_{\Qp}(\Gamma) \otimes_{\Qp} \Dcris(V(\varpi)).
     \end{diagram}
    \]
   \end{theorem}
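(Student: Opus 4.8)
The plan is to factor each of the two vertical maps in the diagram as a ``twist'' step followed by a ``projection'' step, and then to identify the resulting twist square with Proposition~\ref{prop:twisting} and the resulting projection square with a special case of Theorem~\ref{thm:localregulator}(1).

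First I would unwind the maps $\pr^\varpi_{\Iw}$ and $\pr^\varpi_{\cris}$. The map $\pr^\varpi_{\Iw}$ is the composite of the scalar extension and twisting isomorphism $H^1_{\Iw}(K_\infty, V) \to E \otimes_{\Qp} H^1_{\Iw}(K_\infty, V) \cong H^1_{\Iw}(K_\infty, V(\varpi))$ of \eqref{eq:iwacohotwist} with the corestriction map $H^1_{\Iw}(K_\infty, V(\varpi)) \to H^1_{\Iw}(\Qp(\mu_{p^\infty}), V(\varpi))$ attached to the quotient $G \onto \Gamma$ of $G$ by $U$. Likewise, $\pr^\varpi_{\cris}$ is the composite of the twisting map $\Tw_{\varpi^{-1}}$ of Proposition~\ref{prop:twisting} --- incorporating the canonical isomorphism $\Finf \otimes_F \Dcris(V(\varpi)) \cong E \otimes_{\Qp} \Finf \otimes_F \Dcris(V)$ --- with the map on distributions $\cH_{\Finf}(G) \otimes \Dcris(V(\varpi)) \to \cH_{\Finf}(\Gamma) \otimes \Dcris(V(\varpi))$ induced by $G \onto \Gamma$. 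The latter is defined on the submodule $\cH_{\Finf}(G)^\circ$, and we shall see that on the image of the regulator it in fact takes values in $\cH_{\Qp}(\Gamma) \otimes \Dcris(V(\varpi))$.

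Next I would insert the regulator $\cL^G_{V(\varpi)}$ of $V(\varpi)$ as an intermediate object, splitting the diagram into an upper and a lower square. This is legitimate because $V(\varpi)$ is again a crystalline representation of $\cG_{\Qp}$, with the same Hodge--Tate weights as $V$ since $\varpi$ is unramified, so $V(\varpi)$ is good crystalline, and the remaining hypotheses of Theorem~\ref{thm:localregulator} continue to hold for it as $K_\infty$ is unchanged. The upper square --- built from the twist on Iwasawa cohomology and the map $\Tw_{\varpi^{-1}}$ on distributions --- is precisely the commutative diagram of Proposition~\ref{prop:twisting}. The lower square is Theorem~\ref{thm:localregulator}(1) applied to $V(\varpi)$ and to the finite unramified extension $K = F = \Qp$: in this case $G' = \Gal(\Qp(\mu_{p^\infty})/\Qp) = \Gamma$, the sum over $\Gal(K/F)$ defining $\cL^{G'}_{V(\varpi)}$ has a single term, so $\cL^{G'}_{V(\varpi)} = \cL^\Gamma_{\Qp, V(\varpi)} = \cL^\Gamma_{V(\varpi)}$, and the commutative square provided by the theorem is exactly the lower square; the inclusion $\cH_{\Qp}(\Gamma) \otimes \Dcris(V(\varpi)) \into \cH_{\Finf}(\Gamma) \otimes \Dcris(V(\varpi))$ appearing there confirms the descent to $\cH_{\Qp}(\Gamma)$-coefficients asserted above. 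Pasting the two commuting squares gives the theorem.

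The substantive part is the bookkeeping in the second paragraph: one must verify that $\pr^\varpi_{\Iw}$ and $\pr^\varpi_{\cris}$, as defined in the statement, do coincide with the composites described, in particular that the $\Dcris$-identifications folded into $\Tw_{\varpi^{-1}}$ in Proposition~\ref{prop:twisting} match those implicit in the target diagram, and that the projection $G \onto \Gamma$ used here is the one occurring in Theorem~\ref{thm:localregulator}(1). Once these identifications are pinned down the argument is a formal diagram chase, requiring no analytic or cohomological input beyond the two cited results.
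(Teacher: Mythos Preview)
Your proposal is correct and matches the paper's own argument exactly: the paper states the theorem as an immediate consequence of ``Combining Proposition~\ref{prop:twisting} with the defining property of $\cL^G_{V(\varpi)}$ in Theorem~\ref{thm:localregulator}'', which is precisely your factorization into the twist square (Proposition~\ref{prop:twisting}) pasted onto the projection square (Theorem~\ref{thm:localregulator}(1) with $K=F=\Qp$). Your additional care in unwinding $\pr^\varpi_{\Iw}$ and $\pr^\varpi_{\cris}$ and noting the descent to $\cH_{\Qp}(\Gamma)$ simply makes explicit what the paper leaves to the reader.
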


   Here $\cH_{\Finf}(G)^\circ$ denotes the subspace of $\cH_{\Finf}(G)$ satisfying the Galois-equivariance property of Proposition \ref{prop:galoisequivariance}. The map $\pr_{\Iw}^\varpi$ is the composite of the isomorphism \eqref{eq:iwacohotwist} with the corestriction map; the right-hand vertical map is the composite of $\Tw_{\varpi^{-1}}$ with push-forward to $\Gamma$. (Hence both vertical maps are $U$-equivariant, if we let $U$ act on the bottom row by $\varpi^{-1}$.)

   We now apply the results of \S \ref{appendix:cyclo} to each unramified twist $V(\varpi)$ of $V$ to determine exactly the values of $\cL_V^G$ at any character of $G$ which is Hodge--Tate, in terms of the dual exponential and logarithm maps (cf.~\S\ref{sect:crystallinereps} above).

   \begin{definition}
    Let $\omega$ be any continuous character of $G$ with values in some finite extension $E / \Qp$. For $x\in H^1_{\Iw}(K_\infty,V)$, we write $x_{\omega, 0}$ for the image of $x$ in $H^1(\Qp, V(\omega^{-1}))$.
   \end{definition}

    We can now apply Theorem \ref{thm:explicitformulacyclo} to obtain the following formulae for the values of $\cL^G_V$:

   \begin{theorem}
    \label{thm:explicitformula}
    Let $x\in H^1_{\Iw}(K_\infty,V)$. Let $j$ be the Hodge--Tate weight of $\omega$, and $n$ its conductor. If $n = 0$, suppose that $\Dcris(V(\omega^{-1}))^{\vp = p^{-1}} = 0$. Then we have
    \begin{multline*}
     \cL^G_{V}(x)(\omega) =
     \Gamma^*(1+j) \cdot \ve(\omega^{-1}) \cdot \frac{\Phi^n P(\omega^{-1}, \Phi)}{P(\omega, p^{-1} \Phi^{-1})} \\
     \times \begin{cases}
       \exp^*_{V(\omega^{-1})^*(1)}(x_{\omega,0}) \otimes t^{-j} e_j &  \text{if $j \ge 0$,}\\
       \log_{\Qp, V(\omega^{-1})}(x_{\omega,0}) \otimes t^{-j} e_j & \text{if $j \le -1$,}
     \end{cases}
    \end{multline*}
    where the notation is as follows:
    \begin{itemize}
     \item $\Gamma^*(1 + j)$ is the leading term of the Taylor expansion of the Gamma function at $1 + j$,
     \[ \Gamma^*(1 + j) =
      \begin{cases}
       j! & \text{if $j \ge 0$,}\\
       \frac{(-1)^{-j-1}}{(-j-1)!} & \text{if $j \le -1$.}
      \end{cases}
     \]
     \item $P_\omega$ and $\ve(\omega)$ are the $L$ and $\ve$-factors of the Weil--Deligne representation $\DD_{\mathrm{pst}}(\omega)$ (see \S \ref{sect:epsfactors} above).
     \item $\Phi$ denotes the operator on $\Dcris(V) \otimes_{\Qp} \Finf$ which is obtained by extending the Frobenius of $\Dcris(V)$ to act trivially on $\Finf$ (rather than as the usual Frobenius on $\Finf$).
    \end{itemize}
   \end{theorem}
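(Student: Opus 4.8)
The plan is to deduce Theorem~\ref{thm:explicitformula} from the preceding theorem (the commutative diagram relating $\cL^G_V$ to the cyclotomic regulator of each unramified twist $V(\varpi)$) together with the known explicit formula for the one-variable Perrin-Riou regulator, which I take to be Theorem~\ref{thm:explicitformulacyclo} in the appendix. The key point is that any Hodge--Tate character $\omega$ of $G = U \times \Gamma$ factors as $\omega = \varpi \cdot \eta$, where $\varpi$ is the restriction of $\omega$ to $U$ (an unramified character, hence $\varpi = $ a power of the character sending the Frobenius to some unit times possibly a finite-order piece, but in any case crystalline with trivial $L$- and $\ve$-factors contributions beyond a $\Phi^n$-type term) and $\eta$ is the restriction to $\Gamma$, which carries all the ramification and the Hodge--Tate weight $j$. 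So $\eta$ has conductor $n$ and Hodge--Tate weight $j$, while $\varpi$ contributes only through its effect on Frobenius, i.e. through the operator $\Phi$.

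First I would evaluate $\cL^G_V(x)$ at $\omega = \varpi\eta$ by going around the commutative diagram of the preceding theorem: the value equals $\Tw_{\varpi^{-1}}$ applied to something, pushed to $\Gamma$, and then evaluated at $\eta$; equivalently it is $\cL^\Gamma_{V(\varpi)}(\pr^\varpi_{\Iw}(x))$ evaluated at $\eta$, where $\pr^\varpi_{\Iw}(x) \in H^1_{\Iw}(\Qp(\mu_{p^\infty}), V(\varpi))$ is the corestriction of the $\varpi$-twist of $x$. Then I would invoke Theorem~\ref{thm:explicitformulacyclo} for the crystalline representation $V(\varpi)$ and the Dirichlet-type character $\eta$: this yields a formula involving $\Gamma^*(1+j)$, $\ve(\eta^{-1})$, a ratio of $L$-factors for $V(\varpi)$ and its twist, and the dual exponential (if $j\ge 0$) or logarithm (if $j\le -1$) of the image of $\pr^\varpi_{\Iw}(x)$ in $H^1(\Qp, V(\varpi)(\eta^{-1})) = H^1(\Qp, V(\omega^{-1}))$, tensored with $t^{-j}e_j$. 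By the definition of $x_{\omega,0}$, this cohomology class is exactly $x_{\omega,0}$, so the exponential/logarithm terms match the claimed formula directly.

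The remaining work is bookkeeping of the $\ve$- and $L$-factors, and this is where I expect the one genuinely fiddly step. I must check that $\ve((\varpi\eta)^{-1}) = \ve(\eta^{-1})$ up to the explicit power of $\Phi$ and $\tilde\sigma_p$ appearing in the formula, using the computation in \S\ref{sect:epsfactors}: since $\varpi$ is unramified, $\tau(\varpi\eta,\zeta) = \tau(\eta,\zeta)$ and the extra factor is $\varpi(\tilde\sigma_p)^n p^{nj'}$-type data, which gets absorbed into the operator $\Phi$ once one remembers that $\Phi$ acts trivially on $\Finf$ while crystalline Frobenius on $\Dcris(V(\varpi))$ differs from that on $\Dcris(V)$ precisely by $\varpi(\sigma_p)^{-1}$ (cf.\ the remark after Proposition~\ref{thm:unramifiedtwist}). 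Likewise the $L$-factors $P(V(\varpi)\otimes\eta^{-1},-)$ and $P(V(\varpi),-)$ must be rewritten in terms of $P(\omega,-)$, $P(\omega^{-1},-)$ and $\Phi$ by the same substitution $\vp \mapsto \varpi(\sigma_p)\vp = \Phi|_{\Dcris(V)}$-ish identification. Carefully tracking the normalizations fixed in \S\ref{sect:crystallinereps} and \S\ref{sect:epsfactors} (the sign in $\lambda(-x)$, the geometric-Frobenius convention for $\mathrm{rec}_{\Qp}$, the Hodge--Tate weight convention) should make all factors agree with the stated formula; this reconciliation of conventions, rather than any conceptual difficulty, is the main obstacle.
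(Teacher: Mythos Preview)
Your proposal is correct and follows essentially the same approach as the paper: reduce to the cyclotomic regulator $\cL^\Gamma_{V(\varpi)}$ via the commutative diagram (Proposition~\ref{prop:twisting} combined with the compatibility in Theorem~\ref{thm:localregulator}), and then invoke Theorem~\ref{thm:explicitformulacyclo}. The paper in fact gives no further argument beyond the sentence ``We can now apply Theorem~\ref{thm:explicitformulacyclo}\ldots'', so your sketch of the $\ve$- and $L$-factor bookkeeping is already more detailed than what appears there.
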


   \begin{remark}
    To define $\cL^G_{V}$ we made a choice of compatible system of $p$-power roots of unity $\zeta$; but the dependence of $\cL^G_V$ on $\zeta$ is clear from the formula of Theorem \ref{thm:explicitformula}. If we temporarily write $\cL^{G}_{V}(x, \zeta)$ for the regulator using the roots of unity $\zeta$, then for any $\gamma \in \Gamma$ we have
    \[ \cL^G_V(x, \gamma \zeta)(\omega) = \omega(\tilde \gamma)^{-1} \cL^G_V(x, \zeta)(\omega),\]
    where $\tilde \gamma$ is the unique lifting of $\gamma$ to the inertia subgroup of $G$.
   \end{remark}

  \subsection{A local reciprocity formula}
   \label{sect:localrecip}

   Our final local result will be an analogue of Perrin-Riou's local reciprocity formula, relating the maps $\cL^G_V$ and $\cL^G_{V^*(1)}$. The cyclotomic version of this formula, conjecture $\operatorname{Rec}(V)$ in \cite{perrinriou94}, was originally formulated in terms of Perrin-Riou's exponential map $\Omega_{V, h}$, and proved independently by Colmez \cite{colmez98} and Benois \cite{benois00}. In Appendix \ref{appendix:cyclo} below we formulate and prove a version using the map $\cL^\Gamma_V$ instead.

   Here, as in Appendix \ref{appendix:cyclo}, it will be convenient to us to extend the definition of the regulator map to representations which are crystalline, but which may have some negative Hodge--Tate weights. To do this, we note that if $V$ is good crystalline, then for any $k \ge 0$ we have
   \[ \Tw_{\chi}\left( \cL^G_{V(1)}(x \otimes e_{1})\right) = \ell_{-1}\left(\cL^G_V(x)\right) \otimes t^{-1} e_1.\]
   So for arbitrary $V$, and any $j \gg 0$ such that $V(j)$ is good crystalline, we may \emph{define} $\cL^G_{V}$ by the formula
   \[ \cL^G_V(x) = (\ell_{-1} \circ \dots \circ \ell_{-j})^{-1}\left( \Tw_{\chi^j}\left(\cL^G_{V(j)}(x \otimes e_{j})\right)\right) \otimes t^j e_{-j}\]
   and this does not depend on the choice of $j$; this then takes values in the fraction field of $\cH_{\Finf}(G)$.

   \begin{theorem}
    For any crystalline representation $V$ and any classes $x \in H^1_{\Iw}(K_\infty, V)$ and $y \in H^1_{\Iw}(K_\infty, V^*(1))$, we have
    \[ \langle \cL_V^G(x), \cL_{V^*(1)}^G(y) \rangle_{\mathrm{cris}, V} = -\sigma_{-1} \cdot \ell_0 \cdot \langle x, y \rangle_{K_\infty, V},\]
    where $\sigma_{-1}$ denotes the unique element of the inertia subgroup of $G$ such that $\chi(\sigma_{-1}) = -1$.
   \end{theorem}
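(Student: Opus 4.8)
The plan is to reduce the identity to the cyclotomic reciprocity formula proved in Appendix~\ref{appendix:cyclo} (the $\cL^\Gamma$-version of the theorems of Colmez and Benois), using the characterisation of $\cL^G_V$ in Theorem~\ref{thm:localregulator} and its compatibility with unramified twists (Proposition~\ref{prop:twisting}), together with the behaviour of the Perrin-Riou pairing under twisting and corestriction (Lemma~\ref{lemma:PRpairingtwist}). As a first step I would reduce to $V$ good crystalline: for $j\gg 0$ with $V(j)$ good crystalline one has $(V(j))^*(1)=V^*(1)(-j)$, and it is routine to check that both sides of the asserted identity get multiplied by the same element of $\operatorname{Frac}(\cH_{\Finf}(G))$ when $V$ is replaced by $V(1)$ --- on the regulator side using the formula defining $\cL^G_V$ for general crystalline $V$ in terms of the operators $\ell_i$ together with $\Tw_\chi(\ell_i)=\ell_{i-1}$, on the Perrin-Riou side using Lemma~\ref{lemma:PRpairingtwist}, and with $\sigma_{-1}$ unchanged as it lies in the twist-independent inertia subgroup of $G$. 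So we may assume $V$ good crystalline, whence both sides lie in $\cH_{\Finf}(G)$.

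Next, both sides being genuine elements of $\cH_{\Finf}(G)$, it suffices to verify the identity after projecting to $\cH_{\Finf}(\Gal(K(\mu_{p^\infty})/F))$ for every finite unramified $K/F$ inside $K_\infty$; indeed, after extending scalars so that the group algebra of the torsion subgroup of $U$ splits as a product of fields, the residual $\Zp$-variable of $U$ is governed by analytic functions and the $p$-power-order characters of $U$ --- which factor through such finite $K$ --- are dense, so these projections are jointly injective (this is the separation principle behind the uniqueness clause of Theorem~\ref{thm:localregulator}). Fix such a $K$ and write $G'=\Gal(K(\mu_{p^\infty})/F)$. By Theorem~\ref{thm:localregulator}(1), the projection of $\cL^G_V$ to $\cH_{\Finf}(G')\otimes\Dcris(V)$ is $\sum_{\sigma\in\Gal(K/F)}[\sigma]\,\cL^\Gamma_{K,V}(\sigma^{-1}\cdot)$, and the same for $V^*(1)$; and the projection of $\langle x,y\rangle_{K_\infty,V}$ to $\Lambda_{\Qp}(G')$ expands, via the compatibility of the Perrin-Riou pairing with corestriction, as a $\Gal(K/F)$-weighted combination of cyclotomic Perrin-Riou pairings over $K$. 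Substituting these into the two sides, expanding the crystalline pairing over the group algebra of $\Gal(K/F)$ (whose involution sends $[\sigma]$ to $[\sigma^{-1}]$), reindexing the resulting double sum, and invoking the $\Gal(K/F)$-equivariance of $\cL^\Gamma_{K,-}$ and of the cyclotomic pairing, the identity at level $G'$ should reduce term-by-term to the one-variable reciprocity law of Appendix~\ref{appendix:cyclo} applied to $V$ as a crystalline representation of $\cG_K$. Letting $K$ vary then concludes the proof.

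The step I expect to be the main obstacle is exactly this last piece of bookkeeping: identifying precisely how the crystalline pairing $\langle-,-\rangle_{\mathrm{cris},V}$ and its internal involution interact with the Yager-module periods built into $\cL^G_V$ (through the embeddings $S_{F_\infty/F}\into\Lambda_{\OFinf}(U)\into\cH_{\Finf}(U)$) and with the $\Gal(K/F)$-weighted sums appearing on both sides, so that these sums cancel against one another and leave only the one-variable formula, with no surviving period-, $\ve$-, or trace-factors. At bottom this rests on the compatibility --- under the identification $\Dcris(V(\varpi))=\Dcris(V)\otimes_F(E\otimes_{\Qp}K_\infty)^{U=\varpi^{-1}}$ of Section~\ref{sect:yager} and its analogue for $(V(\varpi))^*(1)$ --- of the de Rham pairing valued in $\Dcris(\Qp(1))$ with the product of the de Rham pairing on $\Dcris(V)$ and the natural perfect pairing $(E\otimes_{\Qp}K_\infty)^{U=\varpi^{-1}}\times(E\otimes_{\Qp}K_\infty)^{U=\varpi}\to E$ of the period spaces. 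Once this compatibility is established, the scalar-extension and density reduction and the tracking of the scalar $-\sigma_{-1}\ell_0$ are routine.
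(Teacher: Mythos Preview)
Your overall strategy is sound and would work, but it takes a more circuitous route than the paper's own proof. The paper's argument is essentially a one-liner: for each unramified character $\tau$ of $U$, apply Proposition~\ref{prop:twisting} to both $\cL^G_V$ and $\cL^G_{V^*(1)}$ and Lemma~\ref{lemma:PRpairingtwist} to $\langle x,y\rangle_{K_\infty,V}$, thereby reducing the evaluation of both sides at $\tau$ to the cyclotomic reciprocity formula (Theorem~\ref{thm:cycloreciprocity}) for the twisted representation $V(\tau)$ over $\Qp$. Since this holds for every such $\tau$, and the unramified characters separate points on the $U$-direction, the identity follows.

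Your approach instead projects to finite unramified levels $K/F$ via Theorem~\ref{thm:localregulator}(1) and then tries to match the resulting $\Gal(K/F)$-weighted double sums on the two sides. This is correct in principle, but it forces exactly the bookkeeping you flag as the main obstacle: tracking how the sesquilinear extension of $\langle-,-\rangle_{\cris,V}$ interacts with the Yager periods and with the trace from $\Dcris(K,V)$ to $\Dcris(V)$, and reducing to the cyclotomic formula over $K$ rather than over $\Qp$. The paper's twisting argument bypasses all of this: Proposition~\ref{prop:twisting} already packages the period compatibility you identify in your last paragraph (the pairing of $(E\otimes\Finf)^{U=\varpi^{-1}}$ with $(E\otimes\Finf)^{U=\varpi}$), so one lands directly on the one-variable formula for $V(\tau)$ with no residual sums or trace factors. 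In short, your final paragraph has correctly located the crux, but the cleanest way to exploit it is to twist by characters rather than to project to finite quotients.
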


   \begin{proof}
    Using Lemma \ref{lemma:PRpairingtwist} and Proposition \ref{prop:twisting} for each unramified character $\tau$ of $G$ reduces this immediately to the corresponding statement for the cyclotomic regulator maps $\cL^\Gamma_{V(\tau)}$, which is Theorem \ref{thm:cycloreciprocity}.
   \end{proof}

 \section{Regulators for extensions of number fields}
  \label{sect:globalregulator}
  \label{sect:semilocalregulator}
  \newcommand{\Kpb}{\overline{K}_{\fp}}

  In this section, we show how to define an extension of the regulator map in the context of certain $p$-adic Lie extensions of number fields. This section draws heavily on the cyclotomic case studied by Perrin-Riou in \cite{perrinriou94}; see also \cite{iovitapollack06} for the case of more general $\Zp$-extensions of number fields.

  Let $K$ be a number field, $p$ a (rational) prime, and $\fp$ a prime of $K$ above $p$. We choose a prime $\fP$ of $\overline{K}$ above $\fp$.

  \subsection{Semilocal cohomology}

   Let $T$ be a finitely generated $\Zp$-module with a continuous action of $\cG_K$. For each finite extension $L$ of $K$, the set of primes $\fq$ of $L$ above $\fp$ is finite, and for each $i$ we may define the semilocal cohomology group
   \[ Z^i_{\fp}(L, T) = \bigoplus_{\fq \mid \fp} H^i(L_\fq, T).\]
   If $L / K$ is Galois, with Galois group $G$, then we have a canonical isomorphism
   \begin{equation}
    \label{eq:semilocalcoho}
    Z^i_{\fp}(L, T) \cong \Zp[G] \otimes_{\Zp[G_\fP]} H^i(L_\fP, T),
   \end{equation}
   where $G_\fP$ is the decomposition group of $\fP$ in $G$. In particular, it has an action of $\Zp[G]$, and it is easy to see that the localization map
   \[ \loc_{\fp} = \bigoplus_{\fq \mid \fp} \loc_\fq : H^i(L, T) \to Z^i_\fp(L, T)\]
   is $G$-equivariant.

   If now $K_\infty / K$ is a $p$-adic Lie extension of number fields with Galois group $G$, we may define semilocal Iwasawa cohomology groups
   \[ Z^i_{\Iw, \fp}(K_\infty, T) = \varprojlim_{K'} Z^i_{\fp}(K', T),\]
   where the inverse limit is over finite Galois extensions $K' / K$ contained in $K_\infty$. The isomorphisms \eqref{eq:semilocalcoho} for each finite subextension imply that
   \begin{equation}
    \label{eq:semilocaliwasawacoho}
    Z^i_{\Iw, \fp}(K_\infty, T) = \Lambda_{\Zp}(G) \otimes_{\Lambda_{\Zp}(G_\fP)} H^i_{\Iw}(K_{\infty, \fP}, T).
   \end{equation}

   \begin{theorem}
    \label{thm:semilocalreg}
    Let $K_\infty / K$ be any $p$-adic Lie extension of number fields with Galois group $G$, $\fp$ a prime of $K$ above $p$, and $\fP$ a prime of $\overline{K}$ above $\fp$, such that
    \begin{itemize}
     \item $K_\fp$ is unramified over $\Qp$,
     \item the completion $K_{\infty, \fP}$ is of the form $F_\infty(\mu_{p^\infty})$, for $F_\infty$ an infinite unramified extension of $K_{\fp}$.
    \end{itemize}
    Then there is a unique homomorphism of $\Lambda_{\Zp}(G)$-modules
    \[ \cL^G_{V} : Z^1_{\Iw, \fp}(K_\infty, V) \to \cH_{\Finf}(G) \otimes_{\Qp} \Dcris(K_\fp, V), \]
    where $\Finf$ is the $p$-adic completion of the maximal unramified subfield of $K_{\infty, \fP}$, whose restriction to $H^1_{\Iw}(K_{\infty, \fP}, V)$ is the local regulator map $\cL^{G_\fP}_V$.
   \end{theorem}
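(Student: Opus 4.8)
The plan is to use the tensor-induction description \eqref{eq:semilocaliwasawacoho} to build $\cL^G_V$ on $Z^1_{\Iw,\fp}(K_\infty,V)$ out of the local regulator $\cL^{G_\fP}_V$ of Theorem \ref{thm:localregulator}. First I would observe that the hypotheses guarantee that $K_{\infty,\fP}/K_\fp$ is exactly an extension of the type treated in Section \ref{sect:regulator}: $K_\fp$ is a finite unramified extension of $\Qp$, $K_{\infty,\fP}=F_\infty(\mu_{p^\infty})$ with $F_\infty/K_\fp$ infinite unramified, so $G_\fP=\Gal(K_{\infty,\fP}/K_\fp)\cong U\times\Gamma$ and the local regulator $\cL^{G_\fP}_V:H^1_{\Iw}(K_{\infty,\fP},V)\to\cH_{\Finf}(G_\fP)\otimes_{\Qp}\Dcris(K_\fp,V)$ exists. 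Since $F_\infty/K_\fp$ is infinite, Proposition \ref{prop:injectivereg} gives that $\cL^{G_\fP}_V$ is injective, and the representation need not avoid trivial quotients. Note also that $\Finf$, the completion of the maximal unramified subfield of $K_{\infty,\fP}$, is intrinsic to $\fp$ and does not depend on the choice of $\fP$ above $\fp$ (any two such completions are isomorphic over $K_\fp$), so the target is well-defined.

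Next I would extend scalars along the inclusion $\Lambda_{\Zp}(G_\fP)\to\cH_{\Finf}(G_\fP)$ and form the induced map. Concretely, $\cL^{G_\fP}_V$ is $\Lambda_{\Zp}(G_\fP)$-linear (indeed $\Lambda_{\cO_F}(G_\fP)$-linear), so we may apply the functor $\cH_{\Finf}(G)\otimes_{\cH_{\Finf}(G_\fP)}(-)$, using that $\cH_{\Finf}(G)$ is a $\Lambda_{\Zp}(G_\fP)$-module via $\Lambda_{\Zp}(G_\fP)\to\cH_{\Finf}(G_\fP)\to\cH_{\Finf}(G)$ together with the quotient map $\Lambda_{\Zp}(G_\fP)\to\Lambda_{\Zp}(G)$ induced by $G_\fP\hookrightarrow G$. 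This produces
\[
\cL^G_V:\ \cH_{\Finf}(G)\otimes_{\cH_{\Finf}(G_\fP)}H^1_{\Iw}(K_{\infty,\fP},V)\ \to\ \cH_{\Finf}(G)\otimes_{\Qp}\Dcris(K_\fp,V),
\]
and \eqref{eq:semilocaliwasawacoho} identifies the source, after inverting $p$, with $Z^1_{\Iw,\fp}(K_\infty,V)$ up to a base-change to $\cH_{\Finf}(G)$ over $\Lambda_{\Qp}(G)$. A small point to check here is that $\cH_{\Finf}(G)$ is flat (or at least that the relevant $\mathrm{Tor}$ vanishes) over $\Lambda_{\Qp}(G_\fP)$ so that the induced map is well-defined on the honest tensor product $\Lambda_{\Qp}(G)\otimes_{\Lambda_{\Qp}(G_\fP)}H^1_{\Iw}(K_{\infty,\fP},V)$; alternatively, one can simply define $\cL^G_V$ on the submodule generated by $H^1_{\Iw}(K_{\infty,\fP},V)$ by the formula $\sum_{g\in G/G_\fP}[g]\cdot\cL^{G_\fP}_V(g^{-1}x)$ (as in part (1) of Theorem \ref{thm:localregulator}) and verify directly that it is well-defined and $\Lambda_{\Zp}(G)$-linear.

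For uniqueness, any $\Lambda_{\Zp}(G)$-linear map agreeing with $\cL^{G_\fP}_V$ on $H^1_{\Iw}(K_{\infty,\fP},V)$ is forced on the $\Lambda_{\Zp}(G)$-submodule this generates, and by \eqref{eq:semilocaliwasawacoho} that submodule is all of $Z^1_{\Iw,\fp}(K_\infty,V)$; hence $\cL^G_V$ is unique. The main obstacle I anticipate is the flatness/base-change bookkeeping in the previous paragraph: making precise how $\cH_{\Finf}(G)\otimes_{\Lambda(G_\fP)}(-)$ interacts with the various completed tensor products (the target $\Dcris(K_\fp,V)$ is a $\Finf$-space via $\Finf$, and $\cH_{\Finf}(G)$ must be viewed as a $\cH_{\Finf}(G_\fP)$-algebra compatibly with the $\Finf$-structures), and checking that no $\mathrm{Tor}$-obstructions arise when passing from $H^1_{\Iw}(K_{\infty,\fP},V)$ to $Z^1_{\Iw,\fp}(K_\infty,V)$. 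Everything else — linearity, the restriction property, uniqueness — is formal once the induced map is known to exist.
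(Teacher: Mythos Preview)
Your approach is essentially the same as the paper's: extend the local regulator by tensoring along \eqref{eq:semilocaliwasawacoho}, and uniqueness follows because $H^1_{\Iw}(K_{\infty,\fP},V)$ generates $Z^1_{\Iw,\fp}(K_\infty,V)$ over $\Lambda_{\Zp}(G)$. The paper's proof is a single sentence: ``Immediate by tensoring the local regulator $\cL^{G_\fP}_V$ with $\Lambda_{\Zp}(G)$, using equation \eqref{eq:semilocaliwasawacoho}.''

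The only comment is that your flatness and $\mathrm{Tor}$ worries are unnecessary and slightly obscure the picture. Tensoring a morphism of $\Lambda_{\Zp}(G_\fP)$-modules by $\Lambda_{\Zp}(G)$ is always well-defined as a morphism, with no flatness hypothesis: applying $\Lambda_{\Zp}(G)\otimes_{\Lambda_{\Zp}(G_\fP)}(-)$ to $\cL^{G_\fP}_V$ gives a map whose source is \emph{exactly} $Z^1_{\Iw,\fp}(K_\infty,T)$ by \eqref{eq:semilocaliwasawacoho}, and whose target is $\Lambda_{\Zp}(G)\otimes_{\Lambda_{\Zp}(G_\fP)}\bigl(\cH_{\Finf}(G_\fP)\otimes_{\Qp}\Dcris(K_\fp,V)\bigr)$; one then simply composes with the natural multiplication map into $\cH_{\Finf}(G)\otimes_{\Qp}\Dcris(K_\fp,V)$. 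There is no need to pass through $\cH_{\Finf}(G)\otimes_{\cH_{\Finf}(G_\fP)}(-)$ or to compare that with $\Lambda_{\Qp}(G)\otimes_{\Lambda_{\Qp}(G_\fP)}(-)$.
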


   \begin{proof}
    Immediate by tensoring the local regulator $\cL^{G_\fP}_V$ with $\Lambda_{\Zp}(G)$, using equation \eqref{eq:semilocaliwasawacoho}.
   \end{proof}

   \begin{remark}
    Note that if $\fp$ is finitely decomposed in $K_\infty$, so $[G : G_{\fP}]$ is finite, one can describe $\cL^G_{V}$ as a direct sum of local regulators:
    \[ \cL^G_{V}(x) = \bigoplus_{\sigma \in G / G_{\fP}} [\sigma] \cdot \cL^{G_{\fP}}_{V}(\loc_{\fp} \sigma^{-1}(x)).\]
    However, the construction also applies when $\fp$ is infinitely decomposed. Thus, for instance, if $d > 1$ and $K$ is a CM field of degree $2d$ in which $p$ splits completely, then one can take $K_\infty$ to be the $(d + 1)$-dimensional abelian $p$-adic Lie extension given by the ray class field $K(p^\infty)$.
   \end{remark}

   \begin{remark}
    One can use the regulator maps to construct Coleman maps and restricted Selmer groups of $V$ over $K_\infty$, in the spirit of the constructions in \cite{leiloefflerzerbes10} for the cyclotomic extension.
   \end{remark}


  \subsection{The module of p-adic L-functions}
   \label{sect:defmoduleofLfunctions}

   We now assume that the number field $K$ is totally complex and Galois over $\QQ$, and that $p$ splits completely in $K$, $(p)=\fp_1\dots \fp_e$. For each of these primes, fix an embedding of $\Qb$ into $\overline{K_{\fp_i}}$.
   Let $T$ be a $\Zp$-representation of $\cG_K$, and let $V=T[p^{-1}]$.

   \begin{assumption}
    \label{assumption:crystalline}
    For all $1\leq i\leq e$, the restriction of $V$ to $\cG_{K_{\fp_i}}$ is good crystalline.
   \end{assumption}

   Let $S$ be the finite set of primes of $K$ containing  all the primes above $p$, all the archimedean places and all the places whose inertia group acts non-trivially on $T$. Denote by $K^S$ the maximal extension of $K$ unramified outside $S$. Let $K_\infty$ be a $p$-adic Lie extension of $K$ contained in $K^S$ which is Galois over $\QQ$ and satisfies the conditions of Theorem \ref{thm:semilocalreg} for each of the primes $\fp_1, \dots, \fp_e$.

   \begin{definition}
    Define $H^1_{\Iw,S}(K_\infty,T)=\varprojlim H^1(\Gal(K^S\slash K_n),T)$, where $\{K_n\}$ is a sequence of finite extensions of $K$ such that $K_\infty=\bigcup K_n$. We also let
    \[ H^1_{\Iw,S}(K_\infty,V)=H^1_{\Iw,S}(K_\infty,T)\otimes_{\Zp}\Qp.\]
   \end{definition}

   \begin{assumption}
    \label{assumption:eltsorderp}
    The Galois group $G = \Gal(K_\infty / K)$ has no element of order $p$.
   \end{assumption}

   \begin{remark}
    Examples of $p$-adic Lie extensions satisfying the above hypotheses occur naturally in the context of class field theory; for instance, if $K$ is a CM field in which $p$ splits, and $K_\infty$ the ray class field $K(p^\infty)$, all the conditions are automatic except possibly \ref{assumption:eltsorderp}, and this may be dealt with by replacing $K_\infty$ by a finite subextension.
    We shall study extensions of this type in more detail in \S \ref{sect:imquad} below, where we take $K$ to be an imaginary quadratic field.
   \end{remark}

   As $K_\infty$ is a Galois extension of $\QQ$, the Galois groups $\Gal(K_{\infty, \fp_i}\slash K_{\fp_i})$, $1\leq i\leq e$, are conjugate to each other in $\Gal(K_\infty\slash \QQ)$, as are their inertia subgroups. If $L_{\infty,i}$ denotes the maximal unramified extension of $K_{\fp,i}$ in $K_{\infty,\fp_i}$, we get canonical identifications of $L_{\infty,i}$ with $L_{\infty,j}$ for all $1\leq i,j\leq e$. We can therefore drop the index and denote this unramified extension of $\Qp$ by $F_\infty$.

   As explained in Section \ref{sect:semilocalregulator}, for $1\leq i\leq e$, we have a regulator map
   \[\cL^G_{V,\fp_i} : Z^1_{\Iw, \fp_i}(K_\infty, T) \rightarrow \DD_{\cris,\fp_i}(V) \otimes_{\Qp} \cH_{\Finf}(G).\]
   Via the localisation map $\loc_{\fp_i}:H^1_{\Iw,S}(K_\infty, T) \rightarrow Z^1_{\Iw, \fp_i}(K_\infty, T)$, it induces a map
   \[  H^1_{\Iw,S}(K_\infty, T) \rTo \cH_{\Finf}(G) \otimes_{\Qp} \DD_{\cris,\fp_i}(V)\]
   which we also denote by $\cL^G_{V,\fp_i}$. Let
   \[
    \DD_p(V) =\Dcris\Big(\big(\Ind_{K\slash\QQ}V\big)|_{\cG_{\Qp}}\Big) \cong \bigoplus_{i=1}^e \DD_{\cris,\fp_i}(V).
   \]
   Define
   \[ \cL^G_V = \bigoplus_{i=1}^e \cL^G_{V,\fp_i} : H^1_{\Iw,S}(K_\infty,V) \rTo \cH_{\Finf}(G) \otimes_{\Qp} \DD_p(V).\]

   Denote by $\mathcal{K}_{\Finf}(G)$ the fraction field of $\mathcal{H}_{\Finf}(G)$. Assume that Conjecture $\Leop(K_\infty,V)$ (as formulated in \S \ref{sect:globalranks} below) olds, so $H^2_{\Iw}(K_\infty\slash K,V)$ is $\Lambda_{\Qp}(G)$-torsion. Let $d = \frac{1}{2}[K : \QQ] \dim_{\Qp}(V)$. As $\rank_{\Lambda_{\Zp}(G)} H^1_{\Iw,S}(K_\infty,T)=d$ by Theorem \ref{thm:globalrank},  the regulator $\cL^G_V$ induces a map\footnote{For the definition of the determinant of a finitely generated $\Lambda_{\Zp}(G)$-module, see \cite{knudsenmumford76}; c.f. also \cite[\S 3.1.5]{perrinriou94}.}
   \[ \det \cL^G_V : \det_{\Lambda_{\Qp}(G)} H^1_{\Iw,S}(K_\infty,V) \rTo \mathcal{K}_{\Finf}(G)\otimes_{\Qp}\bigwedge^d \DD_p(V).\]

   \begin{definition}
    \label{def:Iarith}
    Define $\II_{\arith,p}(V)$ to be the $\Lambda_{\Qp}(G)$-submodule of $\mathcal{K}_{\Finf}(G)\otimes_{\Qp}\bigwedge^d\DD_p(V)$
    \[ \II_{\arith,p}(V)= \det \cL^G_V\big(H^1_{\Iw}(K_\infty,T)\big) \otimes \left(\det H^2_{\Iw}(K_\infty,T)\right)^{-1}.\]
   \end{definition}

   In the spirit of Perrin-Riou (c.f. \cite[\S 3.1]{perrinriou03}), we can give an explicit description of $\II_{\arith,p}(V)$ as follows. Let $f_2 \in \Lambda_{\Qp}(G)$ be a generator of the characteristic ideal of $H^2_{\Iw}(K_\infty,T)$, so $\det H^2_{\Iw}(K_\infty,T) = f_2^{-1} \Lambda_{\Qp}(G)$.

   \begin{proposition}
    \label{prop:explicitdescription}
    Let $\fc=\{c_1,\dots,c_d\}\subset H^1_{\Iw,S}(K_\infty,V)$ be elements such that if $\mathcal{C}$ denotes the $\Lambda_{\Qp}(G)$-submodule of $H^1_{\Iw,S}(K_\infty,V)$ spanned by the elements of $\fc$, then the quotient $H^1_{\Iw,S}(K_\infty,V)\slash \mathcal{C}$ is $\Lambda_{\Qp}(G)$-torsion. Denote by $f_{\fc}\in\Lambda_{\Qp}(G)$ the corresponding characteristic element. Then
   \[  \II_{\arith,p}(V)=\Lambda_{\Qp}(G) \, f_2 f_{\fc}^{-1} \, \cL^G_V(c_1)\wedge\dots\wedge \cL^G_V(c_d).\]
   \end{proposition}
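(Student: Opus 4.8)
The plan is to unwind the definition of $\II_{\arith,p}(V)$ from Definition \ref{def:Iarith} and compare it to the right-hand side using the description of determinants of torsion modules via characteristic elements. First I would record that, since $H^1_{\Iw,S}(K_\infty,V)/\mathcal{C}$ is $\Lambda_{\Qp}(G)$-torsion with characteristic element $f_{\fc}$, there is a canonical isomorphism of determinants
\[
 \det\nolimits_{\Lambda_{\Qp}(G)} H^1_{\Iw,S}(K_\infty,V) \;\cong\; f_{\fc}^{-1}\,\det\nolimits_{\Lambda_{\Qp}(G)} \mathcal{C} \;=\; f_{\fc}^{-1}\,\Lambda_{\Qp}(G)\cdot (c_1\wedge\dots\wedge c_d),
\]
using the standard multiplicativity of the Knudsen--Mumford determinant in short exact sequences (the sequence $0\to\mathcal{C}\to H^1_{\Iw,S}(K_\infty,V)\to H^1_{\Iw,S}(K_\infty,V)/\mathcal{C}\to 0$), together with the fact that $\det$ of a torsion module with characteristic element $f$ is $f^{-1}\Lambda_{\Qp}(G)$. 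Here I use that $\mathcal{C}$ is free of rank $d$ on the generators $c_1,\dots,c_d$; strictly one should note that the $c_i$ may satisfy relations, but after inverting $p$ and an appropriate element of $\Lambda_{\Qp}(G)$ one reduces to the free case, and the characteristic element $f_{\fc}$ absorbs exactly this discrepancy — this is the only mildly delicate point.

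Next I would apply the map $\det\cL^G_V$ and track both factors. By definition $\det\cL^G_V$ sends the generator $c_1\wedge\dots\wedge c_d$ of $\det\mathcal{C}$ to $\cL^G_V(c_1)\wedge\dots\wedge\cL^G_V(c_d)$ in $\mathcal{K}_{\Finf}(G)\otimes_{\Qp}\bigwedge^d\DD_p(V)$, so the image of $\det\cL^G_V\big(H^1_{\Iw,S}(K_\infty,V)\big)$ is precisely $f_{\fc}^{-1}\,\Lambda_{\Qp}(G)\cdot\big(\cL^G_V(c_1)\wedge\dots\wedge\cL^G_V(c_d)\big)$. Then, tensoring with $\big(\det H^2_{\Iw}(K_\infty,T)\big)^{-1} = f_2\,\Lambda_{\Qp}(G)$ (since $\det H^2_{\Iw}(K_\infty,T) = f_2^{-1}\Lambda_{\Qp}(G)$ by choice of $f_2$), multiplies the module by the principal ideal generated by $f_2$. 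Combining the two contributions gives
\[
 \II_{\arith,p}(V) \;=\; \Lambda_{\Qp}(G)\, f_2 f_{\fc}^{-1}\,\cL^G_V(c_1)\wedge\dots\wedge\cL^G_V(c_d),
\]
which is the claimed formula.

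The main obstacle I anticipate is purely bookkeeping with the Knudsen--Mumford determinant: one must be careful that the canonical isomorphism $\det_{\Lambda_{\Qp}(G)} H^1_{\Iw,S}(K_\infty,V)\cong f_{\fc}^{-1}\det\mathcal{C}$ is an equality of \emph{submodules} of a common ambient module (rather than an abstract isomorphism), so that intersecting and taking images commute correctly; this is where the conventions of \cite{perrinriou94}, \S 3.1.5, are used, and where one invokes Conjecture $\Leop(K_\infty,V)$ to guarantee that $H^2_{\Iw}(K_\infty,V)$ is genuinely torsion (hence has a well-defined characteristic ideal) and that the rank statement $\rank_{\Lambda_{\Zp}(G)}H^1_{\Iw,S}(K_\infty,T)=d$ of Theorem \ref{thm:globalrank} applies, so $d$ really is the right exterior power. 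Everything else — multiplicativity of determinants in exact sequences, the identification of $\det$ of a torsion module with the inverse of its characteristic element, and the behaviour of $\det\cL^G_V$ on wedge generators — is standard and was already used implicitly in Definition \ref{def:Iarith}.
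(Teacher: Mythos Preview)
Your proposal is correct and takes essentially the same approach as the paper, which simply writes ``Clear from the construction'': you have unpacked the determinant formalism (multiplicativity in the short exact sequence $0 \to \mathcal{C} \to H^1_{\Iw,S}(K_\infty,V) \to H^1_{\Iw,S}(K_\infty,V)/\mathcal{C} \to 0$, identification of $\det$ of a torsion module with the inverse of its characteristic element, and the definition $\det H^2_{\Iw}(K_\infty,T)=f_2^{-1}\Lambda_{\Qp}(G)$) exactly as intended by Definition \ref{def:Iarith}. Your caveats about freeness of $\mathcal{C}$ and about viewing the determinant isomorphism as an equality of submodules inside a common ambient module are the right points to flag, and your invocation of $\Leop(K_\infty,V)$ and Theorem \ref{thm:globalrank} to justify the rank-$d$ statement is also appropriate.
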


   \begin{proof}
    Clear from the construction.
   \end{proof}

   \begin{remark}
    If $H^1_{\Iw,S}(K_\infty,V)$ is free as a $\Lambda_{\Qp}(G)$-module, then $\II_{\arith,p}(V)$ must be contained in $\cH_{\Finf}(G)$.
   \end{remark}

   \begin{remark}
    Via the isomorphism
    \[ \mathcal{K}_{\Finf}(G)\otimes\bigwedge^d\DD_p(V)\cong \Hom_{\Qp}\Big(\bigwedge^d\DD_p(V^*(1)),\mathcal{K}_{\Finf}(G)\Big),\]
    we can consider the $\Lambda_{\Qp}(G)$-module $\II_{\arith,p}(V)$ as a submodule of
    \[ \Hom_{\Qp}\big(\bigwedge^d\DD_p(V^*(1)),\mathcal{K}_{\Finf}(G)\big).\]
   \end{remark}

   The following proposition implies that $\II_{\arith,p}(V)\neq 0$:

   \begin{proposition}
    \label{prop:smallkernel}
    Assume that conjecture $\Leop(K_\infty,V^*(1))$ holds. Then the kernel of the homomorphism
    \[ \loc_p:H^1_{\Iw,S}(K_\infty, T) \rTo \bigoplus_{i=1}^e Z^1_{\Iw,\fp_i}(K_\infty,T)\]
    is $\Lambda_{\Zp}(G)$-torsion.
   \end{proposition}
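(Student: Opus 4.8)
The plan is to recognise $\ker(\loc_p)$ as a Selmer group, feed it into the Poitou--Tate global duality sequence over $K_\infty$, and then compare $\Lambda_{\Qp}(G)$-ranks. Write $\mathcal F$ for the Selmer structure on $T$ imposing the strict (i.e.\ vanishing) local condition at every prime above $p$, the relaxed condition $H^1_{\Iw}(K_{\infty,v},T)$ at the remaining primes of $S$, and the unramified condition outside $S$. Since $S$ contains all primes whose inertia acts non-trivially on $T$, every class in $H^1_{\Iw,S}(K_\infty,T)$ is automatically unramified outside $S$, so $\ker(\loc_p) = \Sel_{\mathcal F}(K_\infty, T)$ by definition. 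The Iwasawa-theoretic Poitou--Tate sequence (as in \cite[\S A]{perrinriou94}, adapted to the present $\Zp^d$-setting) then reads
\begin{multline*}
 0 \to \ker(\loc_p) \to H^1_{\Iw,S}(K_\infty, T) \xrightarrow{\ \loc_p\ } \bigoplus_{i=1}^e Z^1_{\Iw,\fp_i}(K_\infty, T) \\
 \to \Sel_{\mathcal F^\perp}(K_\infty, T^*(1))^\vee \to H^2_{\Iw,S}(K_\infty, T) \to \cdots,
\end{multline*}
where $\mathcal F^\perp$ is the orthogonal Selmer structure on $T^*(1)$ for local Tate duality: the relaxed condition at the primes above $p$, and the strict condition at the remaining primes of $S$. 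In particular $\Sel_{\mathcal F^\perp}(K_\infty, T^*(1))$ is precisely the kernel of the localisation map on $H^1_{\Iw,S}(K_\infty, T^*(1))$ at the primes of $S$ not above $p$, hence is a $\Lambda_{\Zp}(G)$-submodule of $H^1_{\Iw,S}(K_\infty, T^*(1))$.

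Next I would read off ranks from this sequence after inverting $p$. Since $p$ splits completely in $K$ we have $e = [K:\QQ]$ and each $K_{\fp_i} = \Qp$, so $\sum_{i=1}^e [K_{\fp_i}:\Qp]\dim_{\Qp} V = [K:\QQ]\dim_{\Qp} V = 2d$; combining this with the local Euler characteristic formula and the vanishing of the $\Lambda_{\Qp}(G)$-ranks of the local $H^0_{\Iw}$ and $H^2_{\Iw}$ over the infinite extensions $K_{\infty,\fp_i}/K_{\fp_i}$ (these are infinite, each containing $F_\infty(\mu_{p^\infty})$ with $F_\infty/K_{\fp_i}$ infinite), one obtains $\rank_{\Lambda_{\Qp}(G)} \bigoplus_{i=1}^e Z^1_{\Iw,\fp_i}(K_\infty, V) = 2d$. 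The running hypothesis $\Leop(K_\infty,V)$ gives $\rank_{\Lambda_{\Qp}(G)} H^2_{\Iw,S}(K_\infty, V) = 0$, so the image of $\Sel_{\mathcal F^\perp}(K_\infty, V^*(1))^\vee$ there is torsion, and the displayed sequence yields
\[ \rank_{\Lambda_{\Qp}(G)} \ker(\loc_p) = \rank_{\Lambda_{\Qp}(G)} \Sel_{\mathcal F^\perp}(K_\infty, V^*(1)) - d. \]

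Finally, $\Sel_{\mathcal F^\perp}(K_\infty, V^*(1))$ is a $\Lambda_{\Qp}(G)$-submodule of $H^1_{\Iw,S}(K_\infty, V^*(1))$, and by Theorem \ref{thm:globalrank} applied to $V^*(1)$ (which is legitimate precisely because we are now assuming $\Leop(K_\infty, V^*(1))$) the latter has $\Lambda_{\Qp}(G)$-rank $\tfrac12[K:\QQ]\dim_{\Qp} V^*(1) = d$. Hence $\rank_{\Lambda_{\Qp}(G)} \Sel_{\mathcal F^\perp}(K_\infty, V^*(1)) \le d$, so $\rank_{\Lambda_{\Qp}(G)} \ker(\loc_p) \le 0$, i.e.\ $\ker(\loc_p)$ is $\Lambda_{\Zp}(G)$-torsion. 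Note $\Leop(K_\infty, V^*(1))$ enters only through this last step; $\Leop(K_\infty, V)$ is the hypothesis already in force throughout \S\ref{sect:defmoduleofLfunctions}.

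The main obstacle I anticipate is purely foundational: writing down and justifying the Poitou--Tate exact sequence above in the Iwasawa-theoretic setting over a $\Zp^d$-extension, in particular pinning down the duality functor $(-)^\vee$ (so that it is $\Lambda$-rank-preserving) and checking that the orthogonality of the local conditions $\mathcal F$ and $\mathcal F^\perp$ survives the inverse limit. Once this is in place, the remaining ingredients --- the local rank bookkeeping at the primes above $p$, and the identification of $\Sel_{\mathcal F^\perp}$ with the kernel of a map of $\Lambda_{\Qp}(G)$-modules (and hence a submodule of $H^1_{\Iw,S}(K_\infty, V^*(1))$ whose rank is bounded by Theorem \ref{thm:globalrank}) --- are routine.
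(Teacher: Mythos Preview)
Your overall strategy --- Poitou--Tate plus rank-counting --- is the same as the paper's, but there is a genuine gap, and the paper's execution is considerably simpler.

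The gap is in your identification of the dual Selmer term. In the Iwasawa-theoretic Poitou--Tate sequence for the compact module $T$, the term sitting between $Z^1_p$ and $H^2_{\Iw,S}$ is the Pontryagin dual of the \emph{discrete} Selmer group $\Sel_{\mathcal F^\perp}\bigl(K_\infty, V^*(1)/T^*(1)\bigr)$, a submodule of $H^1\bigl(G_S(K_\infty), V^*(1)/T^*(1)\bigr)$. It is \emph{not} the compact object $\Sel_{\mathcal F^\perp}(K_\infty, T^*(1)) \subset H^1_{\Iw,S}(K_\infty, T^*(1))$ that you describe. Consequently your bound ``$\rank \Sel_{\mathcal F^\perp} \le \rank H^1_{\Iw,S}(K_\infty, V^*(1)) = d$'' is not justified: Theorem~\ref{thm:globalrank} for $V^*(1)$ controls the Iwasawa $H^1$, not the corank of the Galois cohomology of the divisible module over $K_\infty$. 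One can bound the latter corank by $d$, but doing so requires running the full nine-term Poitou--Tate sequence and using $\Leop(K_\infty,V^*(1))$ in the form of condition (iii)/(iv) of Proposition~\ref{prop:leopoldt} --- which is exactly what the paper does directly.

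The paper's argument avoids both the Selmer formalism and the extra hypothesis $\Leop(K_\infty,V)$ that you invoke. It uses the unmodified nine-term Poitou--Tate sequence with the full localisation map $\loc_S$ at \emph{all} places of $S$. The first term of that sequence identifies $\ker(\loc_S)$ with $H^2\bigl(G_S(K_\infty), V^*(1)/T^*(1)\bigr)^\vee$, and $\Leop(K_\infty,V^*(1))$ says precisely (via Proposition~\ref{prop:leopoldt}) that this module is $\Lambda_{\Zp}(G)$-torsion. Since $\ker(\loc_p)/\ker(\loc_S)$ injects into $\bigoplus_{v \in S,\, v\nmid p} Z^1_{\Iw,v}(K_\infty,T)$, which has $\Lambda$-rank zero by Theorem~\ref{thm:localrank}, the result follows immediately. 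No rank gymnastics, no $\Leop(K_\infty,V)$, no Selmer structures.
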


   \begin{proof}
    We adapt the arguments in \cite[\S A.2]{perrinriou95}. For $0\leq j\leq 2$, define the $\Lambda_{\Zp}(G)$-modules
    \[ Z^j_S(K_\infty,T)=\bigoplus_{v\in S} H^j_{\Iw}(K_{\infty,v},T)\hspace{3ex} \text{and}\hspace{3ex} Z^j_p(K_\infty,T)= \bigoplus_{i=1}^e Z^j_{\Iw,\fp_i}(K_\infty,T).\]
    Also, define
    \[ X^i_{\infty,S}(K_\infty,T)=H^i\big(G_S(K_\infty),V^*(1)\slash T^*(1)\big). \]
    Taking the limit over the $K_{m,n}$ of the Poitou-Tate exact sequence gives an exact sequence of $\Lambda_{\Zp}(G)$-modules
    \begin{align*}
     0 &\rTo X^2_{\infty,S}(K_\infty,T)^\vee  \rTo H^1_{\Iw,S}(K_\infty,T) \rTo^{\loc_S} Z^1_S(K_\infty,T) \\
        &\rTo  X^1_{\infty,S}(K_\infty,T)^\vee \rTo H^2_{\Iw,S}(K_\infty,T) \rTo Z^2_{S}(K_\infty,T) \\
        &\rTo  X^0_{\infty,S}(K_\infty,T)^\vee\rTo 0.
    \end{align*}

    By Theorem \ref{thm:globalrank}, since we are assuming $\Leop(K_\infty,V^*(1))$, the module $X^2_{\infty,S}(K_\infty,T)$ is $\Lambda_{\Zp}(G)$-cotorsion. Thus $\ker(\loc_S)$ is $\Lambda_{\Zp}(G)$-torsion. As
    \[ \rank_{\Lambda_{\Zp}(G)}Z^1_S(K_\infty,T)=\rank_{\Lambda_{\Zp}(G)}Z^1_p(K_\infty,T)\]
    by Proposition \ref{thm:localrank}, this implies the result.
   \end{proof}

   \begin{corollary}
    If conjecture $\Leop(K_\infty,V)$ holds, the $\Lambda_{\Zp}(G)$-mod\-ule $\II_{\arith,p}(V)$ is non-zero.
   \end{corollary}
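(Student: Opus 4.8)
The plan is to reduce the assertion to the statement that the regulator map $\cL^G_V$ has maximal $\Lambda_{\Qp}(G)$-rank $d$ on $H^1_{\Iw,S}(K_\infty,V)$, and then to obtain that by combining the injectivity of the local regulators with Proposition~\ref{prop:smallkernel}. First I would invoke Proposition~\ref{prop:explicitdescription}: fixing $\fc=\{c_1,\dots,c_d\}\subset H^1_{\Iw,S}(K_\infty,V)$ spanning a $\Lambda_{\Qp}(G)$-submodule $\mathcal{C}$ with torsion quotient, we have $\II_{\arith,p}(V)=\Lambda_{\Qp}(G)\,f_2 f_{\fc}^{-1}\,\cL^G_V(c_1)\wedge\dots\wedge\cL^G_V(c_d)$, so it is enough to show that this wedge product is non-zero in $\mathcal{K}_{\Finf}(G)\otimes_{\Qp}\bigwedge^d\DD_p(V)$. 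Since $\mathcal{C}$ has rank $d=\rank_{\Lambda_{\Zp}(G)}H^1_{\Iw,S}(K_\infty,T)$ by Theorem~\ref{thm:globalrank}, the target of $\cL^G_V$ is $\Lambda_{\Qp}(G)$-torsion-free, and the image of $\mathcal{C}$ is generated by the $d$ elements $\cL^G_V(c_i)$, the non-vanishing of the wedge is equivalent to the statement that $\cL^G_V$ has $\Lambda_{\Qp}(G)$-rank $d$, i.e. that $\ker\cL^G_V$ is $\Lambda_{\Zp}(G)$-torsion.

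Next I would analyse this kernel. By construction $\cL^G_V=\bigoplus_{i=1}^e\cL^G_{V,\fp_i}$, where each $\cL^G_{V,\fp_i}$ is the composite of the localisation map $\loc_{\fp_i}:H^1_{\Iw,S}(K_\infty,V)\to Z^1_{\Iw,\fp_i}(K_\infty,V)$ with the semilocal regulator of Theorem~\ref{thm:semilocalreg}. By \eqref{eq:semilocaliwasawacoho} the latter is obtained from the local regulator $\cL^{G_{\fP_i}}_V$ by base change along $\Lambda_{\Zp}(G_{\fP_i})\hookrightarrow\Lambda_{\Zp}(G)$; this ring map is faithfully flat (for $G$ abelian with no element of order $p$, by Assumption~\ref{assumption:eltsorderp}, one reduces by the structure theorem to the cases where $\Lambda_{\Zp}(G)$ is visibly free over $\Lambda_{\Zp}(G_{\fP})$, which one checks by the formal inverse function theorem, resp. by miracle flatness). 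Since $F_\infty/K_{\fp_i}$ is infinite, Proposition~\ref{prop:injectivereg} shows $\cL^{G_{\fP_i}}_V$ is injective, and faithful flatness propagates this to the semilocal regulator. Hence $\ker\cL^G_V=\ker\loc_p$, where $\loc_p=\bigoplus_i\loc_{\fp_i}$.

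Finally, Proposition~\ref{prop:smallkernel} shows that $\ker\loc_p$ is $\Lambda_{\Zp}(G)$-torsion, which completes the argument. The main point requiring care is that Proposition~\ref{prop:smallkernel} is proved under the hypothesis $\Leop(K_\infty,V^*(1))$, whereas only $\Leop(K_\infty,V)$ appears in the statement of the corollary (this being needed already for $\det H^2_{\Iw}(K_\infty,T)$, hence $\II_{\arith,p}(V)$, to be defined); the hypothesis should accordingly be understood as requiring both Leopoldt-type conjectures, or one notes that in the situations of interest --- for instance $K$ imaginary quadratic, treated in \S\ref{sect:imquad} --- the relevant cases of both follow from the same global input. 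I expect the verification of faithful flatness of $\Lambda_{\Zp}(G)$ over $\Lambda_{\Zp}(G_{\fP})$ and the bookkeeping between the two Leopoldt conjectures to be the only genuinely non-formal steps; everything else is immediate from the results already established.
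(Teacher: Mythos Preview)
Your proposal is correct and follows essentially the same route as the paper: the paper's one-line proof simply says ``Consequence of Propositions~\ref{prop:smallkernel} and~\ref{prop:injectivereg}'', and you have unpacked exactly what that entails --- reduce to showing $\ker\cL^G_V$ is torsion, use injectivity of the local regulator (Proposition~\ref{prop:injectivereg}) together with the identification~\eqref{eq:semilocaliwasawacoho} to pass to the semilocal map, and then invoke Proposition~\ref{prop:smallkernel} for the localisation kernel. Two minor remarks: (i) for the passage from local to semilocal injectivity you only need \emph{flatness} of $\Lambda_{\Zp}(G)$ over $\Lambda_{\Zp}(G_{\fP})$, not faithful flatness, and this is straightforward once one puts the inclusion $G_{\fP}\hookrightarrow G$ in Smith normal form; (ii) your observation about the hypotheses is entirely correct --- the paper's statement assumes only $\Leop(K_\infty,V)$ (needed to define $\II_{\arith,p}(V)$), but the cited Proposition~\ref{prop:smallkernel} requires $\Leop(K_\infty,V^*(1))$, so the corollary as written tacitly uses both.
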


   \begin{proof}
    Consequence of Propositions \ref{prop:smallkernel} and \ref{prop:injectivereg}.
   \end{proof}


   As in the cyclotomic case, we conjecture that $\II_{\arith,p}(V)$ should have a canonical basis vector -- a $p$-adic $L$-function for $V$ -- whose image under evaluation at de Rham characters of $G$ is related to the critical $L$-values of $V$ and its twists. In the above generality this is a somewhat vain exercise as even the analytic continuation and algebraicity of the values of the complex $L$-function is conjectural. In the next section, we shall make this philosophy precise in some special cases; we shall show that it is consistent with known results regarding $p$-adic $L$-functions, but that it also implies some new conjectures regarding $p$-adic $L$-functions of modular forms.


 \section{Imaginary quadratic fields}
  \label{sect:imquad}

  \subsection{Setup}
   \label{sect:setupimquad}

   Throughout this section, let $K$ be an imaginary quadratic field in which $p$ splits; write $(p)=\fp \fpb$. We now introduce a specific class of extensions $K_\infty / K$ for which the hypotheses in Section \ref{sect:defmoduleofLfunctions} are satisfied. Let $\ff$ be an integral ideal of $K$ prime to $\fp$ and $\fpb$, and let $K_\infty$ be the ray class field $K(\ff p^\infty)$. We assume that $\ff$ is stable under $\Gal(K / \QQ)$, which is equivalent to the assumption that $K_\infty$ is Galois over $\QQ$. It is well known that $K_\infty \supseteq K(\mu_{p^\infty})$, and that the primes $\fp$ and $\fpb$ are finitely decomposed in $K$; so $G = \Gal(K_\infty / K)$ is an abelian $p$-adic Lie group of dimension 2, and the decomposition groups $G_{\fp}$ and $G_{\fpb}$ are open subgroups.

   \begin{lemma}
    If $p$ is coprime to the order of the ray class group $\operatorname{Cl}_{\ff}(K)$, then $G$ has no elements of order $p$.
   \end{lemma}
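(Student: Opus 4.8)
The plan is to build $G$ out of ray class groups, use exactness of the inverse limit over the finite layers, and then reduce the statement to an elementary count of $p$-torsion in finite abelian groups. First I would recall that by class field theory $G = \Gal(K_\infty/K) \cong \varprojlim_n \operatorname{Cl}_{\ff p^n}(K)$, and that for every $n$ there is the standard exact sequence
\[ 1 \rTo (\cO_K/\ff p^n)^\times / \im(\cO_K^\times) \rTo \operatorname{Cl}_{\ff p^n}(K) \rTo \operatorname{Cl}(K) \rTo 1, \]
compatible with the (surjective) transition maps. The left-hand terms are finite, so $\varprojlim^1$ vanishes, and passing to the inverse limit yields an exact sequence
\[ 1 \rTo N \rTo G \rTo \operatorname{Cl}(K) \rTo 1, \qquad N := \varprojlim_n (\cO_K/\ff p^n)^\times / \im(\cO_K^\times). \]
Since $\ff$ is prime to $p$, the Chinese remainder theorem gives $(\cO_K/\ff p^n)^\times \cong (\cO_K/\ff)^\times \times (\cO_K/p^n)^\times$; and since $p$ splits in $K$ we have $\cO_K \otimes_{\ZZ} \Zp \cong \Zp \times \Zp$, so $\varprojlim_n (\cO_K/p^n)^\times \cong (\Zp^\times)^2$. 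Hence $N \cong \big[ (\cO_K/\ff)^\times \times (\Zp^\times)^2 \big] / D$, where $D$ is the image of $\cO_K^\times$ in this limit, a finite group whose order divides $w_K := |\cO_K^\times| \in \{2,4,6\}$.

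Next I would extract the numerical content of the hypothesis. From the exact sequence above, $|\operatorname{Cl}_\ff(K)| = |(\cO_K/\ff)^\times| \cdot |\operatorname{Cl}(K)| \,/\, |\im(\cO_K^\times)|$. As $p$ is odd and unramified in $K$, it does not divide $w_K$ --- the only imaginary quadratic field with $3 \mid w_K$ is $\QQ(\sqrt{-3})$, in which $3$ ramifies rather than splits --- so $|\im(\cO_K^\times)|$ is prime to $p$. Comparing $p$-adic valuations, the assumption $p \nmid |\operatorname{Cl}_\ff(K)|$ then forces both $p \nmid |\operatorname{Cl}(K)|$ and $p \nmid |(\cO_K/\ff)^\times|$.

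Finally I would assemble the pieces. The group $M := (\cO_K/\ff)^\times \times (\Zp^\times)^2$ has torsion subgroup of order prime to $p$: the first factor has order prime to $p$ by the previous paragraph, and $\Zp^\times \cong \mu_{p-1} \times \Zp$ has torsion $\mu_{p-1}$. So $M$ contains no element of nontrivial $p$-power order; and since $|D|$ is prime to $p$, any element $\bar g \in M/D$ with $\bar g^p = 1$ has a lift $g \in M$ with $g^{p|D|} = 1$, hence $\operatorname{ord}(g) \mid |D|$ is prime to $p$, hence $\bar g = 1$. Thus $N$ has no element of order $p$. Then for $g \in G$ with $g^p = 1$, the image of $g$ in $\operatorname{Cl}(K)$ has order dividing $\gcd(p, |\operatorname{Cl}(K)|) = 1$, so $g \in N$ and therefore $g = 1$, proving the lemma.

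The one step that needs genuine care is the identification of $N$ --- in particular that the image of $\cO_K^\times$ inside $(\cO_K \otimes \Zp)^\times \cong (\Zp^\times)^2$ really is a finite subgroup of order prime to $p$. Everything else is bookkeeping with finite abelian groups, and the single arithmetic input --- that $p$ odd and split in $K$ implies $p \nmid w_K$ --- is exactly what rules out the degenerate case $p = 3$, $K = \QQ(\sqrt{-3})$.
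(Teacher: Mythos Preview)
Your argument is correct and rests on the same underlying facts as the paper's: the class-field-theoretic description of $G$, the identification $(\cO_K \otimes \Zp)^\times \cong (\Zp^\times)^2$, and the observation that $\cO_K^\times$ is finite of order prime to $p$ (since $p$ is odd and split). The one compressed step --- ``$g^{p|D|}=1$, hence $\operatorname{ord}(g) \mid |D|$'' --- is fine once you remember you have just shown $M$ has no nontrivial $p$-power torsion, so $\operatorname{ord}(g) \mid p|D|$ together with $p \nmid \operatorname{ord}(g)$ gives $\operatorname{ord}(g) \mid |D|$.

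The paper takes a slightly more direct route. Rather than the exact sequence with $\operatorname{Cl}(K)$ on the right, it uses
\[ 0 \rTo \overline{U_\ff} \rTo (\cO_K \otimes \Zp)^\times \rTo G \rTo \operatorname{Cl}_{\ff}(K) \rTo 0, \]
where $U_\ff$ is the group of units congruent to $1$ modulo $\ff$. This puts the hypothesis $p \nmid |\operatorname{Cl}_\ff(K)|$ to work immediately: one only has to check that $(\cO_K \otimes \Zp)^\times / \overline{U_\ff}$ is $p$-torsion-free, and since $\overline{U_\ff} = U_\ff$ is finite of order dividing $w_K$ (hence prime to $p$), this is immediate. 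Your version instead decomposes the hypothesis into $p \nmid |\operatorname{Cl}(K)|$ and $p \nmid |(\cO_K/\ff)^\times|$, which works but adds a detour. The upshot: same ingredients, and the paper's choice of exact sequence absorbs the factor $(\cO_K/\ff)^\times$ into the right-hand term, shortening the bookkeeping.
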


   \begin{proof}
    By class field theory, we have an exact sequence
    \[ 0 \rTo \overline{U_\ff} \rTo (\cO_K \otimes \Zp)^\times \rTo \operatorname{Cl}_{\ff p^\infty}(K) \rTo \operatorname{Cl}_{\ff}(K) \rTo 0,\]
    where $U_\ff$ is the group of units of $\cO_K$ that are 1 modulo $\ff$ and $\overline{U_\ff}$ is the closure of $U_\ff$ in $(\cO_K \otimes \Zp)^\times$. So it suffices to show that the quotient
    \[ \frac{(\cO_K \otimes \Zp)^\times}{\overline{U_\ff}}\]
    is $p$-torsion-free. However, since $K$ is imaginary quadratic, $\overline{U_\ff} = U_\ff$ is a finite group, and as $p$ is odd and split in $K$, we have $p \nmid |\overline{U_\ff}|$. Since $(\cO_K \otimes \Zp)^\times$ is $p$-torsion-free, the result follows.
   \end{proof}

   Let us write $\Gamma_\fp = \Gal(K_\infty / K(\ff \fpb^\infty))$ and $\Gamma_{\fpb} = \Gal(K_\infty / K(\ff \fp^\infty))$. Note that $\Gamma_\fp$ and $\Gamma_{\fpb}$ are $p$-adic Lie groups of rank 1 whose intersection is trivial, and the open subgroup $\Gal(K_\infty / K(\ff))$ is isomorphic to $\Gamma_\fp \times \Gamma_{\fpb}$.

   Let $T$ be a $\Zp$-representation of $\cG_K$ of rank $d$, and let $V=T[p^{-1}]$. As in the previous section, assume that for $\star\in \{\fp, \fpb \}$, the restriction of $V$ to $\cG_{K_\star}$ is good crystalline.

   \begin{lemma}
    We have a canonical decomposition
    \[ \bigwedge^d \DD_p(V) \cong \bigoplus_{m+n=d} \left( \bigwedge^m \DD_{\cris,\fp}(V)\otimes_{\Qp} \bigwedge^n \DD_{\cris,\fpb}(V) \right). \]
   \end{lemma}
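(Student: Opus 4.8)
The plan is to reduce the statement to the standard multilinear-algebra fact that the exterior algebra of a direct sum decomposes. First I would unwind the definition of $\DD_p(V)$. Since $K$ is imaginary quadratic and $p$ splits, we have $(p)=\fp\fpb$, so the index $e$ appearing in the isomorphism $\DD_p(V)\cong\bigoplus_{i=1}^e\DD_{\cris,\fp_i}(V)$ of \S\ref{sect:defmoduleofLfunctions} equals $2$, giving a canonical direct-sum decomposition $\DD_p(V)\cong\DD_{\cris,\fp}(V)\oplus\DD_{\cris,\fpb}(V)$. Moreover, because $\fp$ and $\fpb$ have residue degree $1$ over $p$, the completions $K_\fp$ and $K_\fpb$ are both equal to $\Qp$, so $K_{\fp,0}=K_{\fpb,0}=\Qp$ and each summand is an honest $\Qp$-vector space (of dimension $d$, by the good-crystalline hypothesis). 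In particular all the structures in play are $\Qp$-linear, so the tensor products below are over $\Qp$.

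Next I would invoke the canonical isomorphism of graded $\Qp$-algebras
\[ \bigwedge^{\bullet}(A\oplus B)\cong\Bigl(\bigwedge^{\bullet}A\Bigr)\otimes_{\Qp}\Bigl(\bigwedge^{\bullet}B\Bigr) \]
valid for any two $\Qp$-vector spaces $A$ and $B$, under which, for $a\in\bigwedge^m A$ and $b\in\bigwedge^n B$ viewed inside $\bigwedge^{\bullet}(A\oplus B)$ via the inclusions $A,B\into A\oplus B$, the wedge $a\wedge b$ corresponds to $a\otimes b$. Taking the degree-$d$ graded piece of both sides and applying this with $A=\DD_{\cris,\fp}(V)$ and $B=\DD_{\cris,\fpb}(V)$ yields precisely
\[ \bigwedge^d\DD_p(V)\cong\bigoplus_{m+n=d}\Bigl(\bigwedge^m\DD_{\cris,\fp}(V)\otimes_{\Qp}\bigwedge^n\DD_{\cris,\fpb}(V)\Bigr), \]
which is the asserted decomposition. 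The only points worth spelling out are that this isomorphism is independent of all choices, since it is built entirely from the universal properties of $\oplus$ and of $\bigwedge$, and that (if needed later) it is an isomorphism of filtered Frobenius modules, the Frobenius and Hodge filtration on $\bigwedge^d\DD_p(V)$ being the ones induced functorially from those on the two factors.

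There is essentially no genuine obstacle here: the content is entirely the classical decomposition of exterior powers of a direct sum, and the only mild care required is in identifying $e$ with $2$ and in observing that the local fields $K_\fp$, $K_\fpb$ collapse to $\Qp$, so that the tensor product over $\Qp$ is the correct one and the two factors really are $\Qp$-vector spaces rather than modules over distinct base fields. Accordingly I expect this lemma to occupy only a couple of lines in the final text.
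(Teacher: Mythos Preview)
Your proposal is correct and is precisely the argument the paper has in mind: the paper's proof reads simply ``Clear from the definition,'' and what you have written is exactly the unwinding of that definition together with the standard exterior-algebra decomposition of a direct sum. Your additional remarks about $K_\fp=K_\fpb=\Qp$ and about canonicity are accurate but more detailed than the paper bothers to record.
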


   \begin{proof}
    Clear from the definition.
   \end{proof}

   To simplify the notation, let us write \[\DD_p(V)^{(m,n)}=\bigwedge^m \DD_{\cris,\fp}(V)\otimes_{\Qp} \bigwedge^n \DD_{\cris,\fpb}(V).\]


  \subsection{Galois descent of the module of L-functions}
   \label{sect:Galoisdescent}

   \begin{definition}
    For $m,n\in\NN$ such that $m+n=d$, define $\II^{(m,n)}_{\arith,p}(V)$ to be the image of $\II_{\arith,p}(V)$ in $\mathcal{K}_{\Finf}(G)\otimes_{\Qp} \DD_p(V)^{(m,n)}$ induced from the projection map $\bigwedge^d \DD_p(V)\rTo \DD_p(V)^{(m,n)}$.
   \end{definition}

   The following theorem is the main result of this section:

   \begin{theorem}
    \label{thm:coefficients}
    If $d = 2n$, then $\II^{(n,n)}_{\arith,p}(V) \subset \mathcal{K}_{L}(G) \otimes_{\Qp} \DD_p(V)^{(n,n)}$, where $L$ is a finite unramified extension of $\Qp$.
   \end{theorem}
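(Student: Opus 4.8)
The plan is to reduce the statement to a Galois-descent property of the two-variable regulator map and of the Yager module, the point being that the ``diagonal'' part $\DD_p(V)^{(n,n)}$ of the top exterior power is the component on which the two local contributions at $\fp$ and $\fpb$ enter symmetrically, so that the intrinsic $\Finf$-dependence of each factor is damped to a bounded (unramified) field extension. First I would unwind the definition of $\II_{\arith,p}(V)$ via Proposition \ref{prop:explicitdescription}: choosing generators $\fc = \{c_1,\dots,c_d\}$ of a full-rank submodule of $H^1_{\Iw,S}(K_\infty,V)$, the module $\II^{(n,n)}_{\arith,p}(V)$ is generated over $\Lambda_{\Qp}(G)$ (up to a unit $f_2 f_{\fc}^{-1} \in \mathcal{K}_{\Finf}(G)$, which we will see below actually lies in $\mathcal{K}_{\Qp}(G)$) by the projection to the $(n,n)$-component of $\cL^G_V(c_1)\wedge\dots\wedge\cL^G_V(c_d)$. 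Since $\cL^G_V = \bigoplus_i \cL^G_{V,\fp_i}$ and here $e = 2$ with primes $\fp, \fpb$, expanding the wedge and projecting onto $\DD_p(V)^{(n,n)} = \bigwedge^n\DD_{\cris,\fp}(V)\otimes\bigwedge^n\DD_{\cris,\fpb}(V)$ kills all terms except those using exactly $n$ factors from $\loc_\fp$ and $n$ from $\loc_{\fpb}$. So the $(n,n)$-component is an $\Lambda_{\Qp}(G)$-linear combination of products
\[ \left(\cL^{G_\fp}_V(\loc_\fp c_{i_1})\wedge\dots\wedge\cL^{G_\fp}_V(\loc_\fp c_{i_n})\right) \otimes \left(\cL^{G_{\fpb}}_V(\loc_{\fpb}c_{j_1})\wedge\dots\wedge\cL^{G_{\fpb}}_V(\loc_{\fpb}c_{j_n})\right), \]
with the two index sets partitioning $\{1,\dots,d\}$.

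Next I would bring in the structure of the local regulator from Section \ref{sect:regulator}. By Definition \ref{def:2variableregulator} and Proposition \ref{prop:kernelofpsi}, the image of $\cL^{G_\fp}_V$ lies in the $\Lambda_{\cO_F}(G_\fp)$-span of vectors $i_\infty(m_j\htimes\omega_\fp)$ where $m_1,\dots,m_d$ is a $\Lambda_{\cO_F}(\Gamma)$-basis of $\vp^*\NN_F(T)^{\psi=0}$ and $\omega_\fp$ is a $\Lambda_{\cO_F}(U)$-basis of the Yager module $S_{F_\infty/F}$ (using the canonical identification of the unramified parts $L_{\infty,\fp} = L_{\infty,\fpb} = F_\infty$ noted before the statement). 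The key observation is that a top exterior power of $n$ such vectors produces the $n$-th divided power (really the $\binom{d}{n}$-fold products) of $\omega_\fp$, i.e.\ an element of $\bigwedge^n S_{F_\infty/F}$ over $\Lambda(\Gamma)$; and by Proposition \ref{prop:twist}, $\omega_\fp$ is a \emph{period}, satisfying $\omega_\fp^u = [u]\omega_\fp$ for $u \in U$. Crucially, the Galois group $\Gal(K_\infty/\QQ)$ swaps $\fp$ and $\fpb$, and under complex conjugation the period $\omega_{\fpb}$ is identified with $\omega_\fp$ up to the action of $\Gal(\Finf/\Qp)$; hence the product $\omega_\fp \cdot \omega_{\fpb}$ (more precisely the relevant $\binom{d}{n}$-fold product appearing in the $(n,n)$-component) transforms under $U$ by a \emph{scalar} in $\Finf$, not requiring the full distribution algebra $\cH_{\Finf}(G)$ — it is $U$-invariant after adjusting by the twist, so it lives in $\cH_L(G)$ for $L$ a finite unramified extension of $\Qp$ (the field generated by the values of the $\Dcris$-periods and the relevant unit periods). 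This is the crux: $\Finf$ is an infinite extension of $\Qp$, but the \emph{ratio} of periods at $\fp$ and $\fpb$ — which is what survives in the balanced $(n,n)$-piece — is defined over a finite unramified extension, because $S_{F_\infty/F}$ is free of rank one and $\rho(\omega)$ is a period in the precise sense of Proposition \ref{prop:twist}.

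The main obstacle, and the step I would spend the most care on, is making the ``$\omega_\fp$ and $\omega_{\fpb}$ combine to something defined over $L$'' argument rigorous at the level of the Yager modules rather than just heuristically for the periods. Concretely: I must show that the image of the wedge $i_\infty^{(\fp)}(\text{top power of }S)\cdot i_\infty^{(\fpb)}(\text{top power of }S)$ in $\cH_{\Finf}(G)\otimes\DD_p(V)^{(n,n)}$ lands in $\cH_L(G)\otimes\DD_p(V)^{(n,n)}$ for $L/\Qp$ finite unramified. For this I would use that $\Gal(K_\infty/\QQ)$ acts on everything compatibly with Proposition \ref{prop:galoisequivariance}, so the Galois-equivariance constraint forces the coefficients of the product to satisfy $f^u = f$ for all $u$ in the (infinite) inertia-complement — i.e.\ the coefficients are $U$-invariant elements of $\cH_{\Finf}$, which by Lemma \ref{lemma:ranktwistedmodule} applied over $F_\infty$ are exactly $\cH_{K_0}$ for $K_0$ a finite extension (the ``field of definition'' of the periods). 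The only subtlety is tracking that the $\Dcris$-factors $\bigwedge^n\DD_{\cris,\fp}(V)\otimes\bigwedge^n\DD_{\cris,\fpb}(V)$ also only need a finite unramified field of coefficients — but this is immediate since $\DD_{\cris,\star}(V) = L_0\otimes_{K_\star}\Dcris(V)$ with $L_0$ already a finite unramified extension of $\Qp$ (as $K_\star = \Qp$ here, since $p$ splits completely in the imaginary quadratic field $K$). Assembling these, one takes $L$ to be the compositum of the field of definition of the periods and the coefficient field of $\bigwedge^d\DD_p(V)$, which is a finite unramified extension of $\Qp$, and concludes $\II^{(n,n)}_{\arith,p}(V)\subset\mathcal{K}_L(G)\otimes_{\Qp}\DD_p(V)^{(n,n)}$ as required.
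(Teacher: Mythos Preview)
Your overall strategy --- reduce via Proposition~\ref{prop:explicitdescription} to the $(n,n)$-projection of the wedge, then argue that the balanced product of $n$ factors from $\fp$ and $n$ from $\fpb$ descends to a finite unramified extension --- is the same shape as the paper's argument. But the mechanism you propose for the descent is not correct, and the key ingredient is missing.

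You claim that the Galois-equivariance of Proposition~\ref{prop:galoisequivariance} forces the coefficients of the balanced product to satisfy $f^u = f$ for all $u$ in the inertia complement, so that Lemma~\ref{lemma:ranktwistedmodule} gives descent. This is false. What Proposition~\ref{prop:galoisequivariance} actually says is that the arithmetic Frobenius $\sigma_p$ of $\Finf$ acts on $\cL^G_{V,\fp}(x)$ as multiplication by $[\tilde\sigma_\fp]$, where $\tilde\sigma_\fp \in G_\fp$ is the lift of $\sigma_p$ trivial on roots of unity; and likewise $\sigma_p$ acts on $\cL^G_{V,\fpb}(y)$ as $[\tilde\sigma_\fpb]$ with $\tilde\sigma_\fpb \in G_\fpb$. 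These are \emph{different} elements of $G$, coming from different decomposition groups, and in general neither is the identity. So on the balanced $(n,n)$-product, Frobenius acts as multiplication by $[\tilde\sigma_\fp\,\tilde\sigma_\fpb]^n$, not as the identity. Your appeals to complex conjugation and to ``ratios of periods $\omega_\fp\cdot\omega_\fpb$'' do not establish that this element is trivial, and indeed it typically is not.

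What you are missing is a short but essential class-field-theoretic lemma: the element $\tilde\sigma_\fp\,\tilde\sigma_\fpb \in G$ has \emph{finite order}. This is proved by observing that under the semilocal Artin map $K_\fp^\times \times K_\fpb^\times \to G$, the element $(p,1)$ maps to $\tilde\sigma_\fp$ and $(1,p)$ to $\tilde\sigma_\fpb$, so $\tilde\sigma_\fp\,\tilde\sigma_\fpb$ is the image of $(p,p)$; but $(p,p)$ comes from the global element $p \in K^\times$, which is a unit outside $p$, so $(p^m,p^m)$ lies in the kernel of the Artin map once $p^m \equiv 1 \bmod \ff$. Once you know $[\tilde\sigma_\fp\,\tilde\sigma_\fpb]^n$ has finite order $m$, the coefficients of the balanced product are fixed by $\sigma_p^m$, hence lie in the degree-$m$ unramified extension of $\Qp$. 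That is the entire content of the descent; the Yager-module and period heuristics you sketch are not needed and do not by themselves yield the conclusion.
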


   The rest of this section is devoted to proving this theorem. As in Proposition \ref{prop:galoisequivariance}, let $\tilde \sigma_{\fp} \in G_{\fp}$ be the unique element of $G_{\fp}$ which lifts the Frobenius automorphism at $\fp$ of $K(\ff \fpb^\infty)$ and which is trivial on $K(\mu_{p^\infty})$, and similarly for $\fpb$.

   \begin{lemma}
    The element $\tilde \sigma_{\fp} \tilde \sigma_{\fpb} \in G$ has finite order.
   \end{lemma}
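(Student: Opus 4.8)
The plan is to realise $\tilde\sigma_\fp\tilde\sigma_\fpb$ as the Artin symbol of the principal idèle of $p$ and then read off finiteness of its order from the product formula of global class field theory. The first step is to unwind the definition of $\tilde\sigma_\fp$. Since $\tilde\sigma_\fp$ lies in the decomposition group $G_\fp$, lifts the Frobenius at $\fp$ of the extension $K(\ff\fpb^\infty)$ (which is unramified at $\fp$ because $\ff$ is prime to $\fp$), and is trivial on $K(\mu_{p^\infty})$, the compatibility of the local and global reciprocity maps forces $\tilde\sigma_\fp = \operatorname{rec}_\fp(\varpi_\fp)^{\pm 1}$, where $\operatorname{rec}_\fp\colon K_\fp^\times\to G$ is the local reciprocity map at $\fp$ and $\varpi_\fp$ is any uniformiser; the sign depends only on whether ``Frobenius'' is normalised arithmetically or geometrically, and is immaterial since finite order is preserved under inversion. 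As $p$ splits completely in $K$, the rational integer $p$ is a uniformiser at both $\fp$ and $\fpb$, so with a consistent choice of sign one has $\tilde\sigma_\fp\tilde\sigma_\fpb = \bigl(\operatorname{rec}_\fp(p)\,\operatorname{rec}_\fpb(p)\bigr)^{\pm 1}$ in the abelian group $G$.

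Next I would apply the product formula $\prod_v\operatorname{rec}_v(p)=1$ (the global reciprocity map kills the principal idèle $p\in K^\times$), which gives
\[ \operatorname{rec}_\fp(p)\,\operatorname{rec}_\fpb(p)=\prod_{v\neq\fp,\fpb}\operatorname{rec}_v(p)^{-1}, \]
a product over the finitely many remaining places of $K$, and then inspect each factor. At the (complex) archimedean place the local reciprocity map is trivial; at a finite place $v\nmid\ff p$ the extension $K_\infty/K$ is unramified and $p\in\cO_v^\times$, so $\operatorname{rec}_v(p)=1$; and at a finite place $v\mid\ff$ the element $p$ is again a unit while the conductor of $K_\infty=K(\ff p^\infty)$ has $v$-part equal to $\ff$, so $1+\ff\cO_v\subseteq\ker\operatorname{rec}_v$ and $\operatorname{rec}_v(p)$ has order dividing $\lvert(\cO_v/\ff\cO_v)^\times\rvert$. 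Hence $\operatorname{rec}_\fp(p)\operatorname{rec}_\fpb(p)$ is a finite product of finite-order elements of an abelian group, so has finite order, and therefore so does $\tilde\sigma_\fp\tilde\sigma_\fpb$. (In particular the argument yields $\tilde\sigma_\fp\tilde\sigma_\fpb=1$ when $\ff=(1)$.)

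A pleasant cross-check, which one could also use to motivate the statement, is that complex conjugation $c\in\Gal(K_\infty/\QQ)$ conjugates $\tilde\sigma_\fp$ to $\tilde\sigma_\fpb$: conjugation by $c$ carries $G_\fp$ to $G_\fpb$ and the Frobenius at $\fp$ to the Frobenius at $\fpb$, it fixes the $\Gal(K/\QQ)$-stable ideal $\ff$ (hence carries $K(\ff\fpb^\infty)$ to $K(\ff\fp^\infty)$), and it acts trivially on $\Gal(K(\mu_{p^\infty})/K)$ because $K(\mu_{p^\infty})$ is abelian over $\QQ$; thus $c\tilde\sigma_\fp c^{-1}$ has the two defining properties of $\tilde\sigma_\fpb$. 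Consequently $\tilde\sigma_\fp\tilde\sigma_\fpb=(\tilde\sigma_\fp c)^2$, but proving directly that $\tilde\sigma_\fp c$ has finite order in $\Gal(K_\infty/\QQ)$ seems to need essentially the same idèle computation, so I would present the product-formula argument as the actual proof and keep the conjugation identity only as commentary.

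The one delicate point is the opening step: pinning down $\tilde\sigma_\fp$ as $\operatorname{rec}_\fp(\varpi_\fp)^{\pm 1}$ with the correct normalisation, and checking that the two stated properties really determine $\tilde\sigma_\fp$ uniquely (equivalently, that the maximal unramified subextension of $K_{\infty,\fP}$ is already realised inside $K(\ff\fpb^\infty)$). Once that identification is secured, the remainder is the short class-field-theoretic computation above, and there is no deeper obstacle.
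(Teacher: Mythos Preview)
Your proof is correct and follows essentially the same route as the paper: identify $\tilde\sigma_\fp$ with the image of the uniformiser $p$ under the local reciprocity map at $\fp$ (using that $p$ maps to the identity on the cyclotomic side), and then invoke global class field theory to see that the idèle $(p,p)$ has finite-order Artin symbol. The paper packages the second step slightly more compactly---rather than analysing $\operatorname{rec}_v(p)$ place by place away from $p$, it simply observes that the diagonal principal idèle $p^m$ lies in the kernel of the semilocal Artin map once $p^m\equiv 1\bmod\ff$---but this is a cosmetic difference. One small slip: your claim that $\tilde\sigma_\fp=\operatorname{rec}_\fp(\varpi_\fp)^{\pm1}$ for \emph{any} uniformiser $\varpi_\fp$ is not right, since different uniformisers differ by units whose Artin symbols lie in inertia and hence act nontrivially on $K(\mu_{p^\infty})$; it is precisely the choice $\varpi_\fp=p$ that makes the action on $\mu_{p^\infty}$ trivial, which you in fact use in the very next sentence.
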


   \begin{proof}
    Let us consider the ``semilocal Artin map''
    \[ \theta = (\theta_\fp, \theta_{\fpb}) : K_{\fp}^\times \times K_{\fpb}^\times \to G.\]
    Here $\theta_{\fp}$ is the Artin map for $K_\fp$, normalised so that uniformisers map to geometric Frobenius elements. The kernel of $\theta$ is the image in $K_{\fp}^\times \times K_{\fpb}^\times$ of the elements of $K^\times$ which are units outside $p$ and congruent to $1 \bmod \ff$.

    By the functoriality of the global Artin map (cf.~\cite[VI.5.2]{neukirch99}), there is a commutative diagram
    \[
     \begin{diagram}
      K_{\fp}^\times \times K_{\fpb}^\times & \rTo^\theta & G \\
      \dTo^{N_{K/\QQ}} & & \dTo \\
      \Qp^\times & \rTo & \Gamma,
     \end{diagram}
    \]
    The bottom horizontal map is the local Artin map for $\QQ(\mu_{p^\infty}) / \QQ$; if we identify $\Gamma$ with $\Zp^\times$, this map is the identity on $\Zp^\times$ and sends $p$ to 1.

    Consider the element $(p, 1)$ of $K_{\fp}^\times \times K_{\fpb}^\times$. The image of this in the group $\Gal(K(\ff \fpb^\infty) / K)$ is the Frobenius $\sigma_p$. Its image in $\Qp^\times$ is $p$, which is mapped to the identity in $\Gamma$. Hence the image of $(p, 1)$ in $G$ is $\tilde \sigma_\fp$. Similarly, $(1, p)$ is a lifting of $\tilde \sigma_{\fpb}$.

    Hence $\tilde \sigma_{\fp} \tilde \sigma_{\fpb}$ is the image of the element $(p, p)$ of $K_{\fp}^\times \times K_{\fpb}^\times$. Thus if $m$ is such that $p^m = 1 \bmod \ff$, $(p^m, p^m) \in K_{\fp}^\times \times K_{\fpb}^\times$ is in the kernel of $\theta$, and hence $(\tilde \sigma_{\fp} \tilde \sigma_{\fpb})^m = 1$ in $G$.
   \end{proof}

   \begin{corollary}
    \label{cor:coefficients}
    Let $x_1,\dots,x_n$ be any elements of $Z^1_{\fp}(K_\infty, V)$, and similarly let $y_1,\dots,y_n \in Z^1_{\fpb}(K_{\infty}, V)$. Then the element
    \[ \left(\cL^G_{V, \fp}(x_1) \wedge \dots \wedge \cL^G_{V, \fp}(x_n) \right) \otimes \left(\cL^G_{V, \fpb}(y_1) \wedge \dots \wedge \cL^G_{V, \fpb}(y_n) \right) \]
    of $ \cH_{\Finf}(G) \otimes_{\Qp} \DD_p(V)^{(n, n)}$
    in fact lies in
    \( \cH_{L}(G) \otimes_{\Qp} \DD_p(V)^{(n, n)},\)
    where $L$ is a finite unramified extension of $\Qp$.
   \end{corollary}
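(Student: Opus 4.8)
The plan is to combine the Galois-equivariance of the regulator maps (Proposition~\ref{prop:galoisequivariance}) with the finiteness of the order of $\tilde\sigma_{\fp}\tilde\sigma_{\fpb}$ just established, in order to exhibit the element in question as invariant under an open subgroup of $\Gal(\Finf/\Qp)$, so that it descends to coefficients in a finite unramified extension $L$ of $\Qp$.

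First I would apply Proposition~\ref{prop:galoisequivariance} to the semilocal regulators at $\fp$ and $\fpb$. That proposition is stated for the purely local map $\cL^{G_{\fP}}_V$, but the semilocal maps are obtained from the local ones by $\Lambda_{\Zp}(G)$-linear extension via \eqref{eq:semilocaliwasawacoho}, and $\Lambda_{\Zp}(G)$ is fixed by the action of $\Gal(\Finf/\Qp)$ on $\Finf$-coefficients, so the equivariance persists. Writing $\sigma$ for the arithmetic Frobenius of $F_\infty/\Qp$ (recall $K_{\fp}=K_{\fpb}=\Qp$ since $p$ splits completely in $K$), and letting $\sigma$ act on the $\Finf$-coefficients of $\cH_{\Finf}(G)$ only, we obtain for each $i$
\[ \sigma\big(\cL^G_{V,\fp}(x_i)\big)=[\tilde\sigma_{\fp}]\cdot\cL^G_{V,\fp}(x_i),\qquad \sigma\big(\cL^G_{V,\fpb}(y_i)\big)=[\tilde\sigma_{\fpb}]\cdot\cL^G_{V,\fpb}(y_i), \]
using that $\DD_{\cris,\fp}(V)$ and $\DD_{\cris,\fpb}(V)$ are $\Qp$-vector spaces on which $\sigma$ acts trivially.

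Next I would pass to exterior powers. Since $\sigma$ acts trivially on $\bigwedge^n\DD_{\cris,\fp}(V)$ and scales every $\cH_{\Finf}(G)$-coefficient of $\cL^G_{V,\fp}(x_i)$ by the group-like element $[\tilde\sigma_{\fp}]$, expanding the relevant determinant shows that $\sigma$ scales $\cL^G_{V,\fp}(x_1)\wedge\dots\wedge\cL^G_{V,\fp}(x_n)$ by $[\tilde\sigma_{\fp}^{n}]$, and likewise at $\fpb$. Multiplying the two wedges inside the ring $\cH_{\Finf}(G)$ to form the element $\Xi$ of the statement, and using that $G$ is abelian, gives
\[ \sigma(\Xi)=\big[\tilde\sigma_{\fp}^{n}\,\tilde\sigma_{\fpb}^{n}\big]\cdot\Xi=\big[(\tilde\sigma_{\fp}\tilde\sigma_{\fpb})^{n}\big]\cdot\Xi. \]
By the preceding lemma, $\zeta:=(\tilde\sigma_{\fp}\tilde\sigma_{\fpb})^{n}$ has some finite order $m$ in $G$; since $[\zeta]\in\Lambda_{\Zp}(G)$ is fixed by $\sigma$, iterating yields $\sigma^{k}(\Xi)=[\zeta^{k}]\,\Xi$ for all $k$, and in particular $\sigma^{m}(\Xi)=\Xi$.

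Finally I would conclude by Galois descent on the coefficient field. The Frobenius $\sigma$ topologically generates $\Gal(\Finf/\Qp)$ (as $\Finf/\Qp$ is unramified), so $\overline{\langle\sigma^{m}\rangle}$ is a closed subgroup of finite index, and its fixed field $L:=\Finf^{\overline{\langle\sigma^{m}\rangle}}$ is a finite unramified extension of $\Qp$ with $\Gal(\Finf/L)=\overline{\langle\sigma^{m}\rangle}$. Writing $\cH_{\Finf}(G)=\cH_{\Qp}(G)\htimes_{\Qp}\Finf$ and using $\Finf^{\Gal(\Finf/L)}=L$, one gets $\cH_{\Finf}(G)^{\Gal(\Finf/L)}=\cH_L(G)$; tensoring with the finite-dimensional $\Qp$-space $\DD_p(V)^{(n,n)}$ (on which $\sigma$ acts trivially) and invoking $\sigma^{m}(\Xi)=\Xi$ then places $\Xi$ in $\cH_L(G)\otimes_{\Qp}\DD_p(V)^{(n,n)}$, as required. (Note $L$ may be taken independent of the chosen classes.) The step requiring the most care is this last descent: one must check that the invariants of $\cH_{\Finf}(G)$ under the \emph{non-finite} procyclic group $\Gal(\Finf/L)$ genuinely recover $\cH_L(G)$, which reduces to the corresponding statement for $\Finf/L$ applied levelwise to the Banach spaces defining $\cH_{\Finf}(G)$; the other point to handle carefully is the passage of Proposition~\ref{prop:galoisequivariance} to the semilocal setting and through exterior powers, as indicated above.
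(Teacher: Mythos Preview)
Your argument is correct and is precisely the paper's approach, expanded in detail: the paper's proof is the single line ``Clear, since the Frobenius automorphism of $\Finf$ acts on this element as multiplication by $[\tilde{\sigma}_{\fp} \tilde{\sigma}_{\fpb}]^n$, which we have seen has finite order,'' and you have unwound exactly what is behind that sentence --- the semilocal equivariance from Proposition~\ref{prop:galoisequivariance}, the passage to exterior powers, the iteration $\sigma^m(\Xi)=\Xi$, and the descent $\cH_{\Finf}(G)^{\sigma^m=1}=\cH_L(G)$. There is no substantive difference between the two.
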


   \begin{proof}
    Clear, since the Frobenius automorphism of $\Finf$ acts on this element as multiplication by $[\tilde{\sigma}_{\fp} \tilde{\sigma}_{\fpb}]^n$, which we have seen has finite order.
   \end{proof}

   \begin{remark}
    As is clear from the proof of the lemma and its corollary, the degree of $L / \Qp$ is bounded by the exponent of the ray class group of $K$ modulo $\ff$, and in particular is independent of $V$.
   \end{remark}

   We deduce Theorem \ref{thm:coefficients} by combining Corollary \ref{cor:coefficients} with Proposition \ref{prop:explicitdescription}.

  \subsection{Orders of distributions}

   Let us choose subspaces $W_\fp \subseteq \bigwedge^m \Dcris(K_\fp, V)$ and $W_{\fpb} \subseteq \bigwedge^n \Dcris(K_\fpb, V)$. Then the space
   \[ Q = \left( \bigwedge^m \Dcris(K_\fp, V) / W_\fp \right) \otimes_{\Qp} \left( \bigwedge^n \Dcris(K_\fpb, V) / W_\fpb \right)\]
   is a quotient of $\DD_p(V)^{(m, n)}$ and hence of $\DD_p(V)$. So, for any $c_1, \dots, c_d \in H^1_{\Iw, S}(K_\infty, V)$, we may consider the projection of
   \[ \cL^G_V(c_1, \dots, c_d) = \cL^G_V(c_1) \wedge \dots \wedge \cL^G_V(c_d)\]
   to $Q$.

   \begin{theorem}
    \label{thm:orderofproduct}
    The distribution $\pr_Q\left(\cL^G_V(c_1) \wedge \dots \wedge \cL^G_V(c_d)\right)$ is a distribution on $G$ of order $(m h_\fp, n h_{\fpb})$ with respect to the subgroups $(\Gamma_{\fp}, \Gamma_{\fpb})$, where $h_\fp$ (resp. $h_{\fpb}$)  is the largest valuation of any eigenvalue of $\varphi$ on $\wedge^m \Dcris(K_\fp, V) / W_\fp$ (resp. on $\wedge^n \Dcris(K_\fpb, V) / W_{\fpb}$).
   \end{theorem}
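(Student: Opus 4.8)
The strategy is to reduce the assertion to the one-variable statement of Proposition~\ref{prop:orderofdistributions} for the local regulators at $\fp$ and at $\fpb$ separately, exploiting the fact that the $\fp$-part of $\cL^G_V$ is bounded in the $\Gamma_\fpb$-direction (and symmetrically), so that the two contributions to the order simply add.

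First I would unwind the wedge over the two primes. Since $p$ splits in $K$ we have $\DD_p(V) = \DD_{\cris,\fp}(V) \oplus \DD_{\cris,\fpb}(V)$, and correspondingly $\cL^G_V(c_i) = a_i \oplus b_i$ with $a_i \in \cH_{\Finf}(G) \otimes_{\Qp} \DD_{\cris,\fp}(V)$ and $b_i \in \cH_{\Finf}(G) \otimes_{\Qp} \DD_{\cris,\fpb}(V)$. Expanding the exterior product and using $\bigwedge^d \DD_p(V) \cong \bigoplus_{m+n=d} \DD_p(V)^{(m,n)}$, the image of $\cL^G_V(c_1) \wedge \dots \wedge \cL^G_V(c_d)$ in $\DD_p(V)^{(m,n)}$ is a signed sum, over the subsets $S \subseteq \{1,\dots,d\}$ with $|S| = m$, of the terms $\left(\bigwedge_{i\in S} a_i\right) \otimes \left(\bigwedge_{j\notin S} b_j\right)$. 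Applying $\pr_Q$ to such a term gives $\pr_{Q_\fp}\!\left(\bigwedge_{i\in S} a_i\right) \cdot \pr_{Q_\fpb}\!\left(\bigwedge_{j\notin S} b_j\right)$, the product being taken in $\cH_{\Finf}(G)$ and the $Q_\fp$- and $Q_\fpb$-factors simply juxtaposed. Since the order of a product of $\cH_{\Finf}(G)$-valued distributions is at most the sum of the orders in each coordinate direction, and a finite sum does not raise the order, it is enough to prove that $\pr_{Q_\fp}\!\left(\bigwedge_{i\in S} a_i\right)$ has order $(m h_\fp, 0)$ and $\pr_{Q_\fpb}\!\left(\bigwedge_{j\notin S} b_j\right)$ has order $(0, n h_\fpb)$ with respect to $(\Gamma_\fp,\Gamma_\fpb)$.

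Next I would treat the $\fp$-factor, the $\fpb$-factor being symmetric. Because $\fp$ is finitely decomposed in $K_\infty$, equation~\eqref{eq:semilocaliwasawacoho} exhibits $\cL^G_{V,\fp}$ as a finite direct sum of translates of the purely local regulator $\cL^{G_\fP}_V$; translating by a fixed group element and summing over $G/G_\fP$ leave the order unchanged, so it suffices to bound exterior products of $\cL^{G_\fP}_V$. As $G$ is abelian, its inertia subgroup at $\fp$ is the local inertia $\Gal\!\left(F_\infty(\mu_{p^\infty})/F_\infty\right)$, which is exactly the factor $\Gamma$ of $G_\fP \cong U \times \Gamma$ entering the construction of $\cL^{G_\fP}_V$, and equals $\Gamma_\fp$; the complementary factor $U = G_\fP/\Gamma_\fp$ is an open subgroup of $G/\Gamma_\fp = \Gamma_\fpb$. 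Thus, by Proposition~\ref{prop:orderofdistributions}, any $\cL^{G_\fP}_V(z)$ projected to a $\vp$-stable quotient of $\Dcris(K_\fp,V)$ is bounded in the $\Gamma_\fpb$-direction and of $\Gamma_\fp$-order controlled by the Frobenius slopes on that quotient. Now choose a $\vp$-eigenbasis $v_1,\dots,v_d$ of $\Dcris(K_\fp,\Qpb\otimes V)$ and write $\cL^{G_\fP}_V(z_i) = \sum_l f_{il}\otimes v_l$; the coefficient of a basis wedge $v_{l_1}\wedge\dots\wedge v_{l_m}$ in $\bigwedge_{i\in S}\cL^{G_\fP}_V(z_i)$ is the $m\times m$ minor $\det(f_{i l_k})_{i\in S,\,1\le k\le m}$, a sum of $m$-fold products. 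Each $f_{il}$ is bounded in the $\Gamma_\fpb$-direction, with $\Gamma_\fp$-order governed by the slope $s_l$ attached to $v_l$; and by the good-crystalline hypothesis weak admissibility confines all the $s_l$ to the non-negative range. Hence the coefficient of any basis wedge has order at most $\left(\textstyle\sum_k s_{l_k},\,0\right)$, and for the wedges that survive in $Q_\fp$ the corresponding sum is bounded by $m h_\fp$, where $h_\fp$ is the largest slope occurring on $Q_\fp$. Therefore $\pr_{Q_\fp}\!\left(\bigwedge_{i\in S}a_i\right)$ has order $(m h_\fp, 0)$; combined with the symmetric bound at $\fpb$ and the multiplicativity of orders, this proves the theorem.

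The main obstacle is the exterior-power step: tracking through the $m\times m$ minors how the $\Gamma_\fp$-order accumulates, and — the delicate point — correctly matching the bound $\sum_k s_{l_k}$ read off from the eigenbasis against the invariant $h_\fp$, which is defined intrinsically on the quotient $\bigwedge^m \Dcris(K_\fp,V)/W_\fp$. This hinges on fixing the sign convention of Proposition~\ref{prop:orderofdistributions} relating a Frobenius eigenvalue's valuation to the order of the resulting distribution, and on the weak-admissibility bound confining the Frobenius slopes of $\Dcris(K_\fp,V)$ to the Hodge--Tate range $[0,H]$, so that the slopes appearing on $\bigwedge^m\Dcris(K_\fp,V)/W_\fp$ are honest sums of $m$ non-negative slopes. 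By contrast, the identification of the local inertia and unramified directions with $\Gamma_\fp$ and $\Gamma_\fpb$, which is what decouples the two primes, is a formality once $G$ is known to be abelian.
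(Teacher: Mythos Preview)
Your overall strategy matches the paper's: decompose the wedge over $\fp$ and $\fpb$, bound each factor's order separately via Proposition~\ref{prop:orderofdistributions}, and then combine using the additivity of orders under multiplication (Proposition~\ref{prop:orderofproduct} in the appendix) and finite sums. Your treatment of the exterior-power step via $m\times m$ minors in a $\vp$-eigenbasis is more explicit than the paper's terse citation of Proposition~\ref{prop:orderofdistributions}, but the content is the same.

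There is, however, a genuine gap in the step you dismiss as ``a formality''. You correctly identify the inertia subgroup at $\fp$ with $\Gamma_\fp$, but your claim that the complementary unramified factor $U$ of $G_\fP$ lands inside $\Gamma_\fpb$ via ``$G/\Gamma_\fp = \Gamma_\fpb$'' is not right: $\Gamma_\fpb$ is the inertia subgroup at $\fpb$, a \emph{subgroup} of $G$, not the quotient $G/\Gamma_\fp$; and the local unramified factor at $\fp$ (generated for instance by the lift $\tilde\sigma_\fp$ of Frobenius) does not in general lie in $\Gamma_\fpb$. What the local construction actually gives is order $(h_\fp,0)$ with respect to $(\Gamma_\fp,U)$ for $U=\Gal(K_\infty/K(\mu_{p^\infty}))$, since the local unramified factor at $\fp$ consists of elements of $G_\fP$ fixing $\mu_{p^\infty}$ and hence lies in this global $U$. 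To pass from the pair $(\Gamma_\fp,U)$ to $(\Gamma_\fp,\Gamma_\fpb)$ one must invoke the change-of-variable result Proposition~\ref{prop:changevar}, which says exactly that the order-$(h,0)$ condition depends only on the first subgroup and not on the choice of complement. The paper singles this step out explicitly; your proposal omits it and has the emphasis reversed, treating the coordinate change as automatic while worrying about the minor bookkeeping that is genuinely routine.
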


   \begin{proof}
    Let us write $c_{j, \fp}$ for the localisation of $c_j$ at $\fp$, and similarly for $\fpb$. By Proposition \ref{prop:orderofdistributions}, for each subset $\{j_1, \dots, j_m\} \subseteq \{1, \dots, d\}$ of order $m$, the projection of the element
    \[ \cL_{V, \fp}^G(c_{j_1,\fp}) \wedge \dots \wedge \cL_{V, \fp}^G(c_{j_m, \fp})\]
    to $\bigwedge^m \Dcris(K_\fp, V) / W_{\fp}$ is a distribution of order $(h_{\fp}, 0)$ with respect to the subgroups $(\Gamma_\fp, U)$, where $U = \Gal(K_\infty / K(\mu_{p^\infty}))$. By the change-of-variable result of Proposition \ref{prop:changevar} in the appendix, it is also a distribution of order $(h_{\fp},0)$ with respect to the subgroups $(\Gamma_{\fp}, \Gamma_{\fpb})$.

    We have also a corresponding result for the projection to $\bigwedge^n \Dcris(K_\fpb, V) / W_{\fpb}$ of the distribution obtained from any $n$-element subset of $\{1, \dots, d\}$: this gives a distribution with order $(0, h_{\fpb})$ with respect to $(\Gamma_\fp, \Gamma_\fpb)$. Since the product of distributions of order $(a,0)$ and $(0, b)$ is a distribution of order $(a, b)$ by Proposition \ref{prop:orderofproduct}, the product of any two such subsets gives a distribution with values in $Q$ of order $(h_\fp, h_{\fpb})$. Since $\pr_Q\left(\cL^G_V(c_1) \wedge \dots \wedge \cL^G_V(c_d)\right)$ is a finite linear combination of products of this form, the theorem follows.
   \end{proof}


  \subsection{Example 1: Gr\"ossencharacters and Katz's L-function}
   \label{sect:KatzLfunction}


   \subsubsection{Kummer maps}

    We recall the well-known local theory of exponential maps for the representation $\Zp(1)$. For any finite extension $L / \Qp$, there is a Kummer map $\kappa_L: \cO_L^\times \to H^1(L, \Zp(1))$, whose kernel is the Teichm\"uller lifting of $k_L^\times$. In particular, the restriction to the kernel $U^1(L)$ of reduction modulo the maximal ideal is an injection.

    Moreover, after inverting $p$, we have a commutative diagram relating the Kummer map to the exponential map of Bloch--Kato (see \cite{blochkato90}):
    \[
     \begin{diagram}
      \Qp \otimes_{\Zp} U^1(L) & \rTo^{\kappa_L} & H^1(L, \Qp(1)) \\
      \dTo & & \dEq\\
      \DD_{\dR, L}(\Qp(1)) &\rTo^{\exp_{L, \Qp(1)}} & H^1(L, \Qp(1))
     \end{diagram}
    \]
    where the vertical map sends
    $u$ to $t^{-1} \log(u) \otimes e_1$, where $e_1$ is the basis vector of $\Qp(1)$ corresponding to our compatible system of roots of unity.

    The maps $\kappa_L$ are compatible with the norm and corestriction maps for finite extensions $L' / L$, so for an infinite algebraic extension $K_\infty / \Qp$ we can take the inverse limit over the finite extensions of $\Qp$ contained in $K_\infty$ to define
    \[ \kappa_{K_\infty} : U^1(K_\infty) \rTo H^1_{\Iw}(K_\infty, \Qp(1)),\]
    where $U^1(K_\infty) := \varprojlim_{K' \subset K_\infty} U^1(K')$.


   \subsubsection{Coleman series}

    We recall the following basic result, due to Coleman. Let $\cF$ be any height 1 Lubin--Tate group over $\Qp$, and $F$ an unramified extension of $\Qp$. Fix a generator $v = (v_n)$ of the Tate module of $\cF$ (that is, a norm-compatible sequence of $p^n$-torsion points of $\cF$).

    \begin{theorem}[{\cite{coleman79}}]
     \label{thm:colemanstheorem}
     Let $F$ be a finite unramified extension of $\Qp$. Then for each $\beta = (\beta_n) \in U^1(F(\cF_{p^\infty}))$, there is a unique power series
     \[ g_{F,\cF}(\beta) \in \cO_F[[X]]^{\times,N_\cF = 1}\]
     where $N_\cF$ is Coleman's norm operator, such that for all $n \ge 1$ we have
     \[ \beta_n = [g_{F,\cF}(\beta)]^{\sigma^{-n}}(v_n).\]
    \end{theorem}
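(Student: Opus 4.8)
The statement is Coleman's classical result, so the plan is to recall the structure of its proof rather than invent something new. First I would reduce to the case $F=\Qp$ enlarged by base change: the Lubin--Tate tower $F(\cF_{p^\infty})/F$ is totally ramified of degree $(p-1)p^{n-1}$ over $F$, and $\cO_F[[X]]$ is the natural coefficient ring; the semilinear Frobenius $\sigma$ on $\cO_F$ is what forces the twist $\sigma^{-n}$ in the interpolation formula. The key algebraic input is the \emph{Coleman norm operator} $N_\cF$ on $\cO_F[[X]]$, characterised by the identity $(N_\cF f)\big([p](X)\big) = \prod_{w \in \cF[p]} f(X \oplus_\cF w)$, where $\oplus_\cF$ is the formal group law and $[p]$ is multiplication by $p$; one checks that $N_\cF$ is well-defined (the right-hand side is invariant under $\cF[p]$ and hence a power series in $[p](X)$), multiplicative, and continuous, and that it preserves $\cO_F[[X]]^\times$.

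The heart of the argument is the following: given a norm-compatible sequence $\beta = (\beta_n) \in U^1(F(\cF_{p^\infty}))$, one wants a single power series $g = g_{F,\cF}(\beta)$ with $\beta_n = g^{\sigma^{-n}}(v_n)$ for all $n$. The plan is to show that the evaluation-at-$v_n$ maps
\[
 \cO_F[[X]]^\times \longrightarrow \cO_{F(\cF_{p^n})}^\times
\]
are surjective with a controlled kernel, then to patch. Concretely: for each $n$ pick some $h_n \in \cO_F[[X]]^\times$ with $h_n^{\sigma^{-n}}(v_n) = \beta_n$ (possible since $v_n$ generates the maximal ideal of $\cO_{F(\cF_{p^n})}$ and any principal unit is a power series in a uniformiser); the norm-compatibility of the $\beta_n$ translates, via the defining identity of $N_\cF$ together with the fact that the conjugates of $v_{n+1}$ over $F(\cF_{p^n})$ are exactly $\{v_{n+1}\oplus_\cF w : w \in \cF[p]\}$, into the statement that $N_\cF(h_{n+1})$ and $h_n$ agree after evaluation at $v_n$. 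Hence the sequence $N_\cF^{k}(h_{n+k})$, for fixed $n$ and growing $k$, is Cauchy in $\cO_F[[X]]$ (one shows $N_\cF$ is a contraction modulo the maximal ideal on the relevant quotient, using that it raises the "level" by one), and its limit $g$ is independent of the choices and satisfies $g^{\sigma^{-n}}(v_n) = \beta_n$ for all $n$. The condition $N_\cF g = g$ is then automatic from the construction, and it also gives uniqueness: if $g, g'$ both interpolate $\beta$ and are fixed by $N_\cF$, then $g/g'$ is a $N_\cF$-fixed unit vanishing (i.e.\ equal to $1$) at every $v_n$, and an $N_\cF$-fixed power series is determined by its values on the $v_n$, so $g = g'$.

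The main obstacle is the convergence/patching step: one must make precise in what sense iterating $N_\cF$ contracts, which requires a careful filtration argument on $\cO_F[[X]]$ (by powers of the maximal ideal of $\cO_F$ together with the degree in $X$, or equivalently by the ideals defining the $v_n$), and one must check that the limit genuinely lies in $\cO_F[[X]]$ rather than some completion. Everything else — well-definedness of $N_\cF$, its multiplicativity, the translation of norm-compatibility into the $N_\cF$-relation, and uniqueness — is a formal manipulation with the Lubin--Tate group law once the norm operator is in hand. Since this is Coleman's theorem as stated in \cite{coleman79}, I would in practice simply cite it; the above is the route one would follow to reprove it.
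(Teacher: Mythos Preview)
The paper does not give a proof of this theorem at all: it is stated as a result ``due to Coleman'' and simply cited from \cite{coleman79}. Your instinct at the end of the proposal is therefore exactly right --- in practice one just cites the result --- and the sketch you give is a reasonable summary of the architecture of Coleman's original argument (existence of the norm operator $N_\cF$, lifting each $\beta_n$ to a power series, and a contraction/limit argument using iterates of $N_\cF$ to produce the $N_\cF$-fixed interpolating series).
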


    Here $\sigma$ is the arithmetic Frobenius automorphism of $F / \Qp$, which we extend to an automorphism of $\cO_F[[X]]$ acting trivially on the variable $X$.

    If $\cF$ is the formal multiplicative group $\hat{\GG}_m$, then we shall drop the suffix $\cF$; and we take $v_n = \zeta_n - 1$, where $(\zeta_n)$ is our chosen compatible sequence of $p$-power roots of unity. In this case, if we identify $X$ with the variable $\pi$ in Fontaine's rings, the relation between the map $g_F$ and the Perrin-Riou regulator map is given by the following diagram:
    \[
     \begin{diagram}
      U^1(F(\mu_{p^\infty})) & \rTo^{\kappa_{F(\mu_{p^\infty})}} & H^1_{\Iw}(F(\mu_{p^\infty}), \Zp(1))\\
      \dTo^{g_F} & & \\
      \cO_F[[ \pi ]]^{\times,N = 1} & & \dTo^{\cL_{F, \Qp(1)}^\Gamma}\\
      \dTo^{(1 - \tfrac{\vp}{p})\log} & & \\
      \cO_F[[ \pi ]]^{\psi = 0} & \rTo & \cH(\Gamma) \otimes_{\Qp} \Dcris(F, \Qp(1))
     \end{diagram}
    \]

    If we identify $\Dcris(F, \Qp(1))$ with $F$ via the basis vector $t^{-1} \otimes e_1$, then the bottom map sends $f \in \cO_F[[ \pi ]]^{\psi = 0}$ to $\ell_0 \cdot \mathfrak{M}^{-1}(f)$, where $\ell_0 = \frac{\log \gamma}{\log \chi(\gamma)}$ for any non-identity element $\gamma \in \Gamma_1$ and $\mathfrak{M}$ is the Mellin transform as defined in Section \ref{sect:Fontainerings}. (See e.g.~the proof of Proposition 1.5 of \cite{leiloefflerzerbes11}.) Thus the image of the bottom map is precisely $\ell_0 \cdot \Lambda_{\cO_F}(\Gamma) \subseteq \cH_F(\Gamma)$; and if we define
    \[ h_F(\beta) = \ell_0^{-1}\cdot  \cL_{F, \Qp(1)}^\Gamma(\kappa_{F_\infty}(\beta)) \in \Lambda_{\cO_F}(\Gamma),\]
    then we have
    \[ \mathfrak{M}(h_F(\beta)) = (1 - \tfrac{\vp}{p})\log g_F(\beta).\]


   \subsubsection{Two-variable Coleman series}

    Now let $K_\infty / \Qp$ be an abelian $p$-adic Lie extension containing $\Qp(\mu_{p^\infty})$ such that $G=\Gal(K_\infty\slash\Qp)$ is a $p$-adic Lie group of dimension $2$. Let $\Finf$ be the completion of the maximal unramifed subextension of $K_\infty$. We define
    \[ h_{\infty} :  U^1(K_\infty) \to \Lambda_{\OFinf}(G)\]
     to be the unique map such that the composite
    \[ U^1(K_\infty) \rTo^{\kappa_{K_\infty}} H^1_{\Iw}(K_\infty/\Qp, \Zp(1)) \rTo^{\cL_{\Qp(1)}^G} \cH_{\Finf}(G)\]
    is equal to $\ell_0 \cdot h_\infty$.

    \begin{proposition}
     \label{prop:colemanmaps}
     The element $h_\infty(\beta)$ is uniquely determined by the relation
     \[ h_\infty(\beta) = \sum_{\sigma \in U_F} h_{F}(\beta_F)^\sigma [\sigma] \pmod{I_F}\]
     for all unramified subextensions $F \subset K_\infty$, where $U_F = \Gal(F / \Qp)$, $I_F$ is the kernel of the natural map $\Lambda_{\OFinf}(G) \to \Lambda_{\OFinf}(U_F \times \Gamma)$, and $\beta_F$ denotes the image of $\beta$ in $U^1(F(\mu_{p^\infty}))$.
    \end{proposition}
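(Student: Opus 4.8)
The plan is to deduce the congruence from part~(1) of Theorem~\ref{thm:localregulator} applied to $V=\Qp(1)$, and then to rewrite the output in terms of the one-variable maps $h_F$; the uniqueness will then be a formal consequence of the fact that the ideals $I_F$ have trivial intersection.

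First, I would fix a finite unramified subextension $F\subset K_\infty$, write $G'=\Gal(F(\mu_{p^\infty})/\Qp)=U_F\times\Gamma$, and apply Theorem~\ref{thm:localregulator}(1) to the class $x=\kappa_{K_\infty}(\beta)\in H^1_{\Iw}(K_\infty,\Zp(1))$. This shows that the image of $\cL^G_{\Qp(1)}(x)$ under the projection $\cH_{\Finf}(G)\to\cH_{\Finf}(G')$ equals $\cL^{G'}_{\Qp(1)}(x_F)$, where $x_F$ is the image of $x$ in $H^1_{\Iw}(F(\mu_{p^\infty}),\Qp(1))$; by compatibility of the Kummer maps with corestriction, $x_F=\kappa_{F(\mu_{p^\infty})}(\beta_F)$, and by the explicit formula for $\cL^{G'}_{\Qp(1)}$ in Theorem~\ref{thm:localregulator}(1) this image equals $\sum_{\sigma\in U_F}[\sigma]\cdot\cL^\Gamma_{F,\Qp(1)}\bigl(\sigma^{-1}\circ\kappa_{F(\mu_{p^\infty})}(\beta_F)\bigr)$. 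Next I would invoke Galois equivariance: since $\Qp(1)$ is a $\cG_{\Qp}$-representation, both $\kappa$ and the Perrin--Riou regulator $\cL^\Gamma_{F,\Qp(1)}$ commute with the action of $\Gal(F(\mu_{p^\infty})/\Qp)$, and $\sigma\in U_F$ acts on $\cH_F(\Gamma)\otimes_F\Dcris(F,\Qp(1))$ through the coefficient field $F$ only, the basis vector $t^{-1}e_1$ of $\Dcris(F,\Qp(1))$ being $\cG_{\Qp}$-invariant. Combining this with the defining relation $\cL^\Gamma_{F,\Qp(1)}(\kappa_{F(\mu_{p^\infty})}(\beta_F))=\ell_0\cdot h_F(\beta_F)$ and the fact that $\ell_0\in\cH_{\Qp}(\Gamma_1)$ is fixed by $U_F$, the inner term becomes $\ell_0\cdot h_F(\beta_F)^{\sigma}$ (after reindexing the summation variable), so the whole expression is $\ell_0\sum_{\sigma\in U_F}h_F(\beta_F)^\sigma[\sigma]$. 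Since $\ell_0$ is a non-zero-divisor in $\cH_{\Finf}(G')$, dividing by it, and noting that the projection $\cH_{\Finf}(G)\to\cH_{\Finf}(G')$ carries $h_\infty(\beta)$ to its reduction modulo $I_F$, yields the asserted congruence.

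For uniqueness, I would observe that as $F$ ranges over the finite unramified subextensions of $K_\infty$ the open subgroups $\Gal(F_\infty/F)$ are cofinal among all open subgroups of $U=\Gal(F_\infty/\Qp)$, so $\bigcap_F I_F=0$ in $\Lambda_{\OFinf}(G)$ (equivalently, $\Lambda_{\OFinf}(G)=\varprojlim_F\Lambda_{\OFinf}(U_F\times\Gamma)$). Hence an element of $\Lambda_{\OFinf}(G)$ is determined by its reductions modulo the $I_F$, which the congruence just established pins down; this shows that $h_\infty(\beta)$ is uniquely characterised by the stated relation.

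The step I expect to be most delicate is not conceptual but a matter of bookkeeping: making the Galois action explicit carefully enough to confirm that the inner sum really comes out as $\sum_{\sigma\in U_F}h_F(\beta_F)^\sigma[\sigma]$ rather than with $\sigma$ replaced by $\sigma^{-1}$ — that is, reconciling the equivariance conventions of $\cL^\Gamma$ and of the formula in Theorem~\ref{thm:localregulator}(1) with the superscript convention in the statement, and checking that the two $U_F$-actions on $\cH_F(\Gamma)\otimes_F\Dcris(F,\Qp(1))$ (on the distribution coefficients and on $\Dcris$) are correctly matched under the identification $\Dcris(F,\Qp(1))\cong F$. Everything else follows formally from results established above.
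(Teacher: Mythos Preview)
Your proposal is correct and follows exactly the approach the paper intends: the paper's proof is a single sentence invoking Theorem~\ref{thm:localregulator}(i), and you have unpacked that citation in full detail, including the Galois-equivariance bookkeeping and the uniqueness via $\bigcap_F I_F=0$. Your flagged concern about the $\sigma$ versus $\sigma^{-1}$ indexing is the only genuinely fiddly point, and the paper simply leaves it implicit.
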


    \begin{proof}
     This follows from the compatibility of the maps $\cL^\Gamma$ and $\cL^G$ (Theorem \ref{thm:localregulator}(i)).
    \end{proof}

    We would like to compare this result to Theorem 5 of \cite{yager82}. Our method differs from that of Yager, as we build measures on $G$ out of measures on the Galois groups of extensions $F(\mu_{p^\infty}) / F$ for unramified extensions $F \subset K_\infty$, while Yager considers instead the extensions $F(\mathcal{F}_{p^\infty})/F$ where $\cF$ is the Lubin--Tate group corresponding to an elliptic curve with CM by $\cO_K$.

    Let $\cF$ be any Lubin--Tate formal group over $\Qp$ which becomes isomorphic to $\hat{\GG}_m$ over $\Finf$. If $F$ is any finite unramified extension of $\Qp$ contained in $K_\infty$, then $F(\cF_{p^\infty}) \subseteq K_\infty$. For any $\beta \in U^1(K_\infty)$, let $\beta_{F, \cF}$ be its image in $U^1(F(\cF_{p^\infty}))$. Then Coleman's theorem (Theorem \ref{thm:colemanstheorem}) gives us an element
    \[ g_{F, \cF}(\beta_{F, \cF}) \in \cO_F[[X]]^{\times,N_\cF = 1}.\]

    We write
    \[ h_{F, \cF}(\beta_{F, \cF}) = \mathfrak{M}^{-1} \left[ \left(1 - \tfrac{\vp}{p}\right) \log\left( g_{F, \cF}(\beta) \circ \theta\right)\right] \]
    where $\theta$ is the unique power series in $\OFinf[[X]]$ giving an isomorphism $\cF \rTo^\cong \hat{\GG}_m$ such that $v_n$ maps to $\zeta_n - 1$.

    \begin{theorem}[de Shalit]
     We have
     \[ h_\infty(\beta) = \sum_{\sigma \in U_F} h_{F, \cF}(\beta_{F,\cF})^\sigma [\sigma] \pmod{I_{F, \cF}},\]
     where $I_{F, \cF}$ is the kernel of the natural map
     \[ \Lambda_{\OFinf}(G) \rTo \Lambda_{\OFinf}( \Gal(F(\cF_{p^\infty}) / F)).\]
    \end{theorem}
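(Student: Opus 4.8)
The plan is to reduce the statement to the cyclotomic case already treated in Proposition \ref{prop:colemanmaps}. That proposition characterises $h_\infty(\beta)$ as the unique element of $\Lambda_{\OFinf}(G)$ whose image modulo $I_F$ equals $\sum_{\sigma \in U_F} h_F(\beta_F)^\sigma[\sigma]$ for every finite unramified $F \subset K_\infty$; in particular $\bigcap_F I_F = 0$, and likewise $\bigcap_F I_{F,\cF} = 0$ since the fields $F(\cF_{p^\infty})$ exhaust $K_\infty$. So it suffices to prove two things: first, that the elements $\sum_{\sigma \in U_F} h_{F,\cF}(\beta_{F,\cF})^\sigma[\sigma]$ are compatible under the natural transition maps, so that they glue to a single element $\tilde h_\infty(\beta) \in \Lambda_{\OFinf}(G)$; and second, that $\tilde h_\infty(\beta) = h_\infty(\beta)$.

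For the compatibility, the key input is the uniqueness clause of Coleman's theorem (Theorem \ref{thm:colemanstheorem}), which forces the power series $g_{F,\cF}(\beta_{F,\cF})$ for varying unramified $F$ to be related by Coleman's norm operator, in the same way that Proposition \ref{prop:reduction} governs the maps $y_{K/F}$. Since the operations $f \mapsto \mathfrak{M}^{-1}\!\left[(1-\tfrac{\vp}{p})\log(f\circ\theta)\right]$ and $(-) \mapsto \sum_{\sigma\in U_F}(-)^\sigma[\sigma]$ are functorial in $F$, this norm-compatibility descends to the $h_{F,\cF}(\beta_{F,\cF})$ and produces the element $\tilde h_\infty(\beta)$.

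To identify $\tilde h_\infty(\beta)$ with $h_\infty(\beta)$, I would compare the two elements modulo the cyclotomic ideals $I_{F'}$, which again have trivial intersection. The isomorphism $\theta : \cF \rTo^\cong \hat{\GG}_m$ over $\OFinf$ identifies the $\cF$-tower with the $\hat{\GG}_m$-tower after completing the unramified direction, and substituting $\theta(v_n) = \zeta_n - 1$ into the defining interpolation property $\beta_n = [g_{F,\cF}(\beta_{F,\cF})]^{\sigma^{-n}}(v_n)$ of Coleman's theorem shows that $g_{F,\cF}(\beta_{F,\cF})\circ\theta$ is the $\hat{\GG}_m$-Coleman series attached to $\beta$ over $\OFinf$. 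Translating between the $\OFinf$-valued and $\Lambda_{\OFinf}(U)$-valued pictures via the Yager-module isomorphism of Section \ref{sect:yager}, and then applying $\mathfrak{M}^{-1}(1-\tfrac{\vp}{p})\log$, one sees that this matches the data of Proposition \ref{prop:colemanmaps} defining $h_\infty(\beta)$, which forces $\tilde h_\infty(\beta) = h_\infty(\beta)$.

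The one genuine difficulty is the interaction of $\theta$ with the Frobenius twists $(-)^{\sigma^{-n}}$ appearing in Coleman's theorem: because $\theta$ has coefficients in $\OFinf$ and not in any finite $\cO_F$, it is not fixed by $\sigma$, and $\theta^\sigma = [\tau(\sigma)]\circ\theta$ for an unramified character $\tau$ of $\Gal(F_\infty/\Qp)$ measuring the failure of $\cF$ to descend to $\hat{\GG}_m$ over $\Qp$. One must verify that this discrepancy is exactly compensated by the twisting built into the regulator map — equivalently, that it is accounted for by the Galois-equivariance property of Proposition \ref{prop:galoisequivariance} applied to $V = \Qp(1)$ — so that the identification above holds as an equality of $\Lambda_{\OFinf}$-elements and not merely up to a unit. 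I expect this bookkeeping, which is precisely the computation carried out in de Shalit's work, to be the only real obstacle; the remaining steps are formal consequences of Coleman's theorem, Proposition \ref{prop:colemanmaps}, and the explicit description of the Mellin transform.
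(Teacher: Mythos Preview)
The paper does not actually give a proof of this statement: it simply cites \cite[\S I.3.8]{deshalit87}, with the remark that de Shalit's argument for $K_\infty = \Qp^{\mathrm{ab}}$ goes through unchanged for any smaller extension over which $\cF$ and $\hat{\GG}_m$ become isomorphic. So there is nothing in the paper to compare your argument against beyond the reference.

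Your sketch is a reasonable reconstruction of the shape of de Shalit's argument, and the reduction to Proposition~\ref{prop:colemanmaps} together with the gluing via $\bigcap_F I_{F,\cF} = 0$ is the right framework. You have also correctly isolated the one substantive point: since $\theta$ has coefficients in $\OFinf$ rather than $\cO_F$, the relation $\theta^\sigma = [\tau(\sigma)]\circ\theta$ introduces a Frobenius discrepancy when you try to match $g_{F,\cF}(\beta_{F,\cF})\circ\theta$ against the $\hat{\GG}_m$-Coleman series, and this must be shown to cancel against the Galois-equivariance built into the Yager-module construction. That cancellation is not formal; it is exactly what de Shalit verifies in \S I.3.8, and you are right to flag it as the only genuine obstacle. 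But since you ultimately defer that step to de Shalit's computation, your proposal and the paper's proof amount to the same thing: a citation, yours with more scaffolding around it.
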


    \begin{proof}
     See \cite[\S I.3.8]{deshalit87}. (Note that the theorem is stated there for $K_\infty = \QQ_p^{\mathrm{ab}}$, the maximal abelian extension of $\Qp$; but the theorem, and the proof given, are true with $K_\infty$ replaced by any smaller extension over which the formal groups concerned become isomorphic.)
   \end{proof}

    It follows that the map $h_{\infty}$ defined above coincides with the map constructed (under more restrictive hypotheses) by Yager in \cite{yager82}. In particular, if $c$ denotes the element of the global $H^1_{\Iw}$ obtained by applying the Kummer map to the elliptic units, then $\cL^G_{\fp, V} (c)$ is equal to $\ell_0 \mu$ where $\mu$ is Katz's $p$-adic $L$-function.


  \subsection{Example 2: Two-variable L-functions of modular forms}

   We now consider the restriction to $\cG_{K}$ of the representation $V$ of $\cG_\QQ$ attached to a modular form $f$ of weight 2, level $N$ prime to $p \Delta_{K/\QQ}$, and character $\delta$. Let $E \subseteq \Qpb$ be the completion of $\QQ(f) \subseteq \Qb$ at our chosen prime of $\Qb$. We take $V = V_f^*$, so $V$ has Hodge--Tate weights $\{0, 1\}$ at each of $\fp$ and $\fpb$. Let $\{\alpha, \beta\}$ be the roots of $X^2 - a_p X + p \delta(p)$, so the eigenvalues of $\vp$ on either $\Dcris(K_\fp, V)$ or $\Dcris(K_\fpb, V)$ are $\alpha^{-1}$ and $\beta^{-1}$.

   \begin{definition}
    A $p$-refinement of $f$ is a pair $u = (u_\fp, u_{\fpb}) \subseteq \{\alpha, \beta\}^{\times 2}$. We say that $u$ is \emph{non-critical} if $v_p(u_\fp), v_p(u_{\fpb}) < 1$; otherwise $u$ is \emph{critical}.
   \end{definition}

   Let $K_\infty$ be an extension of $K$ with Galois group $G$, satisfying the hypotheses specified in Section \ref{sect:setupimquad}. For a finite-order character $\omega$ of $G$, let $L_{\{\fp, \fpb\}}(f/K, \omega^{-1}, s)$ denote the twisted $L$-function of $f$ with the Euler factors at $\fp$ and $\fpb$ removed. Let $\Omega^+_f$ and $\Omega^-_f$ be the real and complex periods of $f$ (which are defined up to multiplication by an element of $\QQ(f)^\times$).

   \begin{conjecture}[Existence of $L$-functions]
    \label{conj:lfunc}
    Let $(u_\fp, u_\fpb)$ be a $p$-refinement which is non-critical. Then there exists a distribution $\mu_f(u_\fp, u_\fpb)$ on $G$, of order $(v_p(u_\fp), v_p(u_{\fpb}))$ with respect to the subgroups $(\Gamma_\fp, \Gamma_{\fpb})$, such that for all finite-order characters $\omega$ we have
    \begin{align}
     & \int_{G} \omega\, \mathrm{d}\mu_f(u_\fp, u_\fpb) = \notag \\
     & \left( \prod_{q \in \{\fp, \fpb\}} u_q^{-c_q(\omega)} e_q(\omega^{-1}) \frac{P_q(\omega^{-1}, u_q^{-1})}{P_q(\omega, p^{-1} u_q)} \right)
     \frac{L_{\{\fp, \fpb\}} (f / K, \omega^{-1}, 1)}{\Omega^+_f \Omega^-_f}.
     \label{eq:interpolating}
    \end{align}
   \end{conjecture}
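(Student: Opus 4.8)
The plan is to realise $\mu_f(u_\fp,u_\fpb)$ as one component of the image, under the $2$-variable regulator of Sections \ref{sect:regulator}--\ref{sect:semilocalregulator}, of a global ``zeta element''. Concretely, let $z = z_1\wedge\cdots\wedge z_d$ be a generator of (a finite-index submodule of) $\bigwedge^d H^1_{\Iw,S}(K_\infty,V)$, where $V=V_f^*$ and $d=\tfrac12[K:\QQ]\dim_E V=2$; the existence of such a $z$, together with its precise relation to the $L$-values of $f/K$, is the content of the zeta-element conjecture formulated in \S\ref{sect:defmoduleofLfunctions}. For each $q\in\{\fp,\fpb\}$ choose the $\vp$-stable complement $W_q\subset\Dcris(K_q,V)$ to the $\vp=u_q^{-1}$-eigenline $E\cdot v_q$, so that
\[
 Q \;=\; \bigl(\Dcris(K_\fp,V)/W_\fp\bigr)\otimes_E\bigl(\Dcris(K_\fpb,V)/W_\fpb\bigr)
\]
is a one-dimensional quotient of $\DD_p(V)^{(1,1)}$, hence of $\bigwedge^2\DD_p(V)$. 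I would then define $\mu_f(u_\fp,u_\fpb)$ to be $\pr_Q\!\bigl(\det\cL^G_V(z)\bigr)$, trivialised via the image of $v_\fp\otimes v_\fpb$. By Theorem \ref{thm:coefficients} and Corollary \ref{cor:coefficients} this already has coefficients in a fixed finite unramified extension $L/\Qp$ rather than in $\Finf$, and (using Proposition \ref{prop:explicitdescription}, once one knows that in the present $CM$ setting the characteristic-element corrections introduce no denominators) it lies in $\cH_L(G)$, i.e.\ is a genuine distribution on $G$.

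For the interpolation property, I would evaluate $\det\cL^G_V(z)$ at a finite-order character $\omega$ of $G$ using Theorem \ref{thm:explicitformula}. Since $\cL^G_V$ is assembled from the semilocal regulators at $\fp$ and $\fpb$ (Theorem \ref{thm:semilocalreg}) and $\omega$ has Hodge--Tate weight $j=0$ (so $\Gamma^*(1)=1$ and the \emph{dual} exponential appears), pairing with $v_\fp\otimes v_\fpb$ and using that $\Phi$ acts on $v_q$ by $u_q^{-1}$ turns the local factor at $q$ into
\[
 e_q(\omega^{-1})\cdot u_q^{-c_q(\omega)}\cdot\frac{P_q(\omega^{-1},u_q^{-1})}{P_q(\omega,p^{-1}u_q)},
\]
exactly as in \eqref{eq:interpolating}; the remaining factor is a ($2\times 2$) determinant of dual-exponential values $\bigl\langle\exp^*_{K_q,V^*(1)}(\loc_q(z_i)_{\omega,0}),v_q\bigr\rangle_{\dR}$. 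Thus \eqref{eq:interpolating} reduces to an identity of the form
\[
 \det\Bigl(\bigl\langle\exp^*_{K_q,V^*(1)}(\loc_q(z_i)_{\omega,0}),v_q\bigr\rangle_{\dR}\Bigr)_{i,q}
 \;=\; (\ast)\cdot\frac{L_{\{\fp,\fpb\}}(f/K,\omega^{-1},1)}{\Omega^+_f\,\Omega^-_f},
\]
with $(\ast)$ a nonzero constant independent of $\omega$, which one normalises to $1$ by the choice of $z$.

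\textbf{The main obstacle is this last identity}: it is an \emph{explicit reciprocity law} expressing the localisations at $p$ of the zeta element, paired with the refinement eigenvectors, in terms of the twisted Rankin--Selberg $L$-value of $f/K$. In the cases where Conjecture \ref{conj:lfunc} is already a theorem this is exactly what is available: when $f$ is ordinary and $u_\fp=u_\fpb=\alpha$ is the unit root, the relevant class is the restriction to $K$ of a Beilinson--Kato element and the identity follows from Perrin-Riou's computation \cite{perrinriou88}, combined with the comparison of her map with $\cL^\Gamma$ recorded in Appendix \ref{appendix:cyclo}; when $f$ is non-ordinary it follows from B.\,D.~Kim's construction \cite{kim-preprint} via the $\pm$-decomposition; and for the genuinely new ``mixed'' refinements $(u_\fp,u_\fpb)=(\alpha,\beta)$ it is established in \cite{loeffler13}. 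In full generality the identity \emph{is} (part of) the zeta-element conjecture, so Conjecture \ref{conj:lfunc} becomes unconditional precisely when that conjecture is known for $V_f^*|_{\cG_K}$; everything else in the argument is formal, being just the translation between the regulator's explicit formula and the classical interpolation property.

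Finally, the assertion on the order of $\mu_f(u_\fp,u_\fpb)$ is immediate from Theorem \ref{thm:orderofproduct} applied with the subspaces $W_\fp,W_\fpb$ above: the $\vp$-eigenvalue on each rank-one quotient $\Dcris(K_q,V)/W_q$ has the valuation that makes $h_q=v_p(u_q)$, and non-criticality ($v_p(u_q)<1$, which is also exactly what makes Proposition \ref{prop:orderofdistributions} applicable to this choice of $W_q$) forces the order of $\pr_Q(\det\cL^G_V(z))$ with respect to $(\Gamma_\fp,\Gamma_\fpb)$ to be strictly less than the number of Hodge--Tate weights. An Amice--V\'elu/Vishik uniqueness argument then shows that $\mu_f(u_\fp,u_\fpb)$ is the \emph{only} distribution of that order satisfying \eqref{eq:interpolating}; in particular the distributions attached to distinct refinements are genuinely different objects, as predicted in the introduction.
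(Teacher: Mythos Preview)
The statement you are addressing is a \emph{conjecture}, and the paper does not give (or claim to give) an unconditional proof of it. What the paper does is exactly the reduction you describe: it formulates Conjecture~\ref{conj:twovarmf} (the existence of a distinguished zeta element $\fc \in \bigwedge^2 H^1_{\Iw,S}(K_\infty,V)$ whose dual exponentials recover the twisted $L$-values), and then, in the unnamed Proposition immediately following it, shows that for any such $\fc$ the projection of $\cL^G_V(\fc)$ to the eigenline $E\cdot v_u \subset \DD_p(V)^{(1,1)}$ has order $(v_p(u_\fp),v_p(u_\fpb))$ by Theorem~\ref{thm:orderofproduct} and satisfies the interpolation formula~\eqref{eq:interpolating} by Theorem~\ref{thm:explicitformula}. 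Your proposal follows this route step for step, and you correctly isolate the genuine gap: the ``explicit reciprocity law'' identifying the determinant of dual exponentials with $L_{\{\fp,\fpb\}}(f/K,\omega^{-1},1)/\Omega^+_f\Omega^-_f$ is precisely the content of Conjecture~\ref{conj:twovarmf}, and is not established in the paper.

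One small correction of emphasis: you invoke the known cases (Perrin-Riou, Kim, \cite{loeffler13}) as if they supply the reciprocity law and hence the zeta element. The paper's logic runs the other way. Those references construct the distributions $\mu_f(u)$ directly (by Rankin--Selberg interpolation or modular symbols), so in those cases Conjecture~\ref{conj:lfunc} is known independently; the paper then observes that Conjecture~\ref{conj:twovarmf}, if true, would \emph{reproduce} these known $L$-functions and also predict the new mixed-refinement ones. So your sketch is not a proof of Conjecture~\ref{conj:lfunc} even in the known cases---it is a proof that Conjecture~\ref{conj:twovarmf} implies Conjecture~\ref{conj:lfunc}, which is what the paper proves too.
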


   \begin{remark}
    The definition of the order of a distribution on $\Zp^2$ is given in Section \ref{sect:order}. The hypothesis that the $p$-refinement be non-critical implies that the distribution $\mu_f(u_\fp, u_\fpb)$ is unique if it exists, since a distribution of order $(r,s)$ with $r,s < 1$ is uniquely determined by its values at finite-order characters.
   \end{remark}

   Two approaches are known to the construction of such $L$-functions: either via $p$-adic interpolation of Rankin--Selberg convolutions, as in \cite{hida88,perrinriou88,kim-preprint}, or via the combinatorics of modular symbols on symmetric spaces attached to $\GL(2, \AA_K)$, as in \cite{haran87}. The details have not been written down in the full generality described above (although M.~Emerton and B.~Zhang have announced results of this kind in a paper which is currently in preparation). The literature to date contains constructions of $\mu_f(u_\fp, u_\fpb)$ in the following cases:
   \begin{itemize}
    \item if $f$ is ordinary, $\delta = 1$, and $u$ is the ``ordinary refinement'' $(\alpha, \alpha)$ where $\alpha$ is the unit root \cite{haran87}
    \item if $f$ is ordinary, $u$ is the ordinary refinement, and $G$ decomposes as a direct product of eigenspaces for complex conjugation \cite{perrinriou88}
    \item if $f$ is non-ordinary, $u_{\fp} = u_{\fpb}$, $[K(\ff p) : K]$ is prime to $p$, $\delta^2 = 1$, and we consider only the restriction of the distribution to the set of characters whose restriction to $\Gal(K(\ff p) / K)$ does not factor through a Dirichlet character via the norm map \cite{kim-preprint}.
   \end{itemize}

   \begin{remark}\mbox{~}
    (i) We have chosen to write the interpolating formula \eqref{eq:interpolating} in a way that emphasises the similarity with that of \cite{cfksv}. The cited references use a range of different formulations, and the distributions they construct differ from ours by various correction factors; but in each case the \emph{existence} of a measure satisfying their conditions is equivalent to the conjecture above.

     (ii) If $f$ is ordinary and $u$ is the ordinary refinement, the condition that $\mu_f(u)$ has order $(0, 0)$ is simply that it be a measure. In the non-ordinary case considered by Kim, the condition that $\mu_f(u)$ has order $(v_p(u_\fp), v_p(u_{\fpb}))$ is more delicate, and depends crucially on the decomposition of $\Gal(K_\infty / K(\ff))$ as the direct product of the distinguished subgroups $\Gamma_{\fp}$ and $\Gamma_{\fpb}$ corresponding to the two primes above $p$.
   \end{remark}

   We now give a conjectural interpretation of these $p$-adic $L$-functions in terms of our regulator map $\cL^G_V$. Let us write
   \[ Z^1_{\Iw, p}(V) = Z^1_{\Iw, \fp}(V) \oplus Z^1_{\Iw, \fpb}(V).\]
   We write $\exp^*_{V}$ for the map $\exp^*_{K_\fp, V} \oplus \exp^*_{K_\fpb, V} : Z^1_{\Iw,p}(V) \to \DD_p(V)$, and similarly $\cL^G_V$ for the map
   $\cL^G_{\fp, V} \oplus \cL^G_{\fpb, V} : Z^1_{\Iw, p}(V) \to \cH_{\Finf}(G) \otimes \DD_p(V)$. Both of these induce maps on the wedge square, which we denote by the same symbols.

   The following conjecture can be seen as a special case of the very general ``$\zeta$-isomorphism conjecture'' of Fukaya and Kato (Conjecture 2.3.2 of \cite{fukayakato06}), applied to the module $\Lambda_{\Zp}(G) \otimes T$ for $T$ a $\Zp$-lattice in $V$.

   \begin{conjecture}
    \label{conj:twovarmf}
    Choose a basis $v$ of $\Fil^0 \Dcris(K_\fp, V) \otimes_{\Qp} \Fil^0 \Dcris(K_\fpb, V) \subseteq \DD_p(V)^{(1, 1)}$. Then there is a distinguished element $\fc \in \bigwedge^2 H^1_{\Iw, S}(K_\infty, V_f)$ such that for all finite-order characters $\omega$, we have
    \[ \exp^*_{V(\omega^{-1})^*(1)}(\fc_\omega) = \frac{L(f / K, \omega^{-1}, 1)}{ \Omega^+_f \Omega^-_f }\, v.\]
    Moreover, $\fc$ is a $\Lambda_{\Zp}(G)$-basis of $\II_{\arith,p}(V)$.
   \end{conjecture}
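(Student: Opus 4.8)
\emph{Sketch of a proof strategy.} Since this is a conjecture, I describe a strategy for establishing it rather than a complete argument. The statement splits into two parts of quite different flavour: the existence of a class $\fc$ interpolating the dual exponentials of twists of $V_f$, and the assertion that $\fc$ is a $\Lambda_{\Zp}(G)$-basis of $\II_{\arith,p}(V)$. The candidate for $\fc$ must come from an Euler system for $V_f|_{\cG_K}$, and the argument is cleanest when $f$ has complex multiplication by $K$. Suppose this is so, so $V_f=\Ind_{K/\QQ}\psi$ for a Hecke character $\psi$ and $V_f|_{\cG_K}\cong\psi\oplus\psi^c$; then $H^1_{\Iw,S}(K_\infty,V_f)=H^1_{\Iw,S}(K_\infty,\psi)\oplus H^1_{\Iw,S}(K_\infty,\psi^c)$, each summand of $\Lambda_{\Qp}(G)$-rank one (granting $\Leop(K_\infty,V)$). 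Rubin's Euler system of elliptic units, pushed through the Kummer map as in \S\ref{sect:KatzLfunction} and twisted into $\psi$ (resp.\ $\psi^c$), supplies a canonical generator of each summand, and I would take $\fc$ to be their exterior product. The interpolation property should then follow by combining three ingredients: the explicit formula of Theorem \ref{thm:explicitformula} relating the value of $\cL^G_V$ at a finite-order character to $\exp^*$; the identification in \S\ref{sect:KatzLfunction} of $\cL^G_{\Qp(1)}$ of the elliptic-unit class with $\ell_0$ times Katz's $p$-adic $L$-function (via the Yager module and de Shalit's theorem); and the factorisation of the base-changed Rankin $L$-function $L_{\{\fp,\fpb\}}(f/K,\omega^{-1},s)=L_{\{\fp,\fpb\}}(\psi\omega^{-1},s)\,L_{\{\fp,\fpb\}}(\psi^c\omega^{-1},s)$ into two Hecke $L$-functions of $K$, each interpolated by Katz.

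The generation statement is the genuinely deep input. By Proposition \ref{prop:explicitdescription}, $\fc$ generates $\II_{\arith,p}(V)$ exactly when the characteristic element of $H^1_{\Iw,S}(K_\infty,V)$ modulo the span of the two elliptic-unit classes agrees with a generator $f_2$ of the characteristic ideal of $H^2_{\Iw}(K_\infty,V)$ — that is, with a two-variable Iwasawa Main Conjecture over $K$. In the CM case I would deduce this from Rubin's proof of the Main Conjecture for imaginary quadratic fields, applied to $\psi$ and to $\psi^c$ separately, reassembling the two rank-one statements using the $\psi\oplus\psi^c$ decompositions of $H^1_{\Iw,S}$, of $H^2_{\Iw}$, and (via \S\ref{sect:Galoisdescent}) of $\II_{\arith,p}$. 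One must then check that no spurious $\mu$- or $\lambda$-invariants are introduced at the auxiliary primes of $S$ or at the finitely many primes where $\fp,\fpb$ are decomposed in $K_\infty$, but this is routine bookkeeping rather than a real difficulty.

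The main obstacle is the non-CM case. There is at present no Euler system varying in the anticyclotomic direction of $K_\infty=K(\ff p^\infty)$: the restriction to $K$ of Kato's Euler system controls only the cyclotomic line, so the class $\fc$ must be produced by entirely different means — equivalently, one must construct the Fukaya--Kato $\zeta$-isomorphism for $\Lambda_{\Zp}(G)\otimes T$ directly, which is itself open. This is precisely the point left unproven here, and is taken up for the ``new'' $L$-functions in the forthcoming work \cite{loeffler13}. In the absence of such a construction, what can be done unconditionally — and what the remainder of this section carries out — is the consistency check: assuming the distributions $\mu_f(u_\fp,u_\fpb)$ of Conjecture \ref{conj:lfunc} exist, one uses the growth bound of Theorem \ref{thm:orderofproduct} together with the explicit formula to verify that a suitable combination of them has exactly the order of growth and the interpolation that $\cL^G_V(\fc)$ is predicted to have, confirming that Conjecture \ref{conj:twovarmf} refines rather than contradicts the constructions of Perrin-Riou and Kim.
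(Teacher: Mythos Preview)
The statement is a \emph{conjecture}, and the paper makes no attempt to prove it: there is no ``paper's own proof'' to compare against. You correctly recognise this and offer a proof strategy rather than a proof. Your sketch for the CM case --- splitting $V_f|_{\cG_K}\cong\psi\oplus\psi^c$, taking $\fc$ to be the wedge of the two elliptic-unit classes, invoking the factorisation $L(f/K,\omega^{-1},s)=L(\psi\omega^{-1},s)L(\psi^c\omega^{-1},s)$, and reducing the generation statement to Rubin's main conjecture --- is structurally sound and goes well beyond anything the paper attempts, though note that the paper does not claim the conjecture even in the CM case (that is treated in the companion paper \cite{leiloefflerzerbes12}).

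One correction to your final paragraph: the unconditional content the paper establishes (the Proposition immediately following the conjecture) runs in the \emph{opposite} direction to what you describe. The paper does not assume the distributions $\mu_f(u_\fp,u_\fpb)$ of Conjecture~\ref{conj:lfunc} exist and then match them against a putative $\cL^G_V(\fc)$. Rather, it takes an arbitrary $\fc\in\bigwedge^2 H^1_{\Iw,S}(K_\infty,V)$, shows via Theorem~\ref{thm:orderofproduct} that the projection of $\cL^G_V(\fc)$ to each eigenline $E\cdot v_u$ has order $(v_p(u_\fp),v_p(u_\fpb))$ unconditionally, and then shows via Theorem~\ref{thm:explicitformula} that \emph{if} $\fc$ satisfies the interpolation in Conjecture~\ref{conj:twovarmf}, these projections satisfy~\eqref{eq:interpolating}. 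The implication actually proved is therefore Conjecture~\ref{conj:twovarmf} $\Rightarrow$ Conjecture~\ref{conj:lfunc} (for non-critical refinements), not a check starting from the analytic side.
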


   We choose a basis $v_{\fp, \alpha}, v_{\fp, \beta}$ of $\vp$-eigenvectors in $\Dcris(K_\fp, V)$, and similarly for $\Dcris(K_\fpb, V)$; and for a $p$-refinement $u = (u_\fp, u_{\fpb})$, we let $v_u = v_{\fp, u_{\fp}} \otimes v_{\fpb, u_\fpb} \in \DD_p(V)^{(1, 1)}$. We may normalise such that $v_{\fp} = v_{\fp, \alpha} + v_{\fp, \beta}$ is a basis of $\Fil^0 \Dcris(K_\fp, V)$ (and respectively for $\fpb$); then $v = v_{\fp} \otimes v_{\fpb}$ is a basis of $\Fil^0 \DD_p(V)$.

   \begin{proposition}
    Let $\fc \in \bigwedge^2 H^1_{\Iw, S}(K_\infty, V)$. Then for each $p$-refinement $u$ (critical or otherwise), the projection of $\cL^G_V(\fc)$ to the subspace $E \cdot v_u \subseteq \DD_p(V)^{(1, 1)}$ is a distribution of order $(v_p(u_\fp), v_p(u_{\fpb}))$. If $\fc$ satisfies the condition of Conjecture \ref{conj:twovarmf}, then the projection of $\cL^G_V(\fc)$ satisfies the interpolating property \eqref{eq:interpolating}.
   \end{proposition}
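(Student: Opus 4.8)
The statement has two parts, and I would establish each by reduction to results already available. By bilinearity it suffices to treat $\fc = c_1 \wedge c_2$ with $c_i \in H^1_{\Iw,S}(K_\infty, V)$, so $\cL^G_V(\fc) = \cL^G_V(c_1) \wedge \cL^G_V(c_2)$. For the order claim, note that here $d = 2$, so this is exactly the situation of Theorem \ref{thm:orderofproduct} with $m = n = 1$. I would take $W_\fp \subseteq \Dcris(K_\fp, V)$ to be the $\vp$-eigenline complementary to $v_{\fp, u_\fp}$ (the eigenspace for the other refinement) and likewise $W_\fpb$, so that the corresponding quotient of $\DD_p(V)^{(1,1)}$ is precisely $E \cdot v_u$. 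On $\Dcris(K_\fp, V) / W_\fp = E \cdot v_{\fp, u_\fp}$ the operator $\vp$ acts by $u_\fp^{-1}$, whose valuation is $-v_p(u_\fp)$; so Proposition \ref{prop:orderofdistributions}, applied one prime at a time, bounds the order of the $\fp$-factor in the cyclotomic direction by $v_p(u_\fp)$, Proposition \ref{prop:changevar} rephrases this with respect to $(\Gamma_\fp, \Gamma_\fpb)$, and likewise for $\fpb$; Proposition \ref{prop:orderofproduct} for the product then yields order $(v_p(u_\fp), v_p(u_\fpb))$. By Theorem \ref{thm:coefficients} this projection is moreover a distribution with coefficients in a finite unramified extension of $\Qp$, so it makes sense to integrate it against characters.

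For the interpolation claim it suffices to evaluate the projection at each finite-order character $\omega$ of $G$ and match with the right-hand side of \eqref{eq:interpolating}. Using the description of the semilocal regulator as a sum of local regulators (the remark following Theorem \ref{thm:semilocalreg}; $\fp$ and $\fpb$ are finitely decomposed in $K_\infty$) and the fact that $K_\fp = K_\fpb = \Qp$, I would invoke Theorem \ref{thm:explicitformula}. For $\omega$ of Hodge--Tate weight $0$ it gives, after projecting onto the eigenline $E \cdot v_{q, u_q}$ on which $\Phi$ acts by $u_q^{-1}$, the identity $\cL^G_{q,V}(c_{i,q})(\omega)|_{v_{q,u_q}} = \ve((\omega|_q)^{-1})\, u_q^{-c_q(\omega)} \frac{P_q(\omega^{-1}, u_q^{-1})}{P_q(\omega, p^{-1} u_q)} \cdot \big[\exp^*_{V(\omega^{-1})^*(1)}(c_{i,q,\omega})\big]_{v_q}$, where $[\,\cdot\,]_{v_q}$ denotes the coefficient along $v_q$; here one uses that the image of $\exp^*$ lies in the line $\Fil^0 \Dcris(K_q, V(\omega^{-1}))$, which under the unramified-twist identification of Proposition \ref{prop:twisting} corresponds to $\Fil^0 \Dcris(K_q, V) = E \cdot v_q$ (an identification that carries a Gauss-sum normalization one must track). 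When $\omega$ is unramified at $q$ one also needs the hypothesis $\Dcris(V(\omega^{-1}))^{\vp = p^{-1}} = 0$ of Theorem \ref{thm:explicitformula}, which holds for all but finitely many $\omega$ and automatically when $f$ is non-ordinary.

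One then expands the $2 \times 2$ determinant computing the projection of $\cL^G_V(c_1) \wedge \cL^G_V(c_2)$ at $\omega$ onto $E \cdot v_u = E \cdot (v_{\fp, u_\fp} \otimes v_{\fpb, u_\fpb})$. The factors $\ve((\omega|_q)^{-1})\, u_q^{-c_q(\omega)} P_q(\omega^{-1}, u_q^{-1}) / P_q(\omega, p^{-1} u_q)$ for $q \in \{\fp, \fpb\}$ occur in every term and pull out, leaving the determinant of the numbers $[\exp^*_{V(\omega^{-1})^*(1)}(c_{i,q,\omega})]_{v_q}$, which is exactly the coefficient of $v = v_\fp \otimes v_\fpb$ in $\exp^*_{V(\omega^{-1})^*(1)}(\fc_\omega) \in \bigwedge^2 \Fil^0 \DD_p(V(\omega^{-1})) = E \cdot v$. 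By the defining property of $\fc$ in Conjecture \ref{conj:twovarmf} this coefficient equals $L(f/K, \omega^{-1}, 1) / (\Omega_f^+ \Omega_f^-)$. Assembling the pieces reproduces the right-hand side of \eqref{eq:interpolating}, provided one identifies the product of the $P_q$-factors against the local Euler factors of $L(f/K, \omega^{-1}, s)$ at $s = 1$, i.e. checks that they account precisely for the passage from the primitive value to $L_{\{\fp,\fpb\}}(f/K, \omega^{-1}, 1)$ (the discrepancy being trivial whenever $\omega$ is ramified at the prime in question).

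The main obstacle is this last reconciliation of normalizations: one must match Deligne's $\ve$-factors and the $L$-factors produced by Theorem \ref{thm:explicitformula} against the factors displayed in \eqref{eq:interpolating} (itself pinned down only up to the conventions of the references cited after Conjecture \ref{conj:lfunc}), and verify that multiplying the primitive $L$-value supplied by Conjecture \ref{conj:twovarmf} by the $\Phi$-factor of Theorem \ref{thm:explicitformula} produces exactly the modified-Euler-factor times imprimitive-$L$-value combination demanded by \eqref{eq:interpolating}. A subsidiary point is the careful tracking of the Gauss-sum term implicit in the identification of $\Fil^0 \Dcris(K_q, V(\omega^{-1}))$ with $E \cdot v_q$, together with the passage from the semilocal to the local regulator when $\fp$ or $\fpb$ does not split completely in $K_\infty$; neither is conceptually hard, but both demand care.
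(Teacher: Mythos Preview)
Your proposal is correct and follows essentially the same approach as the paper's proof: the order statement is deduced from Theorem \ref{thm:orderofproduct} (equivalently, from Proposition \ref{prop:orderofdistributions} and the product rule of Proposition \ref{prop:orderofproduct}) applied with $m=n=1$ and $W_\fp$, $W_{\fpb}$ the complementary eigenlines, and the interpolation statement follows by expanding the $2\times 2$ determinant and applying the explicit formula of Theorem \ref{thm:explicitformula} to each factor. The paper's own proof is terser---it simply asserts that the explicit formula ``clearly gives'' the interpolating property---whereas you are more explicit about the normalization bookkeeping (Euler factors, the passage from $L$ to $L_{\{\fp,\fpb\}}$, and the hypothesis $\Dcris(V(\omega^{-1}))^{\vp=p^{-1}}=0$), which the paper does not spell out.
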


   \begin{proof}
    The values of $\cL^G_{V}(\fc)$ at $\omega$ can be expressed in terms of those of the dual exponential map using Proposition \ref{thm:explicitformula}, which clearly gives the formula of \eqref{eq:interpolating}.

    The statement regarding the orders of the projections is an instance of Theorem \ref{thm:orderofproduct}. Concretely, suppose we choose elements $\fc_1, \fc_2$ such that $\fc_1 \wedge \fc_2 = \fc$. Then we have
    \begin{multline*}
     \cL^G_{V}(\fc) = (v_{\fp, \alpha}\cL_{V, \fp}(\fc_1)_{\alpha} + v_{\fp, \beta}\cL_{V, \fp}(\fc_1)_{\beta} + v_{\fpb, \alpha}\cL_{V, \fpb}(\fc_1)_{\alpha} + v_{\fpb, \beta}\cL_{V, \fpb}(\fc_1)_{\beta}) \\ \wedge (v_{\fp, \alpha}\cL_{V, \fp}(\fc_2)_{\alpha} + v_{\fp, \beta}\cL_{V, \fp}(\fc_2)_{\beta} + v_{\fpb, \alpha}\cL_{V, \fpb}(\fc_2)_{\alpha} + v_{\fpb, \beta}\cL_{V, \fpb}(\fc_2)_{\beta}),
    \end{multline*}
    so the projection of $\cL^G_{V}(\fc)$ to the line spanned by $v_u$ is
    \[ v_u \cdot
     \begin{vmatrix}
      \cL^G_{V, \fp}(\fc_1)_{u_\fp} & \cL^G_{V, \fp}(\fc_2)_{u_\fp}\\
      \cL^G_{V, \fpb}(\fc_1)_{u_\fpb} & \cL^G_{V, \fpb}(\fc_2)_{u_\fpb}
     \end{vmatrix}.
    \]
    Since $\cL^G_{V, \fp}(\fc_?)_{u_\fp}$ (for $? \in \{1, 2\}$) is a distribution of order $(v_p(u_\fp), 0)$, and $\cL^G_{V, \fpb}(\fc_?)_{u_\fpb}$ is a distribution of order $(0, v_p(u_\fpb))$, the determinant gives a distribution of order $(v_p(u_\fp), v_p(u_{\fpb})$ as claimed.
   \end{proof}

   In particular, when the refinement $u$ is non-critical, we conclude that Conjecture \ref{conj:twovarmf} implies Conjecture \ref{conj:lfunc} and the projection of $\cL^G_{V}(\fc)$ to $v_u$ must be equal to the uniquely determined distribution $\mu_f(u)$.

   \begin{remark}
    If Conjecture \ref{conj:twovarmf} holds, then one can also project the element $\cL^G_V(\fc)$ into $\DD_p(V)^{(2, 0)}$ (or into $\DD_p(V)^{(0, 2)}$). The resulting distributions are of a rather simpler type: if $\fc = \fc_1 \wedge \fc_2$ as before, then
    \[
     \pr_{2, 0} \cL^G_{V}(\fc) = \cL^G_{\fp, V}(\fc_1) \wedge \cL^G_{\fp, V}(\fc_2).
    \]
    This is a distribution on $G$ with values in the 1-dimensional space $\DD_p(V)^{(2, 0)} = \det_{\Qp} \Dcris(K_\fp, V)$ of order $(1, 0)$, divisible by the image in $\cH_{\Finf}(G)$ of the distribution $\ell_0 \in \cH_{\Qp}(\Gamma_\fp)$, so dividing by this factor gives a bounded measure on $G$ with values in $\Finf$. Note that acting by the arithmetic Frobenius of $\Finf$ on this measure corresponds to multiplication by $[\sigma_\fp]^2$, so it \emph{never} descends to a finite extension of $\Qp$.

    It is natural to conjecture (and would follow from Conjecture 2.3.2 of \cite{fukayakato06}) that if $\tau$ is a character of $G$ whose Hodge--Tate weights at $\fp$ and $\fpb$ are $(r,s)$ with $r \ge 1$ and $s \le -1$, so $\Fil^0 \bigwedge^2 \DD_{p}(V(\tau^{-1})) = \DD_p(V)^{(2,0)}$, then the value of $\pr_{2, 0} \cL^G_V(\fc)$ at $\tau$ should (after dividing by an appropriate period) correspond to the value at $1$ of the $L$-function of the automorphic representation $\operatorname{BC}(\pi_f) \otimes \tau$ of $\operatorname{GL}(2,\mathbb{A}_K)$. Up to a shift by the cyclotomic character, this corresponds to the set of characters denoted by $\Sigma^{(2)}(\ff)$ in \cite{BDP13}, while the finite-order characters covered by the interpolating property in Conjecture \ref{conj:twovarmf} correspond to the set denoted there by $\Sigma^{(1)}(\ff)$.

    If this conjecture holds, the image of $\pr_{2, 0} \cL^G_{V}(\fc) / \ell_0$ in the Galois group of the anticyclotomic $\Zp$-extension of $K$ should be related to the $L$-functions of \cite[Proposition 6.10]{BDP13} and \cite{brakocevic}, which interpolate the $L$-values of twists of $f$ by anticyclotomic characters in $\Sigma^{(2)}(\ff)$. We intend to study this question further in a future paper.
   \end{remark}

\appendix

 \section{Local and global Iwasawa cohomology}
  \label{appendix:iwacoho}

  In this section, we shall recall some results on the structure of Iwasawa cohomology groups of $p$-adic Galois representations over towers of representations of local and global fields. These are generalizations of well-known results for cyclotomic towers due to Perrin-Riou (cf.~\cite[\S 2]{perrinriou92}); much more general results have since been obtained by Nekovar \cite{nekovar06} and we briefly indicate how to derive the results we need from those of \emph{op.cit.}.

  \subsection{Conventions}

   We shall work with extensions of (local or global) fields $F_\infty/ F$ whose Galois group is of the form $G = \Delta \times \Zp^e$, where $e \ge 1$ and $\Delta$ is a finite abelian group of order prime to $p$. The Iwasawa algebra $\Lambda_{\Zp}(G)$ is a reduced ring, but it is not in general an integral domain; rather, it is isomorphic to the direct product of the subrings $e_\eta \Lambda_{\Zp}(G)$, where $\eta$ ranges over the $\Qpb / \Qp$-conjugacy classes of characters of $\Delta$. For each such $\eta$, $e_\eta \Lambda_{\Zp}(G)$ is a local integral domain.

   In order to greatly simplify the presentation of our results, we shall adopt a minor abuse of notation, following the conventions of \cite{perrinriou95}.

   \begin{definition}
    We shall say that a $\Lambda_{\Zp}(G)$-module has rank $r$ if $M_\eta$ has rank $r$ over $e_\eta \Lambda_{\Zp}(G)$ for all $\eta$.
   \end{definition}

   When using this notation it is important to bear in mind that when $\Delta$ is not trivial, most finitely generated $\Lambda_{\Zp}(G)$ modules will not have a rank.
%

%
%


  \subsection{The local case}
   \label{sect:localranks}

   Let $F$ be a finite extension of $\QQ_\ell$, for some prime $\ell$. Let $V$ a $\Qp$-representation of $\mathcal{G}_{F}$ of dimension $d$, and choose a Galois invariant $\Zp$-lattice $T$. For $F_\infty / F$ an abelian extension satisfying the conditions above, we define
   \[ H^i_{\Iw}(F_\infty, T) = \varprojlim_{K} H^i(K, T)\]
   where the limit is over all finite extensions $K/F$ contained in $F_\infty$, with respect to the corestriction maps; and $H^i_{\Iw}(F_\infty, V) = \Qp \otimes_{\Zp} H^i_{\Iw}(F_\infty, T)$.

   \begin{theorem}
    \label{thm:localrank}
    The groups $H^i_{\Iw}(F_\infty, T)$ are finitely-generated $\Lambda_{\Zp}(G)$-modules, zero if $i \ne \{0, 1\}$. We have an isomorphism
    \[ H^2_{\Iw}(F_\infty, T) \cong H^0(F_\infty, T^\vee(1))^\vee,\]
    where $(-)^\vee$ denotes the Pontryagin dual; in particular $H^2_{\Iw}(F_\infty, T)$ is $\Lambda_{\Zp}(G)$-torsion.

    The group $H^1_{\Iw}(F_\infty, T)$ has well-defined rank given by
    \[ \operatorname{rk}_{\Lambda_{\Zp}(G)} H^1_{\Iw}(F_\infty, T) = \begin{cases} 0& \text{if $\ell \ne p$,} \\ [F : \Qp] d & \text{if $\ell = p$}.\end{cases}\]
   \end{theorem}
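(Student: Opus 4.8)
The plan is to deduce everything from the realisation of the whole Iwasawa cohomology complex as the continuous Galois cohomology of a single ``big'' coefficient module. Write $\Lambda = \Lambda_{\Zp}(G)$ and let $\Lambda^\iota$ denote $\Lambda$ equipped with the continuous $\cG_F$-action in which $\sigma$ acts by left multiplication by the image of $\sigma^{-1}$ in $G$. Shapiro's lemma at each finite layer, together with the vanishing of $\varprojlim^1$ for the (compact, finitely generated) modules in play, gives a canonical isomorphism
\[ R\Gamma_{\Iw}(F_\infty, T) \;\simeq\; R\Gamma(\cG_F, \Lambda^\iota \htimes_{\Zp} T) \]
in the derived category of $\Lambda$-modules; since $\cG_F$ has $p$-cohomological dimension $\le 2$ and $\Lambda^\iota \htimes_{\Zp} T$ is $\Zp$-flat, the right-hand side is represented by a perfect complex of $\Lambda$-modules concentrated in degrees $[0,2]$. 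This is worked out in full in \cite[\S 8.4]{nekovar06}, and I would simply quote it. It gives at once that the $H^i_{\Iw}(F_\infty, T)$ are finitely generated over $\Lambda$ and vanish for $i \notin \{0,1,2\}$, and that $H^0_{\Iw}(F_\infty, T) = (\Lambda^\iota \htimes_{\Zp} T)^{\cG_F}$. The latter is $\Lambda$-torsion: after inverting each component $e_\eta\Lambda_{\cO_L}(G)$ (for $L/\Qp$ a finite extension containing the values of all characters of $\Delta$), a $\cG_F$-invariant vector $v$ of the fraction field of $e_\eta\Lambda_{\cO_L}(G)$ tensored with $T$ must satisfy $\rho(\sigma)\, v = \overline{\sigma}\, v$ for all $\sigma$, where $\rho$ is the action of $\cG_F$ on $T$; but $\rho(\sigma) \in \GL_d(\Zp)$ has only algebraic eigenvalues, whereas $\overline{\sigma} \in G$ maps to a transcendental element as soon as $\overline{\sigma}$ lies in the $\Zp^e$-part, so $v = 0$.

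For $H^2$ I would pass to the limit in local Tate duality. For each finite extension $K/F$ inside $F_\infty$ one has $H^2(K,T) \cong H^0(K, T^\vee(1))^\vee$, and corestriction on the source is Pontryagin-dual to restriction on the target; hence $\varprojlim_K$ on the source is the Pontryagin dual of $\varinjlim_K H^0(K, T^\vee(1)) = H^0(F_\infty, T^\vee(1))$, giving the asserted isomorphism $H^2_{\Iw}(F_\infty, T) \cong H^0(F_\infty, T^\vee(1))^\vee$. Since $H^0(F_\infty, T^\vee(1))$ is a sub-$\Zp$-module of $T^\vee(1) \cong (\Qp/\Zp)^d$, it is cofinitely generated over $\Zp$, so its dual is finitely generated over $\Zp$ and a fortiori $\Lambda$-torsion (as $\dim G \ge 1$).

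It remains to compute the rank of $H^1_{\Iw}(F_\infty, T)$, where ``rank'' is, per the paper's convention, the common $e_\eta\Lambda$-rank of all components. The idempotents $e_\eta$ commute with the $\cG_F$-action on $\Lambda^\iota$, so the perfect complex above decomposes as $\bigoplus_\eta R\Gamma(\cG_F, e_\eta\Lambda^\iota \htimes_{\Zp} T)$; the alternating sum of the $e_\eta\Lambda$-ranks of the $H^i_{\Iw}$ equals the alternating sum of the ranks of the terms of the $\eta$-component (ranks being additive over the domain $e_\eta\Lambda$), and this Euler characteristic is unchanged by base change. Base-changing along the evaluation homomorphism $e_\eta\Lambda_{\cO_L}(G) \to \cO_L$ attached to $\eta$ (extended trivially over the $\Zp^e$-part), the projection formula for the perfect complex gives
\[ R\Gamma(\cG_F, e_\eta\Lambda^\iota \htimes_{\Zp} T) \otimes^{\mathbb{L}}_{e_\eta\Lambda_{\cO_L}(G),\, \mathrm{ev}_\eta} \cO_L \;\simeq\; R\Gamma(F, T(\eta) \otimes_{\Zp} \cO_L), \]
so the Euler characteristic equals $\sum_i (-1)^i \dim_{\Qp} H^i(F, V(\eta))$. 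By Tate's local Euler characteristic formula this is $-[F:\Qp]\, d$ when $\ell = p$ and $0$ when $\ell \ne p$, in either case independent of $\eta$. Combining this with the vanishing of the $e_\eta\Lambda$-ranks of $H^0_{\Iw}$ and $H^2_{\Iw}$ gives $\operatorname{rk}_{e_\eta\Lambda} H^1_{\Iw}(F_\infty, T) = [F:\Qp]\, d$ (resp. $0$) for every $\eta$, which simultaneously establishes that $H^1_{\Iw}$ has a well-defined rank and computes it. The only substantive ingredient is the first step; everything after it is local duality plus Euler-characteristic bookkeeping, so the main obstacle is really just importing Nekov\'a\v r's framework faithfully, which I would not reprove here.
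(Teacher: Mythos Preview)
Your approach is essentially the same as the paper's: both identify $R\Gamma_{\Iw}$ with continuous cohomology of $\Lambda^\iota \htimes T$ via Nekov\'a\v{r} \S 8.4, deduce finite generation and vanishing in high degrees from this, obtain the description of $H^2_{\Iw}$ by passing to the limit in local Tate duality, and compute the rank of $H^1_{\Iw}$ from Tate's local Euler characteristic formula combined with an Euler-characteristic base-change argument (the paper cites Nekov\'a\v{r}, Corollary 4.6.10, which is exactly this).

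The one point on which you differ is the treatment of $H^0_{\Iw}$. The paper simply observes that the profinite degree of $F_\infty/F$ is divisible by $p^\infty$ and cites \cite[8.3.5]{nekovar06} to conclude $H^0_{\Iw}(F_\infty,T)=0$ outright: the transition maps in $\varprojlim_K T^{\cG_K}$ are eventually multiplication by $p$ on a fixed finitely generated $\Zp$-module, so the inverse limit vanishes. Your transcendence argument is correct and shows $H^0_{\Iw}$ is $\Lambda$-torsion, which is all you need for the rank computation; but as written you do not actually conclude that $H^0_{\Iw}=0$, which is part of the theorem statement. This is easy to close---since $H^0_{\Iw}=(\Lambda^\iota\htimes T)^{\cG_F}$ sits inside the free $\Lambda$-module $\Lambda^\iota\htimes T\cong\Lambda^d$, it is torsion-free in each $e_\eta$-component, and torsion plus torsion-free gives zero---but you should say so explicitly, or just replace your argument by the paper's simpler one.
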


   \begin{proof}
    We have assumed that $G$ has a subgroup isomorphic to $\Zp^e$ with $e \ge 1$; thus the profinite degree of $F_\infty / F$ is divisible by $p^\infty$, so $H^0_{\Iw}(F_\infty, T) = 0$ by \cite[8.3.5 Proposition]{nekovar06}.

    For the finiteness statements for $i > 0$, we note that
    \[ H^i_{\Iw}(F_\infty, T) \cong H^i(F, \Lambda_{\Zp}(G) \otimes_{\Zp} T)\]
    by \cite[8.4.4.2 Proposition]{nekovar06}, where the action of $\cG_F$ on $\Lambda_{\Zp}(G)$ is via the inverse of the canonical character $\cG_F \to G \to \Lambda_{\Zp}(G)^\times$. This implies the finite generation of the groups $H^i_{\Iw}(F_\infty, T)$, and their vanishing for $i \ge 3$, by Proposition 4.2.2 of \emph{op.cit.}.

    The isomorphism $H^2_{\Iw}(F_\infty, T)^\vee \cong H^0(F_\infty, T^\vee(1))$ follows by applying local Tate duality to each finite extension $K / F$ contained in $F_\infty$. Finally, the formula for the rank of $H^1_{\Iw}(F_\infty, T)$ follows from Tate's local Euler characteristic formula for finite modules and Corollary 4.6.10 of \emph{op.cit.}.
   \end{proof}


  \subsection{The global case}
   \label{sect:globalranks}

   We now let $K$ be a number field.
   Let $V$ be a $\Qp$-representation of $\mathcal{G}_K$ of dimension $d$, and choose a $G_K$-invariant $\Zp$-lattice $T$. Let $S$ be a finite set of places of $K$ containing all the primes above $p$, all infinite places and all the places whose inertia group acts non-trivially on $V$, and let $K^S$ be the maximal extension of $K$ unramified outside $S$.

   \begin{theorem}[Tate's global Euler characteristic formula]
    If $M$ is a $\Zp$-module of finite length with a continuous action of $\Gal(K^S / K)$, then the modules $H^i(K^S / K, M)$ are finite groups, zero for $i \ge 3$. If $K$ is totally complex, then we have
    \[ \prod_{i =0}^2 \left( \# H^i(K^S / K, M)\right)^{(-1)^i} = (\# M)^{-\tfrac{1}{2}[K : \QQ]}.\]
   \end{theorem}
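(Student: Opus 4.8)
The plan is to recover this as Tate's classical global Euler--Poincar\'e characteristic formula, by the standard route: finiteness, d\'evissage to trivial coefficients, and a single genuinely arithmetic base case.

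First I would dispose of the qualitative assertions. Since $p$ is odd, $\operatorname{cd}_p \Gal(K^S/K) = 2$, so $H^i(K^S/K, M) = 0$ for $i \ge 3$; and $\Gal(K^S/K)$ has finitely generated continuous cohomology (ultimately via the Hermite--Minkowski bound on the number of extensions of bounded degree unramified outside $S$), so each $H^i$ is finite. Write $\chi(M) = \prod_{i=0}^{2}(\#H^i(K^S/K, M))^{(-1)^i}$. Then I would observe that both $\chi(M)$ and $(\#M)^{-[K:\QQ]/2}$ are multiplicative along short exact sequences $0 \to M' \to M \to M'' \to 0$ of finite $\Gal(K^S/K)$-modules --- the latter trivially, and $\chi$ because the long exact cohomology sequence is now a finite exact sequence of finite abelian groups. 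So it is enough to treat simple $\mathbb{F}_p[\Gal(K^S/K)]$-modules.

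The reduction to trivial coefficients is the bookkeeping step. The action on a simple $M$ factors through $\Gal(L/K)$ for some finite $L \subset K^S$, which is again totally complex, and $K^S = L^{S_L}$ for $S_L$ the set of places of $L$ above $S$; Shapiro's lemma then gives $H^i(\Gal(K^S/K), \Ind_{\Gal(K^S/L)}^{\Gal(K^S/K)} \mathbb{F}_p) \cong H^i(\Gal(K^S/L), \mathbb{F}_p)$. By Artin's induction theorem for Brauer characters, in the rational Grothendieck group of finite $\mathbb{F}_p[\Gal(K^S/K)]$-modules the class of $M$ is a $\mathbb{Q}$-linear combination of classes $[\Ind_{\Gal(K^S/K')}^{\Gal(K^S/K)}\mathbb{F}_p]$ with $K'/K$ finite inside $K^S$; since $\log_p\!\big(\chi(M)(\#M)^{[K:\QQ]/2}\big)$, viewed as a function of $M$, is additive on this group, it suffices to prove the identity for those induced modules, i.e.\ (by Shapiro again) to prove $\chi(\mathbb{F}_p) = p^{-[K':\QQ]/2}$ for the \emph{trivial} module over every totally complex $K'$ (unramified outside the relevant places).

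This last point is the real obstacle, and the only place the totally-complex hypothesis is used essentially. For trivial $\mathbb{F}_p$ one has $H^0 = \mathbb{F}_p$ and $H^1 = \Hom(\Gal(K^S/K)^{\mathrm{ab}}, \mathbb{F}_p)$, while $H^2$ is identified, via Poitou--Tate duality together with the local computations (the local $H^2$ vanish at the complex places), with $H^1(\Gal(K^S/K), \mu_p)^\vee$ up to corrections that cancel against the Kummer-theoretic description of the latter. Computing $\dim_{\mathbb{F}_p} H^1(-,\mathbb{F}_p)$ and $\dim_{\mathbb{F}_p} H^1(-,\mu_p)$ from the Kummer sequence and class field theory --- the $S$-unit group has rank $\#S - 1$ by Dirichlet, the two $S$-class-group contributions cancel, and the $r_2 = [K:\QQ]/2$ complex places supply the decisive term --- yields $\dim H^1(-,\mathbb{F}_p) - \dim H^2(-,\mathbb{F}_p) = 1 + [K:\QQ]/2$, hence $\chi(\mathbb{F}_p) = p^{-[K:\QQ]/2}$. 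As an alternative to the Grothendieck-group manipulation, one can feed the d\'evissage into the nine-term Poitou--Tate sequence: comparing orders there gives $\chi(M)\,\chi(M^D) = \prod_{v\in S}\chi(\Gal(\overline{K}_v/K_v), M)$ with $M^D = \Hom(M,\mu_{p^\infty})$, the local Euler characteristics being $1$ away from $p$ and at the complex places and $\|\#M\|_v$ at $v\mid p$ by Tate's local formula (cf.\ the proof of Theorem~\ref{thm:localrank}); the product formula gives $\chi(M)\chi(M^D) = (\#M)^{-[K:\QQ]}$, and with the trivial-coefficient base case plus multiplicativity this again pins down $\chi(M) = (\#M)^{-[K:\QQ]/2}$.
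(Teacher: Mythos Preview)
Your sketch is correct and follows the classical route to Tate's global Euler--Poincar\'e formula: d\'evissage to simple modules, reduction to trivial coefficients via Shapiro and an induction argument, and then the arithmetic computation of $\chi(\mathbb{F}_p)$ from class field theory and the $S$-unit theorem (or, in your alternative, from the nine-term Poitou--Tate sequence together with the local Euler characteristic formula). This is exactly the argument recorded in Neukirch--Schmidt--Wingberg \cite[8.3.17, 8.6.14]{nsw}.

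The paper itself gives no proof at all: it simply cites those two results in \cite{nsw}. So there is nothing to compare at the level of method --- you have supplied a genuine proof outline where the paper defers entirely to the literature. One small remark: your use of Artin induction for Brauer characters to write $[M]$ as a rational combination of permutation modules is not strictly needed here, since the standard treatments (and \cite{nsw} in particular) get the reduction to trivial coefficients more directly by passing to the splitting field of a simple $M$ and applying Shapiro together with a dimension-shifting argument; but your version works too, and the endpoint is the same.
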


   \begin{proof}
    See \cite[8.3.17, 8.6.14]{nsw}.
   \end{proof}

   We now consider a Galois extension $K_\infty / K$, contained in $K^S$, whose Galois group $G$ is of the form $\Delta \times \Zp^e$, where $e\geq 1$ and $\Delta$ is abelian of order prime to $p$, as above. For $i \ge 0$, we define
   \[ H^i_{\Iw, S}(K_\infty, T) = \varprojlim_{L} H^i(K^S / L, T)\]
   where the limit is taken over number fields $L$ satisfying $K \subseteq L \subset K_\infty$, with respect to the corestriction maps.

   \begin{theorem}
    \label{thm:globalrank}
    The groups $H^i_{\Iw, S}(K_\infty, T)$ are finitely-generated $\Lambda_{\Zp}(G)$-modules, zero if $i = 0$ or $i \ge 3$. If $K$ is totally complex, then for each character $\eta$ of $\Delta$ we have
    \[ \rank_{e_\eta \Lambda_{\Zp}(G)} e_\eta H^1_{\Iw, S}(K_\infty, T) = \tfrac 1 2 [K : \QQ] d + \rank_{e_\eta \Lambda_{\Zp}(G)}  e_\eta H^2_{\Iw, S}(K_\infty, T).\]
   \end{theorem}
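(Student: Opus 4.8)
The plan is to transcribe the proof of the local case (Theorem \ref{thm:localrank}) almost verbatim, replacing local Tate duality and the local Euler characteristic formula by their global counterparts; the whole argument rests on the machinery of \cite{nekovar06}.

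First I would invoke Shapiro's lemma in the form of \cite[8.4.4.2 Proposition]{nekovar06}: setting $\mathbf{T} = \Lambda_{\Zp}(G) \htimes_{\Zp} T$, with $\cG_K$ acting diagonally via its usual action on $T$ and, on the first factor, via the inverse of the tautological character $\cG_K \onto G \into \Lambda_{\Zp}(G)^\times$, there is a canonical isomorphism $H^i_{\Iw, S}(K_\infty, T) \cong H^i(K^S/K, \mathbf{T})$. Then \cite[Proposition 4.2.2]{nekovar06} shows that $\mathrm{R}\Gamma(K^S/K, \mathbf{T})$ is a perfect complex of $\Lambda_{\Zp}(G)$-modules with cohomology concentrated in degrees $0, 1, 2$; this gives at once that the $H^i_{\Iw, S}(K_\infty, T)$ are finitely generated over $\Lambda_{\Zp}(G)$ and vanish for $i \ge 3$. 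The vanishing of $H^0_{\Iw, S}(K_\infty, T)$ follows, exactly as in the local case, from \cite[8.3.5 Proposition]{nekovar06}, using that the profinite degree of $K_\infty / K$ is divisible by $p^\infty$.

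For the rank formula I would assume $K$ totally complex and argue one $\eta$-component at a time, using the decomposition $\Lambda_{\Zp}(G) = \prod_\eta e_\eta \Lambda_{\Zp}(G)$ with each factor a local integral domain. Since $e_\eta \mathrm{R}\Gamma(K^S/K, \mathbf{T})$ is perfect over the domain $e_\eta \Lambda_{\Zp}(G)$, the alternating sum $\sum_i (-1)^i \rank_{e_\eta \Lambda_{\Zp}(G)} e_\eta H^i_{\Iw, S}(K_\infty, T)$ is well defined and, by \cite[Corollary 4.6.10]{nekovar06}, is computed from the Euler characteristic of $H^\bullet(K^S/K, T/p^n T)$; by Tate's global Euler characteristic formula quoted above this value is $-\tfrac12[K:\QQ]d$, the archimedean terms being trivial because $K$ is totally complex and $p$ is odd. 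Combining this with $H^0_{\Iw, S}(K_\infty, T) = 0$ and the vanishing in degrees $\ge 3$ yields $-\rank e_\eta H^1_{\Iw, S}(K_\infty, T) + \rank e_\eta H^2_{\Iw, S}(K_\infty, T) = -\tfrac12[K:\QQ]d$, which rearranges to the asserted identity.

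I expect the main obstacle to be the passage from the finite-coefficient Euler characteristic formula to the statement about generic ranks of Iwasawa cohomology: one must verify that the perfectness and finiteness hypotheses needed to apply \cite[Corollary 4.6.10]{nekovar06} are genuinely in force, and keep careful track of the fact that ``rank'' is only meaningful after projecting to the domains $e_\eta \Lambda_{\Zp}(G)$, so the Euler characteristic computation must be carried out (identically) in each of these factors. Everything else should follow the local argument word for word.
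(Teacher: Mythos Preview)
Your proposal is correct and follows essentially the same approach as the paper: the paper's proof simply says ``This follows exactly as in Theorem \ref{thm:localrank}, using Tate's global Euler characteristic formula in place of the local one,'' and you have spelled out precisely that argument, citing the same results from \cite{nekovar06} and noting (as the paper does parenthetically) that the archimedean contributions vanish because $K$ is totally complex and $p$ is odd.
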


   \begin{proof}
    This follows exactly as in Theorem \ref{thm:localrank}, using Tate's global Euler characteristic formula in place of the local one. (There are no issues with real embeddings, thanks to our running assumption that $p$ be odd.)
   \end{proof}
%

   \begin{proposition}
    \label{prop:leopoldt}
    The following statements are equivalent:
    \begin{enumerate}[(i)]
     \item $H^2_{\Iw, S}(K_\infty, T)$ is $\Lambda_{\Zp}(G)$-torsion.
     \item For each character $\eta$ of $\Delta$, there is a character $\tau$ of $G$ such that $\tau|_\Delta = \eta$ and $H^2(K^S / K, V(\tau)) = 0$.
     \item $H^2(K^S / K_\infty, T \otimes \Qp/\Zp)$ is a cotorsion $\Lambda_{\Zp}(G)$-module.
     \item $H^2(K^S / K_\infty, T \otimes \Qp/\Zp) = 0$.
    \end{enumerate}
   \end{proposition}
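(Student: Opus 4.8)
The plan is to prove the equivalences by combining two rather different ingredients: a descent (``control theorem'') argument for $(i)\Leftrightarrow(ii)$, and Poitou--Tate duality in the tower — together with the bound $\operatorname{cd}_p(G_S(K_\infty))\le 2$ — for $(i)\Leftrightarrow(iii)\Leftrightarrow(iv)$.

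For $(i)\Leftrightarrow(ii)$ I would use Shapiro's lemma in the form $H^i_{\Iw,S}(K_\infty,T)\cong H^i(G_S(K),\Lambda_{\Zp}(G)\otimes_{\Zp}T)$, as in the proof of Theorem~\ref{thm:globalrank} (with $\cG_K$ acting on $\Lambda_{\Zp}(G)$ through the inverse of the canonical character $\cG_K\to G$). By Theorem~\ref{thm:globalrank} the complex $R\Gamma(G_S(K),\Lambda_{\Zp}(G)\otimes T)$ is a perfect complex of $\Lambda_{\Zp}(G)$-modules with cohomology concentrated in degrees $1$ and $2$. Base-changing it along a continuous character $\tau\colon G\to\cO_\tau^\times$ — after extending scalars so that the idempotent $e_\eta$ cutting out the block of $\eta:=\tau|_\Delta$ is available, and projecting to that block — and using that the Iwasawa cohomology vanishes in degrees $\ge 3$, the hyper-Tor spectral sequence degenerates at the top cohomological degree and yields a canonical isomorphism
\[ H^2_{\Iw,S}(K_\infty,T)\otimes_{\Lambda_{\Zp}(G),\,\tau}\cO_\tau[\tfrac1p]\ \cong\ H^2\!\big(G_S(K),V(\tilde\tau^{-1})\big), \]
where $\tilde\tau$ is the character of $\cG_K$ induced by $\tau$, and $\tilde\tau^{-1}$ runs over all characters of $\cG_K$ factoring through $G$ as $\tau$ does. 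The block $e_\eta\Lambda_{\Zp}(G)$ (after extension of scalars) is a regular local domain, and a finitely generated module over it is torsion iff its annihilator is non-zero, iff its support is a proper closed subset, iff it localises to zero at some prime $\mathfrak p_\tau=\ker(\tau)$; since an element of $\Lambda_{\Zp}(G)$ killing all finite-order characters vanishes, the primes $\mathfrak p_\tau$ (with $\tau$ finite-order) are Zariski-dense. Putting these together, $e_\eta H^2_{\Iw,S}(K_\infty,T)$ is torsion iff $H^2(G_S(K),V(\tilde\tau^{-1}))$ vanishes for some $\tilde\tau$ with $\tilde\tau|_\Delta=\eta^{-1}$, and running over all $\eta$ this is precisely $(i)\Leftrightarrow(ii)$.

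For the second half, I would first record that $p$ being odd forces $\operatorname{cd}_p(G_S(K_\infty))\le\operatorname{cd}_p(G_S(K))\le 2$, so the cohomology sequence of $0\to A[p]\to A\xrightarrow{p}A\to 0$ shows $H^2(G_S(K_\infty),A)$ is $p$-divisible, hence divisible, and $H^3(G_S(K_\infty),A)=0$; thus $X:=H^2(G_S(K_\infty),A)^\vee$ is a finitely generated, $\Zp$-torsion-free $\Lambda_{\Zp}(G)$-module. Then, applying the Poitou--Tate nine-term exact sequence over each finite $L\subseteq K_\infty$ and passing to the inverse limit (equivalently, invoking the Iwasawa-theoretic Poitou--Tate duality of \cite{nekovar06}, or Jannsen's spectral sequence), one identifies $X$, up to a $\Lambda_{\Zp}(G)$-torsion module built from local $H^0_{\Iw}$-terms (finitely generated over $\Zp$, and zero above $p$ by Theorem~\ref{thm:localrank}), with the kernel of a global-to-local map on an $H^1_{\Iw}$; a parallel computation on the other side — whose local correction terms are the $\Lambda_{\Zp}(G)$-torsion modules $\bigoplus_w H^2_{\Iw}(K_{\infty,w},T)$ of Theorem~\ref{thm:localrank} — expresses $H^2_{\Iw,S}(K_\infty,T)$ modulo torsion as a cokernel whose $\Lambda_{\Zp}(G)$-rank is pinned down by Tate's global Euler characteristic formula. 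Comparing ranks shows that $H^2_{\Iw,S}(K_\infty,T)$ is torsion iff $X$ is, i.e.\ $(i)\Leftrightarrow(iii)$; and $(iv)\Rightarrow(iii)$ is trivial, while $(iii)\Rightarrow(iv)$ follows by feeding the $\Zp$-torsion-freeness of $X$ back into this identification, so that a torsion $X$ is forced to vanish — here the divisibility coming from $\operatorname{cd}_p\le2$, together with the torsion-freeness of the relevant $H^1_{\Iw}$ in the present setup, is essential; cf.\ \cite{perrinriou95} and \cite{nsw}.

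I expect the main obstacle to be this second half, and specifically the bookkeeping inside Poitou--Tate duality: keeping straight the involution $g\mapsto g^{-1}$ on $\Lambda_{\Zp}(G)$, the Tate twist relating $T$ with its discrete dual $T^*(1)\otimes\Qp/\Zp$, and which of the local correction terms are genuinely $\Lambda_{\Zp}(G)$-torsion (or zero). The step $(i)\Leftrightarrow(ii)$ and the $\Lambda_{\Zp}(G)$-rank computations via the Euler characteristic formula are routine; the delicate point is upgrading ``$\Lambda_{\Zp}(G)$-cotorsion'' to ``$0$'' in $(iii)\Rightarrow(iv)$, which relies on the divisibility of $H^2(G_S(K_\infty),A)$.
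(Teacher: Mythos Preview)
Your treatment of $(i)\Leftrightarrow(ii)$ is essentially the paper's: both invoke the control isomorphism $H^2_{\Iw,S}(K_\infty,T)\otimes_{\Lambda_{\Zp}(G)}\Zp(\tau)\cong H^2(K^S/K,T(\tau^{-1}))$ (from \cite{nekovar06}) and conclude by Nakayama. For $(ii)\Leftrightarrow(iii)$ the paper takes a more direct route than your Poitou--Tate rank comparison: writing $A=T\otimes\Qp/\Zp$ and $\Lambda=\Lambda_{\Zp}(G)$, it uses the Hochschild--Serre spectral sequence and Poincar\'e duality for $G$-cohomology to obtain the parallel isomorphism
\[ H^2(K^S/K_\infty,A)^\vee\otimes_\Lambda\Zp(\tau)\ \cong\ H^2(K^S/K,T(\tau)\otimes\Qp/\Zp)^\vee, \]
and then runs exactly the same Nakayama argument as in $(i)\Leftrightarrow(ii)$. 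Your approach may be made to work, but it is more circuitous and the bookkeeping you flag is real.

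The genuine gap is in $(iii)\Rightarrow(iv)$. Your argument amounts to: $X=H^2(K^S/K_\infty,A)^\vee$ is $\Zp$-torsion-free (from divisibility via $\operatorname{cd}_p\le 2$), $X$ embeds in $H^1_{\Iw,S}(K_\infty,T^*(1))$ via Poitou--Tate, and ``the relevant $H^1_{\Iw}$ is torsion-free''. But $\Zp$-torsion-freeness together with $\Lambda$-torsion does not force vanishing (consider $\Lambda/(f)$ for $f$ a distinguished polynomial), and the $\Lambda$-torsion submodule of $H^1_{\Iw,S}(K_\infty,T^*(1))$ is not zero in general --- already for a $\Zp$-extension it is identified with $(T^*(1))^{\Gal(K^S/K_\infty)}$, which can be a nonzero free $\Zp$-module. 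So neither ingredient suffices.

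The paper's argument is different in kind: it proves the much stronger statement that $X$ is \emph{free} over $\Lambda$, by induction on $e=\dim G$. The base case $e=1$ is a theorem of Greenberg (cf.\ \cite[Prop.~1.3.2]{perrinriou95}). For the inductive step one fixes $U=\langle\gamma_e\rangle\cong\Zp$ and $u_e=[\gamma_e]-1$, shows $X[u_e]=0$ because $H^1(U,H^2(K^S/K_\infty,A))$ injects via Hochschild--Serre into $H^3(K^S/K_\infty^{U},A)=0$, and identifies $X/u_eX$ with $H^2(K^S/K_\infty^{U},A)^\vee$, which is free over $\Lambda/(u_e)\cong\Zp[[u_1,\dots,u_{e-1}]]$ by the inductive hypothesis. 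Freeness of $X$ then follows from the regular-sequence criterion for $(p,u_1,\dots,u_e)$. This freeness-by-induction step is the idea missing from your proposal.
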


   \begin{proof}
    Since $\Delta$ has order prime to $p$, we may assume $\Delta = 1$, so $G \cong \Zp^e$ and $\Lambda = \Lambda_{\Zp}(G)$ is a local integral domain.

    We first show (i) $\Leftrightarrow$ (ii).  By \cite[8.4.8.2 Corollary, (ii)]{nekovar06} we have an isomorphism
    \[ H^2_{\Iw, S}(K_\infty, T) \otimes_{\Lambda} \Zp(\tau) \cong H^2(K^S / K, T(\tau^{-1})).\]

    If $H^2_{\Iw, S}(K_\infty, T)$ is torsion, then it is annihilated by some non-zero $f \in \Lambda$. Since $f \ne 0$, there exists a character $\tau$ such that $f(\tau) \ne 0$; but by the above formula $f(\tau)$ annihilates $H^2(K^S / K, T(\tau^{-1}))$, so $H^2(K^S / K, V(\tau^{-1})) = 0$. Conversely, if $H^2(K^S/K, V(\tau^{-1})) = 0$ for some $\tau$, then $H^2(K^S / K, T(\tau^{-1}))$ is $\Zp$-torsion, so by a form of Nakayama's lemma -- see \cite[Theorem 2]{balisterhowson} -- we can conclude that $H^2_{\Iw, S}(K_\infty, T)$ is a torsion $\Lambda$-module.

    We now show (ii) $\Leftrightarrow$ (iii). We know that $H^2(K^S / K, T(\tau) \otimes \Qp/\Zp)$ is finite if and only if $H^2(K^S / K, V(\tau)) = 0$. From the Hochschild--Serre spectral sequence and Poincar\'e duality for $G$-cohomology we have an isomorphism
    \[ H^2(K^S / K_\infty, T \otimes \Qp/\Zp)^\vee \otimes_{\Lambda} \Zp(\tau) \cong H^2(K^S / K, T(\tau) \otimes \Qp/\Zp)^\vee\]
    and we conclude by the same argument as before.

    To finish the proof, it suffices to show that (iii) $\Rightarrow$ (iv). We claim that the module $H^2(K^S / K_\infty, T \otimes \Qp/\Zp)$ is co-free over $\Lambda$, i.e.~its Pontryagin dual $X = H^2(K^S / K_\infty, T \otimes \Qp/\Zp)^\vee$ is a free $\Lambda$-module; thus if it is cotorsion, it must be zero. For $e = 1$ this is a theorem of Greenberg, cf.~\cite[Proposition 1.3.2]{perrinriou95}, so we shall reduce to this case by induction on $e$.

    Let us choose topological generators $\gamma_1, \dots, \gamma_e$ of $\Gal(K_\infty / K) \cong \Zp^e$, and set $u_i = [\gamma_i] - 1 \in \Lambda$. Then $\Lambda \cong \Zp[[u_1, \dots, u_e]]$ and in particular $(p, u_1, \dots, u_e)$ is a regular sequence for $\Lambda$; so in order to show that $X$ is free, it suffices to show that $X[u_e] = 0$ and $X/u_e X$ is free as a module over $\Lambda / u_e \Lambda$.

    If we let $U$ be the subgroup of $G$ generated by $\gamma_e$, then
    \[ X[u_e] = H^1(U, H^2(K_\infty, T \otimes \Qp/\Zp))^\vee,\]
    and by the Hochschild--Serre exact sequence, $H^1(U, H^2((K_\infty)^U, T \otimes \Qp/\Zp))$ injects into $H^3(K_\infty^{U}, T \otimes \Qp/\Zp)$, which is 0 (since $p$ is odd); and we have
    \[ X / u_e X = H^2((K_\infty)^U, T \otimes \Qp/\Zp)^\vee,\]
    which (by the induction hypothesis) is free over $\Zp[[u_1, \dots, u_{e-1}]]$, so we are done.

   \end{proof}

   To define our module of $p$-adic $L$-functions we will need to assume the following conjecture, which corresponds to the ``conjecture de Leopoldt faible'' of \cite[\S 1.3]{perrinriou95}:

   \begin{conjecture}[Conjecture $\operatorname{Leop}(K_\infty, V)$]
    \label{conj:leopoldt}
    The equivalent conditions of Proposition \ref{prop:leopoldt} hold, for some (and hence every) $\Zp$-lattice $T$ in $V$.
   \end{conjecture}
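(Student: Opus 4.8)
The plan is to approach the conjecture through condition~(iv) of Proposition~\ref{prop:leopoldt}, the vanishing of $H^2(K^S / K_\infty, T \otimes \Qp/\Zp)$. Since the proof of that proposition already shows the Pontryagin dual of this module is free over $\Lambda_{\Zp}(G)$, it suffices to prove the a priori weaker cotorsion statement~(iii); and the argument given there reduces this, one character $\eta$ of $\Delta$ at a time, to producing a single character $\tau$ of $G$ with $\tau|_\Delta = \eta$ and $H^2(K^S/K, V(\tau)) = 0$. So the whole question collapses to the vanishing of one finite-level twisted Galois cohomology group.

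The next step is to reduce this to the weak Leopoldt conjecture over a one-dimensional subextension. In all the cases of interest $K_\infty$ contains the cyclotomic $\Zp$-extension $K_{\mathrm{cyc}}$ of $K$ (certainly in the imaginary-quadratic setting of \S\ref{sect:imquad}, where $K_\infty = K(\ff p^\infty)$); since $\chi$ restricted to $\Gal(K_{\mathrm{cyc}}/K)$ has infinite order, one may take $\tau$ of the form $\tau_0 \cdot \chi^j$ with $\tau_0$ of finite order and reduce to the statement that $H^2(K^S/K, V(\tau_0)\otimes\Qp(j))$ vanishes for some $j$ --- equivalently, that $H^2_{\Iw}$ of $V(\tau_0)$ over the cyclotomic tower is torsion. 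This is precisely the classical \emph{weak Leopoldt conjecture} for $V(\tau_0)$, and it holds in the cases relevant to this paper: for $V = \Qp(1)$ it is Iwasawa's theorem, and when $K$ is abelian over $\QQ$ (in particular imaginary quadratic) even the full Leopoldt conjecture holds by Brumer's theorem, so $\Leop(K_\infty, \Qp(1))$ is unconditional; for $V$ the dual of the $p$-adic representation of a modular form restricted to $\cG_K$, the required torsion-ness follows from Kato's Euler system via his explicit reciprocity law together with the non-vanishing of the attached $p$-adic $L$-function at a suitable point.

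The main obstacle is that weak Leopoldt is genuinely open for a general crystalline $V$, so the conjecture cannot be proved unconditionally at the level of generality in which it is stated --- which is why it figures as a hypothesis in \S\ref{sect:defmoduleofLfunctions}. The substantive content of a proof in any given case is therefore twofold: the reduction just sketched, which is essentially formal once one has the freeness result of Proposition~\ref{prop:leopoldt} and the descent spectral sequences underlying \S\ref{sect:globalranks}; and the identification of an external input --- a known instance of weak Leopoldt, a suitable Euler system, or a non-vanishing $L$-value --- tailored to the particular $V$ and extension $K_\infty$. For the two-variable imaginary-quadratic applications I expect the delicate point to be the non-ordinary modular form, where one must check that Kato's reciprocity law survives restriction from $\QQ$ to $K$ and base change to $K(\ff p^\infty)$, and that the resulting distribution is not identically zero in the relevant direction.
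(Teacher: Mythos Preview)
The statement you were asked to prove is a \emph{conjecture}, not a theorem: the paper does not prove it and indeed uses it as a standing hypothesis in \S\ref{sect:defmoduleofLfunctions}. There is accordingly no ``paper's own proof'' to compare against. You recognise this yourself in your third paragraph, so your proposal is really a discussion of the reductions and known cases rather than a proof attempt, and on that level it is accurate.

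Your outline matches the paper's own remarks following the conjecture quite closely. The paper observes (just after Conjecture~\ref{conj:leopoldt}) that $\Leop(K_\infty,V)$ is implied by $\Leop(K(\mu_{p^\infty}),V)$ whenever $K(\mu_{p^\infty})\subseteq K_\infty$ with torsion-free relative Galois group, which is exactly your reduction to the cyclotomic tower via condition~(ii). For the modular-form case over an imaginary quadratic field, the paper's Example does precisely what you sketch: it reduces via Shapiro's lemma to $\Leop(\QQ^{\mathrm{cyc}}, V)$ and $\Leop(\QQ^{\mathrm{cyc}}, V(\varepsilon_K))$, and then invokes \cite[Theorem~12.4]{kato04}. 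Your remark about $\Qp(1)$ and Brumer is correct but not made explicit in the paper. The one place where your discussion overreaches slightly is the final paragraph: there is no ``delicate point'' about Kato's reciprocity law surviving restriction to $K$, because the paper's argument stays entirely over $\QQ$ via Shapiro's lemma and never needs a $K$-version of Kato's Euler system.
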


   Note that if $K_\infty, L_\infty$ are two extensions of $K$ satisfying our conditions, with $K_\infty \subseteq L_\infty$, and $\Gal(L_\infty/K_\infty)$ is torsion-free (hence isomorphic to a product of copies of $\Zp$), then conjecture $\Leop(K_\infty, V)$ implies conjecture $\Leop(L_\infty, V)$, since $\Gal(K_\infty / K)$ and $\Gal(L_\infty / K)$ have the same torsion subgroup and thus condition (ii) of Proposition \ref{prop:leopoldt} for $K_\infty$ implies the corresponding condition for $L_\infty$. It is conjectured that $\Leop(K(\mu_{p^\infty}), V)$ should hold for any $V$, and this is known in many cases; see \cite[Appendix B]{perrinriou95}.

   \begin{example}
    Let $V$ be the 2-dimensional $p$-adic representation of $\cG_\QQ$ associated to a modular form, $K / \QQ$ an imaginary quadratic field, and $K_\infty$ the unique $\Zp^2$-extension of $K$. Then $\operatorname{Leop}(K_\infty, V)$ holds.

    To see this, we use the fact that $\operatorname{Leop}(K_\infty, V)$ is implied by $\operatorname{Leop}(K^{\mathrm{cyc}}, V)$, where $K^{\mathrm{cyc}}$ is the cyclotomic $\Zp$-extension of $K$. However, by Shapiro's lemma the conjecture $\operatorname{Leop}(K^{\mathrm{cyc}}, V)$ is equivalent to $\operatorname{Leop}(\QQ^{\operatorname{cyc}}, V \oplus V(\varepsilon_K))$, where $\QQ^{\operatorname{cyc}}$ is the cyclotomic $\Zp$-extension of $\QQ$ and $\varepsilon_K$ is the quadratic Dirichlet character associated to $K$. The conjectures and $\operatorname{Leop}(\QQ^{\operatorname{cyc}}, V)$ and $\operatorname{Leop}(\QQ^{\operatorname{cyc}}, V(\varepsilon_K))$ follow from \cite[Theorem 12.4]{kato04} applied to $f$ and its twist by $\varepsilon_K$.
   \end{example}

   \begin{corollary}
    If $K$ is totally complex and Conjecture $\operatorname{Leop}(K_\infty, V)$ holds, then the module $H^1_{\Iw, S}(K_\infty, T)$ has well-defined $\Lambda_{\Zp}(G)$-rank, equal to $\tfrac{1}{2}[K : \QQ]d$, where $d=\rank_{\Zp}T$.
   \end{corollary}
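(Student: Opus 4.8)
The plan is to deduce the statement directly from Theorem \ref{thm:globalrank} and Proposition \ref{prop:leopoldt}, so that the corollary is essentially a bookkeeping exercise.

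First I would invoke Conjecture $\Leop(K_\infty, V)$, which by definition asserts the equivalent conditions of Proposition \ref{prop:leopoldt}; in particular condition (i) tells us that $H^2_{\Iw, S}(K_\infty, T)$ is a torsion $\Lambda_{\Zp}(G)$-module. Since $\Lambda_{\Zp}(G)$ decomposes as the product of the local integral domains $e_\eta \Lambda_{\Zp}(G)$ as $\eta$ ranges over the $\Qpb/\Qp$-conjugacy classes of characters of $\Delta$, being torsion over $\Lambda_{\Zp}(G)$ is equivalent to each idempotent component $e_\eta H^2_{\Iw, S}(K_\infty, T)$ being torsion over $e_\eta \Lambda_{\Zp}(G)$, i.e. to $\rank_{e_\eta \Lambda_{\Zp}(G)} e_\eta H^2_{\Iw, S}(K_\infty, T) = 0$ for every $\eta$.

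Next I would feed this into the rank formula of Theorem \ref{thm:globalrank}: for each character $\eta$ of $\Delta$ we obtain
\[ \rank_{e_\eta \Lambda_{\Zp}(G)} e_\eta H^1_{\Iw, S}(K_\infty, T) = \tfrac 1 2 [K : \QQ] d + \rank_{e_\eta \Lambda_{\Zp}(G)} e_\eta H^2_{\Iw, S}(K_\infty, T) = \tfrac 1 2 [K : \QQ] d, \]
the second equality by the previous paragraph. Since the value $\tfrac 1 2 [K : \QQ] d$ is independent of $\eta$, the module $H^1_{\Iw, S}(K_\infty, T)$ has a well-defined $\Lambda_{\Zp}(G)$-rank in the sense of the convention fixed just before Theorem \ref{thm:localrank}, and that rank equals $\tfrac 1 2 [K : \QQ] d$; as $T$ is a $\Zp$-lattice in $V$ we have $d = \dim_{\Qp} V = \rank_{\Zp} T$, which gives the stated formula.

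There is no genuine obstacle here: all the substantive input is contained in Theorem \ref{thm:globalrank} (which itself rests on Nekov\'a\v{r}'s descent machinery and Tate's global Euler characteristic formula) together with the equivalence (i)$\Leftrightarrow$(iv) of Proposition \ref{prop:leopoldt}. The only point demanding a little care is the interplay between the idempotent decomposition of $\Lambda_{\Zp}(G)$ and the definition of when a $\Lambda_{\Zp}(G)$-module "has a rank"; once these conventions are unwound, the corollary follows at once.
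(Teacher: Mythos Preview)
Your proposal is correct and takes essentially the same approach as the paper, which simply states the corollary without proof since it follows immediately from Theorem \ref{thm:globalrank} once Conjecture $\Leop(K_\infty, V)$ forces $H^2_{\Iw,S}(K_\infty, T)$ to be torsion. Your write-up just spells out the bookkeeping with the idempotent decomposition that the paper leaves implicit.
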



 \section{Explicit formulae for Perrin-Riou's p-adic regulator}
  \label{appendix:cyclo}

  In this section, we give the proof of the formulae for the cyclotomic regulator used in the proof of Proposition \ref{thm:explicitformula}. As we work only over $\Qp$ here, we shall write $\DD(-)$ and $\NN(-)$ for $\DD_{\Qp}(-)$ and $\NN_{\Qp}(-)$ respectively.

  Let $V$ be a good crystalline representation of $\cG_{\Qp}$, and $x \in \cH(\Gamma) \otimes_{\Lambda_{\Zp}(\Gamma)} H^1_{\Iw}(\QQ_{p, \infty}, V)$. We write $x_j$ for the image of $x$ in $H^1_{\Iw}(\QQ_{p, \infty}, V(-j))$, and $x_{j, n}$ for the image of $x_j$ in $H^1(\QQ_{p, n}, V(-j))$. If we identify $x$ with its image in $\DD(V)^{\psi = 1}$, then $x_j$ corresponds to the element $x \otimes e_{-j} \in \DD(V)^{\psi = 1} \otimes e_{-j} = \DD(V(-j))^{\psi = 1}$.

  Since $V$ has non-negative Hodge--Tate weights, we may interpret $x$ as an element of the module $\left( \BB^+_{\rig, \Qp}\left[\tfrac1t\right] \otimes \Dcris(V)\right)^{\psi = 1}$.

  We shall assume:
  \[ \tag{$\dag$} x \in \left(\BB^+_{\rig, \Qp} \otimes_{\BB^+_{\Qp}} \NN(V)\right)^{\psi = 1} \subseteq \left( \BB^+_{\rig, \Qp}\left[\tfrac1t\right] \otimes_{\Qp} \Dcris(V)\right)^{\psi = 1}.\]

  This condition is satisfied in the following two situations:
  \begin{itemize}
   \item if $V$ has no quotient isomorphic to $\Qp$, by \cite[Theorem A.3]{berger03};
   \item or if $x$ is in the image of the Iwasawa cohomology over $F_\infty(\mu_{p^\infty})$, by Theorem \ref{thm:unramunivnorms} above.
  \end{itemize}

  We will base our proofs on the work of Berger \cite{berger03}, so we recall the notation of that reference. Let $\partial$ denote the differential operator $(1 + \pi) \tfrac{\mathrm{d}}{\mathrm{d}\pi}$ on $\Brig$. We also use Berger's notation $\partial_V \circ \vp^{-n}$ for the map
  \[  \Brig\left[\tfrac1t\right] \otimes_{\Qp} \Dcris(V) \rTo  \QQ_{p, n}\otimes_{\Qp} \Dcris(V)\]
  which sends $\pi^k\otimes v$ to the constant coefficient of $(\zeta_n \exp(t/p^n) - 1)^k \otimes \vp^{-n}(v)\in \QQ_{p,n}((t))\otimes_{\Qp}\Dcris(V)$.

  For $m \in \ZZ$, define $\Gamma^*(m)$ to be the leading term of the Taylor series expansion of $\Gamma(x)$ at $x = m$ (cf.~\cite[\S 3.3.6]{fukayakato06}); thus
  \[ \Gamma^*(1 + j) = \begin{cases} j! & \text{if $j \ge 0$,} \\ \frac{(-1)^{-j-1}}{(-j-1)!} & \text{if $j \le -1$.}\end{cases}.\]

  \begin{proposition}
   For $x$ satisfying $(\dag)$, let us define
   \[ R_{j, n}(x) = \frac{1}{\Gamma^*(1 + j)} \times
    \begin{cases}
     p^{-n} \partial_{V(-j)}(\vp^{-n}(\partial^{j} x \otimes t^j e_{-j})) & \text{if $n \ge 1$,}\\
     (1 - p^{-1} \vp^{-1}) \partial_{V(-j)}(\partial^{j} x \otimes t^j e_{-j}) & \text{if $n = 0$.}
    \end{cases}
   \]
   Then we have
   \[ R_{j, n}(x) =
    \begin{cases}
     \exp^*_{\QQ_{p, n}, V(-j)^*(1)}(x_{j, n}) & \text{for $j \ge 0$,}\\
     \log_{\QQ_{p, n}, V(-j)}(x_{j, n}) & \text{for $j \le -1$.}
    \end{cases}
   \]
  \end{proposition}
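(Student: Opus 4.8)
This statement is, in essence, a reformulation of Berger's explicit reciprocity formulae (\cite[\S II]{berger03}, building on \cite{perrinriou94}), and the plan is to deduce it from there, the new points being a translation of normalisations, the level $n = 0$, and a minor extension in the range of applicability. Recall that Berger treats representations with no quotient isomorphic to $\QQ_p$, which forces $\DD(V)^{\psi=1} \subseteq \NN(V)$; we have instead assumed $(\dag)$ directly (which holds in particular, by Theorem~\ref{thm:unramunivnorms}, for classes coming from $F_\infty(\mu_{p^\infty})$), and since $(\dag)$ is exactly the hypothesis under which the operators $\partial_{V(-j)} \circ \vp^{-n}$ are defined and under which Berger's arguments run, the proofs carry over. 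First I would check, using that Wach modules commute with Tate twists (\cite{berger04}) together with Theorem~\ref{thm:crystallinepsiinvariants}, that $\partial^j x \otimes t^j e_{-j}$ again lies in $\bigl(\Brig \otimes_{\BB^+_{\Qp}} \NN(V(-j))\bigr)^{\psi=1}$, so that $R_{j,n}(x)$ is well defined and Berger's formulae apply to the twist $V(-j)$ and the class $x_j = x \otimes e_{-j}$.

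The content of Berger's formulae, in our notation, is that $p^{-n}\partial_{V(-j)}\bigl(\vp^{-n}(\partial^j x \otimes t^j e_{-j})\bigr)$ for $n \ge 1$ equals $\Gamma^*(1+j)$ times the Bloch--Kato dual exponential $\exp^*_{\QQ_{p,n},V(-j)^*(1)}(x_{j,n})$ precisely in the range where the codomain of this map is the relevant piece of the filtration --- which is exactly the range $j \ge 0$ --- and equals $\Gamma^*(1+j)$ times $\log_{\QQ_{p,n},V(-j)}(x_{j,n})$ in the complementary range $j \le -1$, where $V(-j)$ has strictly positive Hodge--Tate weights; the proof of this goes through the fundamental exact sequence defining $\exp$ and the comparison between the $(\vp,\Gamma)$-module and the de Rham descriptions of $H^1$. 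Granting this, only the value at $n = 0$ remains, and here the factor $1 - p^{-1}\vp^{-1}$ is precisely what the trace $\Tr_{\QQ_{p,1}/\QQ_p}$ produces from the $n = 1$ expression --- a short computation using $\vp \circ \psi(f) = \tfrac1p\sum_{\zeta^p=1} f(\zeta(1+\pi)-1)$ and the $\psi$-invariance of $\partial^j x \otimes t^j e_{-j}$ --- so that, since $x_{j,0} = \operatorname{cores}(x_{j,1})$ and $\exp^*$, $\log$ are compatible with corestriction, the $n=0$ identity follows from the $n=1$ one.

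The step I expect to be the main obstacle is the reconciliation of conventions: matching the constant $\Gamma^*(1+j)$, the powers of $p$, the $\vp^{-n}$-shifts and the Tate-twist vectors $t^j e_{-j}$ with Berger's formulations in \cite{berger03}, which differ by a sign and a normalisation from Perrin-Riou's (and from the $\ell_i$ of Section~\ref{sect:iwasawaalgs}); getting these right, rather than the structural argument, is where the real care is needed, after which the identifications with $\exp^*$ and $\log$ are a direct appeal to Berger.
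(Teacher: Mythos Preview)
Your proposal is correct and follows essentially the same route as the paper: both cases $j\ge 0$ and $j\le -1$ are reduced to Berger's explicit formulae in \cite[\S II]{berger03} (Theorems~II.6 and~II.3 respectively), after twisting to $V(-j)$ and tracking the element $\partial^{j}x\otimes t^{j}e_{-j}$. The only visible difference is that you propose to obtain the $n=0$ formula by corestriction from $n=1$, whereas the paper simply cites Berger's theorems, which already incorporate the $n=0$ case with the factor $1-p^{-1}\vp^{-1}$; your route is perfectly valid but slightly longer. One small point: in the $j\le -1$ case the paper does not get the $\log$ formula directly from Berger but first obtains an expression involving $\log_{\QQ_{p,n},V(-j)}\big((\ell_0\cdots\ell_{h-1}x)_{j,n}\big)$ for an auxiliary $h$, and then uses $(\ell_r x)_{j,n}=(j-r)x_{j,n}$ to cancel the $\ell$-factors against the ratio $\Gamma^*(j+1)/\Gamma^*(j-h+1)$ --- this is precisely the ``reconciliation of conventions'' you flag as the main obstacle, so you have correctly identified where the work lies.
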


  \begin{proof}
   This result is essentially a minor variation on \cite[Theorem II.10]{berger03}. The case $j \ge 0$ is immediate from Theorem II.6 of \emph{op.cit.} applied with $V$ replaced by $V(-j)$ and $x$ by $x \otimes e_{-j}$, using the formula
   \[ \partial_{V(-j)}(\vp^{-n}(x \otimes e_{-j})) = \frac{1}{j!} \partial_{V(-j)}(\vp^{-n}(\partial^j x \otimes t^{j} e_{-j})).\]

   For the formula when $j \le -1$, we choose an auxilliary integer $h \ge 1$ such that $\Fil^{-h} \Dcris(V) = \Dcris(V)$. The element $\partial^{j} x \otimes t^{j} e_{-j}$ lies in $\left(\Brig \otimes_{\Qp} \Dcris(V(-j))\right)^{\psi = 1}$, by (\dag). Applying Theorem II.3 of \emph{op.cit.} with $V$, $h$ and $x$ replaced by $V(-j)$, $h-j$, and $\partial^{j} x \otimes t^{-j} e_j$, we see that
   \[
    \Gamma^*(j+1) R_{j, n}(x) = \Gamma^*(j - h + 1) \log_{\QQ_{p, n}, V(-j)}
    \left[ \left(\ell_0 \dots \ell_{h-1} x \right)_{j, n}\right].
   \]
   For $x \in \cH(\Gamma) \otimes_{\Lambda_{\Zp}(\Gamma)} H^1_{\Iw}(\QQ_{p, \infty}, V)$, we have
   \[ \left( \ell_r x\right)_{j, n} = (j - r) x_{j, n},\]
   so (since $j \le -1$) we have
   \[ \left( \ell_0 \dots \ell_{h-1} x \right)_{j, n} = (j)(j-1) \dots (j - h + 1) x_{j, n} = \frac{\Gamma^*(j+1)}{\Gamma^*(j - h + 1)} x_{j, n}\]
   as required.
  \end{proof}

  \begin{proposition}
   If $x$ is as above, and $\mathcal{L}^\Gamma_V(x)$ is the unique element of $\cH(\Gamma) \otimes_{\Qp} \Dcris(V)$ such that $\mathcal{L}^\Gamma_V(x) \cdot (1 + \pi) = (1 - \vp) x$, then for any $j \in \ZZ$ we have
   \[ (1 - \vp) \cdot \partial_{V(-j)}(\vp^{-n}(\partial^{j} x \otimes t^j e_{-j})) = \mathcal{L}^\Gamma(x)(\chi^j) \otimes t^j e_{-j},\]
   while for any finite-order character $\omega$ of $\Gamma$ of conductor $n \ge 1$, we have
   \begin{multline*} \left(\sum_{\sigma \in \Gamma / \Gamma_n} \omega(\sigma)^{-1} \sigma\right) \cdot \partial_{V(-j)}(\vp^{-n}(\partial^{j} x \otimes t^j e_{-j})) \\=
    \tau(\omega) \vp^{-n}  \left(\mathcal{L}^\Gamma(x)(\chi^j \omega) \otimes t^j e_{-j}\right).
   \end{multline*}
  \end{proposition}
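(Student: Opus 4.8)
The plan is to derive both identities from the single defining relation $\mathcal{L}^\Gamma_V(x)\cdot(1+\pi)=(1-\vp)x$ together with Berger's explicit description of the operators $\partial_{V(-j)}\circ\vp^{-n}$. Note first that this relation says precisely that $(1-\vp)x=\mathfrak{M}\bigl(\mathcal{L}^\Gamma_V(x)\bigr)$, where $\mathfrak{M}$ is the Mellin transform of \S\ref{sect:Fontainerings} applied to the $\Dcris(V)$-factor; here $(1-\vp)x$ lies in $\bigl(\Brig\otimes_{\Qp}\Dcris(V)\bigr)^{\psi=0}$ because $\psi x=x$ and (by $(\dag)$) the putative poles along $t$ cancel. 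Since $\partial\circ\vp=p\,\vp\circ\partial$ on $\Brig$ and $\vp(t)=pt$, a direct formal computation gives for every $j\in\ZZ$ the identity
\[ \partial^{j}\!\bigl[(1-\vp)x\bigr]\otimes t^{j}e_{-j} \;=\; (1-\vp)\bigl(\partial^{j}x\otimes t^{j}e_{-j}\bigr) \]
inside $\Brig\left[\tfrac1t\right]\otimes_{\Qp}\Dcris(V(-j))$ (the factor $p^{j}$ produced by $\partial^{j}\vp=p^{j}\vp\partial^{j}$ is exactly absorbed by $\vp(t^{j}e_{-j})=p^{j}t^{j}e_{-j}$). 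Combining this with $\partial^{j}\circ\mathfrak{M}=\mathfrak{M}\circ\Tw_{\chi^{j}}$ (the infinitesimal form of the $\Gamma$-action on $1+\pi$) shows that $(1-\vp)\bigl(\partial^{j}x\otimes t^{j}e_{-j}\bigr)$ is the image under $\mathfrak{M}$ of $\Tw_{\chi^{j}}\mathcal{L}^\Gamma_V(x)$, twisted by $t^{j}e_{-j}$.

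It then remains to prove the ``point-evaluation formula'': for $\nu\in\cH(\Gamma)$, applying $\partial_{\Qp(-j)}\circ\vp^{-n}$ to $\mathfrak{M}(\nu)\otimes t^{j}e_{-j}$ and then the $\omega$-isotypic projector $\sum_{\sigma\in\Gamma/\Gamma_{n}}\omega(\sigma)^{-1}\sigma$ recovers $\nu(\chi^{j}\omega)$, up to the factor $\tau(\omega,\zeta)$ coming from the substitution $1+\pi\mapsto\zeta_{n}\exp(t/p^{n})$ and the operator $\vp^{-n}$ on the coefficients (and, in the unramified case $\omega=1$, $n=0$, a factor $1-\vp$, which reflects the difference between $x$, which is $\psi$-invariant, and $(1-\vp)x$, which is $\psi=0$). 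This is exactly the rank-one ($V=\Qp$) case, and is essentially the computation of \cite[\S 2]{perrinriou94} combined with the normalisation of Gauss sums in \S\ref{sect:epsfactors}; the general-$V$ statement follows by extending scalars from $\Brig$ to $\Brig\otimes_{\Qp}\Dcris(V)$, since $\partial_{V}\circ\vp^{-n}$ is $\Brig$-semilinear and acts on the $\Dcris(V)$-factor through $\vp^{-n}$. Feeding $\nu=\mathcal{L}^\Gamma_V(x)$ into this formula --- with $\omega$ trivial and $n=0$ (where $\vp^{-n}=\mathrm{id}$ and $\partial_{V(-j)}$ commutes with $\vp$) in the first identity, and $\omega$ of conductor $n\geq1$ in the second --- yields the two displayed equalities; in the first case one also checks, using the $\psi$-invariance of $x$ and the relation $\vp\circ(\partial_{V}\circ\vp^{-n})=\partial_{V}\circ\vp^{-(n-1)}$, that the left-hand side is independent of $n$.

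The step requiring real care is the point-evaluation formula. Two sources of subtlety must be handled: first, when $j\le-1$ the element $t^{j}e_{-j}$ carries $t$ in the denominator, so after the substitution $1+\pi\mapsto\zeta_{n}\exp(t/p^{n})$ one must expand $t^{-|j|}$ as a Laurent series in the new uniformiser and isolate its constant term --- this is precisely where Berger's Theorems~II.3 and~II.6 intervene; second, one must pin down the exact normalising constants, in particular the powers of $p$, the shape of the correction factor $1-p^{-1}\vp^{-1}$ from the previous Proposition, and the compatibility between the Gauss sum $\tau(\omega,\zeta)$ and the choice of $\zeta=(\zeta_{n})$ used to define $\vp^{-n}$. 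All of the analytic content is already contained in \cite[\S II]{berger03}; the remaining work is bookkeeping to match the conventions of that reference with ours.
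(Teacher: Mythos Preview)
Your approach is correct and follows the same two-step strategy as the paper: first reduce to $j=0$ via the twisting identity $\mathcal{L}^\Gamma_{V(-j)}(\partial^j x \otimes t^j e_{-j}) = \Tw_j(\mathcal{L}^\Gamma_V(x)) \otimes t^j e_{-j}$ (which is exactly your observation that $\partial^j\circ\mathfrak{M}=\mathfrak{M}\circ\Tw_{\chi^j}$ together with the compatibility of $\vp$ with $\partial$ and $t^j e_{-j}$), then establish the $j=0$ case.

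The difference lies in the execution of the $j=0$ step. The paper does not defer to \cite{perrinriou94} or invoke any rank-one reduction; it simply writes $x=\sum_{k\ge 0} v_k\pi^k$ with $v_k\in\Dcris(V)$, computes $\partial_V(\vp^{-n}(x))=\sum_k \vp^{-n}(v_k)(\zeta_{p^n}-1)^k$ directly from the definition of $\partial_V\circ\vp^{-n}$, and observes that applying the projector $e_\omega=\sum_\sigma\omega(\sigma)^{-1}\sigma$ kills the contribution from level $n-1$, so $e_\omega\cdot\partial_V(\vp^{-n}(x))=e_\omega\cdot\partial_V(\vp^{-n}((1-\vp)x))$. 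The $\Gamma$-equivariance of $\partial_V\circ\vp^{-n}$ then reduces the right-hand side to $\vp^{-n}(\mathcal{L}^\Gamma(x))\cdot e_\omega\zeta_{p^n}=\tau(\omega)\vp^{-n}(\mathcal{L}^\Gamma(x)(\omega))$. This is a three-line computation with no analytic input.

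Your final paragraph about ``subtleties'' is misplaced for this proposition. The issues you raise---Laurent expansions of $t^{-|j|}$, Berger's Theorems II.3 and II.6, the factor $1-p^{-1}\vp^{-1}$---belong to the \emph{previous} proposition, which identifies $R_{j,n}(x)$ with $\exp^*$ or $\log$. The present proposition is a purely formal Mellin-transform identity: once you have reduced to $j=0$, there is no $t$ in any denominator, and no appeal to \cite{berger03} is needed. The paper's direct computation makes this transparent; your framing obscures it by routing through references that do more work than is required here.
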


  \begin{proof}
   We note that
   \[ \mathcal{L}^\Gamma_{V(-j)}(\partial^{j} x \otimes t^j e_{-j}) = \Tw_j(\mathcal{L}^\Gamma_V(x)) \otimes t^{j} e_{-j},\]
   so it suffices to prove the result for $j = 0$. Suppose we have
   \[ x = \sum_{k \ge 0} v_k \pi^k,\quad v_k \in \Dcris(V).\]
   Then
   \[ \partial_V(\vp^{-n}(x)) = \sum_{k \ge 0} \vp^{-n}(v_k) \left(\zeta_{p^n} - 1 \right)^k.\]
   On the other hand
   \[ \partial_V(\vp^{-n}((1-\vp) x)) = \sum_{k \ge 0} \vp^{-n}(v_k) \left(\zeta_{p^n} - 1 \right)^k - \sum_{k \ge 0} \vp^{1-n}(v_k) \left(\zeta_{p^{n-1}} - 1 \right)^k.\]
   Applying the operator $e_\omega = \sum_{\sigma \in \Gamma / \Gamma_n} \omega(\sigma)^{-1} \sigma$, we have for $n \ge 1$
   \[ e_\omega \cdot \partial_V(\vp^{-n}(x)) = e_\omega \cdot \partial_V(\vp^{-n}((1-\vp) x)),\]
   since $e_\omega$ is zero on $\QQ_{p, n-1}((t))$.

   However, since the map $\partial_V \circ \vp^{-n}$ is a homomorphism of $\Gamma$-modules, we have
   \begin{align*}
     e_\omega \cdot \partial_V(\vp^{-n}((1-\vp) x)) &= e_\omega \cdot \partial_V( \mathcal{L}^\Gamma(x) \cdot (1 + \pi))\\
     &= \vp^{-n}(\mathcal{L}^\Gamma(x)) \cdot e_\omega \partial_{\Qp}(\vp^{-n}(1 + \pi))\\
     &= \tau(\omega) \vp^{-n}\left( \mathcal{L}^\Gamma(x)(\omega)\right).
   \end{align*}
   This completes the proof of the proposition for $j = 0$.
  \end{proof}

  \begin{definition}
   Let $x\in H^1_{\Iw}(\QQ_{p,\infty},V)$. If $\eta$ is any continuous character of $\Gamma$, denote by $x_\eta$ the image of $x$ in $H_{\Iw}^1(\QQ_{p,\infty}, V(\eta^{-1}))$. If $n\geq 0$, denote by $x_{\eta,n}$ the image of $x_\eta$ in $H^1(\QQ_{p,n}, V(\eta^{-1}))$.
  \end{definition}

  Thus $x_{\chi^j, n} = x_{j, n}$. in the previous notation. The next lemma is valid for arbitrary de Rham representations of $\cG_{\Qp}$ (with no restriction on the Hodge--Tate weights):

  \begin{lemma}
   For any finite-order character $\omega$ factoring through $\Gamma / \Gamma_n$, with values in a finite extension $E / \Qp$, we have
   \[ \sum_{\sigma \in \Gamma / \Gamma_n} \omega(\sigma)^{-1} \exp^*_{\QQ_{p,n}, V^*(1)}(x_{0, n})^\sigma = \exp^*_{\Qp, V(\omega^{-1})^*(1)}(x_{\omega,0})\]
   and
   \[ \sum_{\sigma \in \Gamma / \Gamma_n} \omega(\sigma)^{-1} \log_{\QQ_{p, n}, V}(x_{0,n})^\sigma = \log_{\Qp, V(\omega^{-1})}(x_{\omega,0})\]
   where we make the identification
   \[ \DdR(V(\omega^{-1})) \cong \left(E \otimes_{\Qp} \QQ_{p, n} \otimes_{\Qp} \Dcris(V)\right)^{\Gamma = \omega}.\]
  \end{lemma}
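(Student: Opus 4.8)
The plan is to deduce both identities from the standard dictionary between twisting a representation by a finite-order character of $\Gamma$ and passing to a Galois eigenspace on a finite layer, combined with the elementary functoriality (base change and Galois-equivariance) of the Bloch--Kato dual exponential and logarithm maps.

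First I would record the required cohomological input. Since $\omega$ is trivial on $\cG_{\QQ_{p,n}}$, the $\cG_{\QQ_{p,m}}$-modules underlying $V$ and $V(\omega^{-1})$ literally coincide for every $m \ge n$; hence the corestriction maps in the two towers agree above level $n$, and as $\{m \ge n\}$ is cofinal the twisting isomorphism $H^1_{\Iw}(\QQ_{p,\infty}, V) \cong H^1_{\Iw}(\QQ_{p,\infty}, V(\omega^{-1}))$ is the identity at each level $m \ge n$. In particular $x_{\omega, n} = x_{0,n}$ and $x_{\omega, 0} = \operatorname{cor}_{\QQ_{p,n}/\Qp}(x_{0,n})$, the corestriction being taken in $V(\omega^{-1})$-cohomology. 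Applying $\operatorname{res}_{\QQ_{p,n}/\Qp}$ (injective, since its kernel is a $\Qp$-vector space killed by $\#(\Gamma/\Gamma_n)$) and using $\operatorname{res}\circ\operatorname{cor} = \sum_{\sigma \in \Gamma/\Gamma_n}\sigma$ together with the observation that on $H^1(\QQ_{p,n}, V(\omega^{-1})) = H^1(\QQ_{p,n}, V)$ the $\Gamma/\Gamma_n$-action attached to $V(\omega^{-1})$ is the one attached to $V$ multiplied by $\omega^{-1}$, one obtains
\[ \operatorname{res}_{\QQ_{p,n}/\Qp}(x_{\omega, 0}) = \sum_{\sigma \in \Gamma/\Gamma_n} \omega(\sigma)^{-1}\, x_{0,n}^{\sigma} \quad\text{in } H^1(\QQ_{p,n}, V(\omega^{-1})) = H^1(\QQ_{p,n}, V) \]
(cf.~\cite[\S 3.2]{perrinriou94}).

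Second, I would feed this into the dual exponential. The map $\exp^*_{\QQ_{p,n}, V^*(1)}$ depends only on the restriction of the representation to $\cG_{\QQ_{p,n}}$, so it agrees with $\exp^*_{\QQ_{p,n}, V(\omega^{-1})^*(1)}$ under $H^1(\QQ_{p,n}, V) = H^1(\QQ_{p,n}, V(\omega^{-1}))$ and $\DdR(\QQ_{p,n}, V^*(1)) = \DdR(\QQ_{p,n}, V(\omega^{-1})^*(1))$; it is $\Gal(\QQ_{p,n}/\Qp)$-equivariant; and it is compatible with base change, $\exp^*_{\QQ_{p,n}, W}\circ\operatorname{res}_{\QQ_{p,n}/\Qp} = \iota\circ\exp^*_{\Qp, W}$, where $\iota$ is the natural inclusion of the corresponding $\Fil^0\DdR$-modules. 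Combining these three facts with the displayed formula and pulling the operator $\sum_\sigma \omega(\sigma)^{-1}\sigma$ through $\exp^*$ gives
\[ \iota\!\left(\exp^*_{\Qp, V(\omega^{-1})^*(1)}(x_{\omega,0})\right) = \sum_{\sigma \in \Gamma/\Gamma_n}\omega(\sigma)^{-1}\,\exp^*_{\QQ_{p,n}, V^*(1)}(x_{0,n})^{\sigma}. \]
The right-hand side lies in the $\omega$-eigenspace of $E \otimes_{\Qp}\QQ_{p,n}\otimes_{\Qp}\DdR(V)$ for the $\Gamma/\Gamma_n$-action on the middle factor, and this eigenspace is exactly the image of $\iota$, i.e.~it is $\DdR(V(\omega^{-1})^*(1))$ under the normalization fixed in the statement; since $\iota$ is injective this yields the first identity. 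The second identity follows verbatim with $\log_{\QQ_{p,n}, V}$ in place of $\exp^*$, using that $\log$ shares the same base-change and equivariance properties on $H^1_e$.

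The only delicate point I anticipate is keeping the conventions straight, so that the character twisting the Galois action really is $\omega^{-1}$ and the resulting eigenspace matches the prescribed identification $\DdR(V(\omega^{-1})) \cong (E\otimes_{\Qp}\QQ_{p,n}\otimes_{\Qp}\Dcris(V))^{\Gamma = \omega}$; this amounts to computing, once and for all, the effect of replacing $V$ by $V(\omega^{-1})$ on the $\Gal(\QQ_{p,n}/\Qp)$-action on $H^1(\QQ_{p,n}, -)$ and on $\DdR(\QQ_{p,n}, -)$, and then propagating it consistently through $\operatorname{res}\circ\operatorname{cor}$. No idea beyond Shapiro's lemma and the standard functoriality of the Bloch--Kato maps is needed.
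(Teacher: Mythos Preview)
Your proposal is correct and follows essentially the same approach as the paper: both arguments rest entirely on the standard functoriality of the Bloch--Kato maps $\exp^*$ and $\log$ with respect to the Galois structure, together with the observation that twisting by a finite-order character amounts to passing to an eigenspace at a finite layer. The only cosmetic difference is that the paper invokes compatibility with \emph{corestriction} directly (so that $\exp^*_{\Qp, V(\omega^{-1})^*(1)}(x_{\omega,0})$ appears as the $V(\omega^{-1})$-trace of $\exp^*_{\QQ_{p,n}, V^*(1)}(x_{0,n})$, which unwinds to the $\omega$-projection), whereas you go via restriction and then use injectivity of the inclusion $\iota$; both routes are standard and amount to the same computation.
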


  \begin{proof}
   This follows from the compatibility of the maps $\exp^*$ and $\log$ with the corestriction maps (cf~\cite[\S\S II.2 \& II.3]{berger03}).
  \end{proof}

  Combining the three results above, we obtain:

  \begin{theorem}
   \label{thm:explicitformulacyclo}
   Let $j \in \ZZ$ and let $x$ satisfy $(\dag)$. Let $\eta$ be a continuous character of $\Gamma$ of the form $\chi^j \omega$, where $\omega$ is a finite-order character of conductor $n$.
   \begin{enumerate}[(a)]
    \item If $j \ge 0$, we have
    \begin{multline*}
     \cL^\Gamma_{V}(x)(\eta)
     = j! \times\\
     \begin{cases}
      (1 - p^j \vp)(1 - p^{-1-j} \vp^{-1})^{-1} \left( \exp^*_{\Qp, V(\eta^{-1})^*(1)}(x_{\eta,0}) \otimes t^{-j} e_j\right) & \text{if $n = 0$,}\\
      \tau(\omega)^{-1} p^{n(1+j)} \vp^n \left(\exp^*_{\Qp, V(\eta^{-1})^*(1)}(x_{\eta,0}) \otimes  t^{-j} e_j\right) & \text{if $n \ge 1$.}
     \end{cases}
    \end{multline*}
    \item If $j \le -1$, we have
    \begin{multline*}
     \cL^\Gamma_{V}(x)(\eta)
     = \frac{(-1)^{-j-1}}{(-j-1)!} \times\\
     \begin{cases}
      (1 - p^j \vp)(1 - p^{-1-j} \vp^{-1})^{-1} \left( \log_{\Qp, V(\eta^{-1})}(x_{\eta,0}) \otimes t^{-j} e_j\right) & \text{if $n = 0$,}\\
      \tau(\omega)^{-1} p^{n(1+j)} \vp^n \left(\log_{\Qp, V(\eta^{-1})}(x_{\eta,0}) \otimes  t^{-j} e_j\right) & \text{if $n \ge 1$.}
     \end{cases}
    \end{multline*}
   \end{enumerate}
   (In both cases, we assume that $(1 - p^{-1-j} \vp^{-1})$ is invertible on $\Dcris(V)$ when $\eta = \chi^j$.)
  \end{theorem}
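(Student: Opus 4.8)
The plan is to derive the theorem purely formally from the three results immediately preceding it, the only real work being bookkeeping: translating the operators $\vp$, $1-\vp$, $1-p^{-1}\vp^{-1}$ and $\vp^{\pm n}$ from $\Dcris(V(-j))$ to $\Dcris(V)$ under the identification $\Dcris(V(-j)) = \Dcris(V)\otimes t^j e_{-j}$, keeping track of the powers of $p$ produced by $\vp^{\pm 1}(t^j e_{-j}) = p^{\pm j}t^j e_{-j}$, and reconciling the module identifications of the Lemma above with the isomorphism $\DdR(V(\eta^{-1}))\cong(E\otimes\QQ_{p,n}\otimes\Dcris(V))^{\Gamma=\omega}$ used in the statement. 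The object through which everything factors is $\partial_{V(-j)}\big(\vp^{-n}(\partial^j x\otimes t^j e_{-j})\big)$: the first Proposition above expresses a specific multiple of it in terms of $\exp^*_{\QQ_{p,n},V(-j)^*(1)}(x_{j,n})$ when $j\ge0$, and in terms of $\log_{\QQ_{p,n},V(-j)}(x_{j,n})$ when $j\le-1$; the second Proposition expresses it, after applying $1-\vp$ (if $n=0$) or the operator $e_\omega := \sum_{\sigma\in\Gamma/\Gamma_n}\omega(\sigma)^{-1}\sigma$ (if $n\ge1$), in terms of $\cL^\Gamma_V(x)$ evaluated at $\chi^j$ resp.\ $\chi^j\omega$.

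When $n=0$ (so $\eta=\chi^j$): apply $1-\vp$ to the identity of the first Proposition and $1-p^{-1}\vp^{-1}$ to the identity of the second, and compare the (equal) left-hand sides. This gives, inside $\Dcris(V(-j))$,
\[ (1-p^{-1}\vp^{-1})\big(\cL^\Gamma_V(x)(\chi^j)\otimes t^j e_{-j}\big) = \Gamma^*(1+j)\,(1-\vp)\big((\exp^*\text{ or }\log)(x_{j,0})\otimes t^j e_{-j}\big). \]
Rewriting the operators over $\Dcris(V)$ turns the left-hand factor into $1-p^{-1-j}\vp^{-1}$ and the right-hand one into $1-p^j\vp$; dividing by $1-p^{-1-j}\vp^{-1}$ (invertible precisely by the hypothesis in the parenthetical remark, equivalently $\Dcris(V(\chi^{-j}))^{\vp=p^{-1}}=0$) and tensoring by $t^{-j}e_j$ gives the $n=0$ line of (a) resp.\ (b), with $\Gamma^*(1+j)=j!$ resp.\ $\tfrac{(-1)^{-j-1}}{(-j-1)!}$.

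When $n\ge1$ (so $\eta=\chi^j\omega$): apply $e_\omega$ to the ($n\ge1$ branch of the) identity of the first Proposition, which carries the factor $p^{-n}$; by the Lemma above, applied with $V$ replaced by $V(-j)$, the effect of $e_\omega$ on the level-$n$ dual exponential/logarithm is to produce $\exp^*_{\Qp,V(\eta^{-1})^*(1)}(x_{\eta,0})$ resp.\ $\log_{\Qp,V(\eta^{-1})}(x_{\eta,0})$. The second Proposition gives $e_\omega\cdot\partial_{V(-j)}\big(\vp^{-n}(\partial^j x\otimes t^j e_{-j})\big)=\tau(\omega)\,\vp^{-n}\big(\cL^\Gamma_V(x)(\eta)\otimes t^j e_{-j}\big)$. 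Equating the two expressions, transposing the $\vp^{-n}$ (using that $\vp$ is bijective on $\Dcris$ and that $\vp^{-n}(t^j e_{-j})=p^{-nj}t^j e_{-j}$, so the $p^{-nj}$ combines with the explicit $p^n$ to give $p^{n(1+j)}$), and tensoring by $t^{-j}e_j$, gives the $n\ge1$ lines of (a) and (b).

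The argument is thus mechanical once the three inputs are granted; the only place where care is genuinely needed — and the only realistic source of sign or normalization errors — is the systematic translation of the $\vp$-operators between $\Dcris(V)$ and $\Dcris(V(-j))$ together with the matching of the two conventions for $\DdR$ of a Tate twist, i.e.\ exactly the bookkeeping flagged at the outset; no new idea beyond the cited Propositions and Lemma enters.
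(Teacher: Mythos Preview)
Your proposal is correct and matches the paper's approach exactly: the paper simply states ``Combining the three results above, we obtain'' the theorem, leaving the bookkeeping implicit, and what you have written is precisely that bookkeeping carried out in full, including the correct translation of $\vp$-operators under the identification $\Dcris(V(-j))=\Dcris(V)\otimes t^j e_{-j}$ and the resulting power $p^{n(1+j)}$.
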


  From this theorem it is straightforward to deduce a version of Perrin-Riou's explicit reciprocity formula, relating the regulator for $V$ and for $V^*(1)$. We recall from \ref{sect:iwasawacoho} the definition of the Perrin-Riou pairing
  \[ \langle -, - \rangle_{\QQ_{p, \infty}} : H^1_{\Iw}(\QQ_{p, \infty}, V) \times H^1_{\Iw}(\QQ_{p, \infty}, V^*(1)) \to \Lambda_{\Qp}(\Gamma).\]

  Let $h$ be sufficiently large that $V^*(1 + h)$ has Hodge--Tate weights $\ge 0$. Recall that we write $y_{-h}$ for the image of $y$ in $H^1_{\Iw}(\QQ_{p, n}, V^*(1 + h))$.   Define $\cL^\Gamma_{V^*(1)}$ by
  \begin{multline}
   \label{eq:twist1}
   \cL^\Gamma_{V^*(1)}(y) = (\ell_{-1} \ell_{-2} \cdot \ell_{-h})^{-1} \Tw_{-h} \left(\cL_{V^*(1 + h)}(y_{-h})\right) \otimes t^h e_{-h} \\ \in \operatorname{Frac} \cH_{\Qp}(\Gamma) \otimes \Dcris(V^*(1));
   \end{multline}
  note that this definition is independent of the choice of $h \gg 0$. Write $\langle \cdot, \cdot \rangle_{\cris, V}$ for the natural pairing $\Dcris(V) \times \Dcris(V^*(1)) \to \Dcris(\Qp(1)) \cong \Qp$. We extend the crystalline pairing $\Lambda_{\Zp}(\Gamma)$-linearly in the first argument and antilinearly in the second argument.

  \begin{theorem}
   \label{thm:cycloreciprocity}
   For all $x\in H^1_{\Iw}(\QQ_{p,\infty},V)$ and $y\in H^1_{\Iw}(\QQ_{p,\infty},V^*(1))$, we have
   \[ \left\langle \cL_V(x), \cL_{V^*(1)}(y)\right\rangle_{\cris, V} = -\sigma_{-1} \cdot \ell_0 \cdot \langle x, y \rangle_{\QQ_{p, \infty}, V},\]
   where $\sigma_{-1}$ is the unique element of $\Gamma$ such that $\chi(\sigma_{-1}) = -1$.
  \end{theorem}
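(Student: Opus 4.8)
The plan is to deduce the identity from the explicit description of the cyclotomic regulator at de Rham characters (Theorem \ref{thm:explicitformulacyclo}), together with the compatibility of the Bloch--Kato maps with local Tate duality. Both sides of the asserted formula lie in $\operatorname{Frac}\cH_{\Qp}(\Gamma)$, so it suffices to check that they agree after evaluation at a Zariski-dense set of characters of $\Gamma$; I would use the characters $\eta = \chi^j\omega$ with $\omega$ of finite order and $j$ a large positive integer, excluding the finitely many $j$ for which some $L$-factor, $\Gamma^*$-factor or $\ell$-factor below degenerates. (Density is clear: a non-zero element of $\cH_{\Qp}(\Gamma_1)$ has only finitely many zeros in any closed disc of radius $<1$, while all the $\chi^j$ lie in $|X|\le p^{-1}$.) Fix such an $\eta$, write $n$ for the conductor of $\omega$, and note that $x_{\eta,0}\in H^1(\Qp, V(\eta^{-1}))$ and $y_{\eta^{-1},0}\in H^1(\Qp, V^*(1)(\eta)) = H^1\bigl(\Qp,(V(\eta^{-1}))^*(1)\bigr)$ are in local Tate duality.

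First I would unwind the left-hand side at $\eta$. Since the crystalline pairing is extended $\Lambda_{\Zp}(\Gamma)$-linearly in the first variable and anti-linearly in the second, and $\cL^\Gamma_{V^*(1)}$ is defined through \eqref{eq:twist1} from $\cL^\Gamma_{V^*(1+h)}$ for $h\gg0$, the value of $\langle\cL_V(x),\cL_{V^*(1)}(y)\rangle_{\cris,V}$ at $\eta$ is the image under the de Rham pairing $\DdR(V(\eta^{-1}))\times\DdR\bigl((V(\eta^{-1}))^*(1)\bigr)\to\Qp$ of $\cL^\Gamma_V(x)(\eta)$ and $\cL^\Gamma_{V^*(1)}(y)(\eta^{-1})$. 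Applying Theorem \ref{thm:explicitformulacyclo} (in the form valid for an arbitrary crystalline representation, obtained via \eqref{eq:twist1} by twisting to a good crystalline representation, exactly as in \S\ref{sect:explicitformula}) expresses $\cL^\Gamma_V(x)(\eta)$, up to the scalar $\Gamma^*(1+j)\,\ve(\eta^{-1})\,\Phi^{n}P(\eta^{-1},\Phi)\,P(\eta, p^{-1}\Phi^{-1})^{-1}$, as $\exp^*_{(V(\eta^{-1}))^*(1)}(x_{\eta,0})$ (the $j\ge 0$ branch), and expresses $\cL^\Gamma_{V^*(1)}(y)(\eta^{-1})$, up to the analogous scalar with $\eta$ replaced by $\eta^{-1}$ (the Hodge--Tate weight is now $-j\le -1$, so the $\log$ branch), as $\log_{\Qp, V^*(1)(\eta)}(y_{\eta^{-1},0})$. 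The crucial input is then the identity $\langle\exp_{\Qp,W}(a),b\rangle_{\mathrm{Tate}} = \langle a,\exp^*_{\Qp,W}(b)\rangle_{\dR}$ recalled in \S\ref{sect:crystallinereps}, together with $\exp_{\Qp,W}\circ\log_{\Qp,W} = \mathrm{id}$ on $H^1_e$: it identifies the de Rham pairing of these two terms with the local Tate pairing $\langle x_{\eta,0},y_{\eta^{-1},0}\rangle_{\mathrm{Tate},\Qp}$, up to the sign coming from the anti-symmetry of the cup product on $H^1\times H^1$.

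Next I would unwind the right-hand side: one has $\sigma_{-1}(\eta) = (-1)^j\omega(\sigma_{-1})$ and $\ell_0(\eta) = j$, while $\langle x,y\rangle_{\QQ_{p,\infty},V}(\eta)$ is computed from its definition as the inverse limit of the pairings \eqref{eq:PRpairing}, combined with Lemma \ref{lemma:PRpairingtwist} applied to the twist by $\chi^{-j}$, and reduces --- again up to an explicit Gauss-sum and power-of-$p$ normalisation at level $n$ --- to the same pairing $\langle x_{\eta,0},y_{\eta^{-1},0}\rangle_{\mathrm{Tate},\Qp}$. Everything then comes down to checking that the accumulated scalars match: the product $\Gamma^*(1+j)\,\Gamma^*(1-j) = (-1)^{j-1}j$ (a reflection-type identity) supplies $\ell_0=j$ and, with $\sigma_{-1}$, the overall sign; the Gauss sums combine by $\tau(\eta)\,\tau(\eta^{-1}) = \eta(\sigma_{-1})\,p^{n}$; the operators $\Phi^{n}$ on the two sides contribute a factor $p^{-n}$ under $\langle-,-\rangle_{\cris,V}$ (since $\vp$ acts on $\Dcris(\Qp(1))$ as $p^{-1}$, and $\Phi=\vp$ here as $F=\Qp$), cancelling the $p^{n}$; and the $\ve$- and $P$-factors for $V$ at $\eta$ and for $V^*(1)$ at $\eta^{-1}$ pair off by the local functional equation, i.e.\ the adjointness of $\vp$ under the crystalline pairing. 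The unramified characters ($n=0$) are handled in parallel: there the $\ve$-factors are trivial and the $P$-ratio becomes an Euler factor $(1-p^{j}\vp)(1-p^{-1-j}\vp^{-1})^{-1}$, which cancels against its counterpart for $V^*(1)$ at $\eta^{-1}$ by the same adjointness.

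The main obstacle is precisely this last bookkeeping step: assembling the Gauss sums, the $\ve$- and $L$-factors, the powers of $p$ and of $\vp$, the anti-symmetry of the Tate pairing, the sign in the normalisation of $\ell_0$ (which, as noted in \S\ref{sect:iwasawaalgs}, differs by a sign from \cite{perrinriou94}), and Lemma \ref{lemma:PRpairingtwist}, and verifying that they conspire to produce exactly $-\sigma_{-1}\cdot\ell_0\cdot\langle x,y\rangle_{\QQ_{p,\infty},V}$. All of the conceptual content --- the explicit values of the regulator and the duality compatibility of the Bloch--Kato maps --- is already available. (An alternative would be to translate the statement, via the known relation between $\cL^\Gamma_V$ and Perrin-Riou's map $\Omega_{V,h}$, into the reciprocity conjecture $\operatorname{Rec}(V)$ proved by Colmez and Benois; but this merely relocates the same constant-chasing.)
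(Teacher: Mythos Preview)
Your approach is essentially the same as the paper's: evaluate both sides at a Zariski-dense set of characters using Theorem~\ref{thm:explicitformulacyclo}, and reduce to the adjointness of $\exp$ and $\exp^*$ under local Tate duality. The one simplification the paper makes is to test only at the \emph{unramified} characters $\chi^j$ with $j\ge 1+h$ (which already accumulate in every component of $\operatorname{Spec}\Lambda_{\Qp}(\Gamma)$, since $j\bmod (p-1)$ runs through all characters of $\Delta$). This avoids all of the Gauss-sum and $\ve$-factor bookkeeping you describe: at $\eta=\chi^j$ the Euler factors $(1-p^j\vp)(1-p^{-1-j}\vp^{-1})^{-1}$ on the $V$-side and $(1-p^{-j}\vp)(1-p^{j-1}\vp^{-1})^{-1}$ on the $V^*(1+h)$-side cancel directly by the adjointness of $\vp$ under $\langle-,-\rangle_{\cris,V}$, and the only scalar identity needed is $\dfrac{j!\,(-1)^{j-h-1}}{(j-h-1)!} = (-1)^{h+1}\,\sigma_{-1}(\chi^j)\,\ell_0(\chi^j)\cdots\ell_h(\chi^j)$, which is immediate. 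Your more general computation with ramified $\omega$ would also work, but costs extra effort for no gain.
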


 \begin{proof}
  By Theorem \ref{thm:explicitformulacyclo} (a), for $j \ge 1+h$ we have
  \[
   \cL_V(x)(\chi^j) = j! (1 - p^j \vp)(1 - p^{-1-j} \vp^{-1})^{-1} \left( \exp^*_{0, V^*(1+j)}(x_{j,0}) \otimes t^{-j} e_j\right)\]
  and
  \begin{multline*}
   \cL_{V^*(1 + h)}(y_{-h})(\chi^{h-j}) \otimes t^h e_{-h}  =\frac{(-1)^{j-h-1}}{(j-h-1)!} \times\\  (1 - p^{-j} \vp)(1 - p^{j-1} \vp^{-1})^{-1} \left( \log_{\Qp, V^*(1 + j)} (y_{-j,0}) \otimes t^{j} e_{-j}\right).
  \end{multline*}

   Hence we have
  \begin{align*}
    \big\langle \cL_V(x)(\chi^j), & \cL_{V^*(1 + h)}(y_{-h})(\chi^{h-j}) \otimes t^h e_{-h}\big\rangle_{\cris, V} \\
   & = \frac{(-1)^{h-j-1} j!}{(j-h-1)!} \langle \exp^*_{0, V^*(1+j)}(x_{j,0}), \log_{\Qp, V^*(1 + j)} (y_{-j,0}) \rangle_{\cris, V(-j)}\\
   & = \frac{(-1)^{h-j-1} j!}{(j-h-1)!} \langle x_{j,0}, y_{-j,0}\rangle_{\Qp, V(-j)}\\
   & = (-1)^{h+1} \left[ \sigma_{-1} \cdot (\ell_0 \dots \ell_{h}) \cdot \langle x, y \rangle_{\QQ_{p, \infty}, V} \right] (\chi^j).
  \end{align*}

  Using the definition of $\cL^\Gamma_{V^*(1)}$ as in \eqref{eq:twist1}, this relation takes the more pleasing form
  \[ \left\langle \cL_V(x), \cL_{V^*(1)}(y)\right\rangle_{\cris, V} = -\sigma_{-1} \cdot \ell_0 \cdot \langle x, y \rangle_{\QQ_{p, \infty}, V}.\]
 \end{proof}


 \section{Functions of two p-adic variables}

Let $p$ be a prime. We let $L$ be a complete discretely valued subfield of $\Cp$, and let $v_p$ denote the $p$-adic valuation on $L$, normalised in the usual fashion, so $v_p(p) = 1$.

\subsection{Functions and distributions of one variable}

We recall the theory in the one-variable case, as presented in \cite{colmez10}. Let $h \in \mathbb{R}$, $h \ge 0$.

Let $f$ be a function $\Zp \to L$. We say $f$ has order $h$ if, informally, it may be approximated by a Taylor series of degree $[h]$ at every point with an error term of order $h$. More precisely, $f$ has order $h$ if there exist functions $f^{(j)}, 0 \le j \le [h]$, such that the quantity
\[ \ve_f(x,y) = f(x + y) - \sum_{j=0}^{[h]} \frac{f^{(j)}(x) y^j}{j!},\]
satisfies
\[ \sup_{x \in \Zp, y \in p^n \Zp} v_p \left( \ve_f(x, y)\right) -hn \to \infty\]
as $n \to \infty$. (It is clear that this determines the functions $f^{(0)}, \dots, f^{(j)}$ uniquely.)

We write $C^h(\Zp, L)$ for the space of such functions, with a Banach space structure given by the valuation
\[ v_{C^h}(f) = \inf\left( \inf_{0 \le j \le [h], x \in \Zp} v_p(f^{(j)}(x)), \inf_{x,y \in \Zp} v_p(\ve_f(x,y)) - h v_p(y)\right).\]
We define the space $D^h(\Zp, L)$ of \emph{distributions of order h} to be the continuous dual of $C^h(\Zp, L)$.

Then we have the following celebrated theorem, due to Mahler \cite{mahler58} for $h = 0$ and to Amice \cite{amice64} for $h > 0$:

\begin{enumerate}
 \item[(1)] The space $C^h(\Zp, L)$ has a Banach space basis given by the functions
 \[ x \mapsto p^{[h \ell(n)]} \binom{x}{n}\]
 for $n \ge 0$, where $\ell(n)$ is, as in \S 1.3.1 of \cite{colmez10}, the smallest integer $m$ such that $p^m > n$.
 \item[(2)] The space $LP^{N}(\Zp, L)$ of $L$-valued locally polynomial functions of degree $N$ is dense in $C^h(\Zp, L)$ for any $N \ge [h]$, and a linear functional
 \[ \mu: LP^{N}(\Zp, L) \to L\]
 extends continuously to a distribution of order $h$ if and only there is a constant $C$ such that we have
 \[ v_p\left(\int_{x \in a + p^n \Zp} \left( \frac{x - a}{p^n} \right)^k \, \mathrm{d}\mu\right) \ge C - hn\]
 for all $a \in \Zp$, $n \in \mathbb{N}$ and $0 \le k \le N$.
\end{enumerate}

A modern account of this theorem is given in \cite[\S\S 1.3, 1.5]{colmez10}.

\subsection{The two-variable case}
\label{sect:order}

We now consider functions of two variables. For $a, b \ge 0$, we define the space
\[ C^{(a,b)}(\Zp^2, L) := C^a(\Zp, L) \htimes_L C^b(\Zp, L),\]
with its natural completed tensor product topology. We regard this as a space of functions on $\Zp^2$ in the obvious way, and refer to these as the $L$-valued functions on $\Zp^2$ of order $(a, b)$. It is clear that $C^{(0, 0)}(\Zp^2, L)$ is simply the space of continuous $L$-valued functions on $\Zp^2$, and that if $a' \ge a$ and $b' \ge b$, then $C^{(a',b')}(\Zp^2, L)$ is dense in $C^{(a,b)}(\Zp^2, L)$. Moreover, for any $(a,b)$ the space $LA(\Zp^2, L)$ of locally analytic functions on $\Zp^2$ is a dense subspace of $C^{(a,b)}(\Zp^2, L)$.

Note that any choice of Banach space bases for the two factors in the tensor product gives a Banach space basis for $C^{(a,b)}(\Zp^2, L)$. In particular, from (1) above we have a Banach basis given by the functions
\[ c_{n_1,n_2} : (x_1,x_2) \mapsto p^{[a \ell(n_1)] + [b \ell(n_2)]} \binom{x_1}{n_1}\binom{x_2}{n_2}.\]

The following technical proposition will be useful to us in the main text:

\begin{proposition}
\label{prop:changevar}
 For any $h \ge 0$, the space $C^{(0, h)}(\Zp^2, L)$ is invariant under pullback via the map $\Phi: (x, y) \to (x, ax + y)$, for any $a \in \Zp$.
\end{proposition}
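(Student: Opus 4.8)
The plan is to work directly with the Banach basis of $C^{(0,h)}(\Zp^2, L)$ afforded by the product structure, and to check that it transforms well under the shear $\Phi$. Write $e_m(y) = p^{[h\ell(m)]}\binom{y}{m}$ for the Amice basis of $C^h(\Zp, L)$, so that the functions $c_{n_1,n_2}(x,y) = \binom{x}{n_1}\,e_{n_2}(y) = p^{[h\ell(n_2)]}\binom{x}{n_1}\binom{y}{n_2}$ form a Banach basis of $C^{(0,h)}(\Zp^2,L) = C^0(\Zp,L)\htimes_L C^h(\Zp,L)$, each of norm $1$, as recorded above. It therefore suffices to show that $\Phi^*c_{n_1,n_2}$ lies in $C^{(0,h)}(\Zp^2,L)$ with norm $\le 1$, and then to sum over $(n_1,n_2)$.

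For the first step I would use Vandermonde's identity $\binom{ax+y}{n_2} = \sum_{k=0}^{n_2}\binom{ax}{k}\binom{y}{n_2-k}$, which gives $\Phi^*c_{n_1,n_2}(x,y) = \sum_{k=0}^{n_2}\bigl(\binom{x}{n_1}\binom{ax}{k}\bigr)\cdot\bigl(p^{[h\ell(n_2)]}\binom{y}{n_2-k}\bigr)$. Here $\binom{x}{n_1}\binom{ax}{k}$ takes values in $\Zp$ on $\Zp$ (as $a,x\in\Zp$), so it is a vector of norm $\le 1$ in $C^0(\Zp,L)$; and since $m\mapsto\ell(m)$ and $t\mapsto[ht]$ are non-decreasing and $n_2-k\le n_2$, the exponent $[h\ell(n_2)]-[h\ell(n_2-k)]$ is $\ge 0$, so that $p^{[h\ell(n_2)]}\binom{y}{n_2-k} = p^{[h\ell(n_2)]-[h\ell(n_2-k)]}\,e_{n_2-k}(y)$ is a vector of norm $\le 1$ in $C^h(\Zp,L)$. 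Thus $\Phi^*c_{n_1,n_2}$ is a finite sum of simple tensors of norm $\le 1$, hence lies in $C^{(0,h)}(\Zp^2,L)$ with norm $\le 1$ by the ultrametric inequality.

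To pass to infinite sums, write a general element as $F = \sum_{n_1,n_2}\lambda_{n_1,n_2}c_{n_1,n_2}$ with $\lambda_{n_1,n_2}\to 0$; then $\sum_{n_1,n_2}\lambda_{n_1,n_2}\Phi^*c_{n_1,n_2}$ converges in $C^{(0,h)}(\Zp^2,L)$ (its terms tend to $0$) to some element $G$, and the remaining task is to identify $G$ with $\Phi^*F$. There is no deep difficulty here; the one point calling for care is that one may not yet invoke continuity of $\Phi^*$ on $C^{(0,h)}$, since that is exactly what is being proved. Instead I would observe that $\Phi$ is a homeomorphism of $\Zp^2$, so $\Phi^*$ is an isometry of $C^0(\Zp^2,L)$, and that the inclusion $C^{(0,h)}(\Zp^2,L)\hookrightarrow C^0(\Zp^2,L)$ satisfies $\|\cdot\|_{C^0}\le\|\cdot\|_{C^{(0,h)}}$. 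Applying the continuous operator $\Phi^*$ of $C^0$ to the convergence $\sum\lambda_{n_1,n_2}c_{n_1,n_2}\to F$ then shows $\sum\lambda_{n_1,n_2}\Phi^*c_{n_1,n_2}\to\Phi^*F$ in $C^0(\Zp^2,L)$; comparing this with the $C^{(0,h)}$-limit $G$, which also tends to $G$ in $C^0$, and using that $C^0$ is Hausdorff, gives $\Phi^*F = G\in C^{(0,h)}(\Zp^2,L)$, as required.

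An alternative, more conceptual route --- which I would fall back on if the bookkeeping above proved awkward --- is to identify $C^{(0,h)}(\Zp^2,L)$ with the space $C(\Zp,C^h(\Zp,L))$ of continuous maps into the Banach space $C^h(\Zp,L)$; under this identification $\Phi^*$ carries $x\mapsto F(x,-)$ to $x\mapsto\tau_{ax}(F(x,-))$, where $\tau_c$ is translation by $c$ on $C^h(\Zp,L)$. Each $\tau_c$ is an isometric automorphism, since the defining valuation $v_{C^h}$ is translation-invariant, so the claim reduces to joint continuity of the action $(c,g)\mapsto\tau_c g$ on $\Zp\times C^h(\Zp,L)$, and hence --- via the ultrametric inequality --- to continuity of the orbit maps $c\mapsto\tau_c g$. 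This last point is where the real work would lie: one must show $v_{C^h}(\tau_c g-g)\to\infty$ as $v_p(c)\to\infty$, which I would prove by inspecting the two halves of the formula defining $v_{C^h}$ --- uniform continuity of the finitely many continuous functions $g^{(j)}$ on the compact set $\Zp$ controls the Taylor-coefficient part, while for the error term one splits according to whether $v_p(y)$ is large (using the order-$h$ bound on $g$, which is uniform in the base point) or small (using uniform continuity of $g$ and of the $g^{(j)}$, the factors $y^j/j!$ being bounded below).
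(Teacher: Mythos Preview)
Your proof is correct and follows essentially the same route as the paper: expand $\binom{ax+y}{n_2}$ by Vandermonde, use that $\binom{x}{n_1}\binom{ax}{k}$ is $\Zp$-valued (hence of $C^0$-norm $\le 1$), and use monotonicity of $\ell$ to bound the $y$-factor. The paper records only the basis computation and asserts ``it suffices''; you have in addition spelled out why that suffices, by passing through $C^0$ to identify the limit --- this is a genuine point the paper leaves implicit, and your treatment of it is sound.
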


\begin{proof}
 It suffices to show that $\Phi^*(c_{n_1,n_2})$ can be written as a convergent series in terms of the functions $c_{m_1, m_2}$ with uniformly bounded coefficients. We find that
 \begin{align*}
  \Phi^*(c_{n_1,n_2})(x_1, x_2) &= p^{[h \ell(n_2)]} \binom{x_1}{n_1} \binom{ax_1 + x_2}{n_2}\\
  &= \sum_{i=0}^{n_1} p^{[h \ell(n_2)]} \binom{x_1}{n_1} \binom{ax_1}{n_2 - i} \binom{x_2}{i}.
 \end{align*}
 The functions $x_1 \mapsto \binom{x_1}{n_1} \binom{ax_1}{n_2 - i}$ are continuous $\Zp$-valued functions on $\Zp$, and hence the coefficients of their Mahler expansions are integral; and since the function $\ell(n)$ is increasing, we see that the coefficients of $\Phi^*(c_{n_1,n_2})$ in this basis are in fact bounded by 1.
   \end{proof}

Dually, we define a distribution of order $(a,b)$ to be an element of the dual of $C^{(a,b)}(\Zp^2, L)$; the space $D^{(a,b)}(\Zp^2, L)$ of such distributions is canonically isomorphic to the completed tensor product $D^a(\Zp, L) \htimes_L D^b(\Zp, L)$.

An analogue of (2) above is also true for these spaces. Let us write $LP^{(N_1, N_2)}(\Zp^2, L)$ for the space of functions on $\Zp^2$ which are locally polynomial of degree $\le N_1$ in $x_1$ and of degree $\le N_2$ in $x_2$; that is, the algebraic tensor product $LP^{N_1}(\Zp, L) \otimes_L LP^{N_2}(\Zp, L)$.

\begin{proposition}
 Suppose $N_1 \ge [a]$ and $N_2 \ge [b]$. Then the subspace $LP^{(N_1, N_2)}(\Zp^2, L)$ is dense in $C^{(a,b)}(\Zp^2, L)$, and a linear functional on $LP^{(N_1, N_2)}(\Zp^2, L)$ extends to an element of $D^{(a, b)}(\Zp, L)$ if and only if there is a constant $C$ such that
\begin{multline}
\label{eq:ordercondition}
  v_p \left(\int_{(x_1, x_2) \in (a_1 + p^{n_1}\Zp) \times (a_2 + p^{n_2}\Zp)} \left( \frac{x_1 - a_1}{p^{n_1}}\right)^{k_1} \left( \frac{x_2 - a_2}{p^{n_2}}\right)^{k_2}\,\mathrm{d}\mu\right) \\ \ge C - a n_1 - b n_2
 \end{multline}
for all $(a_1, a_2) \in \Zp^2$, $(n_1, n_2) \in \NN^2$, $0 \le k_1 \le N_1$ and $0 \le k_2 \le N_2$.
\end{proposition}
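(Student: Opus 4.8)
The plan is to deduce the proposition from the one-variable theorem (2) recalled above, exploiting the completed tensor product descriptions $C^{(a,b)}(\Zp^2,L)=C^a(\Zp,L)\htimes_L C^b(\Zp,L)$ and $D^{(a,b)}(\Zp^2,L)=D^a(\Zp,L)\htimes_L D^b(\Zp,L)$. For the density claim, one first invokes the one-variable result (legitimate since $N_1\ge[a]$ and $N_2\ge[b]$) to see that $LP^{N_1}(\Zp,L)$ is dense in $C^a(\Zp,L)$ and $LP^{N_2}(\Zp,L)$ in $C^b(\Zp,L)$; then $LP^{(N_1,N_2)}(\Zp^2,L)=LP^{N_1}(\Zp,L)\otimes_L LP^{N_2}(\Zp,L)$ is dense in $C^{(a,b)}(\Zp^2,L)$ by the elementary fact that $E_0\otimes_L F_0$ is dense in $E\htimes_L F$ whenever $E_0\subseteq E$ and $F_0\subseteq F$ are dense subspaces of Banach spaces.

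The heart of the argument is a norm estimate for the ``box monomials''. For $a\in\Zp$, $n\ge 0$ and $0\le k\le N$ write $B_{a,n,k}(x)=\mathbf{1}_{a+p^n\Zp}(x)\big(\tfrac{x-a}{p^n}\big)^k\in LP^N(\Zp,L)$; for fixed $n$ these form a basis of the subspace of functions polynomial of degree $\le N$ on each coset of $p^n\Zp$. I will show that for any combination $f=\sum_{a,k}\lambda_{a,k}B_{a,n,k}$ at level $n$ one has
\[ -hn+\min_{a,k}v_p(\lambda_{a,k})\ \le\ v_{C^h}(f)\ \le\ -hn+c(h,N)+\min_{a,k}v_p(\lambda_{a,k}), \]
where $c(h,N)$ depends only on $h$ and $N$. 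The lower bound follows from the ultrametric inequality once one checks $v_{C^h}(B_{a,n,k})\ge -hn$: in the defining infimum for $v_{C^h}$, the derivative terms $f^{(j)}(x)$ with $j\le[h]$ have valuation $\ge -[h]n\ge -hn$, while in the error term $\ve_f(x,y)$ one separates the case where $x$ and $x+y$ lie in the same coset of $p^n\Zp$ (so $v_p(y)\ge n$) from the case where they do not (so $v_p(y)\le n-1$), bounding $v_p(\ve_f(x,y))-hv_p(y)\ge -hn$ in each case using $0\le[h]\le h$. The upper bound is obtained by exhibiting, for the coset carrying a coefficient of minimal valuation, an explicit $(x,y)$ making one term of the infimum as small as claimed, distinguishing whether the smallest-degree minimal-valuation coefficient there has degree $\le[h]$ or $>[h]$.

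Granting this, both directions are short. Necessity: if $\mu$ extends to $\bar\mu\in D^{(a,b)}(\Zp^2,L)$, then $v_p(\langle\bar\mu,\xi\rangle)\ge v_{D^{(a,b)}}(\bar\mu)+v_{C^{(a,b)}}(\xi)$ for all $\xi$; since by (1) the spaces $C^a$, $C^b$ admit orthonormal bases, the completed tensor norm is cross-normed, so $v_{C^{(a,b)}}\big(B_{a_1,n_1,k_1}\otimes B_{a_2,n_2,k_2}\big)=v_{C^a}(B_{a_1,n_1,k_1})+v_{C^b}(B_{a_2,n_2,k_2})\ge -an_1-bn_2$, and the integral in \eqref{eq:ordercondition} is precisely $\langle\bar\mu,B_{a_1,n_1,k_1}\otimes B_{a_2,n_2,k_2}\rangle$, giving the required bound with $C=v_{D^{(a,b)}}(\bar\mu)$. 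Sufficiency: assuming \eqref{eq:ordercondition}, it suffices to show $\mu$ is continuous on $LP^{(N_1,N_2)}(\Zp^2,L)$ for the $C^{(a,b)}$-norm, since it then extends by density to an element of $D^{(a,b)}(\Zp^2,L)$; and because the projective tensor norm is computed on representations by finite sums of elementary tensors, it is enough to prove $v_p(\mu(f\otimes g))\ge v_{C^a}(f)+v_{C^b}(g)+C'$ for $f\in LP^{N_1}(\Zp,L)$, $g\in LP^{N_2}(\Zp,L)$ and a fixed $C'$. Writing $f=\sum_i\lambda_iB_{a_i,n,k_i}$ and $g=\sum_j\rho_jB_{b_j,m,l_j}$ at levels $n,m$, bilinearity and \eqref{eq:ordercondition} give $v_p(\mu(f\otimes g))\ge\min_iv_p(\lambda_i)+\min_jv_p(\rho_j)+C-an-bm$, which by the box estimate above is $\ge v_{C^a}(f)+v_{C^b}(g)+\big(C-c(a,N_1)-c(b,N_2)\big)$.

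The step I expect to be the main obstacle is the upper bound in the box-monomial estimate, $v_{C^h}(f)\le -hn+c(h,N)+\min_{a,k}v_p(\lambda_{a,k})$: the bound coming from the derivative part of $v_{C^h}$ is too weak when the dominant coefficient has low degree, so the required smallness must instead be extracted from the cross-coset error terms. This is exactly the two-variable shadow of the non-trivial direction of Amice's theorem in one variable, and in a pinch one can simply quote the relevant estimates from the proof of the one-variable result in \cite{colmez10}.
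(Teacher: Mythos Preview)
Your proposal is correct and follows essentially the same approach as the paper, which simply states that ``the proof of this result is virtually identical to the 1-variable case'' and omits all details. Your reduction via the completed tensor product description, together with the box-monomial norm estimates (which, as you note, are the two-variable analogue of the nontrivial direction in \cite{colmez10}), is exactly the kind of argument the paper is gesturing at, and your outline is in fact considerably more detailed than what the paper provides.
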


The proof of this result is virtually identical to the 1-variable case, so we shall not give the full details here.

In particular, if $a,b < 1$, we may take $N_1 = N_2 = 0$, and a distribution of order $(a, b)$ is uniquely determined by its values on locally constant functions, or equivalently, by its values on the indicator functions of open subsets of $\Zp^2$. Conversely, a finitely-additive function $\mu$ on open subsets of $\Zp^2$ defines a distribution of order $(a,b)$ if and only if there is $C$ such that
\[ v_p \mu\left( (a_1 + p^{n_1}\Zp) \times (a_2 + p^{n_2}\Zp) \right) \ge C - a n_1 - b n_2.\]

The following is easily verified:

\begin{proposition}
\label{prop:orderofproduct}
 The convolution of distributions of order $(a,b)$ and $(a', b')$ has order $(a + a', b + b')$.
\end{proposition}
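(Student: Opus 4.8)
The plan is to reduce the statement to the one-variable case and then prove that via the Amice--Mahler moment criterion. First I would note that, since $\Zp^2 = \Zp \times \Zp$ as a topological group under componentwise addition, the convolution product on distributions on $\Zp^2$ is compatible with the factorisation $D^{(a,b)}(\Zp^2,L) = D^a(\Zp,L)\htimes_L D^b(\Zp,L)$: on pure tensors one has $(\mu_1 \htimes \mu_2)*(\nu_1 \htimes \nu_2) = (\mu_1 * \nu_1) \htimes (\mu_2 * \nu_2)$, which is immediate from the definition of convolution, and pure tensors span a dense subspace. Thus it suffices to establish the one-variable statement that the convolution of $\mu \in D^\alpha(\Zp,L)$ and $\nu \in D^{\alpha'}(\Zp,L)$ lies in $D^{\alpha+\alpha'}(\Zp,L)$. (This is essentially contained in \cite{colmez10}; I would nonetheless reprove it for completeness.)

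For the one-variable statement I would invoke the moment criterion recalled in item (2) above (the one-variable form of \eqref{eq:ordercondition}): fixing $N \ge [\alpha+\alpha']$ — note that then also $N \ge [\alpha]$ and $N \ge [\alpha']$, since $\alpha,\alpha'\ge 0$ — it is enough to bound $\int_{c+p^n\Zp}\bigl(\tfrac{x-c}{p^n}\bigr)^k\,\mathrm d(\mu*\nu)$ for $0\le k\le N$ uniformly, with error linear in $n$. Unwinding $(\mu*\nu)(f) = \iint f(x+y)\,\mathrm d\mu(x)\,\mathrm d\nu(y)$ and decomposing the $x$-integral over residue classes $a$ modulo $p^n$, the condition $x+y\in c+p^n\Zp$ confines $y$ to the class $(c-a)+p^n\Zp$, and on these classes $\tfrac{x+y-c}{p^n} = \tfrac{x-a}{p^n} + \tfrac{y-(c-a)}{p^n} =: s+u$ with $s,u\in\Zp$. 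Expanding $(s+u)^k$ by the binomial theorem and integrating term by term expresses the $k$-th moment of $\mu*\nu$ over $c+p^n\Zp$ as $\sum_{a}\sum_{j=0}^{k}\binom kj\bigl(\int_{a+p^n\Zp}s^j\,\mathrm d\mu\bigr)\bigl(\int_{(c-a)+p^n\Zp}u^{k-j}\,\mathrm d\nu\bigr)$. Applying the moment bounds for $\mu$ and for $\nu$ (legitimate since $j,k-j\le N$) gives the two factors valuation $\ge C_\mu - \alpha n$ and $\ge C_\nu - \alpha' n$ respectively, so each summand has valuation $\ge C_\mu+C_\nu-(\alpha+\alpha')n$; summing over the $p^n$ classes $a$ and the indices $j$ cannot lower this, the base field being non-archimedean and the $\binom kj$ being $p$-adic integers. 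Hence the moment criterion holds for order $\alpha+\alpha'$ with constant $C_\mu+C_\nu$, proving the one-variable claim and hence the proposition. (Equivalently, one may skip the tensor reduction and run this identical computation in two variables directly against \eqref{eq:ordercondition}.)

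The argument is elementary and I do not expect a real obstacle; the one point meriting attention is the mechanism by which the orders simply add — namely that the number of residue classes being summed over grows like $p^n$, yet in the non-archimedean setting this contributes no loss of valuation, so the exponents $\alpha,\alpha'$ add cleanly rather than being inflated by the $\log p$-scale growth in the count of classes. A minor companion point is checking that the indices $j$ and $k-j$ arising in the binomial expansion remain within the ranges for which the moment bounds of $\mu$ and $\nu$ are available, which is guaranteed precisely by $\alpha,\alpha'\ge 0$ and the choice $N \ge [\alpha+\alpha']$.
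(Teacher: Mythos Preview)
Your argument is correct. The paper itself gives no proof of this proposition beyond the phrase ``easily verified'', so your write-up supplies exactly the details the authors omit; your moment-criterion computation (either via the tensor reduction or, as you note, directly in two variables against \eqref{eq:ordercondition}) is the natural verification they had in mind.
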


It is important to note that the spaces of functions and of distributions of order $(a,b)$ depend on a choice of coordinates; they are not invariant under automorphisms of $\Zp^2$, \emph{even if} $a = b$. However, dualising Proposition \ref{prop:changevar} above, the space of distributions of order $(0, h)$ is invariant under automorphisms preserving the subgroup $(0, \Zp)$.

\begin{remark}
 One can also define a function $f:\Zp^2 \to L$ to be of order $h$, for a single non-negative real $h$, if $f$ has a Taylor expansion of degree $[h]$ at every point, with the error term $\ve(x,y)$ (defined as above) satisfying
 \[ \inf_{x \in \Zp^2, y \in p^n \Zp^2} v_p \ve(x,y) - hn \to \infty.\]
 This definition \emph{is} invariant under automorphisms of $\Zp$ (and indeed under arbitrary morphisms of locally $\Qp$-analytic manifolds). However it is not so convenient for us, since locally constant functions are only dense for $h < 1$, and a finitely-additive function on open subsets extends to a linear functional on this space if we can find a $C$ such that
 \begin{equation}
\label{eq:badordercondition}
v_p \mu\left( a + p^n \Zp^2 \right) \ge C - nh.
\end{equation}
 The requirement that this be satisfied, for some $h < 1$, is much stronger than the requirement that \eqref{eq:ordercondition} is satisfied for some $a, b < 1$.
\end{remark}

We shall also use the concept of distributions of order $(a, b)$ on a slightly larger class of group: if we have an abelian $p$-adic Lie group $G$, and an open subgroup $H$ with distinguished subgroups $H_1,H_2$ such that $H = H_1 \times H_2$ and $H_1 \cong H_2 \cong \Zp$, then we may define a distribution on $G$ to have order $(a, b)$ if its restriction to every coset of $H$ has order $(a, b)$ in the above sense. Note that this does not depend on a choice of generators of the groups $H_i$, but it does depend on the choice of the subgroups $H_1, H_2$; so when there is a possibility of ambiguity we shall write ``order $(a,b)$ with respect to the subgroups $H_1, H_2$''.

Note that an application of Proposition \ref{prop:changevar} shows that a distribution has order $(0, h)$ with respect to the subgroups $(H_1, H_2)$ if and only if it has order $(0, h)$ with respect to $(H_1', H_2)$ for any other subgroup $H_1'$ complementary to $H_2$; that is, in this special case the definition of ``order $(0, h)$'' depends only on the choice of $H_2$.


\section*{Acknowledgements}

We are grateful to Eric Pickett for alerting us to the results on integral normal bases contained in his paper \cite{pickett10}. The main idea of this paper was developed during our visit to Salt Lake City in February 2011; we would like to thank Wieslawa Niziol and Pierre Colmez for their hospitality. We would also like to thank John Coates, Pierre Colmez, Henri Darmon, Karl Rubin and Otmar Venjakob for their interest and for some helpful discussions, and Ivan Fesenko for raising the question of whether Perrin-Riou's local reciprocity formula holds in this setting. We also thank the referee for a number of helpful suggestions and corrections.
\newcommand{\etalchar}[1]{$^{#1}$}
\newcommand{\noopsort}[1]{} \newcommand{\singleletter}[1]{#1} \def\cprime{$'$}
\providecommand{\bysame}{\leavevmode\hbox to3em{\hrulefill}\thinspace}
\providecommand{\MR}[1]{\relax}
\renewcommand{\MR}[1]{%
 MR \href{http://www.ams.org/mathscinet-getitem?mr=#1}{#1}.
}
\providecommand{\href}[2]{#2}
\newcommand{\articlehref}[2]{\href{#1}{#2}}


\begin{thebibliography}{CFK{\etalchar{+}}05}

\bibitem[Ami64]{amice64}
Yvette Amice, \emph{Interpolation {$p$}-adique}, Bull. Soc. Math. France
  \textbf{92} (1964), 117--180. \MR{0188199}

\bibitem[BH97]{balisterhowson}
Paul Balister and Susan Howson,
  \articlehref{http://www.intlpress.com/AJM/AJM-v01.php\#AJM-1-2}{\emph{Note on
  {N}akayama's lemma for compact {$\Lambda$}-modules}}, Asian J. Math.
  \textbf{1} (1997), no.~2, 224--229. \MR{1491983}

\bibitem[Bel11]{bellaiche11}
Jo{\"e}l Bella{\"i}che, \emph{Computation of {$p$}-adic {$L$}-functions of critical {CM}
  forms}, preprint, 2011.

\bibitem[Ben00]{benois00}
Denis Benois,
  \articlehref{http://dx.doi.org/10.1215/S0012-7094-00-10422-X}{\emph{On
  {I}wasawa theory of crystalline representations}}, Duke Math. J. \textbf{104}
  (2000), no.~2, 211--267. \MR{1773559}

\bibitem[Ber03]{berger03}
Laurent Berger,
  \articlehref{http://www.mathematik.uni-bielefeld.de/documenta/vol-kato/berger.dm.html}{\emph{Bloch
  and {K}ato's exponential map: three explicit formulas}}, Doc. Math. Extra
  Vol. \textbf{3} (2003), 99--129, Kazuya Kato's fiftieth birthday.
  \MR{2046596}

\bibitem[Ber04]{berger04}
\bysame,
  \articlehref{http://dx.doi.org/10.1112/S0010437X04000879}{\emph{Limites de
  repr{\'e}sentations cristallines}}, Compos. Math. \textbf{140} (2004), no.~6,
  1473--1498. \MR{2098398}

\bibitem[BDP13]{BDP13}
Massimo Bertolini, Henri Darmon, and Kartik Prasanna,
  \articlehref{http://dx.doi.org/10.1215/00127094-2142056}{\emph{Generalized
  {H}eegner cycles and {$p$}-adic {R}ankin {$L$}-series}}, Duke Math. J.
  \textbf{162} (2013), no.~6, 1033--1148, With an appendix by Brian Conrad.
  \MR{3053566}

\bibitem[BK90]{blochkato90}
Spencer Bloch and Kazuya Kato,
  \articlehref{http://www.springer.com/birkhauser/mathematics/book/978-0-8176-4566-3}{\emph{{$L$}-functions
  and {T}amagawa numbers of motives}}, The {G}rothendieck {F}estschrift,
  {V}ol.\ {I}, Progr. Math., vol.~86, Birkh{\"a}user Boston, Boston, MA, 1990,
  pp.~333--400. \MR{1086888}

\bibitem[Bra11]{brakocevic}
Miljan Brako\v{c}evi\'c,
  \articlehref{http://dx.doi.org/10.1093/imrn/rnq275}{\emph{Anticyclotomic
  {$p$}-adic {$L$}-function of central critical {R}ankin--{S}elberg
  {$L$}-value}}, Int. Math. Res. Notices \textbf{2011} (2011), no.~21,
  4967--5018. \MR{2852303}

\bibitem[CC99]{cherbonniercolmez99}
Fr{\'e}d{\'e}ric Cherbonnier and Pierre Colmez,
  \articlehref{http://dx.doi.org/10.1090/S0894-0347-99-00281-7}{\emph{Th{\'e}orie
  d'{I}wasawa des repr{\'e}sentations {$p$}-adiques d'un corps local}}, J.
  Amer. Math. Soc. \textbf{12} (1999), no.~1, 241--268. \MR{1626273}

\bibitem[CFK{\etalchar{+}}05]{cfksv}
John Coates, Takako Fukaya, Kazuya Kato, Ramdorai Sujatha, and Otmar Venjakob,
  \articlehref{http://dx.doi.org/10.1007/s10240-004-0029-3}{\emph{The
  {$\operatorname{GL}_2$} main conjecture for elliptic curves without complex
  multiplication}}, Publ. Math. IH{\'E}S \textbf{101} (2005), 163--208.
  \MR{2217048}

\bibitem[Col79]{coleman79}
Robert Coleman,
  \articlehref{http://dx.doi.org/10.1007/BF01390028}{\emph{Division values in
  local fields}}, Invent. Math. \textbf{53} (1979), no.~2, 91--116.
  \MR{0560409}

\bibitem[Cz98]{colmez98}
Pierre Colmez, \articlehref{http://dx.doi.org/10.2307/121003}{\emph{Th{\'e}orie
  d'{I}wasawa des repr{\'e}sentations de de {R}ham d'un corps local}}, Ann. of
  Math. (2) \textbf{148} (1998), no.~2, 485--571. \MR{1668555}

\bibitem[Cz10]{colmez10}
\bysame,
  \articlehref{http://smf4.emath.fr/Publications/Asterisque/2010/330/html/smf_ast_330_13-59.php}{\emph{Fonctions
  d'une variable {$p$}-adique}}, Ast{\'e}risque \textbf{330} (2010), 13--59.
  \MR{2642404}

\bibitem[DS87]{deshalit87}
Ehud De~Shalit, \emph{Iwasawa theory of elliptic curves with complex
  multiplication}, Perspectives in Mathematics, vol.~3, Academic Press Inc.,
  Boston, MA, 1987. \MR{917944}

\bibitem[Del73]{deligne73}
Pierre Deligne, \emph{Les constantes des \'equations fonctionnelles des
  fonctions {$L$}}, Modular functions of one variable, {II} ({P}roc.
  {I}nternat. {S}ummer {S}chool, {U}niv. {A}ntwerp, {A}ntwerp, 1972), Springer,
  Berlin, 1973, pp.~501--597. Lecture Notes in Math., Vol. 349. \MR{0349635}

\bibitem[Eme04]{emerton04}
Matthew Emerton, \emph{Locally analytic vectors in representations of locally
  {$p$}-adic analytic groups}, Mem. Am. Math. Soc. (to appear), 2004.

\bibitem[Fon90]{fontaine90}
Jean-Marc Fontaine,
  \articlehref{http://dx.doi.org/10.1007/978-0-8176-4575-5}{\emph{Repr{\'e}sentations
  {$p$}-adiques des corps locaux. {I}}}, The Grothendieck Festschrift, Vol.\
  II, Progr. Math., vol.~87, Birkh{\"a}user Boston, Boston, MA, 1990,
  pp.~249--309. \MR{1106901}

\bibitem[Fon94a]{fontaine94b}
\bysame, \emph{Repr{\'e}sentations {$p$}-adiques semi-stables}, Ast{\'e}risque
  \textbf{223} ({\noopsort{b}}1994), 113--184, P{\'e}riodes $p$-adiques
  (Bures-sur-Yvette, 1988). \MR{1293972}

\bibitem[Fon94b]{fontaine94c}
\bysame, \emph{Repr\'esentations {$\ell$}-adiques potentiellement
  semi-stables}, Ast{\'e}risque \textbf{223} ({\noopsort{c}}1994), 321--347,
  P{\'e}riodes $p$-adiques (Bures-sur-Yvette, 1988). \MR{1293977}

\bibitem[FK06]{fukayakato06}
Takako Fukaya and Kazuya Kato, \emph{A formulation of conjectures on {$p$}-adic
  zeta functions in noncommutative {I}wasawa theory}, Proceedings of the {S}t.
  {P}etersburg {M}athematical {S}ociety. {V}ol. {XII} (Providence, RI), Amer.
  Math. Soc. Transl. Ser. 2, vol. 219, Amer. Math. Soc., 2006, pp.~1--85.
  \MR{2276851}

\bibitem[GS93]{greenbergstevens93}
Ralph Greenberg and Glenn Stevens,
  \articlehref{http://dx.doi.org/10.1007/BF01231294}{\emph{{$p$}-adic
  {$L$}-functions and {$p$}-adic periods of modular forms}}, Invent. Math.
  \textbf{111} (1993), no.~2, 407--447. \MR{1198816}

\bibitem[Har87]{haran87}
Shai M~J Haran,
  \articlehref{http://www.numdam.org/item?id=CM_1987__62_1_31_0}{\emph{{$p$}-adic
  {$L$}-functions for modular forms}}, Compos. Math. \textbf{62} (1987), no.~1,
  31--46. \MR{892149}

\bibitem[Hid88]{hida88}
Haruzo Hida,
  \articlehref{http://www.numdam.org/item?id=AIF_1988__38_3_1_0}{\emph{A
  {$p$}-adic measure attached to the zeta functions associated with two
  elliptic modular forms. {II}}}, Ann. Inst. Fourier (Grenoble) \textbf{38}
  (1988), no.~3, 1--83. \MR{976685}

\bibitem[IP06]{iovitapollack06}
Adrian Iovita and Robert Pollack,
  \articlehref{http://dx.doi.org/10.1515/CRELLE.2006.069}{\emph{Iwasawa theory
  of elliptic curves at supersingular primes over {$\mathbb{Z}_p$}-extensions
  of number fields}}, J. reine angew. Math. \textbf{598} (2006), 71--103.
  \MR{2270567}

\bibitem[Kat93]{kato93}
Kazuya Kato, \articlehref{http://dx.doi.org/10.1007/BFb0084729}{\emph{Lectures
  on the approach to {I}wasawa theory for {H}asse-{W}eil {$L$}-functions via
  {$B_{\mathrm{dR}}$}. {I}}}, Arithmetic algebraic geometry ({T}rento, 1991),
  Lecture Notes in Math., vol. 1553, Springer, Berlin, 1993, pp.~50--163.
  \MR{1338860}

\bibitem[Kat04]{kato04}
\bysame, \emph{{$p$}-adic {H}odge theory and values of zeta functions of
  modular forms}, Ast{\'e}risque \textbf{295} (2004), ix, 117--290,
  Cohomologies $p$-adiques et applications arithm{\'e}tiques. III. \MR{2104361}

\bibitem[Kim11]{kim-preprint}
Byoung~Du Kim,
  \articlehref{http://homepages.ecs.vuw.ac.nz/~bdkim/}{\emph{Two-variable
  {$p$}-adic {$L$}-functions of modular forms for non-ordinary primes}},
  preprint, September 2011.

\bibitem[KM76]{knudsenmumford76}
Finn~Faye Knudsen and David Mumford,
  \articlehref{http://www.mscand.dk/article.php?id=2311}{\emph{The projectivity
  of the moduli space of stable curves. {I}. {P}reliminaries on ``det'' and
  ``{D}iv''}}, Math. Scand. \textbf{39} (1976), no.~1, 19--55. \MR{0437541}

\bibitem[LLZ10]{leiloefflerzerbes10}
Antonio Lei, David Loeffler, and Sarah~Livia Zerbes,
  \articlehref{http://www.intlpress.com/AJM/AJM-v14.php\#AJM-14-4}{\emph{Wach
  modules and {I}wasawa theory for modular forms}}, Asian J. Math. \textbf{14}
  (2010), no.~4, 475--528. \MR{2774276}

\bibitem[LLZ11]{leiloefflerzerbes11}
\bysame, \articlehref{http://dx.doi.org/10.2140/ant.2011.5.1095}{\emph{Coleman
  maps and the $p$-adic regulator}}, Algebra \& Number Theory \textbf{5}
  (2011), no.~8, 1095--1131. \MR{2948474}

\bibitem[LLZ13]{leiloefflerzerbes12}
\bysame,
  \articlehref{http://dx.doi.org/10.1007/s11856-013-0020-0}{\emph{Critical
  slope p-adic {L}-functions of {CM} modular forms}}, Israel J. Math.
  \textbf{198} (2013), no.~1, 261--282, \path{arXiv:1112.5579}. \MR{3096640}

\bibitem[Loe13]{loeffler13}
David Loeffler, \articlehref{http://arxiv.org/abs/1304.4042}{\emph{P-adic
  integration on ray class groups and non-ordinary p-adic {L}-functions}},
  Proceedings of the conference ``Iwasawa 2012'' (to appear), 2013,
  \path{arXiv:1304.4042}.

\bibitem[Mah58]{mahler58}
K.~Mahler, \emph{An interpolation series for continuous functions of a
  {$p$}-adic variable}, J. Reine Angew. Math. \textbf{199} (1958), 23--34.
  \MR{0095821}

\bibitem[Nek06]{nekovar06}
Jan Nekov{\'a}{\v{r}}, \emph{Selmer complexes}, Ast{\'e}risque (2006), no.~310,
  viii+559. \MR{2333680}

\bibitem[Neu99]{neukirch99}
J{\"u}rgen Neukirch, \emph{Algebraic number theory}, Grundlehren der
  Mathematischen Wissenschaften [Fundamental Principles of Mathematical
  Sciences], vol. 322, Springer-Verlag, Berlin, 1999. \MR{1697859}

\bibitem[NSW00]{nsw}
J{\"u}rgen Neukirch, Alexander Schmidt, and Kay Wingberg, \emph{Cohomology of
  number fields}, Grundlehren der Mathematischen Wissenschaften [Fundamental
  Principles of Mathematical Sciences], vol. 323, Springer-Verlag, Berlin,
  2000. \MR{1737196}

\bibitem[Pas05]{pasol05}
Vicentiu Pasol, \emph{P-adic modular symbols attached to {C.} {M.} forms},
  Ph.D. thesis, Boston University, 2005. \MR{2707730}

\bibitem[PR88]{perrinriou88}
Bernadette Perrin-Riou,
  \articlehref{http://dx.doi.org/10.1112/jlms/s2-38.1.1}{\emph{Fonctions {$L$}
  {$p$}-adiques associ{\'e}es {\`a} une forme modulaire et {\`a} un corps
  quadratique imaginaire}}, J. London Math. Soc. \textbf{38} (1988), no.~1,
  1--32. \MR{949078}

\bibitem[PR90]{perrinriou90}
\bysame, \emph{Th\'eorie d'{I}wasawa {$p$}-adique locale et globale}, Invent.
  Math. \textbf{99} (1990), no.~2, 247--292.

\bibitem[PR92]{perrinriou92}
\bysame, \articlehref{http://dx.doi.org/10.1007/BF01232022}{\emph{Th\'eorie
  d'{I}wasawa et hauteurs {$p$}-adiques}}, Invent. Math. \textbf{109} (1992),
  no.~1, 137--185. \MR{1168369}

\bibitem[PR94]{perrinriou94}
\bysame, \articlehref{http://dx.doi.org/10.1007/BF01231755}{\emph{Th{\'e}orie
  d'{I}wasawa des repr{\'e}sentations {$p$}-adiques sur un corps local}},
  Invent. Math. \textbf{115} (1994), no.~1, 81--161, With an appendix by
  Jean-Marc Fontaine. \MR{1248080}

\bibitem[PR95]{perrinriou95}
\bysame,
  \articlehref{http://smf4.emath.fr/Publications/Asterisque/1995/229/html/smf_ast_229.html}{\emph{Fonctions
  {$L$} {$p$}-adiques des repr{\'e}sentations {$p$}-adiques}}, Ast{\'e}risque
  \textbf{229} (1995), 1--198. \MR{1327803}

\bibitem[PR03]{perrinriou03}
\bysame,
  \articlehref{http://projecteuclid.org/euclid.em/1067634729}{\emph{Arithm{\'e}tique
  des courbes elliptiques {\`a} r{\'e}duction supersinguli{\`e}re en {$p$}}},
  Experiment. Math. \textbf{12} (2003), no.~2, 155--186. \MR{2016704}

\bibitem[Pic10]{pickett10}
Erik~Jarl Pickett,
  \articlehref{http://dx.doi.org/10.1142/S1793042110003654}{\emph{Construction
  of self-dual integral normal bases in abelian extensions of finite and local
  fields}}, Int. J. Number Theory \textbf{6} (2010), no.~7, 1565--1588.
  \MR{2740722}

\bibitem[ST02]{schneiderteitelbaum02}
P.~Schneider and J.~Teitelbaum,
  \articlehref{http://0-dx.doi.org.pugwash.lib.warwick.ac.uk/10.1007/BF02784538}{\emph{Banach
  space representations and {I}wasawa theory}}, Israel J. Math. \textbf{127}
  (2002), 359--380. \MR{1900706}

\bibitem[Sem88]{semaev88}
I.~A. Semaev, \emph{Construction of polynomials, irreducible over a finite
  field, with linearly independent roots}, Mat. Sb. (N.S.) \textbf{135(177)}
  (1988), no.~4, 520--532, 560. \MR{942137}

\bibitem[Wac96]{wach96}
Nathalie Wach, \emph{Repr{\'e}sentations {$p$}-adiques potentiellement
  cristallines}, Bull. Soc. Math. France \textbf{124} (1996), no.~3, 375--400.
  \MR{1415732}

\bibitem[Yag82]{yager82}
Rodney Yager, \articlehref{http://dx.doi.org/10.2307/1971398}{\emph{On two
  variable {$p$}-adic {$L$}-functions}}, Ann. of Math. (2) \textbf{115} (1982),
  no.~2, 411--449. \MR{0647813}

\end{thebibliography}
\end{document}